\documentclass[10pt,reqno]{amsart}
\usepackage{amsfonts}
\usepackage{enumerate}
\usepackage{url}
\usepackage{amssymb,amscd,epsfig}
\usepackage{mathtools}
\usepackage{graphicx}
\usepackage{caption}
\usepackage{psfrag}
\usepackage{mathptmx}
\usepackage{helvet}
\usepackage{courier}
\usepackage{multicol}
\usepackage[bottom]{footmisc}
\usepackage{tikz}\usetikzlibrary{arrows,snakes,backgrounds,patterns,matrix,shapes,fit,calc,shadows,plotmarks,calc,shapes,decorations.pathreplacing,3d}
\usepackage{enumitem}
\usepackage{color}
\usepackage{xfrac}

\makeatletter
\def\subsection{\@startsection{subsection}{2}%
  \z@{.5\linespacing\@plus.7\linespacing}{.1\linespacing}%
  {\normalfont\bfseries}}
\makeatother

\newcommand{\Ban}{E} 
\newcommand{\Dom}{{\mathcal D}}
\newcommand{\Dif}{{\mathrm{Dif}}}
\newcommand{\JDif}{\mathrm{Dif}(\Dom,\Jcal)}
\newcommand{\Dcal}{{\mathcal D}}
\newcommand{\Hcal}{{\mathcal H}}
\newcommand{\Kcal}{{\mathcal K}}

\newcommand{\Jcal}{{\mathcal J}}
\newcommand{\Lcal}{{\mathcal L}}
\newcommand{\Ocal}{{\mathcal O}}

\newcommand{\Rcal}{{\mathcal R}}
\newcommand{\N}{\mathbb{N}}
\newcommand{\Z}{\mathbb{Z}}
\newcommand{\R}{\mathbb{R}}

\newcommand{\C}{\mathbb{C}}
\newcommand{\rn}{\R^d}

\newcommand{\e}{e}
\newcommand{\rd}{\mathrm{d}}

\newcommand{\T}{\mathbb{T}}

\newcommand{\Gal}{G_\mathrm{Gal}}
\newcommand{\Smu}{\Sigma_\mu}
\newcommand\restr[2]{{
  \left.\kern\nulldelimiterspace 
  #1 
  \vphantom{|} 
  \right|_{#2} 
  }}
  
\newcommand{\scalh}{\langle\cdot,\cdot\rangle}
\newcommand{\scal}[2]{{\langle#1,#2\rangle}}

\newcommand{\wt}{\widetilde}


\newcommand{\vertiii}[1]{{\left\vert\kern-0.25ex\left\vert\kern-0.25ex\left\vert #1 
    \right\vert\kern-0.25ex\right\vert\kern-0.25ex\right\vert}}
\newcommand{\nl}{f}

\newtheorem{theorem}{Theorem}[section]
\newtheorem{proposition}[theorem]{Proposition}

\newtheorem{lemma}[theorem]{Lemma}
\theoremstyle{definition}
\newtheorem{definition}[theorem]{Definition}
\newtheorem{example}[theorem]{Example}
\theoremstyle{remark}
\newtheorem{remark}[theorem]{Remark}
\numberwithin{equation}{section}

%
%
\newcommand\RR{\mathbb R} 
\newcommand\CC{\mathbb C}

\newcommand\ZZ{\mathbb Z}  
\newcommand\TT{\mathbb T} 

\newcommand\vb{|}
\newcommand\norm{\|}

\newcommand\vctl[2]{\left(#1,\,#2\right)}
\newcommand{\sech}{\mathrm{sech}}
\newcommand{\en}{\xi}
\newcommand{\muak}{\mu_{\alpha,k}}
\newcommand{\mua}{\mu_{\alpha}}


\newcommand{\nat}{\mathbb{N}}
\newcommand{\real}{\mathbb{R}}
\newcommand{\complex}{\mathbb{C}}
\newcommand{\intrn}{\int_{\R^d}}
\newcommand{\intr}{\int_{\R}}

\newcommand{\lam}{\xi}

\newcommand{\ly}{\xi_\infty}

\newcommand{\la}{\langle}
\newcommand{\ra}{\rangle}
\newcommand{\diff}{\,\mathrm{d}}
\newcommand{\dif}{\mathrm{d}}
\newcommand{\Ve}{\Vert}

\renewcommand\emptyset{\mbox{\Large \o}}

\newcommand{\calJ}{\mathcal{J}}
\newcommand{\calL}{\mathcal{L}}

\newcommand{\disp}{\displaystyle}
\newcommand\txt{\textstyle}

\DeclareMathOperator \sgn{sgn}

\DeclareMathOperator \im{Im}
\DeclareMathOperator \re{Re}

\makeindex

\begin{document}

\title{Orbital stability: analysis meets geometry}
\author[S. {De Bi\`{e}vre}]{Stephan {De Bi\`{e}vre}$^{1,2}$}
\address{$^1$Laboratoire Paul Painlev\'e, CNRS, UMR 8524 et UFR de Math\'ematiques,
Universit\'e Lille~1, Sciences et Technologies
F-59655 Villeneuve d'Ascq Cedex, France.}
\email{Stephan.De-Bievre@math.univ-lille1.fr}
\email{Simona.Rota-Nodari@math.univ-lille1.fr}

\address{$^2$
Equipe-Projet MEPHYSTO,
Centre de Recherche INRIA Futurs,
Parc Scientifique de la Haute Borne, 40, avenue Halley B.P. 70478,
F-59658 Villeneuve d'Ascq cedex, France.}

\author[F. Genoud]{Fran\c{c}ois Genoud$^3$}
\address{$^3$Faculty of Mathematics, University of Vienna,
Oskar-Morgenstern-Platz 1,
1090 Vienna, Austria.}
\email{francois.genoud@univie.ac.at}

\author[S. {Rota Nodari}]{Simona {Rota Nodari}$^1$}

\date{\today}

\begin{abstract}
We present an introduction to the orbital stability of relative equilibria of Hamiltonian dynamical systems on (finite and infinite dimensional) Banach spaces. A convenient formulation of the theory of Hamiltonian dynamics with symmetry and the corresponding momentum maps is proposed that allows us to highlight the interplay between (symplectic) geometry and (functional) analysis in the proofs of orbital stability of relative equilibria via the so-called energy-momentum method. The theory is illustrated with examples from finite dimensional systems, as well as from Hamiltonian PDE's, such as  solitons, standing and plane waves for the nonlinear Schr\"odinger equation, for the wave equation, and for the Manakov system.
\end{abstract}

\maketitle

\tableofcontents

\newpage

\section{Introduction}\label{s:intro}
The purpose of these notes is to provide an introduction to the theory of orbital stability of relative equilibria, a notion from the theory of (mostly Hamiltonian) dynamical systems with symmetry that finds its origins in the study of planetary motions~\cite{am}. In more recent times it has proven important in two new ways at least. It has on the one hand found an elegant reformulation in the modern framework of Hamiltonian mechanics of finite dimensional systems with symmetry in terms of symplectic geometry. It can indeed be phrased and studied in terms of  the theory of momentum maps and of symplectic reduction~\cite{am,ma,pat,mon,lersi,or,patrobwulff,rob,monrod}. On the other hand, it also underlies the stability analysis of plane waves, of travelling wave solutions and of solitons in infinite dimensional nonlinear Hamiltonian PDE's, which has received considerable attention over the last fourty years or so, and continues to be a very active area of research. We will give a brief historical 
account of the notion of orbital stability in the context of nonlinear PDE's in Section~\ref{history.sec}.

It is clear that in this field nonlinear analysis can be expected to meet geometry in interesting and beautiful ways. It nevertheless appears that in the literature on Hamiltonian PDE's, the simple and elegant geometric ideas underlying the proofs of orbital stability aren't emphasized. 
The goal of these notes is to provide a unified formulation of the theory in a sufficiently general but not too abstract framework that allows one to treat finite and infinite dimensional systems on the same footing. In this manner, one may hope to harness the geometric intuition readily gained from treating finite dimensional systems and use it as a guide when dealing with  the infinite dimensional ones that are the main focus of our interest, but that demand more sophisticated technical tools from functional analysis and PDE theory.  
The text is of an introductory nature and suitable for young researchers wishing to familiarize themselves with the field. It is aimed at analysts not allergic to geometry and at geometers with a taste for analysis, and written in the hope such people exist.  
\subsection{Notions of stability}
There are many notions of stability for dynamical systems. One may in particular consider stability with respect to perturbations in the vector field generating the dynamics, or stability with respect to a variation in the initial conditions. It is the latter one we shall be considering here. For a sampling of possible definitions in this context, one can consult Section~6.3 of Abraham and Marsden [AM], who give nine different ones and mention there exist others still\dots  We start by introducing the ones of interest to us in these notes. 

The simplest possible one is presumably the following. Let $\Ban$ be a normed vector space,  $\rd$ the corresponding metric on $\Ban$, and $X$ a vector field on $\Ban$.  Let $u\in \Ban$ and $t\in\R\to u(t)\in \Ban$ a flow line of $X$ (\emph{i.e.} $\dot u(t) = X(u(t))$, with $u(0)=u$). Let us assume the flow is well-defined globally, with $u(t)=\Phi_t^X(u)$. Then one says that the initial condition $u$ is stable if for all $\epsilon>0$, there exists a $\delta>0$ so that, for all $v\in E$,
\begin{equation}\label{eq:stab0}
\rd(v, u)\leq \delta\Rightarrow\sup_{t\in\R} \rd(v(t), u(t))\leq \epsilon.
\end{equation}
Here $v(t)=\Phi_t^X(v)$. This can be paraphrased as follows: once close, forever not too far. Note that, if $u$ is stable in this sense, then so is $u(t)$ for all $t\in\R$.  There exists one situation where proving stability is straightforward. It is the case where $u=u_*$ is a fixed point of the dynamics, meaning $u(t)=u_*$, for all $t\in\R$, and where $u_*$ is a local non-degenerate minimum of a constant of the motion, that is a 
function $\Lcal:\Ban\to\R$, referred to as a \emph{Lyapunov function}, satisfying $\Lcal(v(t))=\Lcal(v)$ for all $t\in\R$, and for all $v$ in a neighbourhood of $u_*$. Let us sketch the argument, which is classic. Supposing $\Lcal\in C^2(\Ban, \Ban)$ and that $D^2_{u_*}\Lcal$ is positive definite, one obtains from a Taylor expansion of $\Lcal$ about $u_*$ an estimate of the type
\begin{equation}\label{eq:localmin}
c\rd(v, u_*)^2\leq \Lcal(v)-\Lcal(u_*)\leq C\rd(v, u_*)^2,
\end{equation}
for all $v$ in a neighbourhood of $u_*$. Then, for $v$ sufficiently close to $u_*$, one can easily show, using an argument by contradiction, that $v(t)$ stays in this neighbourhood  and hence, for all $t$,
\begin{equation}\label{eq:lyapmethod}
c\rd(v(t), u_*)^2\leq \Lcal(v(t))-\Lcal(u_*)=\Lcal(v)-\Lcal(u_*)\leq C\rd(v, u_*)^2,
\end{equation}
from which~\eqref{eq:stab0} follows immediately.  This approach is known as the Lyapunov method for proving stability.\footnote{Remark that $\Lcal(v(t))\leq \Lcal(v)$ would suffice in~\eqref{eq:lyapmethod}. But in these notes we will exclusively work with constants of the motion.}

In Hamiltonian systems, at least one constant of the motion always exists, namely the Hamiltonian itself. The above argument leads therefore to the perfectly standard result that local minima of the Hamiltonian are stable fixed points of the dynamics. All orbital stability results that we shall discuss below are, 
{\it in fine}, based on this single argument, appropriately applied and combined with additional geometric properties of (Hamiltonian) systems with symmetry, and, of course, with an appropriate dose of (functional) analysis. Let us finally point out that when this approach does not work, and this is very often the case, one is condemned to resort to considerably more sophisticated techniques, involving the KAM theorem or Nekhoroshev estimates, for example.

A stronger version of stability than~\eqref{eq:stab0} is an asymptotic one, and goes as follows:
there exists a $\delta>0$ so that, for all $v\in E$,
\begin{equation*}\label{eq:asymstab}
\rd(v, u)\leq \delta\Rightarrow\lim_{t\to +\infty} \rd(v(t), u(t))=0.
\end{equation*}
This phenomenon can only occur in dissipative systems. When $u$ is a fixed point of the dynamics, it corresponds to requiring it is attractive. If the flow line issued from $u$ is periodic, one obtains a limit cycle. So in this second definition, the idea is that, if two points start close enough, they end up together. Since our focus here is on Hamiltonian systems, where such behaviour cannot occur (because volumes are preserved), we shall not discuss it further. 
Note, however, that another notion of ``asymptotic stability'' has
been introduced and studied in the context of Hamiltonian nonlinear dispersive PDE's. 
We shall briefly comment on this in Section~\ref{history.sec}.

There are several cases when definition~\eqref{eq:stab0} is too strong, and a weaker notion is needed, referred to as \emph{orbital stability}. The simplest definition of this notion goes as follows. Suppose $t\in\R\to u(t)\in E$ is a flow line of the dynamics and consider the dynamical orbit
$$
\gamma=\{u(t) \mid t\in \R\}.
$$ 
We say $u=u(0)$ is orbitally stable if the following holds. For all $\epsilon>0$, there exists $\delta>0$, so that
\begin{equation}\label{eq:orbstab0}
\rd(v, u)<\delta \Rightarrow \forall t\in\R, \rd(v(t), \gamma)\leq \epsilon. 
\end{equation}
The point here is that the new dynamical orbit $\tilde \gamma=\{v(t)\mid t\in\R\}$ stays close to the initial one, while possibly $v(t)$ can drift away from $u(t)$, for the same value of the time $t$. As we will see, this can be expected to be the rule since the nearby orbit may no longer be periodic even if the original one was, or have a different period. A simple example that can be understood without computation is this. Think of two satellites on circular orbits around the earth. Imagine the radii are very close. Then the periods of both motions will be close but different. Both satellites will eternally move on their respective circles, which are close, but they will find themselves on opposite sides of the earth after a long enough time, due to the difference in their angular speeds. In addition, a slight perturbation in the initial condition of one of the satellites will change its orbit, which will become elliptical, and again have a different period. But the new orbit will stay close to the original circle.  So here the idea is this: if an initial condition $v$ is chosen close to $u$, then at all later times $t$, $v(t)$ is close to \emph{some} point on $\gamma$, but not necessarily close to $u(t)$, for the same value of $t$. We will treat this illustrative example in detail in Section~\ref{s:circular}. 
\subsection{Symmetries and relative equilibria}
The definition of orbital stability in~\eqref{eq:orbstab0} turns out to be too strong still for many applications, in particular in the presence of symmetries of the dynamics. This is notably the case in the study of solitons and standing or travelling wave solutions of nonlinear Hamiltonian differential or partial differential equations. We will therefore present an appropriate generalization of the notion of orbital stability in the presence of symmetries in Section~\ref{s:orbstab}. For that purpose, we introduce in Section~\ref{s:dynamics}  dynamical systems $\Phi_t^X$, $t\in\R$ on  Banach spaces $\Ban$, which admit an invariance group $G$ with an action $\Phi_g, g\in G$ on $\Ban$, \emph{i.e.} $\Phi_g\Phi_t^X=\Phi_t^X\Phi_g$. We then say $u\in\Ban$ is a \emph{relative equilibrium} if, for all $t\in\R$, $\Phi_t^X(u)\in \Ocal_u$, where $\Ocal_u=\Phi_G(u)$ is the group orbit of $u$ under the action of $G$. As we will see, solitons, travelling waves and plane waves are relative equilibria. We say a relative equilibrium  $u$ is \emph{orbitally stable}  if initial conditions $v\in\Ban$ close to $u$ have the property that for all $t\in\R$, $\Phi_t^X(v)$ remains close to $\Ocal_u$. Note that the larger the symmetry group $G$ is, the weaker is the corresponding notion of stability.

The main goal of these notes is to present a general framework allowing to establish orbital stability of  such \emph{relative equilibria}  of (both finite and infinite) dynamical systems with symmetry, using an appropriate generalization of the Lyapunov method sketched above. This approach to stability is often referred to
as the ``energy-momentum'' method.
In the process, we wish to clearly separate the part of the argument which is abstract and very general, from the part that is model-dependent. We will also indicate for which arguments one needs the dynamics to be Hamiltonian and which ones go through more generally.

In Section~\ref{s:spherpotstab}, we treat the illustrative example of the relative equilibria of the motion in a spherical potential, allowing us to present four variations of the proof of orbital stability, which are later extended to a very general setting in Section~\ref{s:orbstabproof}. The main hypothesis of the proofs, which work for general dynamical systems on Banach spaces, is the existence of a \emph{coercive Lyapunov function} $\Lcal$, which is a group-invariant constant of the motion satisfying an appropriately generalized coercive estimate of the type~\eqref{eq:localmin} (see~\eqref{eq:coerciverestricted}). In applications, the proof of orbital stability is thus reduced to the construction of such a function.

It is in this step that the geometry of Hamiltonian dynamical systems with symmetry plays a crucial role. Indeed, the construction of an appropriate Lyapunov function for such systems exploits the special link that exists between their constants of the motion $F$ and their symmetries, as embodied in Noether's theorem and the theory of the momentum map. This is explained in Sections~\ref{s:hamdyninfinite} and~\ref{s:identifyreleq}. The crucial observation is then that 
in Hamiltonian systems, relative equilibria tend to come in families $u_\mu\in\Ban$, indexed by the value $\mu$ of the constants of the motion at $u_\mu$. In fact, it turns out that $u_\mu\in\Ban$ is a relative equilibrium of a Hamiltonian system if (and only if) $u_\mu$ is a critical point of the restriction of the Hamiltonian to the level surface $\Sigma_\mu=\{u\in\Ban\mid F(u)=\mu\}$ of these constants of the motion (Theorem~\ref{thm:relequicritical}). This observation at once yields the candidate Lyapunov function $\Lcal_\mu$ (see~\eqref{eq:lagfunction}). 

We finally explain (Proposition~\ref{thm:hessianestimate}) how the proof of the coercivity of the Lyapunov function can be obtained from a suitable lower bound on its second derivatives $D^2\Lcal_\mu(w,w)$, with $w$ restricted to an appropriate subspace of $\Ban$,  using  familiar arguments from the theory of Lagrange multipliers (Section~\ref{s:orbstabproof}). This ends the very general, geometric and abstract part of the theory. To control  $D^2\Lcal_\mu(w,w)$ finally requires an often difficult, problem-dependent, and detailed spectral analysis of the Hessian of the Lyapunov function, as we will show in the remaining sections. 

\subsection{Examples}
We illustrate the theory in Section~\ref{s:nlsetorus1d} on a first simple example. We consider the plane waves $u_{\alpha,k}(t,x)=\alpha e^{-ikx}e^{i\en t}$, $\en\in \RR$, $k\in {2\pi}\ZZ$ and $\alpha \in \R$, which are solutions of
the cubic  nonlinear Schr\"odinger equation on the one-dimensional torus $\TT$,
$$ 	
i{\partial_t}u(t,x)+\beta{\partial_{xx}^2}u(t,x)+\lambda\vb u(t,x)\vb^2u(t,x)=0,
$$
provided $\en +\beta k^2=\lambda\vb\alpha\vb^2$. This equation is (globally) well-posed on $\Ban=H^1(\TT,\CC)$ and its dynamical flow is invariant under the globally Hamiltonian action $\Phi$ of the group $G=\R\times\R$ defined by $\left(\Phi_{a,\gamma}(u)\right)(x)=e^{i\gamma}u(x-a)$ (see Section \ref{ss:hampde}). The plane waves $u_{\alpha,k}(t,x)$ are $G$-relative equilibria. 
We establish (Theorem~\ref{thmstability}) their orbital stability
when $\beta\left({2\pi}\right)^2>2\lambda |\alpha|^2$.
Although the linear stability analysis for this model is sketched in many places, and the nonlinear (in)stability results seem to be known to many, we did not find a complete proof of nonlinear orbital stability in the literature. A brief comparison between our analysis and related results  (\cite{zhidkov01,galhar07a,galhar07b}) ends Section~\ref{s:nlsetorus1d}. Note that the analysis of orbital stability of plane waves of the cubic nonlinear Schr\"odinger equation on a torus of  dimension $d>1$ is much more involved (see for example \cite{faogaulub2013}). 

In Section~\ref{curves.sec} we will present orbital stability results pertaining to curves (\emph{i.e.} one-dimensional families) of standing waves
of nonlinear Schr\"odinger equations on $\rn$ with a space-dependent coefficient $f$:
\begin{equation}\label{nls0}
i\partial_t u(t,x)+\Delta u(t,x)+f(x,|u|^2(t,x))u(t,x)=0.
\end{equation}
Imposing a 
non-trivial spatial dependence has two major consequences. First, the space-translation 
symmetry of the equation is destroyed, and one is left with the reduced one-parameter symmetry group 
$G=\R$, acting on the Sobolev space $E=H^1(\rn)$ via
$\Phi_\gamma(u)=e^{i\gamma} u$.
Note that the associated group orbits are of the simple form 
$\Ocal_u=\{e^{i\gamma} u : \gamma\in\R\}\subset H^1(\rn)$.
Now, standing waves are, by definition, solutions of~\eqref{nls0} of the form
$u(x,t)=e^{i\xi t} w(x)$,
which are therefore clearly relative equilibria. Such standing waves are sometimes referred to as
``solitons'' due to the spatial localization of the profile $w(x)$, and to their stability.

Second, constructing curves of standing wave solutions of \eqref{nls0} is now a hard problem, 
and we will outline the 
bifurcation theory developed in \cite{dcds,jde,ans,eect} to solve it. This powerful
approach allows one to deal with power-type nonlinearities
$f(x,|u|^2)=V(x)|u|^{\sigma-1}$ (under an approriate decay assumption 
on the coefficient $V:\rn\to\real$) 
but also with more general nonlinearities, for instance the asymptotically linear
$f(x,|u|^2)=V(x)\frac{|u|^{\sigma-1}}{1+|u|^{\sigma-1}}$. This will give a good illustration
of how involved the detailed analysis of $D^2\Lcal(w,w)$ required by the model can be.
As we shall see, this analysis
turns out to be deeply connected with the bifurcation behaviour of the standing waves.

In the pure power (space-independent) case $f(x,|u|^2)=|u|^{\sigma-1}$, the appropriate
notion of stability is that associated with the action of the full group $G=\rn\times\real$,
$\left(\Phi_{a,\gamma}(u)\right)(x)=e^{i\gamma}u(x-a)$.
The stability
of standing waves in this context was proved in the seminal paper of Cazenave and Lions \cite{cazlions}
for $1<\sigma<1+\frac{4}{d}$, and this result is sharp ({\it i.e.} stability does not hold at 
$\sigma=1+\frac{4}{d}$). The contribution \cite{cazlions} is one of the first rigorous results
on orbital stability for nonlinear dispersive equations, and is based on variational arguments using the
concentration-compactness principle (see for instance \cite{zhidkov01,hajstu} for more recent 
results in this direction). This line of argument is conceptually very different from the 
energy-momentum approach developed here, so we shall not say more about it.

The modern treatment of Hamiltonian dynamical systems with symmetries uses the language of symplectic geometry, as for example in~\cite{am, ar, ma, souriau1997}. But we don't need the full power of this theory, since we will work exclusively with linear symplectic structures on (infinite dimensional) symplectic vector spaces. For the reader not familiar with Hamiltonian mechanics, Lie group theory and symplectic group actions, elementary self-contained introductions to these subjects sufficient for our purposes are provided in the Appendix. 

\medskip
\noindent{\bf Acknowledgments.}  This work was supported in part by the Labex CEMPI (ANR-11-LABX-0007-01). F.G. thanks CEMPI and the Lab. Paul Painlev\'e for their hospitality during his one-month visit to the Universit\'e Lille 1 in September 2013. He also acknowledges the support 
of the ERC Advanced Grant ``Nonlinear studies of water flows with vorticity''.
The authors are grateful to V.~Combet, A.~De Laire, S.~Keraani, G.~Rivi\`ere, B.~Tumpach and G.~Tuynman for stimulating discussions on the subject matter of these notes.

\section{Dynamical systems, symmetries and relative equilibria}\label{s:dynamics}
\subsection{Dynamical systems on Banach spaces.}\label{ss:dynamicsbanach}
Let $\Ban$ be a Banach space.  A domain $\Dom$\index{domain} is a dense subset of $E$; in the examples presented in these notes, it will be a dense linear subspace of $E$.  
\begin{definition}\label{def:dynsystem}
A \emph{dynamical system}\index{dynamical system} on $\Ban$ is a separately continuous map
\begin{equation}\label{flow1}
\Phi^X:(t, u)\in\R\times \Ban\to \Phi^X_t(u):=\Phi^X(t,u)\in \Ban, 
\end{equation}
with the following properties: 
\begin{enumerate}
\item[(i)] For all $t,s\in\R$,
\begin{equation}\label{flow2}
\Phi_t^X\circ\Phi_s^X=\Phi_{t+s}^X, \quad  \Phi_0^X(u)=\mathrm{Id}_\Ban.
\end{equation}
\item[(ii)] For all $t\in\R$, $\Phi_t^X(\Dom)=\Dom$.
\item[(iii)]  $X:{\mathcal D}\subset \Ban\to \Ban$ is a vector field that generates the dynamics in the sense that, when $u\in\mathcal D$,  $\Phi_t^X(u):=u(t)\in\Dcal$ is a solution of the differential equation
\begin{equation}\label{eq:diffeq}
\dot u(t)=X(u(t)),\quad u(0)=u.
\end{equation}
\end{enumerate}
By this we mean that the curve $t\in\R\to u(t)\in \Ban$ is differentiable as a map from $\R$ to $E$. 
\end{definition}
In infinite dimensional problems, the vector fields are often only defined on a domain $\Dom$, where they may not even be continuous.   But note that we always assume that the flows themselves are defined on all of $\Ban$ (or on an open subset of $\Ban$). For examples illustrating these subtleties, see Section~\ref{s:dynsysexamples}. Local flows can be defined in the usual manner. In that case the domains are dense in some open subset of $E$, but we shall not deal with such situations in these notes since we will always assume the flows to be globally defined.

Suppose  there exists a function $F:\Ban\to\R^m$ so that 
\begin{equation}\label{eq:constantofmotion}
F\circ \Phi_t^X=F, \quad \forall t\in \R.
\end{equation}
We then say that the  vector field $X$ or its associated flow $\Phi_t^X$ admits $m$ \emph{constants of the motion}\index{constants of the motion}, which are the components $F_i$ of $F$. In that case, one may consider the restriction of the flow $\Phi_t^X$ to the level sets of $F$: for $\mu\in\R^m$, we define
\begin{equation}\label{eq:levelsurface}
\Sigma_\mu=\{u\in \Ban\mid F(u)=\mu\},
\end{equation}
and one has that $\Phi_t^X\Sigma_\mu=\Sigma_\mu$, for all $\mu\in\R^m$.

\begin{remark}
The role of and the need for a domain $\Dcal$ with the properties (ii) and (iii) in the definition of a dynamical system above will become clear in Sections~\ref{s:hamdyninfinite} and~\ref{s:identifyreleq}. They are in particular needed to prove~\eqref{eq:constantofmotion} for suitable $F$. Some of the stability results that are our main focus can be obtained without those conditions, as we will further explain in Section~\ref{s:orbstabproof}. Similarly, global existence is not strictly needed: it can for example be replaced by a weaker ``blow-up alternative.'' We will not further deal with these issues here.
\end{remark}
\subsection{Symmetries, reduced dynamics and relative equilibria}\label{ss:invariance}
We now define the notion of an invariance group for a dynamical system.  For that purpose, we need to say a few words about group actions. Let $G$ be a topological group acting on $E$. By this we mean there exists a separately continuous map
$$
\Phi : (g,u)\in G\times \Ban \to \Phi_g(u)\in \Ban,
$$
satisfying $\Phi_e=\mathrm{Id}$, $\Phi_{g_1g_2}=\Phi_{g_1}\circ\Phi_{g_2}$. We will call
\begin{equation}\label{eq:Gorbit}
\Ocal_u=\{\Phi_g(u) \mid g\in G\}
\end{equation}
the orbit of $G$\index{orbit} through $u\in E$. For later reference, we define the isotropy group\index{isotropy group} of $u$, $G_u$, as follows
\begin{equation}\label{eq:isotropyu}
G_u=\{g\in G\mid \Phi_g(u)=u\}.
\end{equation}

We can then introduce the notion of an invariance group for $\Phi_t^X$. 
\begin{definition}\label{def:symgroup} We say $G$ is an invariance group \index{invariance group} (or symmetry group)\index{symmetry group|seeonly {invariance group}} for the dynamical system $\Phi_t^X$ if, for all $g\in G$, and for all $t\in\R$,
\begin{equation}\label{eq:symgroup}
\Phi_g\circ \Phi^X_t=\Phi_t^X\circ \Phi_g.
\end{equation}
\end{definition}
Remark that  $G=\R$ is always an invariance group of the dynamical system, with action $\Phi_t^X$ on $E$. While this is correct, this is not of any particular use, as one can suspect from the start. Indeed, the flow $\Phi_t^X$ is in applications obtained by integrating a nonlinear differential or partial differential equation, and is not explicitly known. In fact, it is the object of study. ``Useful'' symmetries are those that help to simplify this study; they need to have a simple and explicit action on $\Ban$. They are often of a clearcut  geometric origin: translations, rotations, gauge transformations, \emph{etc.} Several examples are provided in the following sections. 

Finally, it should be noted we did not define ``the'' symmetry group for $\Phi_t^X$, but ``a'' symmetry group. Depending on the problem at hand and the questions addressed, different symmetry groups may prove useful for the same dynamical system, as we shall also illustrate. In particular, any subgroup of an invariance group is also an invariance group, trivially. 

It follows immediately from~\eqref{eq:Gorbit} and~\eqref{eq:symgroup} that, for all $x\in \Ban$, 
\begin{equation}\label{eq:orbdyn}
\Phi_t^X\Ocal_u=\Ocal_{\Phi_t^X(u)}.
\end{equation}
In other words, if $G$ is an invariance group, then the dynamical system maps $G$-orbits into $G$-orbits.  This observation lies at the origin of the following construction which is crucial for the definitions of relative equilibrium and orbital stability that we shall introduce. We give the general definitions here, and refer to the coming sections for examples.  Defining an equivalence relation on $\Ban$ through 
$$
u\sim u'\Leftrightarrow \Ocal_u=\Ocal_{u'},
$$
we consider the corresponding quotient space that we denote by $\Ban_G=\Ban/\sim$ and that we refer to as the reduced phase space. We will occasionally use the notation
\begin{equation}\label{eq:projection}
\pi:u\in \Ban \to \Ocal_u\in \Ban_G
\end{equation}
for the associated projection. So the elements of $\Ban_G$ are just the $G$-orbits in $\Ban$. It is then clear from~\eqref{eq:orbdyn} that the dynamical system $\Phi_t^X$ on $\Ban$ naturally induces reduced dynamics\index{reduced dynamics} on the orbit space $\Ban_G$: it ``passes to the quotient'' in the usual jargon. We will use the same notation for these reduced dynamics and write $\Phi_t^{X}\Ocal=\Ocal(t)$ for any $\Ocal\in E_G$. Note that $\Phi^X_t\Ocal_u=\Ocal_{u(t)}$ (See Fig.~\ref{figure1}).  

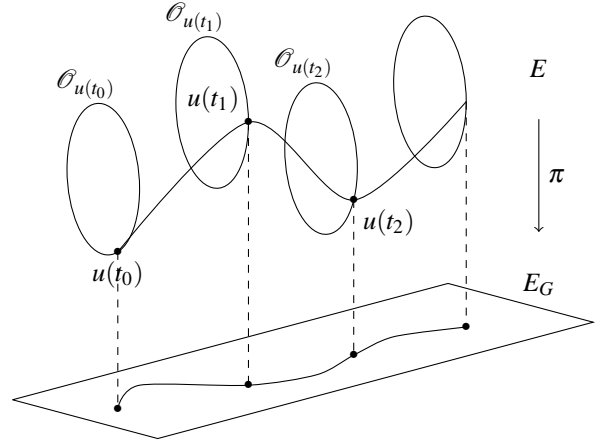
\begin{figure}
	\begin{center}
 	\begin{tikzpicture}[x={(0.965cm,-0.258cm)}, y={(0.965cm,0.258cm)}, z={(0cm,1cm)}]
		\begin{scope}[canvas is yx plane at z=0]
		\draw (0.5,0.5)--(6.5,0.5)--(6.5,2.5)--(0.5,2.5)--(0.5,0.5);
		\draw plot [smooth, tension=0.5] coordinates {(1,1.45) (1.75,1) (2.5,1.75) (3.25,1.95) (4,1.7) (4.75,1.5) (5.5,1.75)};
		\node  at (1,1.45)  {\scriptsize$\bullet$};
		\node  at (2.5,1.75)  {\scriptsize$\bullet$};
		\node  at (4,1.7)  {\scriptsize$\bullet$};
		\node  at (5.5,1.75)  {\scriptsize$\bullet$};
		\end{scope}
		\begin{scope}[canvas is xz plane at y=1]
		\draw (1.25,3) ellipse (0.5cm and 1cm);
		\draw[dashed] (1.45,2.083) -- (1.45,0);
		\end{scope}
		\begin{scope}[canvas is xz plane at y=2.5]
		\draw (1.25,3.5) ellipse (0.5cm and 1cm);
		\draw[dashed] (1.75,3.5) -- (1.75,0);
		\end{scope}
		\begin{scope}[canvas is xz plane at y=4]
		\draw (1.25,2.5) ellipse (0.5cm and 1cm);
		\draw[dashed] (1.7,2.064) -- (1.7,0);
		\end{scope}
		\begin{scope}[canvas is xz plane at y=5.5]
		\draw (1.25,3) ellipse (0.5cm and 1cm);
		\draw[dashed] (1.75,3) -- (1.75,0);
		\end{scope}
		\draw plot [smooth, tension=0.5] coordinates {(1.45,1,2.083) (1.75,2.5,3.5) (1.7,4,2.064) (1.75,5.5,3)};
		\node  at (1,1,4.2) {$\Ocal_{u(t_0)}$};
		\node  at (1.45,1,2.083) [anchor=north] {${u(t_0)}$};
		\node  at (1.45,1,2.083) {\scriptsize$\bullet$};
		\node  at (1,2.5,4.7) {$\Ocal_{u{(t_1)}}$};
		\node  at (1.75,2.5,3.5)[anchor=south east] {$u{(t_1)}$};
		\node  at (1.75,2.5,3.5) {\scriptsize $\bullet$};
		\node  at (1,4,3.7) {$\Ocal_{u{(t_2)}}$};
		\node  at (1.7,4,2.064) [anchor=north west] {$u{(t_2)}$};
		\node  at (1.7,4,2.064) {\scriptsize $\bullet$};
		\node at (1.25,7,2.5) [anchor=south,inner sep=0.3cm] {$E$};
		\draw[->] (1.25,7,2.25) --  (1.25,7,0.75) node[midway, anchor=west] {$\pi$};
		\node at (1.25,7,0.5) [anchor=north,inner sep=0.3cm] {$E_G$};
	\end{tikzpicture}
	\end{center}
	\caption{A dynamical orbit $t\to u(t)$ and its ``attached'' $G$-orbits, with the projection into $E_G$. \label{figure1} }
\end{figure}

As a general rule of thumb, one may hope that the reduced dynamics are simpler than the original ones, since they take place on a lower dimensional (or in some sense smaller) quotient space. This idea can sometimes provide a useful guideline, notably in the study of stability properties of fixed points or periodic orbits of the original dynamical system, as will be illustrated in the coming sections. Implementing it concretely can nevertheless be complicated, in particular because the quotient itself may be an unpleasant object to do analysis on, even in finite dimensions, as its topology or differential structure may be pathological and difficult to deal with. Conditions on $G$ and on the action $\Phi$ are needed, for example, to ensure the quotient topology on $E_G$ is Hausdorff, or that it has a differentiable structure~\cite{am, ma, patrobwulff}. In addition, concrete computations on models are more readily done on $E$ directly, than in the abstract quotient space, particularly in infinite dimensional problems. We will avoid these difficulties, in particular because we will work almost exclusively with \emph{isometric} group actions.  Their orbits have simplifying features that we will repeatedly use: see Proposition~\ref{prop:isometry} below.

We are now in a position to introduce the notion of \emph{relative equilibrium}, as follows. 
\begin{definition}\label{def:orbfixedpoint} Let $u\in E$. Let $\Phi_t^X$ be a dynamical system on $E$ and let $G$ be a symmetry group for $\Phi_t^X$. We say $u$ is a $G$-\emph{relative equilibrium}\index{relative equilibrium}\footnote{In~\cite{ma}, the term \emph{stationary motion} is used for this concept.} for $\Phi_t^X$ if, for all $t\in\R$, $u(t)\in\Ocal_u$. Or, equivalently, if for all $t\in\R$, $\Phi_t^{X}\Ocal_u=\Ocal_u$. When there is no ambiguity about the dynamical system $\Phi_t^X$ and the group $G$ considered, we will simply say $u$ is a \emph{relative equilibrium}. 
\end{definition}

With the language introduced, $u$ is a relative equilibrium if $\Ocal_u$ is a fixed point of the reduced dynamics on $\Ban_G$.  Again, we refer to the following sections for examples. We are interested in these notes in the stability of such relative equilibria. Roughly speaking, we will say a relative equilibrium is orbitally stable if it is stable as a fixed point of the reduced dynamics; we give a precise definition in Section~\ref{s:orbstab}. 

We end this section with two comments. First, the above terminology comes from  the literature on Hamiltonian dynamical systems in finite dimensions. We will see in the following sections what the many specificities are of that situation. We refer to~\cite{ar, am, ma} for textbook treatments and historical background and to~\cite{pat, mon, lersi, patrobwulff, rob, monrod} for more recent developments. Second, we will often need to deal with the restriction of the dynamical systems under consideration to the level sets $\Sigma_\mu\subset\Ban$ of a family of constants of the motion $F$, as defined in~\eqref{eq:levelsurface}. Note that $\Sigma_\mu$ is a metric space. We define
\begin{equation}\label{eq:gsimgamu}
G_{\Sigma_\mu}=\{ g\in G\mid \forall u\in \Sigma_\mu, \Phi_g(u)\in\Sigma_\mu\}.
\end{equation}
This is clearly a subgroup of $G$, which is a symmetry group of the dynamical system restricted to $\Sigma_\mu$. We will often deal with isometric group actions on such $\Sigma_\mu$, or on the full Banach space $E$. The following simple proposition collects some of the essential properties of their orbits that we shall repeatedly need and use. We first recall the definition of the Hausdorff metric. Let $(\Sigma, \rd)$ be a metric space and let $S, S'\subset (\Sigma, \rd)$. Then
\begin{equation}\label{eq:hausdorff}
\Delta(S, S')=\max\{\sup_{u\in S} \rd(u, S'), \sup_{u'\in S'}\rd(S, u')\}.
\end{equation}
Notice that this is only a pseudometric\footnote{$\Delta(S, S')=0$ does not imply $S=S'$. In particular, $\Delta(S, \overline S)=0$. } and that $\Delta(S, S')=+\infty$ is possible. 
\begin{proposition}\label{prop:isometry} Let $G$ be a group, $(\Sigma,\rd)$ a metric space  and $\Phi:G\times \Sigma\to \Sigma$ an action of $G$ on $\Sigma$. Suppose that for each $g\in G$,  $\Phi_g$ is an isometry: $\forall u,u'\in \Sigma, \rd (\Phi_g(u),\Phi_g(u'))=\rd (u,u')$. Let $\Ocal, \Ocal'$ be two $G$-orbits in $\Sigma$. Then
\begin{enumerate}[label=({\roman*})]
\item $\forall u_1, u_2\in\Ocal, \forall u'_1, u'_2\in \Ocal',\ \rd (u_1, \Ocal')=\rd(u_2, \Ocal'),\quad \rd(u'_1, \Ocal)=\rd (u'_2, \Ocal),$
\item $\forall u\in \Ocal, u'\in \Ocal', \quad \rd(u, \Ocal')=\Delta(\Ocal, \Ocal')=\rd(u', \Ocal),$
\item $\forall u\in \Ocal, u'\in \Ocal', \quad   \Delta(\Ocal, \Ocal')\leq \rd(u,u').$
\end{enumerate}
\end{proposition}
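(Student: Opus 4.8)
The plan is to prove the three claims in order, since each feeds into the next, using nothing more than the isometry property and the triangle inequality.

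For part (i), fix $u_1, u_2 \in \Ocal$. Since both lie on the same $G$-orbit, there is $g \in G$ with $\Phi_g(u_1) = u_2$. Because $\Phi_g$ is an isometry that maps $\Ocal'$ onto itself (as $\Ocal'$ is a $G$-orbit, $\Phi_g(\Ocal') = \Ocal'$), we get $\rd(u_2, \Ocal') = \rd(\Phi_g(u_1), \Phi_g(\Ocal')) = \rd(u_1, \Ocal')$. The symmetric statement for $u_1', u_2' \in \Ocal'$ follows the same way. The only mild point to spell out is that $\rd(\Phi_g(u), \Phi_g(S)) = \rd(u, S)$ for any subset $S$, which is immediate from taking infima of $\rd(\Phi_g(u), \Phi_g(s)) = \rd(u, s)$ over $s \in S$ together with $\Phi_g(S) = S$.

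For part (ii), note that by part (i) the function $u \mapsto \rd(u, \Ocal')$ is constant on $\Ocal$; call its value $a$. Likewise $u' \mapsto \rd(u', \Ocal)$ is constant on $\Ocal'$; call it $a'$. Then $\sup_{u \in \Ocal} \rd(u, \Ocal') = a$ and $\sup_{u' \in \Ocal'} \rd(\Ocal, u') = a'$, so by the definition~\eqref{eq:hausdorff} of the Hausdorff pseudometric, $\Delta(\Ocal, \Ocal') = \max\{a, a'\}$. It remains to show $a = a'$. This is where a small symmetry argument is needed: given $\eps > 0$, pick $u \in \Ocal$ and $u' \in \Ocal'$ with $\rd(u, u') \le a + \eps$ (possible since $\rd(u, \Ocal') = a$ for any fixed $u \in \Ocal$, so some $u' \in \Ocal'$ comes within $\eps$). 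Then $a' = \rd(u', \Ocal) \le \rd(u', u) = \rd(u, u') \le a + \eps$, and letting $\eps \to 0$ gives $a' \le a$. The reverse inequality is identical, so $a = a' = \Delta(\Ocal, \Ocal')$, proving (ii).

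Part (iii) is then immediate: for any $u \in \Ocal$ and $u' \in \Ocal'$, $\Delta(\Ocal, \Ocal') = \rd(u, \Ocal') \le \rd(u, u')$, since $\rd(u, \Ocal')$ is an infimum over points of $\Ocal'$ and $u'$ is one such point. I do not anticipate a genuine obstacle here; the content is entirely the bookkeeping in (ii) that the two one-sided suprema defining $\Delta$ actually coincide, and the slightly delicate step is the $\eps$-argument showing $a \le a'$ and $a' \le a$ rather than just $\max\{a,a'\} = \Delta$. One should also record the trivial edge cases (if $\Ocal = \Ocal'$ everything is zero; if $\Sigma$ is such that the distances are infinite the same manipulations hold in $[0, +\infty]$), but these require no separate argument.
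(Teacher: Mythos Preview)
Your proof is correct and follows essentially the same approach as the paper: part (i) via the isometry and orbit-invariance, part (ii) by showing the two constant one-sided distances $a$ and $a'$ coincide, and part (iii) as an immediate consequence. The only cosmetic difference is that in (ii) you give a direct $\eps$-argument for $a'\le a$, whereas the paper packages the same step as a proof by contradiction (assuming $a<a'$ and finding $v\in\Ocal'$ with $\rd(u,v)<a'=\rd(v,\Ocal)$); the underlying idea is identical.
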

\begin{proof} The first statement follows from the existence of $g\in G$ so that $\Phi_g(u_1)=u_2$. For the second, we proceed by contradiction. Suppose first that, $\forall u\in \Ocal, u'\in \Ocal'$, $\rd(u,\Ocal')<\rd(u',\Ocal)$. Let $u\in\Ocal, u'\in\Ocal'$. Then we know there exists $v\in\Ocal'$ (depending on $u,u'$) so that $\rd(u,\Ocal')\leq \rd(u,v)<\rd(u',\Ocal)$. But since, by the first part of the proposition, $\rd(v,\Ocal)=\rd(u', \Ocal)$, this implies $\rd(u,v)<\rd(v,\Ocal)$, which is a contradiction. So we conclude, using the first part again, that  $\forall u\in \Ocal, u'\in \Ocal'$, $\rd(u,\Ocal')\geq\rd(u',\Ocal)$. Repeating the argument with the roles of $\Ocal, \Ocal'$ inverted, the result follows.
\end{proof}
If the action is not isometric, it is quite possible for all the statements of the theorem to fail. For example, consider on $\Ban=\R^2$ the action $\Phi_a(q,p)=(\exp(a) q, \exp(-a) p)$, $a\in \R$.

\section{Examples}
\subsection{Motion in a spherical potential}\label{s:spherpot}
In this section, we illustrate the preceding notions on a simple Hamiltonian mechanical system: a particle in a spherical potential. We will make free use of the concepts and notation of  Appendices~\ref{s:liegroups} and~\ref{s:hammech} that we invite the reader unfamiliar with Hamiltonian mechanics or Lie group theory to peruse. 
  
By a spherical potential we mean  a function $V:\R^3\to \R$,  
satisfying $V(Rq)=V(q)$, for all $R\in\,$SO$(3)$. With a slight abuse of notation, we write $V(q)=V(\|q\|)$, for a smooth function $V:\R^+\to \R$. 
We consider on $E=\R^6$ the Hamiltonian 
\begin{equation}\label{eq:spherpotham}
H(u)=H(q,p)=\frac12p^2 +V(\|q\|)
\end{equation} 
and the corresponding Hamiltonian equations of motion
\begin{equation}\label{eq:eqmotionspherical}
\dot q=p,\quad \dot p=-V'(\|q\|)\hat q,
\end{equation}
where we introduce the notation $\hat b=\frac{b}{\|b\|}$ for any $b\in\R^3$.   Integrating those, we obtain the Hamiltonian flow $\Phi_t^H(u)=u(t)$, where $u=(q,p)\in\R^6$. Introducing the angular momentum 
\begin{equation}\label{eq:angularmomentum}
L(q,p)=q\wedge p,
\end{equation}
one checks immediately that, for any solution $t\in \R\to (q(t), p(t))\in\R^6$, one has 
\begin{equation}\label{eq:angmomentumcons}
\frac{\rd}{\rd t} L(q(t), p(t))=0.
\end{equation}
In other words, angular momentum is conserved during the motion in a central potential: its three components are constants of the motion. This implies the familiar result that the motion takes place in the plane perpendicular to $L$ and passing through $0$.  

We will now use Noether's Theorem (Theorem~\ref{thm:nother2}) to show this system is SO$(3)$-invariant. We start with the following observations.  
First, the action of the group $G=$SO$(3)$ on $\Ban=\R^6$ given by 
\begin{equation}\label{eq:so3}
\Phi_R(u)=(Rq, Rp)
\end{equation}
is easily checked to be globally Hamiltonian\footnote{See Definition~\ref{def:globhamaction}.}. Indeed, for each $\xi\in\mathrm{so}(3)$, 
$$
\Phi_{\exp (t\xi)}=\Phi^{F_\xi}_t,
$$
where
\begin{equation}\label{eq:so3mommap}
F_{\xi}(q,p)= \xi \cdot L(q,p)
\end{equation}
(recall that we can identify $\mathrm{so}(3)$ with $\R^3$ via \eqref{xirepr}). In other words,``angular momentum generates rotations.''
Next, it is clear that the Hamiltonian satisfies $H\circ \Phi_R=H$.
As a result, it follows from  Theorem~\ref{thm:nother2}~(iii) that the dynamical flow is rotationally invariant:
$$
\Phi_t^H\circ \Phi_R=\Phi_R\circ \Phi_t^H,\qquad\forall t\in\R, \  R\in \mathrm{SO}(3).
$$ 
Note that, here and in what follows, we are using, apart from the symplectic, also the standard euclidean structure on $\R^6$. 

We now wish to identify the relative equilibria of these systems. For that purpose, consider first $u\in\R^6$ with $L(u)=\mu\not=0$. Then the ensuing dynamical trajectory $u(t)$ lies in the 
surface
\begin{equation}\label{eq:sphericalsigmamu}
\Sigma_\mu=\{u\in\R^6\mid L(u)=\mu\}.
\end{equation}
Now, if $u$ is a relative equilibrium, then, for each $t$, there exists $R(t)\in\mathrm{SO}(3)$ so that $\Phi_{R(t)}u=u(t)$. Hence $\mu=L(u(t))=L(\Phi_{R(t)}u)=R(t)L(u)=R(t)\mu$.  In other words, $R(t)$ belongs to 
 \begin{equation*}
 G_\mu=\{R\in\mathrm{SO}(3)\mid R\mu=\mu\}\simeq \mathrm{SO}(2),
 \end{equation*}
which is the subgroup of rotations about the  $\mu$-axis. It follows that $\|q(t)\|=\|q\|$, for all $t$. Since $q(t)$ is perpendicular to $\mu$, this means that $q(t)$ lies on the circle of radius $\|q\|$ centered at $0$ and perpendicular to $\mu$. The orbit is therefore circular and, in particular, for all $t$, $q(t)\cdot p(t)=0$. Conversely, it is clear that all circular dynamical orbits are relative equilibria. 
The initial conditions corresponding to such circular orbits are easily seen to be of the form
\begin{equation}\label{eq:circincond}
q=\rho_*\hat q,\quad p=\sigma_*\hat p, \quad \sigma_*^2=\rho_*V'(\rho_*), \quad \hat q\cdot \hat p=0,
\end{equation}
with $\rho_*,\sigma_*>0$ and hence $V'(\rho_*)>0$.
We will discuss in Section~\ref{s:spherpotstab} under what conditions they are orbitally stable in the sense of~\eqref{eq:orbstab0}. 

Now, let $u=(q,p)\in\R^6$ be such that $L(u)=0$. In this case $q$ and $p$ are parallel and this remains true at all times. But if $p(t)\not=0$ at any time $t$, $u$ cannot be a relative equilibrium. Indeed, the motion is then along a straight line passing through the origin and such a straight line cannot lie in an SO$(3)$ orbit since the SO$(3)$ action preserves norms.   If on the other hand  $u=(\rho_*\hat q,0)=u(t)$ is a fixed point of the dynamics, it is a fortiori a relative equilibrium. This occurs if and only if $V'(\rho_*)=0$ as is clear from the equations of motion. Note that these fixed points fill the sphere of radius $\rho_*$.

 It is clear these fixed points cannot be stable in the sense of definition~\eqref{eq:stab0} or~\eqref{eq:orbstab0}.  
Indeed, any initial condition $u'$ close to such fixed point $u$, but with $p'\not=0$ gives rise to a trajectory in the plane spanned by $q'$ and $p'$: when $q'$ and $p'$ are not parallel, the trajectory will wind around the origin in this plane, moving away from the initial condition. What we will prove in Section~\ref{s:spherpotstab} is that, provided $V''(\rho_*)>0$, these trajectories all stay close to 
\begin{equation}\label{eq:fixorbit}
\Ocal_{\rho_*, 0,0}=\{u\in\R^6\mid q\cdot q=\rho_*^2, \ p\cdot p=0, \ q\cdot p=0\},
\end{equation}
which is the SO$(3)$ orbit through the fixed point $u=(\rho_*\hat q, 0)$. Those fixed points are therefore SO$(3)$-orbitally stable, in the sense of Definition~\ref{def:orbstabgen}~(i) below.

To end this section, we list, for later purposes, all SO$(3)$-orbits in $\Ban=\R^6$. 
Those are easily seen to be the hypersurfaces  $\Ocal_{\rho, \sigma, \alpha}$ of the form
\begin{equation}\label{eq:rhosigmaalpha}
\Ocal_{\rho, \sigma, \alpha}=\{(q,p)\in\R^6\mid q\cdot q=\rho^2,\ p\cdot p=\sigma^2, \ q\cdot p=\alpha\},
\end{equation}
with $\rho, \sigma\geq 0, \alpha\in \R$. Note that $|\alpha|\leq \rho\sigma$. 
Those orbits are three-dimensional smooth submanifolds of $\R^6$, except on the set where the angular momentum $L$ vanishes, \emph{i.e.} on
$$
\Sigma_0=\{(q,p)\in\R^6\mid L(q,p)=0\}.
$$
This surface (which is not a submanifold of $E$) is itself SO$(3)$-invariant and foliated by group orbits as follows:
$$
\Sigma_0=\bigcup_{\rho\sigma=|\alpha|}\ \Ocal_{\rho,\sigma,\alpha}=
\{(0,0)\} \ \cup\bigcup_{\substack{ \rho\sigma=|\alpha| \\ (\rho,\sigma)\not=(0,0) }}\ \Ocal_{\rho,\sigma,\alpha}.
$$
On the latter orbits, $q$ and $p$ are parallel, but do not both vanish, so that these orbits can be identified with two-dimensional spheres.  


\subsection{The nonlinear Schr\"odinger equation}\label{s:dynsysexamples}

An important example of an infinite dimensional dynamical system is the nonlinear Schr\"odinger\index{equation!nonlinear Schr\"odinger} equation 

\begin{equation}
	\label{eq:nlsgen}
	\left\{
	\begin{aligned}
		&i\partial_t u(t,x) +\Delta u(t,x)  +\nl(x,u(t,x))=0,\\
		&u(0,x)=u_0(x),
	\end{aligned}
	\right.
\end{equation}
with $u(t,x):\R\times\R^d\to \C$. Here $\Delta$ denotes the usual Laplace operator and $\nl$ is a local nonlinearity. More precisely, consider $\nl : (x,u)\in\R^d\times \R^+\to \nl(x,u)\in\R$ such that $\nl$ is measurable in $x$ and continuous in $u$. Assume that 
\begin{equation}
	\label{eq:nlscond1nl}
	\nl(x,0)=0\ \mbox{a.e. in }\R^d
\end{equation}
and that for every $K>0$ there exists $L(K)<+\infty$ such that
\begin{equation}
	\label{eq:nlscond2nl}
	|\nl(x,u)-\nl(x,v)|\le L(K)|u-v|
\end{equation}
a.e. in $\R^d$ and for all $0\leq u,v\le K$. Assume further that
\begin{equation}
	\label{eq:nlscond3nl}
	\left\{
	\begin{aligned}
		& L(\cdot)\in C^0([0,+\infty))&\mbox{if } d=1,\\
		& L(K)\le C(1+K^{\alpha})\ \mbox{with }  0\le \alpha <\frac{4}{d-2}&\mbox{if } d\ge 2,
	\end{aligned}
	\right.
\end{equation}
and extend $\nl$ to the complex plane by setting
\begin{equation}
	\label{eq:nlsextnl}
	f(x,u)=\frac{u}{|u|}f(x,|u|),
\end{equation}
for all $u\in \C$, $u\neq 0$.

Finally, let $H$ be the Hamiltonian of the system defined by 
\begin{equation}
	\label{eq:nlshamiltonian}
	H(u)=\frac{1}{2}\int_{\R^d}|\nabla u|^2(x) \diff x-\int_{\R^d}\int_{0}^{|u|(x)}f(x,s)\diff s\diff x.
\end{equation}
 We now explain how the Schr\"odinger equation defines an infinite dimensional dynamical system with symmetries, within the framework of Sections \ref{ss:dynamicsbanach} and \ref{ss:invariance}. The sense in which the Schr\"odinger equation defines a Hamiltonian dynamical system will be explained in Section \ref{s:hamdyninfinite}.
 
For that purpose, we need the following results on local and global existence of solutions to
\eqref{eq:nlsgen}. First, concerning local existence, we have:

\begin{theorem}[\cite{cazenave2003}]\label{thm:nlslocalflowH1} 
If $\nl$ is as above, then for every $u_0\in H^{1}(\R^d,\C)$ there exist numbers $T_{\mathrm{min}}, T_{\mathrm{max}}>0$ and a unique maximal solution $u:t\in (-T_{\mathrm{min}}, T_{\mathrm{max}})\to u(t)\in H^1(\R^d,\C)$ of~\eqref{eq:nlsgen} satisfying
	$$u\in C^0((-T_{\mathrm{min}}, T_{\mathrm{max}}),H^{1}(\R^d))\cap C^1((-T_{\mathrm{min}}, T_{\mathrm{max}}),H^{-1}(\R^d)).$$ Moreover, $u$ depends continuously on $u_0$ in the following sense: if $u_0^k\to u_0$ in $H^1(\R^d,\C)$  and if $u_k$ is the maximal solution of \eqref{eq:nlsgen} with the initial value $u_0^k$, then $u_k\to u$ in $ C^0([-S,T],H^1(\R^d))$ for every interval $[-S,T]\subset (-T_{\mathrm{min}}, T_{\mathrm{max}})$. In addition, there is conservation of charge and energy, that is
	\begin{equation}
		\label{eq:nlsconservationlaw}
		\|u(t)\|_{L^2}=\|u_0\|_{L^2},\quad H(u(t))=H(u_0)
	\end{equation}
	for all $t\in  (-T_{\mathrm{min}}, T_{\mathrm{max}})$.
\end{theorem}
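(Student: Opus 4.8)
The result is classical (see~\cite{cazenave2003}); the proof I would give runs the standard Kato/Ginibre--Velo/Cazenave fixed-point scheme on the Duhamel formulation of~\eqref{eq:nlsgen},
\begin{equation*}
u(t) = e^{it\Delta}u_0 + i\int_0^t e^{i(t-s)\Delta} f(\cdot,u(s))\,\diff s ,
\end{equation*}
where $e^{it\Delta}$ is the free Schr\"odinger group. The first ingredient is the family of Strichartz estimates: from the dispersive bound $\|e^{it\Delta}\|_{L^1\to L^\infty}\lesssim|t|^{-d/2}$ and a $TT^*$ argument, one has $e^{it\Delta}u_0\in L^q_{\mathrm{loc}}(\R,L^r)$ for every admissible pair ($\frac{2}{q}=d(\frac{1}{2}-\frac{1}{r})$, $2\le r\le\frac{2d}{d-2}$, read as $2\le r<\infty$ when $d\le 2$), together with the corresponding retarded estimate for the Duhamel term. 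Combining these with the Sobolev embedding $H^1(\R^d)\hookrightarrow L^r(\R^d)$ over the relevant range, I would set up the fixed point in a complete metric space of the form $C([-T,T],H^1)\cap L^q([-T,T],W^{1,r})$ for a well-chosen admissible $(q,r)$, using on a bounded ball only a weaker (e.g.\ $L^q_tL^r_x$-type) distance so that the contraction estimate does not see the top-order derivative.

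Next I would estimate the nonlinearity. Hypotheses~\eqref{eq:nlscond1nl}--\eqref{eq:nlscond3nl} give the pointwise growth $|f(x,u)|\le C(|u|+|u|^{\sigma})$ and the local Lipschitz bound $|f(x,u)-f(x,v)|\le C(1+|u|^{\sigma-1}+|v|^{\sigma-1})|u-v|$, where $\sigma=1+\alpha$ when $d\ge 2$ (so $\sigma-1<\frac{4}{d-2}$, i.e.\ $H^1$-subcritical) and $\sigma$ plays no role when $d=1$ since $H^1(\R)\hookrightarrow L^\infty$. The subcriticality is precisely what makes the H\"older exponents produced by $H^1\hookrightarrow L^r$ land in the admissible Strichartz range. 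For the gradient one needs $\nabla\big(f(\cdot,u)\big)=f_u(\cdot,u)\,\nabla u$ in a suitable $L^{r'}$: since $f$ is only Lipschitz in $u$, I would treat $f$ as a Lipschitz function of $(\mathrm{Re}\,u,\mathrm{Im}\,u)$, use that such functions act boundedly $W^{1,r}\to W^{1,r}$, and note that~\eqref{eq:nlscond1nl} makes the extension~\eqref{eq:nlsextnl} harmless at the origin. Feeding these bounds into the Strichartz inequalities and taking $T$ small depending only on $\|u_0\|_{H^1}$, the Duhamel map becomes a contraction on a ball, yielding a unique local solution; uniqueness in the full space, the maximal interval $(-T_{\mathrm{min}},T_{\mathrm{max}})$ with the blow-up alternative, the claimed regularity (with $\partial_t u=i\Delta u+if(\cdot,u)\in C^0(H^{-1})$ read off from the equation), and continuous dependence on $u_0$ all follow from the same contraction estimates by routine arguments.

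For the conservation laws~\eqref{eq:nlsconservationlaw}, the formal identities come from pairing the equation with $\bar u$ and taking imaginary parts (giving $\frac{d}{dt}\|u(t)\|_{L^2}^2=0$), respectively from pairing with $\partial_t\bar u$ and taking real parts (giving $\frac{d}{dt}H(u(t))=0$, since $f(x,u)$ is the $u$-derivative of $\int_0^{|u|}f(x,s)\,\diff s$). Neither pairing is a priori legitimate in $H^1$, because $\partial_t u$ only lies in $H^{-1}$. The remedy I would use is to prove both identities first for more regular data: approximate $u_0\in H^1$ by $u_0^k\in H^2$, for which the solutions are smoother, $\partial_t u_k\in L^2$, and the computations are rigorous (mollifying $f$ beforehand if one wants to be scrupulous about differentiating the primitive), and then pass to the limit $k\to\infty$ using the continuous dependence already established together with the continuity of $\|\cdot\|_{L^2}$ and of $H$ on $H^1$. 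This yields charge and energy conservation on all of $(-T_{\mathrm{min}},T_{\mathrm{max}})$.

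The main obstacle is exactly this last step --- the regularization and passage-to-the-limit proof of energy conservation in the energy space --- together with the preliminary verification that the functional $H$ of~\eqref{eq:nlshamiltonian} is well defined and $C^1$ on $H^1(\R^d)$ under only~\eqref{eq:nlscond1nl}--\eqref{eq:nlscond3nl}; everything else is a now-classical Strichartz/fixed-point package, and in these notes one would in any case just cite~\cite{cazenave2003}.
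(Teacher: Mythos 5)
The paper gives no proof of this theorem: it is quoted verbatim from Cazenave's monograph \cite{cazenave2003}, which is exactly where the argument you sketch (Strichartz estimates, Kato's fixed point in a metric space with a weakened distance, blow-up alternative, and conservation laws obtained by regularizing the data and passing to the limit via continuous dependence) is carried out. Your outline is the standard proof from that reference and is essentially correct, so there is nothing to compare against within the paper itself.
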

For global existence of solutions, one needs a growth condition on $f$ in its second variable. 
\begin{theorem}[\cite{cazenave2003}]\label{thm:nlsglobalflowH1} 
Let $\nl$ be as in Theorem \ref{thm:nlslocalflowH1}. Suppose in addition that there exist 
$A\ge 0$ and $0\le \nu <\frac{4}{d}$ such that
	\begin{equation}
		\label{eq:nlscondglobalex}
		\int_{0}^{|u|}f(x,s)\diff s\le A|u|^2(1+|u|^{\nu}), \quad x\in\rn, \ u\in\complex.
	\end{equation}
	It follows that for every $u_0\in H^{1}(\R^d,\C)$, the maximal strong $H^1$-solution $u$ of  \eqref{eq:nlsgen} given by Theorem \ref{thm:nlslocalflowH1} is global and $\sup_{t\in\R} \|u(t)\|_{H^1}<+\infty$.
\end{theorem}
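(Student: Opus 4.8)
The plan is to run the classical energy method: combine the growth condition~\eqref{eq:nlscondglobalex} with the two conservation laws~\eqref{eq:nlsconservationlaw} supplied by Theorem~\ref{thm:nlslocalflowH1} to produce an a priori bound on $\|u(t)\|_{H^1}$ that is uniform over the maximal interval of existence, and then invoke the blow-up alternative to conclude. Recall that the construction of the maximal $H^1$-solution in Theorem~\ref{thm:nlslocalflowH1} comes with the standard blow-up alternative: if $T_{\mathrm{max}}<+\infty$ then $\|u(t)\|_{H^1}\to+\infty$ as $t\uparrow T_{\mathrm{max}}$, and symmetrically at $-T_{\mathrm{min}}$. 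Hence it suffices to show that $t\mapsto\|u(t)\|_{H^1}$ stays bounded on $(-T_{\mathrm{min}},T_{\mathrm{max}})$ by a constant depending only on $u_0$.

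First I would fix $t$ in the maximal interval and, using the definition~\eqref{eq:nlshamiltonian} of $H$ together with the energy conservation $H(u(t))=H(u_0)$, write
\begin{equation*}
\tfrac12\intrn|\nabla u(t)|^2\diff x=H(u_0)+\intrn\int_0^{|u(t)|(x)}f(x,s)\diff s\diff x .
\end{equation*}
The growth hypothesis~\eqref{eq:nlscondglobalex} bounds the last term from above, and the charge conservation $\|u(t)\|_{L^2}=\|u_0\|_{L^2}$ then gives
\begin{equation*}
\intrn\int_0^{|u(t)|(x)}f(x,s)\diff s\diff x\le A\|u(t)\|_{L^2}^2+A\|u(t)\|_{L^{2+\nu}}^{2+\nu}=A\|u_0\|_{L^2}^2+A\|u(t)\|_{L^{2+\nu}}^{2+\nu},
\end{equation*}
the $L^{2+\nu}$-norm being finite since $\nu<\tfrac4d$ forces $H^1(\rn)\hookrightarrow L^{2+\nu}(\rn)$. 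When $\nu=0$ this already closes the estimate; for $\nu>0$ I would next apply the Gagliardo--Nirenberg inequality, which provides $C=C(d,\nu)$ with
\begin{equation*}
\|u(t)\|_{L^{2+\nu}}^{2+\nu}\le C\,\|\nabla u(t)\|_{L^2}^{d\nu/2}\,\|u(t)\|_{L^2}^{2+\nu-d\nu/2}=C\,\|\nabla u(t)\|_{L^2}^{d\nu/2}\,\|u_0\|_{L^2}^{2+\nu-d\nu/2}.
\end{equation*}

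The condition $0\le\nu<\tfrac4d$ is precisely what makes the exponent $\tfrac{d\nu}{2}$ strictly less than $2$, so Young's inequality absorbs this term into the kinetic energy: for every $\eps>0$ there is $C_\eps$, depending only on $d,\nu,A$ and $\|u_0\|_{L^2}$, with $A\|u(t)\|_{L^{2+\nu}}^{2+\nu}\le\eps\|\nabla u(t)\|_{L^2}^2+C_\eps$. Taking $\eps=\tfrac14$ and substituting back into the energy identity yields
\begin{equation*}
\tfrac14\|\nabla u(t)\|_{L^2}^2\le H(u_0)+A\|u_0\|_{L^2}^2+C_{1/4},
\end{equation*}
a bound independent of $t$. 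Adding $\|u(t)\|_{L^2}^2=\|u_0\|_{L^2}^2$ gives $\sup_{t\in(-T_{\mathrm{min}},T_{\mathrm{max}})}\|u(t)\|_{H^1}^2<+\infty$. By the blow-up alternative this is incompatible with $T_{\mathrm{min}}<+\infty$ or $T_{\mathrm{max}}<+\infty$, so $u$ is global and the same bound reads $\sup_{t\in\R}\|u(t)\|_{H^1}<+\infty$.

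I expect the only delicate point to be the exponent bookkeeping in the Gagliardo--Nirenberg step: one must check that interpolating $L^{2+\nu}$ between $L^2$ and $H^1$ produces exactly the power $\tfrac{d\nu}{2}$ on $\|\nabla u\|_{L^2}$, and that the subcriticality $\nu<\tfrac4d$ makes it subquadratic. This is what lets the nonlinear term be dominated by the (conserved) energy; everything else is routine manipulation of the conservation laws. Conceptually this is again the Lyapunov-type mechanism of the introduction — a conserved quantity confining the trajectory to one of its sublevel sets — here applied to the energy functional $H$ itself.
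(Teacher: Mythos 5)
Your argument is correct and is precisely the classical energy-method proof: the paper itself gives no proof but cites \cite{cazenave2003}, where the result is established exactly this way (conservation of charge and energy, the subcritical Gagliardo--Nirenberg estimate with exponent $d\nu/2<2$, Young's inequality, and the blow-up alternative). The only point worth noting is that the blow-up alternative is not stated explicitly in the paper's version of Theorem~\ref{thm:nlslocalflowH1}, but it is part of the local well-posedness theory of \cite{cazenave2003} on which that theorem rests, so invoking it is legitimate.
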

Note that the condition on $f$ is always satisfied when $f$ is negative. This result implies that one can define $\Phi_t^X$ on $\Ban=H^1(\R^d,\C)$ by $\Phi_t^X(u)=u(t)\in \Ban$ and that $\Phi_t^X$ satisfies~\eqref{flow1}--\eqref{flow2}. Note however that, whereas the flow lines $t\to u(t)\in \Ban$ are guaranteed to be continuous by the above theorems, they are $C^1$ only when viewed as taking values in $E^*=H^{-1}(\R^d,\C)$. The following ``propagation of regularity'' theorem allows one to identify the appropriate domain $\Dom$ on which the stronger condition~\eqref{eq:diffeq} holds.

\begin{theorem}[\cite{cazenave2003}] \label{thm:nlsregularityH2Hm} 
Let $\nl$ be as in Theorem \ref{thm:nlslocalflowH1},
and consider $u_0\in H^1(\R^d,\C)$ and $u\in  C^0((-T_{\mathrm{min}}, T_{\mathrm{max}}),H^{1}(\R^d))$ 
the solution of the problem \eqref{eq:nlsgen} given by Theorem~\ref{thm:nlslocalflowH1}.
Then the following statements hold.
\begin{itemize}
\item[(i)] If $u_0\in H^2(\R^d,\C)$, then  $u\in  C^0((-T_{\mathrm{min}}, T_{\mathrm{max}}),H^{2}(\R^d))$. If, in addition, $\nl(x,\cdot)\in C^1 (\C,\C)$, then  $u$ depends continuously on $u_0$ in the following sense: if $u_0^k\to u_0$ in $H^2(\R^d,\C)$  and if $u_k$ is the maximal solution of \eqref{eq:nlsgen} with the initial value $u_0^k$, then $u_k\to u$ in $ C^0([-S,T],H^2(\R^d))$ for every interval $[-S,T]\subset (-T_{\mathrm{min}}, T_{\mathrm{max}})$.
\item[(ii)] If $u_0\in H^m(\R^d,\C)$ for some integer $m>\max\left\{\frac{d}{2},2\right\}$ and if $\nl(x,\cdot)\in C^m (\C,\C)$, then $u\in C^0((-T_{\mathrm{min}}, T_{\mathrm{max}}),H^{m}(\R^d))$. In addition, $u$ depends continuously on $u_0$ in the following sense: if $u_0^k\to u_0$ in $H^m(\R^d,\C)$  and if $u_k$ is the maximal solution of \eqref{eq:nlsgen} with the initial value $u_0^k$, then $u_k\to u$ in $L^{\infty}([-S,T],H^m(\R^d))$ for every interval $[-S,T]\subset (-T_{\mathrm{min}}, T_{\mathrm{max}})$.
\end{itemize}
\end{theorem}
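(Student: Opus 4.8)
The three assertions are all instances of one principle: for a semilinear Schr\"odinger equation, regularity of the datum propagates for as long as the solution survives in the energy space $H^1$. The plan is therefore to build a local well-posedness theory at the higher regularity level, with a lifespan bounded below in terms of the higher Sobolev norm alone, and then to show by a differential inequality that this higher norm cannot become infinite before $T_{\mathrm{max}}$ (nor below $-T_{\mathrm{min}}$), so that the blow-up alternative of the higher-regularity theory forces the solution to persist on the whole maximal $H^1$-interval given by Theorem~\ref{thm:nlslocalflowH1}. For the local step I would run a contraction-mapping argument for the Duhamel equation
\[
u(t)=e^{it\Delta}u_0+i\int_0^t e^{i(t-s)\Delta}f(\cdot,u(s))\diff s
\]
in a ball of $C^0([-T,T],H^2(\R^d))$ -- using the Strichartz estimates already underlying Theorem~\ref{thm:nlslocalflowH1} when $d\ge 2$, and plain $L^2$ energy estimates when $d=1$, where $H^1(\R)\hookrightarrow L^\infty(\R)$. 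The Lipschitz bound \eqref{eq:nlscond2nl}--\eqref{eq:nlscond3nl} controls $f(\cdot,u)$ in $L^2$; the second derivative is recovered \emph{not} by estimating $\|f(\cdot,u)\|_{H^2}$ directly (which would need regularity of $f$ in $x$, not assumed), but through the equation written as $\Delta u=-i\,\partial_t u-f(\cdot,u)$, so that controlling $\|u(t)\|_{H^2}$ reduces to controlling $\|\partial_t u(t)\|_{L^2}$. Uniqueness in Theorem~\ref{thm:nlslocalflowH1} identifies this $H^2$-solution with $u$ near $t=0$.

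\emph{The crux is the differential inequality.} Differentiating \eqref{eq:nlsgen} in time, $w:=\partial_t u$ solves a real-linear Schr\"odinger equation $i\,\partial_t w+\Delta w+\partial_t\!\left(f(\cdot,u)\right)=0$, with $w(0)=i\left(\Delta u_0+f(\cdot,u_0)\right)\in L^2$. The key observation is that \eqref{eq:nlscond2nl} yields $\|\partial_t\!\left(f(\cdot,u(t))\right)\|_{L^2}\le L(K)\,\|w(t)\|_{L^2}$ whenever $\|u(t)\|_{L^\infty}\le K$ -- only the Lipschitz character of $f$ in $u$ enters, and no $x$-derivative of $f$ appears. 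Since $e^{it\Delta}$ is unitary on $L^2$, Duhamel and Gronwall give
\[
\|w(t)\|_{L^2}\le\|w(0)\|_{L^2}\exp\!\left(\int_0^t L\!\left(\|u(s)\|_{L^\infty}\right)\diff s\right).
\]
On every compact subinterval of $(-T_{\mathrm{min}},T_{\mathrm{max}})$ the $H^1$-norm of $u$ is bounded -- by conservation of charge and energy \eqref{eq:nlsconservationlaw} and the $H^1$ local theory -- hence so is $\|u(s)\|_{L^\infty}$ (directly for $d=1$, via a bootstrap of Strichartz norms for $d\ge 2$), so $\|w(t)\|_{L^2}$, and thus $\|\Delta u(t)\|_{L^2}\le\|w(t)\|_{L^2}+\|f(\cdot,u(t))\|_{L^2}$, stays bounded there. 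Feeding this into the blow-up alternative gives $u\in C^0((-T_{\mathrm{min}},T_{\mathrm{max}}),H^2)$, continuity in time following from that of $w$ and of $f(\cdot,u)$ as $L^2$-valued maps together with $\Delta u=-iw-f(\cdot,u)$. When $f(x,\cdot)$ is merely Lipschitz, the time-differentiation is legitimised by first mollifying $f$ to $f_\varepsilon(x,\cdot)\in C^1$ with uniform Lipschitz constants, running the argument for the associated solutions $u_\varepsilon$, and passing to the limit using the $H^1$ continuous dependence of Theorem~\ref{thm:nlslocalflowH1} and the uniform $H^2$-bounds.

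\emph{Higher regularity and continuous dependence.} For (ii), with $f(x,\cdot)\in C^m$ and $m>\max\{d/2,2\}$, one iterates: the equations satisfied by the successive time-derivatives $\partial_t^k u$ involve only the $u$-derivatives of $f$, hence are controlled by $\|u\|_{L^\infty}$ and $\|f\|_{C^m}$ through Moser-type estimates available precisely in this range of $m$ (where $H^m$ is a multiplicative algebra and embeds appropriately), so the Gronwall scheme above propagates the $L^2$-bounds on the $\partial_t^k u$; spatial $H^m$-regularity and its continuity in time are then read off from the equation. The two continuous-dependence statements follow by subtracting the Duhamel equations for two solutions $u,\tilde u$ with data close in $H^2$ (resp. $H^m$): the $C^1$ (resp. $C^m$) hypothesis lets one write $f(\cdot,u)-f(\cdot,\tilde u)$, via a fundamental-theorem-of-calculus representation, as a multiplier acting on $u-\tilde u$, and a Gronwall estimate in $H^2$ (resp. $H^m$) -- in which the uniform higher-order bounds on $u$ and $\tilde u$ obtained above serve as coefficients -- closes the argument.

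\emph{Main obstacle.} The delicate point is the propagation step and, more acutely, its $H^m$ version: legitimising the time-differentiated equation for a merely Lipschitz nonlinearity, and above all upgrading the locally uniform $H^2$-\emph{bound} to genuine \emph{continuity} with values in $H^2$ -- the mollification gives the bound but not the continuity, which must be extracted from the structure $\Delta u=-iw-f(\cdot,u)$ with $L^2$-continuous right-hand side. For $d\ge 2$ there is the further burden of threading the Strichartz estimates through every step, since $H^1$ no longer controls $L^\infty$; and the $H^m$ iteration is heavier still, requiring one to track Moser estimates for all the nonlinear terms produced by repeated differentiation.
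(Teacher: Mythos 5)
This theorem is imported by the paper from \cite{cazenave2003} without proof, so there is nothing internal to compare against; your sketch is, in substance, the standard argument of that reference (estimate $\partial_t u$ in $L^2$ via the time-differentiated equation and Gronwall, using only the Lipschitz character of $f$ in $u$, then recover $\Delta u=-i\,\partial_t u-f(\cdot,u)$, with Strichartz norms replacing the $L^\infty$ control when $d\ge 2$, and an iteration in the algebra $H^m$, $m>d/2$, for part (ii)). The one place where the reference is cleaner than your proposal: instead of mollifying $f$ to legitimise the time-differentiation, one works directly with the difference quotients $h^{-1}(u(t+h)-u(t))$, to which the Lipschitz bound \eqref{eq:nlscond2nl} applies verbatim, obtaining $\partial_t u\in L^\infty_{\mathrm{loc}}(L^2)$ in the limit $h\to0$ without ever differentiating $f$.
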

Note that the derivatives of $f$ should be understood in the real sense here.

\begin{remark}\label{strongdiff.rem}
It follows from Theorem~\ref{thm:nlsregularityH2Hm} that, if we take $\Dom=H^m(\R^d,\C)$, 
with $m\geq 3$, then~\eqref{eq:diffeq} is satisfied, and so the flow
is differentiable as a map from $\real$ to $E=H^1(\R^d,\C)$.
\end{remark}

\begin{example}
	\label{ex:nlspowernonlin}
	A typical example of local nonlinearity which satisfies \eqref{eq:nlscond1nl}, \eqref{eq:nlscond2nl}, \eqref{eq:nlscond3nl} and \eqref{eq:nlsextnl} is the pure power nonlinearity
	\begin{equation}
	 	\label{eq:nlspowernonlin}
	 	f(u)=\lambda |u|^{\sigma-1} u
	 \end{equation} 
	with 
	\begin{equation}
		\label{eq:nlscondpowernl}
		\begin{aligned}
			& 1\le \sigma <+\infty&\mbox{for } d=1,\\
			& 1\le \sigma <1+\frac{4}{d-2}&\mbox{for } d\ge 2,
		\end{aligned}
	\end{equation}
	and $\lambda\in \R$. The standard ``cubic'' Schr\"odinger equation corresponds to $\sigma=3$, which is an allowed value of $\sigma$ only if $1\leq d\leq 3$. The Hamiltonian is then given by
	\begin{equation}
		\label{eq:nlshamiltonianpower}
		H(u)=\frac{1}{2}\int_{\R^d}|\nabla u|^2(x)\diff x-\frac{\lambda}{\sigma+1}\int_{\R^d}|u|^{\sigma+1}(x)\diff x.
	\end{equation}
	In this case, the nonlinear Schr\"odinger equation reads
	\begin{equation}
	\label{eq:nlspurepower}
	\left\{
	\begin{aligned}
		&i\partial_t u(t,x) +\Delta u(t,x) +\lambda |u|^{\sigma-1}(t, x) u(t,x)=0,\\
		&u(0,x)=u_0(x).
	\end{aligned}
	\right.
	\end{equation}
	Theorem \ref{thm:nlslocalflowH1} then ensures the existence of a local solution
	\begin{equation}
	u\in C^0((-T_{\mathrm{min}}, T_{\mathrm{max}}),H^{1}(\R^d))\cap C^1((-T_{\mathrm{min}}, T_{\mathrm{max}}),H^{-1}(\R^d))
	\end{equation}
	and the conservation of the Hamiltonian energy $H$. To guarantee the existence of a global flow, we have to distinguish the focusing ($\lambda>0$) and the defocusing case ($\lambda<0$). More precisely, Theorem~\ref{thm:nlsglobalflowH1} implies the flow is globally defined on $H^1(\R^d,\C)$, i.e.
	\begin{equation}\label{eq:schrodflow}
		\Phi^{X}: \R \times H^1(\R^d,\C)\to H^1(\R^d,\C), 
	\end{equation}
	if $\sigma$ satisfies \eqref{eq:nlscondpowernl} in the defocusing case or if $1\le \sigma<1+\frac{4}{d}$ in the focusing case. Note that, in the latter situation, $\sigma=3$ is allowed only if $d=1$. 

	Next, we recall that 
	\begin{align*}
		&\sigma\in \N,\ \sigma\ \mbox{odd}\ \Rightarrow f\in  C^\infty (\C,\C),\\
		&\sigma\in \N,\ \sigma\ \mbox{even}\ \Rightarrow (f\in  C^m (\C,\C)\Leftrightarrow m\le \sigma-1),\\
		&\sigma\notin \N\Rightarrow (f\in  C^m (\C,\C)\Leftrightarrow m\le [\sigma-1]+1),
	\end{align*}
	and, in particular, $f\in  C^1 (\C,\C)$ for all $\sigma\ge 1$.
Hence Theorem \ref{thm:nlsregularityH2Hm} applies and the flow can be restricted to $H^2(\R^d,\C)$
	\begin{equation*}
		\Phi^X: \R \times H^2(\R^d,\C)\to H^2(\R^d,\C), 
	\end{equation*}
	whenever $\sigma$ satisfies \eqref{eq:nlscondpowernl} in the defocusing case or $1\le \sigma<1+\frac{4}{d}$ in the focusing case. This, however, is not enough for our purposes, since it only guarantees the existence of the derivative of $t\to u(t)$ as a function in $L^2(\R^d,\C)$, and not as a function in $\Ban=H^1(\R^d,\C)$. In other words, we cannot take $\Dcal=H^2(\R^d,\C)$ if we wish to satisfy~\eqref{eq:diffeq}. To obtain sufficient propagation of regularity, having in mind Remark~\ref{strongdiff.rem}, we state the following results. 
	
	 In dimension $d=1$ both in the defocusing case, for $3\le \sigma <+\infty$, and in the focusing case, for $3\le \sigma<5$, 
		\begin{equation*}
		\Phi^X: \R \times H^3(\R,\C)\to H^3(\R,\C). 
	\end{equation*}
	Hence, in these cases, using the notation introduced in Section \ref{ss:dynamicsbanach}, $E=H^1(\R,\C)$ and the domain $\mathcal D$ of the vector field $X$ can be chosen to be the Sobolev space $H^3(\R,\C)$.  
	
	In dimension $d=2,3$ and in the defocusing case, the global flow $\Phi^X$ can be defined on $E=H^1(\R^d,\C)$ for all $3\le \sigma <1+\frac{4}{d-2}$. As before, the domain $\mathcal D$ of the vector field $X$ can be chosen to be the Sobolev space $H^3(\R^d,\C)$. 
	
	It follows in particular from what precedes that the cubic Schr\"odinger equation $(\sigma=3)$ fits in the framework of the previous section provided either $d=1$ (with $\lambda$ arbitrary) or $\lambda<0$ and $d=2,3$. 

We now turn to the study of the symmetries of the nonlinear Schr\"odinger equation~\eqref{eq:nlspurepower}. Let $G=\mathrm{SO}(d)\times \R^d \times \R$ and define its action on $E=H^1(\R^d,\C)$ via
		\begin{equation}\label{eq:nlsinvariance}
		\forall u\in H^1(\R^d),\quad \left(\Phi_{R,a,\gamma}(u)\right)(x)=e^{i\gamma} u(R^{-1}(x-a)).
	\end{equation}
	Here the group law of $G$ is
	\begin{equation*}
		(R_1,a_1,\gamma_1)(R_2,a_2,\gamma_2)=(R_1R_2,a_1+R_1a_2,\gamma_1+\gamma_2)
	\end{equation*}
	for all $R_1, R_2\in \mathrm{SO}(d)$, $a_1,a_2\in \R^d$ and $\gamma_1, \gamma_2\in \R$. We claim that $G$ is an invariance group (see Definition \ref{def:symgroup}) for the dynamics $\Phi_t^X$. Indeed, let $u(t,x)=(\Phi_t^X(u))(x)$ a solution to the nonlinear Schr\"odinger equation \eqref{eq:nlspurepower} and consider $\left((\Phi_{R,a,\gamma}\circ \Phi_t^X)(u)\right)(x)=e^{i\gamma}u(t,Rx-a)$. A straightforward calculation shows that $e^{i\gamma}u(t,R^{-1}(x-a))$ is again a solution to equation \eqref{eq:nlspurepower}. More precisely,
	\begin{align*}
		&i\partial_t (e^{i\gamma}u(t,R^{-1}(x-a)))+\Delta (e^{i\gamma}u(t,R^{-1}(x-a)))\\ 
		&+\lambda|e^{i\gamma}u(t,R^{-1}(x-a))|^{\sigma-1}(e^{i\gamma}u(t,R^{-1}(x-a)))\\
		&=e^{i\gamma}\left(i(\partial_t u)(t,R^{-1}(x-a))+(\Delta u)(t,R^{-1}(x-a)) +(\lambda|u|^{\sigma-1} u)(t,R^{-1}(x-a))\right)\\
		&=0
	\end{align*}
	where we use the fact that the Laplace operator is invariant under space rotations, space translations and phase rotations. As a consequence, $$\left((\Phi_{R,a,\gamma}\circ \Phi_t^X)(u)\right)(x)= \left((\Phi_t^X\circ \Phi_{R,a,\gamma})(u)\right)(x)$$ and $G$ is an invariance group for the dynamics $\Phi_t^X$. Moreover, we can easily prove that $H\circ \Phi_{R,a,\gamma}=H$. Indeed, using the definition of $H$ given in \eqref{eq:nlshamiltonianpower}, we have
	\begin{align*}
		H\circ \Phi_{R,a,\gamma}(u)&=\frac{1}{2}\int_{\R^d}|\nabla u|^2(R^{-1}(x-a))\diff x-\frac{\lambda}{\sigma+1}\int_{\R^d}|u|^{\sigma+1}(R^{-1}(x-a))\diff x\\
		&= H(u).
	\end{align*}
	We will see later (in Section \ref{ss:symconstmotion}) why this is important.


	Now, let us give some examples of $G$-relative equilibria of the nonlinear Schr\"odinger equation \eqref{eq:nlspurepower}. First, consider the simplest case where $d=1$ and $\sigma=3$. The invariance group $G$ reduces to $\R\times \R$ and the nonlinear Schr\"odinger equation
becomes
	\begin{equation}
		\label{eq:nlscubrelativeeq}
		i\partial_t u(t,x) +\partial_{xx}^2 u(t,x) +\lambda |u(t,x)|^2u(t,x)=0.
	\end{equation}
	In the focusing case ($\lambda>0$), there exists a two-parameters family of functions, the so-called {\em bright solitons}\index{solitons},
	\begin{equation*}
		u_{\alpha,c}(t,x)=\alpha \sqrt{\frac{2}{\lambda}}\sech(\alpha(x-ct))e^{i\left(\frac{c}{2} x+\left(\alpha^2-\frac{c^2}{4}\right)t\right)}
	\end{equation*}
	that are solutions to \eqref{eq:nlscubrelativeeq} for all $(\alpha,c)\in \R\times \R$, with initial conditions
		\begin{equation}\label{eq:soliton}
		u_{\alpha,c}(x)=u_{\alpha,c}(0,x)=\alpha \sqrt{\frac{2}{\lambda}}\sech(\alpha x)e^{i\left(\frac{c}{2} x\right)}\in E=H^1(\R).
	\end{equation}
	For each $(\alpha,c)\in \R\times \R$, $u_{\alpha,c}(x)$ is a $G$-relative equilibrium of \eqref{eq:nlscubrelativeeq}. Indeed, the $G$-orbit of $u_{\alpha,c}(x)$ is given by 
	\begin{equation}\label{eq:solitonorbit}
		\Ocal_{u_{\alpha,c}}=\left\{e^{i\gamma}u_{\alpha,c}(x-a), (a,\gamma)\in \R\times \R\right\}.
	\end{equation}
	Hence, it is clear that for all $t\in \R$, $u_{\alpha,c}(t,x)\in \Ocal_{u_{\alpha,c}}$ and, by Definition \ref{def:orbfixedpoint}, we can conclude that $u_{\alpha,c}(x)$ is a $G$-relative equilibrium of \eqref{eq:nlscubrelativeeq}.


	More generally, standing and travelling waves are examples of $G$-relative equilibria of the nonlinear Schr\"odinger equation \eqref{eq:nlspurepower}. More precisely, standing waves\index{standing waves} are solutions to \eqref{eq:nlspurepower} of the form
	\begin{equation}
		\label{eq:stationarystatesNLS}
		u_{\mathrm{S}}(t,x)=e^{i\en t}w_{\mathrm{S}}(x)
	\end{equation}
	with $\en\in \R$. For this to be the case, the profile $w_{\mathrm{S}}$ has to be a solution of the \emph{stationary equation} 
	\begin{equation*}
		\Delta w+\lambda |w|^{\sigma-1} w=\xi w.
	\end{equation*}
	Bright solitons with $c=0$ are examples of such standing waves, with $d=1, \sigma=3$. Standing waves of the one-dimensional Schr\"odinger equation with a spatially inhomogeneous nonlinearity, as well as their orbital stability, will be studied in Section~\ref{curves.sec}. Travelling waves are solutions to \eqref{eq:nlspurepower} of the form
	\begin{equation}
		\label{eq:travelingwavesNLS}
		u_{{\mathrm{TW}}}(t,x)=e^{i\xi t}w_{{\mathrm{TW}}}(x-ct)
	\end{equation}
	with $\xi \in \R$ and $c\in \R^d$. Now, the profile $w_{{\mathrm{TW}}}$ has to be a solution of
	\begin{equation*}
		\Delta w+\lambda |w|^{\sigma-1} w=\xi w+ i c\cdot \nabla w.
	\end{equation*}
	Bright solitons with $c\neq0$ are examples of such travelling waves, with $d=1, \sigma=3$.
	
	The $G$-orbit of the initial condition $w_{\mathrm{S}}(x)$ is given by 
	\begin{equation}
		\Ocal_{w_{\mathrm{S}}}=\left\{e^{i\gamma}w_{\mathrm{S}}(R^{-1}(x-a)), (R,a,\gamma)\in G\right\}
	\end{equation}
	and it is clear that $u_{\mathrm{S}}(t,x)\in \Ocal_{w_{\mathrm{S}}}$ for all $t\in \R$. The same holds true for $u_{\mathrm{TW}}$ with $w_{\mathrm{S}}$ replaced by $w_{\mathrm{TW}}$.
\end{example}




Another, closely related example of an infinite dimensional dynamical system is the cubic Schr\"odinger equation
\begin{equation}
	\label{eq:nlsper}
	\left\{
	\begin{aligned}
		&i\partial_t u(t,x) +\partial_{xx}^2 u(t,x) \pm|u(t,x)|^2 u(t,x)=0\\
		&u(0,x)=u_0(x)
	\end{aligned}
	\right.
\end{equation}
in the space periodic setting $\T=\R/(2\pi\Z)$ (the one dimensional torus). In \cite{bourgain1993}, the following theorem is proven.
\begin{theorem}[\cite{bourgain1993}]
	\label{thm:nlsperglobalflowHs} The Cauchy problem \eqref{eq:nlsper} is globally well-posed for data $u_0\in H^{s}(\T,\C)$, $s\ge 0$ and the solution $u\in  C^0(\R,H^s(\T))$. Moreover, if $u$, $v$ are the solutions corresponding to data $u_0,v_0\in H^s(\T,\C)$, there is the regularity estimate
	\begin{equation}
		\label{eq:nlsperglobalestimate}
		\|u(t)-v(t)\|_{H^s}\le C^{|t|}\|u_0-v_0\|_{H^s}
	\end{equation}
	where $C$ depends on the $L^2$-size of the data, i.e. $C=C(\|u_0\|_{L^2},\|v_0\|_{L^2})$.
\end{theorem}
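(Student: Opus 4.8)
The plan is to prove local well-posedness by a contraction argument in Fourier restriction (Bourgain) spaces and then upgrade to global well-posedness using conservation of the $L^2$ norm. First I would recast \eqref{eq:nlsper} in Duhamel form,
\[
u(t)=e^{it\partial_{xx}^2}u_0\pm i\int_0^t e^{i(t-s)\partial_{xx}^2}\big(|u|^2u\big)(s)\,\diff s ,
\]
and look for a fixed point not directly in $C^0(\R,H^s)$ but in the space $X^{s,b}(\T\times\R)$ with
\[
\|u\|_{X^{s,b}}^2=\sum_{n\in\Z}\int_{\R}\langle n\rangle^{2s}\langle\tau+n^2\rangle^{2b}\,|\widehat u(n,\tau)|^2\,\diff\tau ,
\]
which measures Sobolev regularity relative to the linear Schr\"odinger flow. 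Since $X^{s,b}\hookrightarrow C^0(\R,H^s)$ for $b>\tfrac12$, a solution constructed in (a time‑restricted version of) $X^{s,b}$ automatically lies in the class claimed in the statement. The whole problem is thereby reduced to a multilinear estimate for the Duhamel term, of the form $\big\||u|^2u\big\|_{X^{s,-b'}_\delta}\lesssim \delta^{\theta}\,\|u\|_{X^{s,b}_\delta}\|u\|_{X^{0,b}_\delta}^2$ for suitable $\tfrac12<b<1$, $0<b'<\tfrac12$ with $b+b'\le1$, some $\theta>0$, and every $s\ge0$, where $X^{s,b}_\delta$ is the restriction norm to a time interval of length $\delta$.

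The analytic heart is a periodic Strichartz‑type estimate: for functions time‑localized to a unit interval one has $\|u\|_{L^4_{t,x}(\T\times\R)}\lesssim\|u\|_{X^{0,3/8}}$. I would prove this by duality and Plancherel after a dyadic decomposition in the modulation variable $\langle\tau+n^2\rangle$; it ultimately rests on the elementary arithmetic fact that a given pair of integers $(k,\ell)$ admits only $O(1)$ representations as $(n_1-n_2,\;n_1^2-n_2^2)$ — the benign, one‑dimensional end of the lattice‑point counting that makes the analogous problem on $\T^2$ genuinely harder. Feeding this $L^4$ bound into H\"older's inequality, together with the product rule $\langle n\rangle^s\lesssim\langle n_1\rangle^s+\langle n_2\rangle^s+\langle n_3\rangle^s$ on the frequency relation $n=n_1-n_2+n_3$, yields the trilinear estimate above with the derivative weight placed on the \emph{highest}‑frequency input; this is what allows $s=0$ (and every $s\ge0$) to be admissible, rather than forcing a positive lower bound on $s$, and it is also what lets the contraction radius in $X^{s,b}_\delta$ be taken of size $\sim\|u_0\|_{H^s}$ while the existence time $\delta$ depends only on $\|u_0\|_{L^2}$.

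With the trilinear estimate in hand, a standard contraction mapping in $X^{s,b}_\delta$ gives local existence, uniqueness and Lipschitz (indeed analytic) dependence on the data, on a time interval whose length $\delta$ depends only on $\|u_0\|_{L^2}$. Conservation of the $L^2$ norm, $\|u(t)\|_{L^2}=\|u_0\|_{L^2}$ (together with conservation of the Hamiltonian, though mass alone suffices here), then permits re‑iterating the local theory with the \emph{same} step $\delta$ on $[\delta,2\delta],[2\delta,3\delta],\dots$, covering all of $\R$ and producing a global solution in $C^0(\R,H^s)$ for every $s\ge0$ — the higher Sobolev regularity being propagated on each step by the same estimate. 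Finally, the solution map on one step is Lipschitz from $H^s$ to $C^0([0,\delta],H^s)$ with constant bounded by some $C=C(\|u_0\|_{L^2},\|v_0\|_{L^2})\ge1$; composing this bound over the $\sim|t|/\delta$ steps needed to reach time $t$ gives $\|u(t)-v(t)\|_{H^s}\le C^{|t|}\|u_0-v_0\|_{H^s}$, which is \eqref{eq:nlsperglobalestimate}. The main obstacle is precisely the periodic $L^4$ estimate and its upgrade to the sharp trilinear bound at $s=0$: on the torus there is no scaling and no smoothing from stationary phase or the $TT^*$ method available on $\R^d$, so one must exploit the arithmetic of integer frequencies, and choosing the modulation exponents $b,b'$ so that the contraction closes while keeping $s=0$ admissible (and $\delta$ dependent only on $\|u_0\|_{L^2}$) is the delicate point.
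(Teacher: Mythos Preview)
The paper does not give its own proof of this theorem: it is stated as a quotation of Bourgain's result, with the citation \cite{bourgain1993} attached to the theorem environment, and is used as a black box to ensure that the cubic NLS on $\T$ defines a flow on $H^s$. Your outline is a faithful sketch of Bourgain's original argument---Duhamel formulation, contraction in $X^{s,b}$ spaces, the periodic $L^4$ Strichartz estimate $\|u\|_{L^4_{t,x}}\lesssim\|u\|_{X^{0,3/8}}$ obtained from the divisor-counting fact on $\T$, the resulting trilinear bound with the derivative on the highest frequency so that the local time depends only on $\|u_0\|_{L^2}$, and then iteration via $L^2$ conservation to get both global existence and the exponential-in-$|t|$ Lipschitz estimate. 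So there is nothing to compare: you have supplied (correctly) precisely the proof the paper defers to the reference.
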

This ensures the existence of a global flow 
	\begin{equation*}
		\Phi^X: \R \times H^s(\T,\C)\to H^s(\T,\C). 
	\end{equation*}
for all $s\ge 1$. Hence, we can choose $E=H^1(\T,\C)$ and $\Dcal=H^3(\T,\C)$ to ensure the conditions of Section~\ref{ss:dynamicsbanach} are satisfied.


As before, by using the invariance of Equation \eqref{eq:nlsper} under space translations and phase rotations, we can show that the dynamics defined by $\Phi^X_t$ are invariant under the action of the group $G= \R \times \R$ given by
\begin{equation}\label{eq:nlsinvarianceper}
	(\Phi_{a,\gamma}(u))(x)=e^{i\gamma} u(x-a).
\end{equation}
As an example of $G$-relative equilibria, we can consider the two-parameter family of plane waves\index{plane waves}
\begin{equation}
 	\label{eq:planewavesreleq}
 	u_{\alpha,k}(t,x)=\alpha e^{-ikx} e^{i\en t}
\end{equation} 
with $\alpha\in \R$ and $k\in \Z$ and $\xi=-k^2\pm |\alpha|^2$. The $G$-orbit of the initial condition $u_{\alpha,k}(x)=\alpha e^{-ikx}$ is given by 
	\begin{equation*}
		\Ocal_{u_{\alpha,k}}=\left\{\alpha e^{i\gamma}e^{-ik(x-a)}, (a,\gamma)\in G\right\}.
	\end{equation*}
	As before, it is clear that $u_{\alpha,k}(t,x)\in \Ocal_{u_{\alpha,k}}$ for all $t\in \R$. We will study the orbital stability of these relative equilibria in Section~\ref{s:nlsetorus1d}.

Remark that plane waves are the simplest elements of a family of solutions of the NLS equation of the form 
\begin{equation*}
u_{p,c}(t,x)=e^{i\xi t}e^{-ipx}U(x-ct), \ (t,x)\in\R\times\R
\end{equation*}
with $\xi,p,c\in\R$ and $U:\R\to\C$ a periodic function. This kind of solutions are called \emph{quasi-periodic travelling waves} and their orbital stability has been studied in \cite{galhar07b,galhar07a}.


\subsection{The Manakov equation}\label{ss:manakov}

The Manakov equation\index{equation!Manakov} \cite{manakov74,gazeau12} is a system of two coupled nonlinear Schr\"odinger equations which describe the evolution of nonlinear electric fields in optical fibers with birefringence, defined by 
\begin{equation}
	\label{eq:nlsmanakov}
	\left\{
	\begin{aligned}
		&i\partial_t u(t,x) +\Delta u(t,x) +\lambda|u(t,x)|^2 u(t,x)=0\\
		&u(0,x)=u_0(x)
	\end{aligned}
	\right.
\end{equation}
with $u(t,x)=\begin{pmatrix}u_1(t,x)\\u_2(t,x)\end{pmatrix}:\R\times\R\to \C^2$, $|u(t,x)|^2=(|u_1(t,x)|^2+|u_2(t,x)|^2)$ and $\lambda\in \RR$.

With the same arguments as those used for the nonlinear Schr\"odinger equation \eqref{eq:nlspurepower}, one can easily show that the flow is globally defined in $H^1(\R,\C^2)$, i.e.
\begin{equation}
	\Phi^{X}:\R\times H^{1}(\R,\C^2)\to H^1(\R,\C^2)
\end{equation}
both in the focusing ($\lambda>0$) and in the defocusing case ($\lambda<0$). Moreover, thanks to the propagation of regularity, the flow preserves $H^3(\R,\C^2)$ i.e.
\begin{equation}
	\Phi^{X}:\R\times H^{3}(\R,\C^2)\to H^3(\R,\C^2)
\end{equation}
as before. Hence, using the notation of Section \ref{ss:dynamicsbanach}, one can choose $E=H^1(\R,\C^2)$ and the domain $\mathcal D=H^3(\R,\C^2)$. 

Now, let $(a, S)\in G=\R \times \mathrm{U}(2)$ act on $E=H^1(\R,\C^2)$ via 
	\begin{equation}\label{eq:manakovinvariance}
		\Phi_{a,S}(u)=S u(x-a).
	\end{equation}
	Here the group law of $G$ is
	$
		(a_1,S_1)(a_2,S_2)=(a_1+a_2,S_1S_2)
	$
	for all $a_1,a_2\in \R^d$ and $S_1, S_2\in \mathrm{U}(2)$. 
	A straightforward calculation proves that $G$ is an invariance group for the dynamics $\Phi_t^X$. 

In the focusing case ($\lambda>0$), there exists a family of solitons,
	\begin{equation*}
		u_{\nu}(t,x)=\alpha \sqrt{\frac{2}{\lambda}}\sech(\alpha(x-ct))e^{i\left(\frac{c}{2} x+\left(\alpha^2-\frac{c^2}{4}\right)t\right)}\begin{pmatrix}\cos\theta e^{i\gamma_1}\\ \sin\theta e^{i\gamma_2} \end{pmatrix}
	\end{equation*}
	that are solutions to \eqref{eq:nlsmanakov} for all $\nu=(\alpha,c,\theta,\gamma_1,\gamma_2)\in \R^5$, with initial condition 
		\begin{equation*}
		u_{\nu}(x)=u_{\nu}(0,x)=\alpha \sqrt{\frac{2}{\lambda}}\sech(\alpha x)e^{i\left(\frac{c}{2} x\right)}\begin{pmatrix}\cos\theta e^{i\gamma_1}\\ \sin\theta e^{i\gamma_2} \end{pmatrix}\in E=H^1(\R,\C^2).
	\end{equation*}
	For each $\nu\in \R^5$, $u_{\nu}(x)$ is a $G$-relative equilibrium of \eqref{eq:nlsmanakov}. Indeed, the $G$-orbit of $u_{\nu}(x)$ is given by 
	\begin{equation*}
		\Ocal_{u_{\nu}}=\left\{Su_{\nu}(x-a), (a,S)\in \R\times \mathrm{U}(2)\right\}.
	\end{equation*}
	Hence, it is clear that for all $t\in \R$, $u_{\nu}(t,x)\in \Ocal_{u_{\nu}}$ and, by Definition \ref{def:orbfixedpoint}, we can conclude that $u_{\nu}(x)$ is a $G$-relative equilibrium of \eqref{eq:nlscubrelativeeq}.	


\subsection{The nonlinear wave equation}\label{ss:nlw}

Let us consider the nonlinear wave\index{equation!nonlinear wave} equation 

\begin{equation}
	\label{eq:nlwpurepower}
	\left\{
	\begin{aligned}
		&\partial^2_{tt} u(t,x) -\Delta u(t,x) +\lambda |u(t,x)|^{\sigma-1} u(t,x)=0\\
		&u(0,x)=u_0(x), \partial_t u(0,x)=u_1(x)
	\end{aligned}
	\right.
\end{equation}
with $u(t,x):\R\times\R^d\to \R$, and, for simplicity, let us take $d=1,2,3$. Moreover, we restrict our attention to the defocusing case, that in our notation corresponds to $\lambda>0$ (because of the minus sign in front of the Laplacian), and to the so-called algebraic nonlinearities, which means $\sigma\in \N$ is odd. As a consequence the function $f(u)=|u|^{\sigma-1} u$ is smooth.

Let $H$ defined by 
\begin{equation}\label{eq:nlwhamiltonian}
	H(u,\partial_t u)=\frac 12 \int_{\R^d} |\nabla u|^2\diff x+\frac 12 \int_{\R^d} |\partial_t u|^2\diff x+\frac{\lambda}{\sigma+1}\int_{\R^d}|u|^{\sigma+1}\diff x
\end{equation}
be the Hamiltonian of the system. As for the Schr\"odinger equation, we will explain in Section \ref{s:hamdyninfinite} how the nonlinear wave equation defines an infinite dimensional Hamiltonian dynamical system.

In the defocusing case and whenever $1\le\sigma<+\infty$ for $d=1$ or $1\le\sigma<1+\tfrac{4}{d-2}$ for $d=2,3$, we can define a global flow on $H^1(\R^d,\R)\times L^2(\R^d,\R)$, i.e. 
\begin{equation}
	\label{eq:flownlw}
	\begin{aligned}
		\Phi^X:\R\times (H^1(\R^d,\R)\times L^2(\R^d,\R))&\to H^1(\R^d,\R)\times L^2(\R^d,\R)\\
		 (t,u(0),\partial_t u(0))&\to (t,u(t),\partial_t u(t))
	\end{aligned}
\end{equation}
with $u\in C(\R,H^1(\R^d))\cap C^1(\R,L^2(\R^d))$ the unique solution to \eqref{eq:nlwpurepower}. Moreover the Hamiltonian  energy \eqref{eq:nlwhamiltonian} is conserved along the flow, i.e.
\begin{equation*}
	H(u(0),\partial_t u(0))=H(u(t),\partial_t u(t))
\end{equation*}
for all $t\in \R$ (see \cite{tao_book} and references therein). Furthermore, it follows from the integral form of \eqref{eq:nlwpurepower} (see \cite[Ex.~2.18 and 2.22]{tao_book}) 
that $u\in C^2(\R,H^{-1}(\R^d))$.

In the algebraic case, thanks to the persistence of regularity, the flow can be restricted to $H^s(\R^d,\R)\times H^{s-1}(\R^d,\R)$,
\begin{equation*}
		\Phi^X:\R\times (H^s(\R^d,\R)\times H^{s-1}(\R^d,\R))\to H^s(\R^d,\R)\times H^{s-1}(\R^d,\R)
\end{equation*}
for all $s>\tfrac{d}{2}$. Hence, using the notation of Section \ref{ss:dynamicsbanach}, $E=H^1(\R^d,\R)\times L^2(\R^d,\R)$ and the domain $\mathcal D$ of the vector field $X$ can be chosen to be the Sobolev space $H^2(\R^d,\R)\times H^1(\R^d,\R)$. 

As for the nonlinear Schr\"odinger equation, by using the invariance of Equation \eqref{eq:nlwpurepower} under space rotations,  space translations and phase rotations, we can show that the dynamics defined by $\Phi^X_t$ are invariant under the action of the group $G=\mathrm{SO}(d)\times \R^d$ on $E=H^1(\R^d,\R)\times L^2(\R^d,\R)$ defined by 
\begin{equation*}
	\Phi_{R,a}(u,\partial_t u)=(u(R^{-1}(x-a)),\partial_t u(R^{-1}(x-a))).
\end{equation*}
Moreover, $H\circ \Phi_{R,a,\gamma}=H$ and we will explain in Section \ref{ss:symconstmotion} the consequences of this fact.

\subsection{Generalized symmetries}\label{ss:gensym}

The nonlinear Schr\"odinger equation is often said to be invariant under Galilei transformations. This invariance is however of a slightly different nature than the one defined in Definition~\ref{def:symgroup}, as we now explain\footnote{We will, in this section, make free use of the material of Appendices~\ref{s:liegroups} and~\ref{s:hammech}.}. 

Recall that Newtonian mechanics is known to be invariant under coordinate changes between inertial frames. These include space and time translations, rotations, and changes to a moving frame, often referred to as Galilei boosts. All together, they form a group, the Galilei group $G_{\mathrm{Gal}}$, which is a Lie group that can be defined formally as
$$
\Gal=\mathrm{SO}(d)\times\R^d\times\R^d\times\R
$$
with composition law
$$
(R', v', a', t')(R, v, a, t)=(R'R, R'v+v', R'a+a'+v't, t+t').
$$
It acts naturally on space-time $(x,t)\in\R^d\times\R$ as follows:
$$
(R', v', a', t')(x,t)=(R'x+a'+v't, t'+t).
$$
Of course, the physical case corresponds to $d=3$. 

The statement that Newton's equations are invariant under boosts means for example that, if $t\to(q_1(t), q_2(t))$ is the solution of Newton's equations of motion for two particles moving in a spherically symmetric interaction potential $V$ 
$$
m_1\ddot q_1(t)=-\nabla_{q_1}V(\|q_1(t)-q_2(t)\|), \quad m_2\ddot q_2(t)=-\nabla_{q_2}V(\|q_1(t)-q_2(t)\|),
$$
with initial conditions
$$
q_1(0)=q_1,\ q_2(0)=q_2,\ \dot q_1(0)=\frac{p_1}{m_1},\ \dot q_2(0)=\frac{p_2}{m_2},
$$
then, for all $v\in\R^3$, $t\to (q_1(t)+vt, q_2(t)+vt)$ is also such a solution, with initial conditions
$$
q_1(0)=q_1,\ q_2(0)=q_2,\ \dot q_1(0)=\frac{p_1}{m_1}+v,\ \dot q_2(0)=\frac{p_2}{m_2}+v.
$$
In a Hamiltonian description\footnote{See Appendix~\ref{s:hammech}.}, the above equations of motion are associated to the Hamiltonian
$$
H(q,p)=\frac{p_1^2}{2m_1}+\frac{p_2^2}{2m_2}+V(\|q_1-q_2\|),
$$
which generates a flow $\Phi_t^H$ that is clearly invariant under space translations and rotations. The situation for Galilei boosts, however,  is different. Indeed, in this context they act on the phase space $E=\R^6\times \R^6$ with symplectic transformations, as follows:
$$
\forall v\in\R^3, \quad \Phi^K_v(q,p)=(q, p_1-m_1v, p_2-m_2v).
$$
Here $K=m_1q_1+m_2 q_2$ and $\Phi_v^K$ is a shorthand notation for 
$$
\Phi_v^K=\Phi_{v_1}^{K_1}\circ\Phi_{v_2}^{K_2}\circ\dots\circ\Phi_{v_n}^{K_n,},
$$
where each $\Phi_{v_i}^{K_i}$ is the hamiltonian flow of one component of $K$. 
But those do NOT commute with the dynamical flow $\Phi_t^H$. Indeed, one easily checks that
\begin{equation}\label{eq:galileiinv}
\Phi^K_{v}\Phi_t^H\Phi^K_{-v}=\Phi^P_{vt}\Phi_t^H,
\end{equation}
where $P=p_1+p_2$ is the total momentum of the system, which generates translations: 
$\Phi_a^P(q_1, q_2, p_1, p_2)=(q_1+a, q_2+a, p_1, p_2)$. 
In that sense, the three dimensional commutative group of Galilei boosts is NOT an invariance group for the dynamical system according to Definition~\ref{def:symgroup}. To remedy this situation, one can proceed as follows. Define, on $\Ban=\R^6\times\R^6$, for each $g=(R,v, a, t) \in \Gal$, the symplectic transformation
$$
\Phi_g=\Phi^P_a\Phi_t^H\Phi^K_v\Phi_R,
$$
where $\Phi_R$ is defined as in~\eqref{eq:so3}. It is then easily checked using~\eqref{eq:galileiinv} that the $\Phi_g$ define an action of $\Gal$ on $\Ban$. It is clearly globally Hamiltonian (Definition~\ref{def:globhamaction})\footnote{It is however not Ad$^*$-equivariant.}. It follows that the Galilei boosts are generalized symmetries for the dynamical system $\Phi_t^H$, in the following sense:
\begin{definition}\label{def:generalizedsym} Let $G$ be  a Lie group, and $\Phi$ an action of $G$ on a Banach space $\Ban$. Let $\Phi_t^X$ a dynamical system on $\Ban$. We say $G$ is a generalized symmetry group\index{generalized symmetry group} for $\Phi_t^X$ provided there exists $\xi\in \frak g$ so that 
$\Phi_t^X=\Phi_{\exp(t\xi)}$.
\end{definition} 
For our purposes, an important difference between symmetries and generalized symmetries in Hamiltonian systems is that the latter do NOT give rise to constants of the motion. To illustrate this, remark that, although the Galilei boosts are generated by $K(q,p)=m_1q_1+m_2q_2$, it is clear that $K$ is not a constant of the motion of $H$:
\begin{equation}\label{eq:centermass}
\{K, H\}=P,
\end{equation}
where $P=p_1+p_2$ is the total momentum of the two-particle system. This is not a surprise: $K=MR$, where $R$ is the center of mass of the two-particle system and $M=m_1+m_2$ its mass. And of course, the center of mass moves: in fact,~\eqref{eq:centermass} implies it moves at constant velocity. 

A similar situation occurs with the nonlinear Schr\"odinger equation. If $u(t,x)$ is  a solution of~\eqref{eq:nlsgen} with a power law nonlinearity, then so is, for every $v\in\R^d$,
\begin{equation}\label{eq:boost}
\tilde u(t,x)=\exp\big(-{\txt\frac{i}{2} (v\cdot x}+\txt\frac{v^2}{2}t)\big)\, u(t, x+vt),
\end{equation}
as is readily checked. The function $\tilde u$ can be interpreted as the wave function in the moving frame, as can be seen from the shift $x\to x+vt$ in position and from the factor $\exp(-i\frac{v}{2}\cdot x)$, which corresponds to a translation by $\frac12 v$ in momentum, in the usual quantum mechanical interpretation of the Schr\"odinger equation. Adopting the framework of Section~\ref{s:dynsysexamples}, one observes that the maps
$$
 \widehat\Psi_v\,u(x)=\exp\left(-{\txt\frac{i}{2} (v\cdot x})\right)\, u( x),
 $$
 defined for all $v\in\R^d$ on $\Ban=H^1(\R^d)$ are not symmetries for the Schr\"odinger flow $\Phi^{X}$ defined in~\eqref{eq:schrodflow} but that
\begin{equation}\label{eq:boostinvariance}
\widehat\Psi_{v}\Phi_t^{X}\widehat\Psi_{-v}=\Phi_{I, vt, -\tfrac{v^2}{4}t}\Phi_t^{X},
\end{equation}
where $\Phi_{I, vt,  -\tfrac{v^2}{4} t}$ is defined in~\eqref{eq:nlsinvariance}. This commutation relation is very similar to~\eqref{eq:galileiinv}, except for the extra phase $\exp(-i\tfrac{v^2}{4}t)$. We note in passing that  the boosts $\hat\Psi_v$ are unitary on $L^2$, but do not preserve the $H^1$ norm. They are nevertheless bounded operators on $\Ban=H^1(\R^d)$.

As in classical mechanics, one can put together the above transformations with the representation of the Euclidean group in~\eqref{eq:nlsinvariance} to form a (projective) representation of the Galilei group showing that the Galilei boosts are generalized symmetries of the nonlinear Schr\"odinger equation with a power law nonlinearity. We will not work this out in detail here, but note for further use that
\begin{equation}\label{eq:projective}
\Phi_{R, a, \gamma} \widehat\Psi_v=\exp(i\tfrac{v\cdot a}{2})\widehat \Psi_{Rv}\Phi_{R, a, \gamma}.
\end{equation}
In particular $  \Phi_{I, a, 0} \widehat\Psi_v=\exp(i\tfrac{v\cdot a}{2})\widehat \Psi_{v}\Phi_{I, a, 0}$ so that, in this setting, the boosts $\hat \Psi_v$  commute with translations only ``up to a global phase'' $\exp(i\tfrac{v\cdot a}{2})$, in the usual terminology of quantum mechanics. In contrast, in classical mechanics, $\Phi^K_v$ and $\Phi_a^P$ clearly commute. 

Generalized symmetries do not provide constants of the motion via Noether's Theorem, and hence cannot quite play the same role as symmetries in the study of relative equilibria. We will now show how one may nevertheless use~\eqref{eq:boostinvariance} in the analysis of the stability of the relative equilibria of the (non)linear Schr\"odinger equation. 

 We first remark that the $u_{\alpha, c}$, defined in~\eqref{eq:soliton}, satisfy $u_{\alpha, c}=\widehat\Psi_{-c}u_{\alpha, 0}$. We will show that, thanks to~\eqref{eq:boostinvariance}, if $u_{\alpha, 0}$ is orbitally stable, then so is $u_{\alpha, c}$, for any $c\in\R$. We will only sketch the argument, leaving the details to the reader. Note first that $u_{\alpha, 0}$ is orbitally stable, if and only if, for all $\epsilon>0$, there exists $\delta>0$ so that, for all $w\in \Ban$ with $\rd(w, u_{\alpha, 0})\leq \delta$, there exists, for all $t\in\R$, $a(t)\in\R, \gamma(t)\in\R$ so that
$$
\|\Delta_t\|\leq \epsilon,\quad\mathrm{where}\quad \Delta_t:=\Phi_t^Xw-\Phi_{I, a(t), \gamma(t)}u_{\alpha,0}.
$$
Now suppose $u\in\Ban$ is sufficiently close to $u_{\alpha, c}$, for some $c\in\R$. Then, since $\widehat\Psi_c$ is a bounded operator, $\widehat\Psi_{c}u=w$ is close to $u_{\alpha, 0}$. Then, using~\eqref{eq:boostinvariance} and~\eqref{eq:projective}, one finds
\begin{align*}
\Phi_t^Xu&=\Phi_t^X\widehat\Psi_{-c}w\\
&=\widehat\Psi_{-c}\Phi_{I,ct,-\frac{c^2}{4}t}\Phi_t^Xw\\
&=\widehat\Psi_{-c}\Phi_{I,ct,-\frac{c^2}{4}t}\Phi_{I, a(t), \gamma(t)}u_{\alpha,0}+\widehat\Psi_{-c}\Phi_{I,ct,-\frac{c^2}{4}t}\Delta_t\\
&=\widehat\Psi_{-c}\Phi_{I,ct+a(t),-\frac{c^2}{4}t+\gamma(t)}u_{\alpha, 0}+\widehat\Psi_{-c}\Phi_{I,ct,-\frac{c^2}{4}t}\Delta_t\\
&=\Phi_{I,ct+a(t),-\tfrac{c^2}{4}t+\gamma(t)+\tfrac{c(ct+a(t))} {2}}\widehat\Psi_{-c}u_{\alpha, 0}+\widehat\Psi_{-c}\Phi_{I,ct,-\frac{c^2}{4}t}\Delta_t.
\end{align*}
Since $u_{\alpha, c}=\widehat\Psi_{-c}u_{\alpha, 0}$, and since $\widehat \Psi_{-c}$ is bounded, it is now clear that $\Phi_t^Xu$ is at all times close to $\Ocal_{u_{\alpha, c}}$, defined in~\eqref{eq:solitonorbit}. 

The above argument shows, more generally, that the relative equilibria of the homogeneous NLS for $G=\mathrm{SO}(d)\times\R^d\times\R$ (see~\eqref{eq:nlsinvariance}) come in families $\widehat \Psi_{-c}u_0=u_{c}$, indexed by $c\in\R^d$. Moreover, if $u_0$ is spherically symmetric and orbitally stable, then all $u_c$ are orbitally stable.

\section{Orbital stability: a general definition}\label{s:orbstab}
We can now formulate the general definition of orbital stability that we shall study. In fact, several definitions appear naturally:
\begin{definition} \label{def:orbstabgen}  Let $\Phi_t^X$ be a dynamical system on a Banach space $\Ban$ and let $G$ be a symmetry group for $\Phi_t^X$. 
\begin{enumerate}[label=({\roman*})]
\item Let $u\in \Ban$ and let $\Ocal_u$ be the corresponding $G$-orbit . We say $u\in \Ban$ is orbitally stable\index{orbitally stable} if 
$$
\forall \epsilon>0, \exists \delta>0, \forall v\in \Ban,\  \left(\rd(v,u)\leq \delta\Rightarrow \forall t\in\R, \ \inf_{t'\in\R}\rd(v(t), \Ocal_{u(t')})\leq \epsilon\right).
$$
\item Let $\Ocal$ be a $G$-orbit in $\Ban$. We say $\Ocal$ is stable if each $u\in \Ocal$ is orbitally stable in the sense of (i) above. 
\item Let $\Ocal$ be a $G$-orbit in $\Ban$. We say $\Ocal$ is uniformly stable if it is stable and $\delta$ in (i) does not depend on $u\in\Ocal$. In other words, if $\forall \epsilon>0$, there exists $\delta>0$ so that, $\forall u\in\Ocal$, $\forall v\in \Ban$, 
\begin{equation}\label{eq:orbstabunif}
\rd(v,u)\leq \delta\Rightarrow \forall t\in\R, \ \inf_{t'\in\R}\rd(v(t), \Ocal_{u(t')})\leq \epsilon.
\end{equation}
\item We say $\Ocal\in \Ban_G$ is  Hausdorff orbitally stable if
$\Ocal$ satisfies: $\forall \epsilon>0$, there exists $\delta >0$ so that, $\forall \Ocal'\in E_G$
\begin{equation}
\Delta(\Ocal, \Ocal')\leq \delta\Rightarrow \forall t\in\R,\
\inf_{t'}\Delta(\Ocal'(t), \Ocal(t') )\leq \epsilon.
\end{equation}
\end{enumerate}
\end{definition}
The four definitions are subtly different. 

Definition (i) requires that the dynamical orbit issued from the nearby initial condition $v$ remains close to the orbit $\{\Phi_{t'}^X\Phi_g(u)\mid  t'\in\R, g\in G\}$ of the larger group $\R\times G$. It is therefore a generalization of definition~\eqref{eq:orbstab0}, which corresponds to the case $G=\{e\}$. This notion of orbital stability therefore depends on the choice of the group $G$ and it is clear that, the larger $G$, the weaker it is. As we will see in the examples of Section~\ref{s:spherpotstab} and Section~\ref{ss:hampde}, there are cases where definition~\eqref{eq:orbstab0} is not satisfied for some $u\in E$, but where the above definition holds for a suitable choice of $G$. As we will also see, the choice of $G$ may depend on the point $u\in E$ considered and it is in particular not always necessary to use the largest symmetry group $G$ available for $\Phi_t^X$ to obtain orbital stability. 

The stability of the orbit $\Ocal$ as defined in part (ii) simply requires the orbital stability of each point $u\in\Ocal$, as defined in (i). Note that $\delta$ depends on $u$ here. In part~(iii) of the definition, uniformity is required. 

Part (iv) requires that if two $G$-orbits $\Ocal, \Ocal'\subset \Ban$ are initially close (in the sense of the Hausdorff metric) then, for all $t$, $\Ocal'(t)$ is close to $\Ocal(t')$ for some value of $t'$. It is the natural transcription of the definition of orbital stability in~\eqref{eq:orbstab0} from the original dynamical system on $E$  to the reduced dynamics on $\Ban_G$.  

Parts (i), (ii) and (iii) are the most telling/interesting, since they give a statement directly on the phase space $E$, using the original distance $\rd$, rather than in the more abstract quotient space $\Ban_G$. They do moreover not use the somewhat unpleasant Hausdorff metric. In applications, one really wants to prove (i), (ii) or~(iii). 

As shown in the lemma below,  the four definitions in Definition~\ref{def:orbstabgen} are equivalent when the group action is isometric. For many applications in infinite dimensional systems in particular, this is the case.

\begin{lemma} \label{lem:isometric} Let $\Phi_t^X$ be a dynamical system on $E$ and let $G$ be a symmetry group for $\Phi_t^X$, acting isometrically. Let $u\in E$. Then the following statements are equivalent.
\begin{enumerate}[label=({\roman*})]
\item $u\in E$ is orbitally stable. 
\item Each $v\in \Ocal_u$ is orbitally stable. 
\item $\Ocal_u$ is uniformly stable.
\item $\Ocal_u$ is Hausdorff orbitally stable.
\end{enumerate}
\end{lemma}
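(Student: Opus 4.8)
The plan is to prove the cycle $(iii)\Rightarrow(ii)\Rightarrow(i)\Rightarrow(iii)$ and, separately, $(iii)\Rightarrow(iv)\Rightarrow(i)$, after which all four statements are equivalent. Two of these implications are immediate: uniform stability of $\Ocal_u$ trivially implies its stability, i.e.\ $(iii)\Rightarrow(ii)$; and since $u\in\Ocal_u$, the stability of $\Ocal_u$ contains as a special case the orbital stability of $u$, i.e.\ $(ii)\Rightarrow(i)$. So all the content sits in $(i)\Rightarrow(iii)$, $(iii)\Rightarrow(iv)$ and $(iv)\Rightarrow(i)$, and the tools are exactly: the commutation $\Phi_g\circ\Phi_t^X=\Phi_t^X\circ\Phi_g$ (Definition~\ref{def:symgroup}), which via~\eqref{eq:orbdyn} gives $\Phi_t^X\Ocal_y=\Ocal_{y(t)}$ for every $y$; the fact that each $\Phi_g$ is an isometric bijection of $E$ carrying every $G$-orbit onto itself (setwise, since $\Phi_g\Ocal_y=\{\Phi_{gh}(y):h\in G\}=\Ocal_y$); and Proposition~\ref{prop:isometry}, which for isometric actions gives $\rd(y,\Ocal')=\Delta(\Ocal_y,\Ocal')$ and $\Delta(\Ocal_y,\Ocal')\le\rd(y,y')$ for $y'\in\Ocal'$.

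For $(i)\Rightarrow(iii)$: fix $\epsilon>0$ and take the $\delta>0$ provided by the orbital stability of $u$. Let $w\in\Ocal_u$, say $w=\Phi_g(u)$, and let $v\in E$ with $\rd(v,w)\le\delta$. Put $\tilde v=\Phi_{g^{-1}}(v)$; since $\Phi_{g^{-1}}$ is an isometry and $\Phi_{g^{-1}}(w)=u$, one has $\rd(\tilde v,u)\le\delta$, so $(i)$ yields $\inf_{t'}\rd(\tilde v(t),\Ocal_{u(t')})\le\epsilon$ for all $t$. Now $\tilde v(t)=\Phi_{g^{-1}}(v(t))$ by the commutation relation, and because $\Phi_{g^{-1}}$ is an isometric bijection fixing each $G$-orbit one gets $\rd(\tilde v(t),\Ocal_{u(t')})=\rd(v(t),\Ocal_{u(t')})$; since moreover $\Ocal_{w(t')}=\Ocal_{u(t')}$, the same $\delta$ works for every $w\in\Ocal_u$, which is uniform stability.

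For $(iv)\Rightarrow(i)$: fix $\epsilon>0$, take $\delta>0$ from $(iv)$, and let $v\in E$ with $\rd(v,u)\le\delta$. Then $\Delta(\Ocal_v,\Ocal_u)\le\delta$ by Proposition~\ref{prop:isometry}, so $(iv)$ gives $\inf_{t'}\Delta(\Ocal_{v(t)},\Ocal_{u(t')})\le\epsilon$ for all $t$; rewriting $\Delta(\Ocal_{v(t)},\Ocal_{u(t')})=\rd(v(t),\Ocal_{u(t')})$ (again Proposition~\ref{prop:isometry}, using $v(t)\in\Ocal_{v(t)}$) gives $(i)$. For $(iii)\Rightarrow(iv)$: fix $\epsilon>0$, take $\delta>0$ from $(iii)$, and let $\Ocal'\in E_G$ with $\Delta(\Ocal_u,\Ocal')\le\delta/2$. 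Choose any $v\in\Ocal'$; then $\rd(v,\Ocal_u)=\Delta(\Ocal',\Ocal_u)\le\delta/2$, so there is $w\in\Ocal_u$ with $\rd(v,w)\le\delta$. Applying $(iii)$, $\inf_{t'}\rd(v(t),\Ocal_{u(t')})\le\epsilon$; since $v(t)\in\Phi_t^X\Ocal'=\Ocal'(t)$, which is again a $G$-orbit, Proposition~\ref{prop:isometry} converts this into $\inf_{t'}\Delta(\Ocal'(t),\Ocal_{u(t')})\le\epsilon$, i.e.\ $(iv)$.

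The delicate point --- indeed essentially the only one --- is the simultaneous bookkeeping in $(i)\Rightarrow(iii)$: one must verify at once that $\Phi_{g^{-1}}$ sends a $\delta$-neighbour of $w$ to a $\delta$-neighbour of $u$, that it commutes with the flow, and that it leaves every comparison orbit $\Ocal_{u(t')}$ invariant, so that the estimate furnished by $(i)$ transfers verbatim with a $\delta$ independent of $w\in\Ocal_u$. A minor technical nuisance is that the infimum defining $\rd(v,\Ocal_u)$ need not be attained, which is why in $(iii)\Rightarrow(iv)$ I shrink to $\delta/2$ before selecting $w$.
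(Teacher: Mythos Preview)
Your proof is correct and follows essentially the same route as the paper: both establish the key implication $(i)\Rightarrow(iii)$ by conjugating with $\Phi_{g^{-1}}$ and using that an isometry fixing each $G$-orbit preserves distances to those orbits, and both close the loop through $(iii)\Rightarrow(iv)\Rightarrow(i)$ via Proposition~\ref{prop:isometry}. Your $\delta/2$ shrinking in $(iii)\Rightarrow(iv)$ is in fact slightly more careful than the paper's version, which glosses over the possible non-attainment of the infimum.
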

In practice, one often proves (i) for a suitably chosen $u$ on the orbit. This then automatically yields (iii). The statement in terms of the reduced dynamics in (iv) is intellectually satisfying but rarely encountered, it seems. 
\begin{proof}

We prove $(i)\Leftrightarrow(ii)$ and $(i)\Rightarrow(iii)\Rightarrow(iv)\Rightarrow(i)$.

$(i)\Rightarrow(iii)$ and $(i)\Rightarrow(ii)$: Let $v\in\Ocal_u$ and $v'\in E$, $\rd(v', v)\leq \delta$. Then there exists $g\in G$ so that $v=\Phi_g(u)$. Define $u'=\Phi_g^{-1}(v')$. Then, by the isometry of $\Phi_g$, $\rd(u',u)\leq \delta$ and hence, by hypothesis, for all $t$, there exists $t'$ so that $\rd(u'(t), \Ocal_{u(t')})<\epsilon.$ Hence 
$$
\rd(v'(t), \Ocal_{v(t')})=\rd(\Phi_g(u'(t)), \Ocal_{u(t')})=\rd(u'(t),\Ocal_{u(t')})\leq \epsilon.
$$
This proves $(iii)$ and, in particular, $(ii)$. Since it is clear that $(ii)\Rightarrow(i)$, we obtain  $(i)\Leftrightarrow(ii)$.

$(iii)\Rightarrow(iv)$: Suppose $\Ocal_u$ is uniformly stable. Let $\Ocal'$ be such that $\Delta(\Ocal_u, \Ocal')<\delta$. Let $u'\in \Ocal'$ with $\rd(u,u')\leq \delta$. Then~\eqref{eq:orbstabunif}, together with Proposition~\ref{prop:isometry} $(ii)$, imply $\Delta(\Ocal'(t),\Ocal_{u(t')})\leq \epsilon$.

$(iv)\Rightarrow(i)$: Suppose $\Ocal_u$ is orbitally stable. Let $u'\in E$ so that $\rd(u,u')\leq \delta$. Let $\Ocal'=\Ocal_{u'}$. Then, by Proposition~\ref{prop:isometry}~$(iii)$, $\Delta(\Ocal,\Ocal')\leq\delta$. Hence, for all $t$, $\inf_{t'}\Delta(\Ocal'(t), \Ocal(t'))\leq\epsilon$. Proposition~\ref{prop:isometry}~$(ii)$ then implies~$(i)$. 
\end{proof}
In many applications, especially in infinite dimensional problems, the $\Phi_g$ are both linear and norm-preserving: several examples were given in Section~\ref{s:dynamics}. In that case the action is of course isometric. In addition, all group orbits are then bounded. Note nevertheless that, if the $\Phi_g$ are norm-preserving, but not linear, the action is no longer isometric, while the group orbits are still bounded.  Finally, isometric actions may have unbounded group orbits: think for example of translations on $\Ban=\R^{2n}$.

\section{Orbital stability in spherical potentials}\label{s:spherpotstab}
Before presenting the general Lyapunov approach to the proof of orbital stability in Section~\ref{s:orbstabproof}, we show here the orbital stability of the relative equilibria in spherical potentials that we identified in Section~\ref{s:spherpot}. This simple example is instructive for several reasons. First, it permits one  to appreciate  the group theoretic and symplectic mechanisms  underlying the construction of a suitable candidate Lyapunov function. Second, it nicely illustrates the various methods available to use this Lyapunov function in order to prove orbital stability via an appropriate ``coercivity estimate'' generalizing~\eqref{eq:localmin}. We will present three such methods below. 
\subsection{Fixed points}\label{s:fixedpoints}
The proof of the uniform orbital stability of $\Ocal_{\rho_*,0,0}$ in~\eqref{eq:fixorbit} is straightforward, and can be done with $H$ itself as the Lyapunov function, in close analogy with the proof sketched in the introduction. 
\begin{proposition} \label{lem:sphericalfixedpoints} Let $V\in C^2(\R^3)$ be a spherical potential and $H(u)=\frac12 p^2+V(q)$ the corresponding Hamiltonian. Let $\rho_*>0$ with $V'(\rho_*)=0$, $V''(\rho_*)>0$. Let $\Ocal_{\rho_*,0,0}=\{(q,p)\in\R^6\mid \|q\|=\rho_*, p=0\}$ be the corresponding SO$(3)$ orbit. Then $\Ocal_{\rho_*, 0,0}$ is uniformly orbitally stable.
\end{proposition}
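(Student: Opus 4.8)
The plan is to run the Lyapunov argument sketched in the Introduction, using $H$ itself as Lyapunov function and the whole orbit $\Ocal:=\Ocal_{\rho_*,0,0}$ in the role of the non-degenerate minimum: the point is that $H\equiv V(\rho_*)$ on $\Ocal$ and that $H$ has a coercive minimum in the directions transverse to $\Ocal$.

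First I would record two preliminaries. By the equations of motion~\eqref{eq:eqmotionspherical} and $V'(\rho_*)=0$, every point of $\Ocal$ is a \emph{fixed point} of the flow; hence $\Ocal_{u(t')}=\Ocal$ for all $u\in\Ocal$ and all $t'$, so orbital stability of $u\in\Ocal$ in the sense of Definition~\ref{def:orbstabgen}(i) reduces to: for every $\epsilon>0$ there is $\delta>0$ such that $\rd(v,u)\leq\delta$ implies $\rd(v(t),\Ocal)\leq\epsilon$ for all $t\in\R$, where $\rd$ is the Euclidean distance on $\R^6$. Since $\rd(v,u)\leq\delta$ for some $u\in\Ocal$ forces $\rd(v,\Ocal)\leq\delta$, uniform stability (Definition~\ref{def:orbstabgen}(iii)) will follow as soon as $\delta$ can be chosen depending on $\epsilon$ only. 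Second, a one-line computation gives, for $q\neq0$,
\begin{equation*}
\rd\big((q,p),\Ocal\big)^2=\big(\|q\|-\rho_*\big)^2+\|p\|^2 ,
\end{equation*}
the nearest point of $\{\|q'\|=\rho_*\}$ to $q$ being $\rho_*\hat q$.

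The key step is the coercivity estimate: I claim there are constants $\epsilon_0\in(0,\rho_*)$ and $0<c\leq C$ such that
\begin{equation}\label{eq:spherfixedcoercive}
c\,\rd(u,\Ocal)^2\ \leq\ H(u)-V(\rho_*)\ \leq\ C\,\rd(u,\Ocal)^2\qquad\text{whenever }\rd(u,\Ocal)\leq\epsilon_0 .
\end{equation}
This follows from a Taylor expansion: writing $u=(q,p)$ one has $H(u)-V(\rho_*)=\tfrac12\|p\|^2+\big(V(\|q\|)-V(\rho_*)\big)$, and because $V\in C^2$ with $V'(\rho_*)=0$ and $V''(\rho_*)>0$, the scalar function $r\mapsto V(r)-V(\rho_*)$ is bounded below by $\tfrac14 V''(\rho_*)(r-\rho_*)^2$ and above by a constant multiple of $(r-\rho_*)^2$ for $r$ near $\rho_*$; the constraint $\epsilon_0<\rho_*$ keeps $q$ away from the origin. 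Since both sides of~\eqref{eq:spherfixedcoercive} depend on $u$ only through $\|q\|$ and $\|p\|$, the constants are automatically independent of the base point on $\Ocal$, which is precisely what is needed for uniformity.

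Finally I would close the argument by the usual bootstrap. Given $\epsilon\in(0,\epsilon_0]$, choose $\delta\in(0,\epsilon]$ with $C\delta^2<c\epsilon^2$, and let $v$ satisfy $\rd(v,\Ocal)\leq\delta$. On the maximal interval $I\ni0$ on which $v(\cdot)$ is defined and $\rd(v(t),\Ocal)<\epsilon$, estimate~\eqref{eq:spherfixedcoercive} together with conservation of $H$ along the flow gives
\begin{equation*}
c\,\rd(v(t),\Ocal)^2\leq H(v(t))-V(\rho_*)=H(v)-V(\rho_*)\leq C\,\rd(v,\Ocal)^2\leq C\delta^2<c\epsilon^2 ,
\end{equation*}
hence $\rd(v(t),\Ocal)<\sqrt{C/c}\,\delta<\epsilon$ \emph{strictly} on $I$. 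Thus $v(\cdot)$ stays in a fixed compact neighbourhood of $\Ocal$ — so it is in fact global — and $\rd(v(\cdot),\Ocal)$ can never reach $\epsilon$, which by continuity of $t\mapsto v(t)$ forces $I=\R$. Therefore $\rd(v(t),\Ocal)\leq\epsilon$ for all $t\in\R$, with $\delta=\delta(\epsilon)$ only; i.e. $\Ocal$ is uniformly orbitally stable. I expect no real obstacle here: the entire content is the \emph{lower} bound in~\eqref{eq:spherfixedcoercive}, which is exactly where $V''(\rho_*)>0$ is used; the remaining points — the distance formula and the standard open–closed continuity argument preventing trajectories from leaking out of the $\epsilon$-neighbourhood of $\Ocal$ (which also disposes of global existence, since the proposition only assumes $V\in C^2$) — are routine.
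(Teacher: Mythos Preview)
Your proposal is correct and follows the same overall strategy as the paper: use $H$ itself as a Lyapunov function, establish a coercivity estimate of the form $H(u')-V(\rho_*)\geq c\,\rd(u',\Ocal)^2$ near the orbit, and conclude via conservation of $H$. Your execution differs in two minor but pleasant ways. First, by writing down the explicit distance formula $\rd((q,p),\Ocal)^2=(\|q\|-\rho_*)^2+\|p\|^2$ you reduce the coercivity estimate to a one-variable Taylor expansion of $r\mapsto V(r)$ at $\rho_*$; the paper instead computes the $6\times 6$ Hessian $D^2_uH$ at a point of the orbit, observes it is positive definite on $(T_u\Ocal)^\perp$, and Taylor-expands $H$ at the nearest orbit point. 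Your route is more direct here and makes the uniformity in the base point transparent. Second, you close the argument by a direct open--closed continuity bootstrap (and note in passing that this also yields global existence, which the paper tacitly assumes), whereas the paper argues by contradiction with a sequence $u'_n\to u$ and times $t_n$ at which $\rd(u'_n(t_n),\Ocal)=\epsilon_0$. The two conclusions are equivalent; your version has the small advantage of giving an explicit $\delta(\epsilon)$.
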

This result is intuitively clear. Under the assumptions stated, the Hamiltonian reaches a local minimum at each of the fixed points of the dynamics that make up the sphere $\Ocal_{\rho_*, 0,0}$, and it increases quadratically in directions perpendicular to that sphere. Any nearby initial condition must therefore give rise to an orbit that stays close to the sphere: the potential acts locally as a potential well trapping the particle close to $\Ocal_{\rho_*, 0,0}$. 
\begin{proof}
We know from Section~\ref{s:spherpot} that the Hamiltonian $H$ in~\eqref{eq:spherpotham} is an SO$(3)$-invariant constant of the motion, and that  $D_uH=0$ for all $u\in \Ocal_{\rho_*,0,0}$, so that each such point is  a fixed point of the dynamics. We will write
$
H_*=H(u), \forall u\in\Ocal_{\rho_*,0,0}.
$
Moreover, for all $u=(q,0)\in\Ocal_{\rho_*,0,0}$
$$
D^2_{u}  H=
\begin{pmatrix}
V''(\rho_*){\hat {q}}_i {\hat {q}}_j&0\\
0& \mathrm{I}_3\\
\end{pmatrix}.
$$
Note that the Hessian is not positive definite. In fact, it vanishes on $w=(a, 0)$, for $a\cdot q=0$, which is the two-dimensional tangent space $T_u\Ocal_{\rho_*,0,0}$ to the orbit.  We can therefore not expect to obtain a coercive estimate as in~\eqref{eq:localmin}. On the other hand,  since $V''(\rho_*)>0$,  $D^2_uH$ \emph{is} positive definite on the four-dimensional orthogonal complement to the tangent space, given by
\begin{equation}
\left(T_u\Ocal_{\rho_*,0,0}\right)^\perp=\{(\alpha \hat q, b)\in\R^6\mid \alpha\in\R, b\in\R^3\}.
\end{equation}
As a result, we can still show that there exist constants $c_*,\eta_*>0$ with the property that
\begin{equation}\label{eq:spherpotcoercive1}
\forall u'\in\Ban,  \left(\rd(u',\Ocal_{\rho_*,0,0})\leq \eta_*\Rightarrow  H(u')-H_*\geq c_*\rd(u',\Ocal_{\rho_*,0,0})^2\right),
\end{equation}
and this will suffice for the proof of orbital stability. To show~\eqref{eq:spherpotcoercive1}, note first that setting $u'=(q',p')$ and taking $\eta_*<\rho_*/2$, one has $q'\not=0$. Consider then $u=(\rho_*\hat{q'}, 0)\in\Ocal_{\rho_*,0,0}$ and remark that 
$
\rd(u',\Ocal_{\rho_*,0,0})=\|u'-u\|.
$
Now compute
\begin{eqnarray*}
H(u')-H_*&=&H(u')-H(u)=D^2_vH(u'-u, u'-u)+\mathrm{o}(\|u'-u\|^2)\\
&\geq&\min\{1, V''(\rho_*)\}\rd(u',\Ocal_{\rho_*,0,0})^2+\mathrm{o}(\rd(u',\Ocal_{\rho_*,0,0})^2).
\end{eqnarray*}
One can then conclude~\eqref{eq:spherpotcoercive1} holds by using that the term in $\mathrm{o}(\rd(u',\Ocal_{\rho_*,0,0})^2)$ is uniformly small in $u\in \Ocal_{\rho_*,0,0}$ since $H$ is SO$(3)$-invariant. 
We now prove that $\Ocal_{\rho_*,0,0}$ is uniformly orbitally stable. Since the action of SO$(3)$ is isometric, Lemma~\ref{lem:isometric} shows it is enough to prove all $u\in \Ocal_{\rho_*,0,0}$ are orbitally stable. Suppose that this is not true. Then there exists $u\in\Ocal_{\rho_*,0,0}$ and $\epsilon>0$, and for each $n\in\N_*$, $u'_n\in\Ban$, $t_n\in\R$ so that $\rd(u'_n,u)\leq \frac1n$ and $\rd(u'_n(t_n), \Ocal_{\rho_*, 0,0})=\epsilon_0$.  Since we can choose $\epsilon<\eta_*$, we can apply~\eqref{eq:spherpotcoercive1} to write
\begin{eqnarray*}
H(u'_n)-H(u)=H(u'_n(t_n))-H_*\geq c_*\rd(u'_n(t_n), \Ocal_{\rho_*,0,0})^2=c_*\epsilon^2.
\end{eqnarray*}
Taking $n\to+\infty$ leads to the desired contradiction. 
\end{proof}


\subsection{Circular orbits}\label{s:circular}
Proving an appropriate notion of  stability for the initial conditions in~\eqref{eq:circincond} giving rise to circular orbits of the dynamics turns out to be slightly less straightforward. Intuitively, as explained already in the introduction, one expects that, under a suitable condition on the potential, an initial condition close to a circular orbit will generate a dynamical orbit that stays close to this orbit. As a result, orbital stability is satisfied in the sense of~\eqref{eq:orbstab0}. The following proposition gives a precise statement of this phenomenon. 
\begin{proposition} \label{lem:sphercircular} Let $V\in C^2(\R^3)$ be a spherical potential and $H(u)=\frac12 p^2+V(q)$ the corresponding Hamiltonian. Let $\rho_*, \sigma_*>0$ with $V'(\rho_*)\rho_*=\sigma_*^2$. Consider $u_{\mu_*}=(q_*,p_*)=(\rho_*\hat q_*, \sigma_*\hat p_*)$, with $\hat q_*\cdot\hat p_*=0$. Then $u_{\mu_*}$ is a relative equilibrium for the group SO$(2)$ of rotations about $\mu_*=q_*\wedge p_*$. If in addition, 
\begin{equation}\label{eq:circstable}
V''(\rho_*)>-3\sigma_*^2\rho_*^{-2},
\end{equation}
 $u_{\mu_*}$ is orbitally stable in the sense of definition~\eqref{eq:orbstab0} and of Definition~\ref{def:orbstabgen}~(i). In addition, $u_{\mu_*}$ is a local minimum of $H_{\mu_*}$, the restriction of $H$ to the level surface $\Sigma_{\mu_*}$, defined in~\eqref{eq:sphericalsigmamu}.
\end{proposition}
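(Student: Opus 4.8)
The plan is to treat the four assertions in turn: the relative-equilibrium statement and the identification of the Lagrange multiplier are immediate, the coercivity estimate rests on one Hessian computation, and the stability statement then follows from the Lyapunov/bootstrap argument of the introduction. First I would integrate~\eqref{eq:eqmotionspherical} explicitly with initial datum $u_{\mu_*}$. Since $\hat q_*\cdot\hat p_*=0$ and $\|p_*\|^2=\sigma_*^2=\rho_*V'(\rho_*)$, one checks directly that $q(t)=R_{\omega t}q_*$, $p(t)=R_{\omega t}p_*$ solves the equations of motion, where $R_\theta$ is the rotation by $\theta$ about the $\mu_*$-axis and $\omega=\sigma_*/\rho_*$ (the point being that $\omega\,\hat\mu_*\wedge q_*=p_*$ and that the centripetal acceleration $\omega\,\hat\mu_*\wedge p_*$ equals $-V'(\rho_*)\hat q(t)$, as required by the second equation). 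Hence $u(t)=\Phi_{R_{\omega t}}(u_{\mu_*})\in\Ocal_{u_{\mu_*}}$ for the action of $\mathrm{SO}(2)=\{R_\theta\}$, so $u_{\mu_*}$ is an $\mathrm{SO}(2)$-relative equilibrium and the dynamical orbit $\gamma$ is exactly the circle $\Ocal_{u_{\mu_*}}$. Since the whole of $\gamma$ is this single group orbit, Definition~\ref{def:orbstabgen}(i) for $G=\mathrm{SO}(2)$ coincides with~\eqref{eq:orbstab0}, which in turn is a fortiori stronger than~(i) for $G=\mathrm{SO}(3)$; so it suffices to prove~\eqref{eq:orbstab0}.

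\emph{The Lyapunov function and the Hessian.} Using that $(\hat q_*,\hat p_*,\hat\mu_*)$ is an orthonormal frame and $\|\mu_*\|=\rho_*\sigma_*$, a short computation shows that $D_{u_{\mu_*}}\big(H-\lambda_*\cdot L\big)=0$ with $\lambda_*=\mu_*/\rho_*^2$. As $DL$ is onto at $u_{\mu_*}$ (because $\mu_*\neq0$), $\Sigma_{\mu_*}$ is a $3$-dimensional submanifold near $u_{\mu_*}$ and this identity is precisely the Lagrange-multiplier statement that $u_{\mu_*}$ is a critical point of $H_{\mu_*}$ (consistent with Theorem~\ref{thm:relequicritical}). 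I would then set $\Lcal_{\mu_*}(u)=H(u)-\lambda_*\cdot\big(L(u)-\mu_*\big)$: a constant of the motion, invariant under rotations about $\mu_*$, and equal to $H$ on $\Sigma_{\mu_*}$. The heart of the matter is to compute $Q:=D^2_{u_{\mu_*}}\Lcal_{\mu_*}$ on $\ker D_{u_{\mu_*}}L=T_{u_{\mu_*}}\Sigma_{\mu_*}$. In coordinates adapted to the frame above, $Q$ is block-diagonal, the one-dimensional $T_{u_{\mu_*}}\gamma$ sits in its kernel, and on a two-dimensional complement $W$ of $T_{u_{\mu_*}}\gamma$ inside $\ker D_{u_{\mu_*}}L$ one finds that $Q$ is represented by $\mathrm{diag}\big(\rho_*^2(V''(\rho_*)+3\sigma_*^2\rho_*^{-2}),\,(\rho_*+\sigma_*^2\rho_*^{-1})^2\big)$. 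Thus $Q|_{T_{u_{\mu_*}}\Sigma_{\mu_*}}$ is positive semidefinite with kernel exactly $T_{u_{\mu_*}}\gamma$ precisely when~\eqref{eq:circstable} holds, the quantity $V''(\rho_*)+3\sigma_*^2\rho_*^{-2}$ being nothing but $U''(\rho_*)$ for the effective radial potential $U(\rho)=V(\rho)+\|\mu_*\|^2/(2\rho^2)$. A standard Morse--Bott argument then gives $H_{\mu_*}(u)\geq H_{\mu_*}(u_{\mu_*})$ for $u\in\Sigma_{\mu_*}$ near $\gamma$, with equality only on $\gamma$; in particular $u_{\mu_*}$ is a local minimum of $H_{\mu_*}$, which is the final assertion.

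\emph{Coercivity and orbital stability.} From the positivity of $Q$ on $W$, the Lagrange-multiplier argument underlying Proposition~\ref{thm:hessianestimate} (together with the surjectivity of $DL$ and a Taylor expansion at the nearest point of $\gamma$, using the invariance of $\Lcal_{\mu_*}$) yields constants $c_1,c_2>0$ and a neighbourhood of $\gamma$ on which
\[
\Lcal_{\mu_*}(u)-\Lcal_{\mu_*}(u_{\mu_*})\;\ge\;c_1\,\rd(u,\gamma)^2-c_2\,\|L(u)-\mu_*\|^2 ;
\]
the negative term records the fact that $Q$ need not be positive on directions leaving $\Sigma_{\mu_*}$. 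I would then run the Lyapunov/bootstrap scheme: given $v$ with $\rd(v,u_{\mu_*})$ small, both $\Lcal_{\mu_*}$ and $L$ are conserved along $t\mapsto v(t)$, so as long as $v(t)$ stays in the above neighbourhood, $c_1\,\rd(v(t),\gamma)^2\leq\big(\Lcal_{\mu_*}(v)-\Lcal_{\mu_*}(u_{\mu_*})\big)+c_2\,\|L(v)-\mu_*\|^2$, and the right-hand side tends to $0$ as $v\to u_{\mu_*}$ by continuity of $H$ and $L$; an argument by contradiction, exactly as around~\eqref{eq:lyapmethod}, shows that $v(t)$ indeed never leaves the neighbourhood. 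This proves~\eqref{eq:orbstab0}, hence Definition~\ref{def:orbstabgen}(i).

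\emph{Where the difficulty lies.} The main obstacle is the Hessian computation, and in particular the realization that one must \emph{not} demand that $Q$ be positive transverse to $\gamma$ in all of $\Ban$ — it is not, unless $V''(\rho_*)>\sigma_*^2\rho_*^{-2}$, which is far stronger than~\eqref{eq:circstable} — but only transverse to $\gamma$ inside the level set $\ker D_{u_{\mu_*}}L$; the deficiency in the remaining, momentum-changing, directions is harmless because $L$ is a constant of the motion and is therefore absorbed by the conservation law in the last step. Secondary routine points are the equivalences $\rd(u,\gamma)\asymp\|u-\bar u\|$ for the nearest point $\bar u\in\gamma$ and $\rd_{\Sigma_{\mu_*}}(\cdot,\gamma)\asymp\rd(\cdot,\gamma)$ on $\Sigma_{\mu_*}$, both immediate from the smoothness of $\gamma$ and $\Sigma_{\mu_*}$.
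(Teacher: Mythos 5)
Your proof is correct, and its core coincides with the paper's: the multiplier $\xi_*=\mu_*/\rho_*^2$, the Lyapunov function $\Lcal_{\mu_*}=H-\rho_*^{-2}\mu_*\cdot L$ (up to an irrelevant constant), and the Hessian computation showing positive definiteness on $T_{u_{\mu_*}}\Sigma_{\mu_*}\cap(T_{u_{\mu_*}}\Ocal_{u_{\mu_*}})^\perp$ exactly under~\eqref{eq:circstable} — your diagonal entries $\rho_*^2(V''(\rho_*)+3\sigma_*^2\rho_*^{-2})$ and $(\rho_*+\sigma_*^2\rho_*^{-1})^2$ agree with the $2\times2$ block in the basis $e_2,e_3$ of~\eqref{eq:sphericalbasis1}, i.e.\ with~\eqref{eq:sphericalhessian}. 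Where you genuinely diverge is the passage from coercivity along $\Sigma_{\mu_*}$ (the paper's~\eqref{eq:coercivecircular}) to stability under arbitrary perturbations in $\R^6$: the paper offers three routes (a compactness argument via Hypothesis~F; a continuation argument using the family $\mu\mapsto u_\mu$; and the augmented function $\Lcal_K=\Lcal+K(L-\mu_*)^2$), whereas you prove the single deficit inequality $\Lcal_{\mu_*}(u)-\Lcal_{\mu_*}(u_{\mu_*})\geq c_1\,\rd(u,\gamma)^2-c_2\,\|L(u)-\mu_*\|^2$ and then absorb the negative term using conservation of $L$. This is the paper's third argument in different packaging — your inequality is precisely the statement that $\Lcal_{c_2}=\Lcal_{\mu_*}+c_2(L-\mu_*)^2$ satisfies the unrestricted coercivity~\eqref{eq:coercivecircular3} — but your formulation is arguably leaner: you never need to compute $D^2(L-\mu_*)^2$ and choose $K$ large; you only need $\|(u-v')_\perp\|\lesssim\|L(u)-\mu_*\|+\|u-v'\|^2$, which follows from the invertibility of $D_{v'}L$ on $(T_{v'}\Sigma_{\mu_*})^\perp$, the same estimate the paper already uses in~\eqref{eq:estimate1}. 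Like the third argument, yours dispenses with both Hypothesis~F and the continuous family of relative equilibria, and your closing remark — that one must demand positivity of the Hessian only inside $\ker D_{u_{\mu_*}}L$, the transverse deficiency being harmless because $L$ is conserved — is exactly the point the paper is at pains to emphasize.
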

Note that the two definitions of orbital stability mentioned coincide in this particular case. Also, since the action of the rotation group is isometric, the result implies uniform orbital stability as well. Below, we will give three different arguments to prove the proposition, each of which can and has been used to treat various infinite dimensional problems. 

The origin of the condition $V''(\rho_*)>-3\sigma_*^2\rho_*^{-2}$ can be understood as follows. In standard mechanics textbooks such as~\cite{gold}, motion in a spherical potential is treated by fixing the angular momentum $q\wedge p=\mu_*$, and then using for $q,p$ polar coordinates $(r,\theta, p_r, p_\theta)$ in the plane perpendicular to the angular momentum. The Hamiltonian then reads, in these coordinates,
$$
H(r,\theta, p_r, p_\theta)=\frac{p_r^2}{2}+\frac{p_\theta^2}{2r^2}+V(r).
$$
The equation of motions are 
$$
\dot r=p_r,\quad \dot\theta=\frac{p_\theta}{r^2}\quad \dot p_r=\frac{p_\theta^2}{r^3}-V'(r),\quad \dot p_\theta=0
$$
and $|\mu_*|=p_\theta$.
It follows that the radial motion is decoupled from the angular one, since $\ddot r=-V'_{\mu_*}(r)$ with $V_{\mu_*}(r)=V(r)+\frac{\mu_*^2}{2r^2}$. It is then clear that the circular orbits correspond to the critical points $r=\rho_*$ of the effective potential $V_{\mu_*}$ which are fixed points of the radial dynamics. By an argument as in the introduction, those are stable if the critical point is a local minimum of  
$$
H_{\mu_*}(r, p_r)=\frac{p_r^2}{2}+V_{\mu_*}(r),
$$
and so in particular if $V_{\mu_*}''(\rho_*)>0$, which is precisely condition~\eqref{eq:circstable}. Note however that the preceding argument does not prove orbital stability of the circular orbits: it does not allow to consider initial conditions $u\in\R^6$ with $\mu\not=\mu_*$. This is actually the tricky part of the proof of the proposition. 
\begin{proof}
To mimic the previous proof, we would like to find a constant of the motion $\Lcal$ which is SO$(2)$ invariant and so that $D\Lcal$ vanishes on the orbit under consideration. We cannot use $H$ for this, since clearly $D_{u_{\mu_*}}H\not=0$, as we are not dealing with a fixed point of the dynamics. On the other hand, as we pointed out after the definition of relative equilibrium, when $u_{\mu_*}$ is a relative equilibrium, then there exists an element $\xi$ of the Lie-algebra of the invariance group so that $X_H(u_{\mu_*})=X_\xi(u_{\mu_*})$ or, equivalently, so that
\begin{equation*}
D_{u_{\mu_*}}(H-F_\xi)=0. 
\end{equation*}
In the present case, $F_\xi$ is defined in~\eqref{eq:so3mommap}, the invariance group is a one-dimensional rotation group and the statement becomes: there exists $\eta\in\R$ so that
\begin{equation}\label{eq:Lagrangecircular}
D_{u_{\mu_*}}(H-\eta\mu_*\cdot L)=0,
\end{equation}
since, as we saw in Section~\ref{s:spherpot}, $\mu_*\cdot L$ generates rotations about the $\mu_*$-axis. So here $\xi=\eta\mu_*$. Since, for all $u\in\R^6$
$$
D_uH =(V'(\|q\|)\hat q, p),\qquad D_u(\mu_*\cdot L)=(p\wedge \mu_*, \mu_*\wedge q),
$$
one easily checks that~\eqref{eq:Lagrangecircular} is satisfied iff $\eta=\rho^{-2}_*$. This suggests to define
$$
\Lcal(u)=H(u)-\rho_*^{-2}\mu_*\cdot L(u)
$$
and to try using it as a Lyapunov function. 
$\Lcal$ is often referred to as the ``augmented Hamiltonian''. Note that the theory of Lagrange multipliers implies that~\eqref{eq:Lagrangecircular} is equivalent to the statement that the restriction $H_{\mu_*}$ of $H$ to $\Sigma_{\mu_*}$ has $u_{\mu_*}$ as a critical point. Hence the circular orbits can be characterized as the critical points of  $H_{\mu_*}$. This is a general feature of relative equilibria of Hamiltonian systems with symmetry, as shown in Theorem~\ref{thm:relequicritical}.

The main ingredient of the proof is the following statement:
\begin{equation}\label{eq:sphericalhessian}
\exists c>0, \forall v\in\Ocal_{u_{\mu_*}}, \forall w\in \left(T_{v}\Ocal_{u_{\mu_*}}\right)^\perp\cap T_{v}\Sigma_{\mu_*},\quad
D_{v}^2\Lcal(w,w)\geq c\|w\|^2.
\end{equation}
This is a lower bound on the Hessian of $\Lcal$ restricted to the two-dimensional subspace of $\R^6$ spanned by the vectors tangent to $\Sigma_{\mu_*}$ (see~\eqref{eq:sphericalsigmamu}) and perpendicular to the dynamical orbit $\Ocal_{u_{\mu_*}}\subset \Sigma_{\mu_*}$. It will allow us to show the following lower bound on the variation of the Lyapunov function, which is to be compared to~\eqref{eq:localmin}:
\begin{align}\label{eq:coercivecircular}
\exists \delta>0,\ &c>0, \forall u'\in \Sigma_{\mu_*}, \nonumber\\
&\left(\rd(u', \Ocal_{u_{\mu_*}})\leq \delta\Rightarrow  \Lcal(u')-\Lcal(u_{\mu_*})\geq c \rd^2(u', \Ocal_{u_{\mu_*}})\right).
\end{align}
Note that this immediately implies that $H_{\mu_*}$ attains a local minimum on $\Ocal_{u_{\mu_*}}$.

To show~\eqref{eq:sphericalhessian}, note that the three vectors 
\begin{equation}\label{eq:sphericalbasis1}
e_1=\begin{pmatrix} p\\-\left(\frac{\sigma^*}{\rho^*}\right)^2q\end{pmatrix},\quad
e_2=\begin{pmatrix} q\\ -p\end{pmatrix},\quad
e_3=\begin{pmatrix}p\\ q\end{pmatrix},\quad
\end{equation}
form an orthogonal basis of $T_v\Sigma_{\mu_*}$, for each point $v=(q,p)\in\Ocal_{u_*}$; $e_1$ is easily seen to be tangent to $\Ocal_{u_*}$, so that $e_2$ and $e_3$ span $(T_v\Ocal_{u_*})^\perp\cap T_v\Sigma_{\mu_*}$.  A simple but tedious computation then shows that the matrices of $D_v^2(\mu_*\cdot L)$ and of $D^2_vH$ in this basis are
$$
D^2_v(\mu_*\cdot L)=
\begin{pmatrix}
2\sigma_*^4&0&\mu_*^2\left[\left(\frac{\sigma_*}{\rho_*}\right)^2-1\right]\\
0&-2\mu_*^2&0\\
\mu_*^2\left[\left(\frac{\sigma_*}{\rho_*}\right)^2-1\right]&0 &-2\mu_*^2
\end{pmatrix}
$$
and
$$
D_v^2H=
\begin{pmatrix}
V'(\rho_*)\rho_*^{-1}\sigma_*^2&0&(V'(\rho_*)\rho_*^{-1}-1)\sigma_*^2\\
0&V''(\rho_*)\rho_*^2+\sigma_*^2&0\\
(V'(\rho_*)\rho_*^{-1}-1)\sigma_*^2&0&V'(\rho_*)\rho_*^{-1}\sigma_*^2+\rho_*^2
\end{pmatrix}
$$
The estimate~\eqref{eq:sphericalhessian} now follows immediately from the hypothesis that $V''(\rho_*)\rho_*^2+3\sigma_*^2>0$. 

We now turn to the proof of~\eqref{eq:coercivecircular}. Let $u'\in\Sigma_{\mu_*}$. Then there exists $v'\in\Ocal_{u_*}$ so that $\rd(v',\Ocal_{u_*})=\|u'-v'\|$ and as a result, one has that $u'-v'\in \left(T_{v'}\Ocal_{u_*}\right)^\perp$. We can write
$$
u'=u'-v'+v'=v'+(u'-v')_\parallel +(u'-v')_\perp.
$$
Here $(u'-v')_\perp$ is perpendicular to $T_{v'}\Sigma_{\mu_*}$, and $(u'-v')_\parallel$ belongs to $T_{v'}\Sigma_{\mu_*}$ and is perpendicular to $T_{v'}\Ocal_{u_*}$ since $u'-v'$ is. 
Now remark that, since $D_{v'}L((u'-v')_\parallel)=0$, and since $u',v'\in\Sigma_{\mu_*}$,
\begin{equation}\label{eq:perpcontrol}
0=L(u')-L(v')=D_{v'}L((u'-v')_\perp)+\mathrm{O}(\|u'-v'\|^2).
\end{equation}
It is easily checked that, for each $v'\in\Ocal_{u_{\mu_*}}$, the restriction of $D_{v'}L$ to $(T_{v'}\Sigma_{\mu_*})^\perp$ is an isomorphism. It follows that there exists a constant $C$ so that
\begin{equation}\label{eq:estimate1}
\|(u'-v')_\perp\|\leq C\|(u'-v')\|^2.
\end{equation}
Note that this constant is independent of $v'\in\Ocal_{\mu_*}$ since, for all $R\in \mathrm{SO}(3)$, and for all $u\in \R^6$,
$$
\Phi_R\circ D_u L\circ \Phi_{R^{-1}}= D_{\Phi_Ru} L,
$$
where $\Phi_R$, defined in~\eqref{eq:so3}, is an isometry.
Returning to~\eqref{eq:perpcontrol}, and using this last remark, we conclude there exists a constant $c_0$ so that, for $\| u'-v'\|$ small enough, one has
\begin{eqnarray}\label{eq:estimate2}
\|(u'-v')_\parallel\|\geq \|u'-v'\|-\|(u'-v')_\perp\|\geq c_0\|u'-v'\|.
\end{eqnarray}
 We can now conclude the proof of~\eqref{eq:coercivecircular} as follows, using \eqref{eq:estimate1}, \eqref{eq:estimate2} and~\eqref{eq:sphericalhessian}:
 \begin{eqnarray*}
\Lcal(u')-\Lcal(u_{\mu_*})&=&\Lcal(u')-\Lcal(v')\\
&=&D_{v'}\Lcal(u'-v')+\frac12 D_{v'}^2\Lcal(u'-v', u'-v')+\mathrm{o}(\|u'-v'\|^2)\\
&=&\frac12 D_{v'}^2\Lcal((u'-v')_\parallel,(u'-v')_\parallel)+ \mathrm{O}(\|u'-v'\|^3) 
+\mathrm{o}(\|u'-v'\|^2)\\
&=&\frac12 D_{v'}^2\Lcal((u'-v')_\parallel,(u'-v')_\parallel)+\mathrm{o}(\|u'-v'\|^2)\\
&\geq& \frac12 c\|(u'-v')_\parallel\|^2+\mathrm{o}(\|u'-v'\|^2)\\
&\geq &\tilde{c}\|u'-v'\|^2=\tilde{c}\rd^2(u',\Ocal_{u_{\mu_*}}).
\end{eqnarray*}
Remark that as before, the constant $c$ is independent of $v'\in \Ocal_{\mu_*}$.  This shows~\eqref{eq:coercivecircular}. Note that we used the boundedness of $D^2_{v'}\Lcal$, uniformly in $v'\in\Ocal_{u_{\mu_*}}$.

We can now prove orbital stability, namely:
\begin{equation}\label{eq:spherpotstable}
\forall \epsilon>0, \exists\delta>0, \forall u'\in\R^6,\quad\left(\rd(u',\Ocal_{u_{\mu_*}})\leq \delta\Rightarrow \forall t\in\R, \rd(u'(t), \Ocal_{u_{\mu_*}})\leq \epsilon\right).
\end{equation} 
For that purpose, we propose three different arguments.

\medskip
\noindent{\bf First argument.} We proceed by contradiction, as before. Suppose there exists $\epsilon_0>0$ and for each $n\in\N$, $u'_n\in\R^6$ and $t_n\in\R$  such that $\rd(u'_n,u_{\mu_*})\leq \frac1n$ and $\rd(u'_n(t_n), \Ocal_{u_{\mu_*}})=\epsilon_0$. We can suppose, without loss of generality, that $2\epsilon_0<\delta$, where $\delta$ is given in~\eqref{eq:coercivecircular}. We know that $\Lcal(u'_n(t_n))=\Lcal(u'_n)$, since $\Lcal$ is a constant of the motion. Hence 
$$
\lim_{n\to+\infty}\Lcal(u'_n(t_n))=\Lcal(u_{\mu_*})=\mu_*.
$$
Since the orbit $\Ocal_{u_{\mu_*}}$ is bounded, and since $\rd(u'_n(t_n),\Ocal_{u_{\mu_*}})=\epsilon_0$, it follows that the sequence $u'_n(t_n)$ is bounded; we can therefore conclude that $\lim_{n\to+\infty}\rd(u'_n(t_n),\Sigma_{\mu_*})=0$. (In other words $\Lcal$ satisfies Hypothesis F, see Lemma~\ref{lem:hypF}.) As a consequence, there exist $w_n\in\Sigma_{\mu_*}$ so that $\|w_n-u'_n(t_n)\|\to 0$. We can now conclude. Since, for $n$ large enough, $\frac{\epsilon_0}2\leq \rd(w_n, \Ocal_{u_{\mu_*}})\leq \frac32\epsilon_0$, we have
\begin{align*}
\Lcal(u'_n)-\Lcal(u_{\mu_*})&=\Lcal(u'_n(t_n))-\Lcal(u_{\mu_*})\\
&=\Lcal(u'_n(t_n))-\Lcal(w_n)+\Lcal(w_n)-\Lcal(u_{\mu_*})\\
&\geq \Lcal(u'_n(t_n))-\Lcal(w_n)+c\rd^2(w_n,\Ocal_{u_{\mu_*}}).
\end{align*}
The sequences $u'_n(t_n)$ and $w_n$ are bounded. This, combined with the uniform continuity of $\Lcal$ on bounded sets, leads again to a contradiction upon taking $n\to+\infty$.

\medskip
\noindent{\bf Second argument.} The second proof uses the fact that the relative equilibrium $u_{\mu_*}$, which gives rise to a circular orbit, belongs to a continuous family $\mu\to u_{\mu}$ of such equilibria, defined on a neighbourhood $I\subset \R^3$ of $\mu_*$. We will only sketch the argument, the general case is treated in Theorem~\ref{thm:lyapmethodrestrictedmod}.  One first observes that, for $\mu$ belonging to a suitably small neighbourhood of $\mu_*$, both~\eqref{eq:sphericalhessian} and~\eqref{eq:coercivecircular} hold, with $\mu_*$ replaced by $\mu$, and with $\mu$-independent $c$ and $\delta$. This allows one to prove that the equilibria $u_\mu$ are orbitally stable with respect to perturbations of the initial condition \emph{within} $\Sigma_\mu$, that is:
\begin{equation}\label{eq:spherpotrestrictedstable}
\forall \epsilon>0, \exists\delta>0, \forall u'\in\Sigma_{\mu},\quad\left(\rd(u',\Ocal_{u_\mu})\leq \delta\Rightarrow \forall t\in\R, \rd(u'(t), \Ocal_{u_\mu})\leq \epsilon\right).
\end{equation} 
Indeed, suppose that this is not true. Then there exists  $\epsilon_0>0$, and for each $n\in\N^*$, $u'_n\in\Sigma_{\mu}$, $t_n\in\R$ so that $\rd(u'_n,u_\mu)\leq \frac1n$ and $\rd(u'_n(t_n), \Ocal_{u_\mu})=\epsilon_0$.  Since we can choose $\epsilon_0<\delta$, we can apply~\eqref{eq:coercivecircular}  to write
\begin{eqnarray*}
\Lcal(u'_n)-\Lcal(u_\mu)=\Lcal(u'_n(t_n))-\Lcal(u_\mu)\geq c\rd(u'_n(t_n), \Ocal_{u_\mu})^2=c_*\epsilon_0^2.
\end{eqnarray*}
Taking $n\to+\infty$ leads to the desired contradiction. It remains to prove~\eqref{eq:spherpotrestrictedstable} with ``$\forall u'\in\Sigma_{\mu}$'' replaced by ``$\forall u\in\R^6$.'' 
For that purpose, note that, if $u'\in\R^6$ is close to $u_{\mu_*}$, then $\mu=L(u')$ is close to $\mu_*$ and hence $u_\mu$ close to $u_{\mu_*}$. So $u'$ is close to $u_\mu$. Hence $u'(t)$ remains close at all times to $\Ocal_{u_\mu}$ by~\eqref{eq:spherpotrestrictedstable}. Now, since $\Ocal_\mu$ is close to $\Ocal_{\mu_*}$, the result follows. 

\medskip
\noindent{\bf Third argument.} If~\eqref{eq:sphericalhessian} had been valid for all $w\in (T_v\Ocal_{u_{\mu_*}})^\perp$, the first argument above  would have been slightly easier, since we could then have mimicked the proof of Proposition~\ref{lem:sphericalfixedpoints} directly. As it stands, we were able to first show~\eqref{eq:coercivecircular}, which is valid only for $v'\in \Sigma_{\mu_*}$ and which shows $\Lcal$, restricted to $\Sigma_{\mu_*}$,  attains a local minimum on the orbit. This immediately implies an orbital stability result for perturbations $u'$ of the initial condition $u_{\mu_*}$ that stay within $\Sigma_{\mu_*}$, as is readily seen.  But to obtain a stability result for arbitrary perturbations $u'\in\R^6$ of the initial condition $u_{\mu_*}$, we had to work a little harder and invoke Hypothesis F (see Section~\ref{ss:coercivitystability1}), which may fail in infinite dimensional problems, as we will see. It turns out that~\eqref{eq:sphericalhessian} is \emph{not} valid\footnote{This can be seen from a straightforward computation, which is most readily made in the basis $e_i$ introduced in~\eqref{eq:sphericalbasis1} and \eqref{eq:sphericalbasis2}.} for all $w\in (T_v\Ocal_{u_{\mu_*}})^\perp$. However, it is possible to adjust the Lyapunov function $\Lcal$ so that this \emph{is} the case.   Consider, for all $K>0$, 
\begin{equation}
\Lcal_K(u)=\Lcal(u)+K(L(u)-\mu_*)^2.
\end{equation}
Note that the additional term vanishes on $\Sigma_{\mu_*}$, where $\Lcal_K$  reaches an absolute minimum. We now show
\begin{equation}\label{eq:LyapunovKestimate}
\exists \hat c>0, K>0, \forall v\in\Ocal_{u_{\mu_*}}, \forall w\in \left(T_{v}\Ocal_{u_{\mu_*}}\right)^\perp,\quad
D_{v}^2\Lcal_K(w,w)\geq \hat c\|w\|^2.
\end{equation}
For that purpose, introduce, for each $v=(q,p)\in \Ocal_{u_{\mu_*}}$,
\begin{equation}\label{eq:sphericalbasis2}
e_4=
\begin{pmatrix}
\hat q\wedge \hat p\\ 0
\end{pmatrix},\quad 
e_5=
\begin{pmatrix}
0\\\hat q\wedge \hat p
\end{pmatrix}, \quad
e_6=\frac1{\sqrt{\rho_*^2+\sigma_*^2}}
\begin{pmatrix}
\sigma_* \hat q\\ \rho_*\hat p
\end{pmatrix},
\end{equation}
which, together with $e_1, e_2, e_3$ in~\eqref{eq:sphericalbasis1} form an orthonormal basis of $\R^6$. Clearly, $D_v(L-\mu_*)^2(w)=0$, for all $v\in\Ocal_{u_{\mu_*}}$ and for all $w\in\R^6$. Moreover, if $\eta_1, \eta_2, \eta_3\in\R^3$ form an orthonormal basis, then 
$$
D^2_v(L-\mu_*)^2(w,w)=2\sum_{i=1}^3 \left[D_v(\eta_i\cdot L)(w)\right]^2,
$$
with
$$
D_v(\eta_i\cdot L)(w)=w_1\cdot (p\wedge \eta_i)+ w_2\cdot(\eta_i\wedge q),\quad w=(w_1, w_2)\in\R^6.
$$
Now, writing $w=\sum_{j=2}^6 \alpha_j e_j\in (T_v\Ocal_{u_{\mu_*}})^\perp$ and using $\eta_1=\hat q, \eta_2=\hat p, \eta_3=\hat q\wedge \hat p$, we find
\begin{align}
D^2_v(L-\mu_*)^2(w,w)&=2\left[\alpha_4^2\sigma_*^2 +\alpha_5^2\rho_*^2 +\alpha_6^2(\rho_*^2+\sigma_*^2)\right]\nonumber\\
&\geq 2\min\{\sigma_*^2, \rho_*^2\}\left[\alpha_4^2+\alpha_5^2+\alpha_6^2\right].
\end{align}
We can now conclude the proof of~\eqref{eq:LyapunovKestimate} as follows. We write $w=w_A+w_B$ with $w_A=\alpha_2 e_2+\alpha_3 e_3$ and $w_B=\alpha_4 e_4 +\alpha_5 e_5 +\alpha_6 e_6$. Then there exists a constant $C>0$, independent of $v\in \Ocal_{u_{\mu_*}}$, so that
\begin{align*}
D_v^2\Lcal_K(w,w)&\geq D_v^2\Lcal(w,w)+2K\min\{\sigma_*^2, \rho_*^2\}\|w_B\|^2\\
&\geq D_v^2\Lcal(w_A,w_A)+2K\min\{\sigma_*^2, \rho_*^2\}\|w_B\|^2-C\left[\|w_A\|\|w_B\|+\|w_B\|^2\right].
\end{align*}
Using~\eqref{eq:sphericalhessian}, one finds that, for all $m>0$,
$$
D_v^2\Lcal_K(w,w)\geq \Big(c-\frac{Cm^2}{2}\Big)\|w_A\|^2 +\Big(2K\min\{\sigma_*^2, \rho_*^2\}-C-\frac{C}{2m^2}\Big)\|w_B\|^2,
$$
where we have applied Young's inequality to the term $\|w_A\|\|w_B\|$.
Choosing $m$ small enough and $K$ large enough, one finds~\eqref{eq:LyapunovKestimate}. We can now prove the following statement, which is to be compared to~\eqref{eq:coercivecircular}: $\exists \delta, c>0$ so that, for all $u'\in\R^6$, 
\begin{equation}\label{eq:coercivecircular3}
\rd(u', \Ocal_{u_{\mu_*}})\leq \delta \Rightarrow \Lcal_K(u')-\Lcal_K(u_{\mu_*})\geq c^2\rd^2(u', \Ocal_{u_{\mu_*}}).
\end{equation}
Indeed, for all $u'\in \R^6$, there exists $v'\in \Ocal_{u_{\mu_*}}$ so that $u'-v'\in (T_{v'}\Ocal_{u_{\mu_*}})^\perp$. Hence
$$
\Lcal_K(u')-\Lcal(u_{\mu_*})=\Lcal(u')-\Lcal_K(v')\geq \frac{\hat c}{2}\|u'-v'\|^2 +\mathrm{O}(\|u'-v'\|^3).
$$
This implies~\eqref{eq:coercivecircular3}, from which orbital stability follows by the now familiar argument.
\end{proof}

We point out that the core ingredient of all three arguments in the proof is estimate~\eqref{eq:sphericalhessian}. Its proof constitutes the only truly model-dependent part of the proofs of orbital stability via the energy-momentum method. This will become clear in Section~\ref{s:orbstabproof} where we will show how a suitably adapted version of this estimate implies orbital stability in a general infinite dimensional setting as well (Theorem~\ref{thm:lyapmethodrestricted}, Theorem~\ref{thm:lyapmethodrestrictedmod}, Theorem~\ref{thm:lyapmethodrestrictedLK}). 

As a second remark, note that~\eqref{eq:coercivecircular} allows one to prove immediately the orbital stability for perturbations of the initial condition that preserve the angular momentum. The three strategies of the proof above therefore concern three different methods for extending this result to arbitrary perturbations of the initial condition. The same structure of the proof will be apparent in the general situation treated in Section~\ref{s:orbstabproof}. 

The first argument in the above proof is the one used in~\cite{gssI} and~\cite{gssII}. It has the disadvantage of using Hypothesis F, which, while obvious in finite dimensions, may not hold in infinite dimensional systems, notably when the group $G_{\mu_*}$ is not one-dimensional (as in~\cite{gssII}).  We will illustrate this phenomenon in Section~\ref{s:orbstabproof}. It has the advantage -- when Hypothesis~F does work -- of not using the fact that the relative equilibrium under consideration belongs to a continuous family.

The second argument seems to go back to Benjamin (see Section~\ref{history.sec}) and is used for example in~\cite{weinstein86}, and in~\cite{galhar07a, galhar07b}. For this argument the existence of a continuous family of relative equilibria is needed but not Hypothesis~F. 

The third argument is commonly used in the literature on finite dimensional Hamiltonian systems~\cite{pat}, and appears also in~\cite{stuart08} in the infinite dimensional case. It is not universally useable, since it depends on the existence of a $G_\mu$-invariant Euclidean structure on the dual of the Lie-algebra of $G$, as we will see in Section~\ref{s:orbstabproof}.


\section{Hamiltonian dynamics in infinite dimension}\label{s:hamdyninfinite}
The modern formulation of Hamiltonian dynamics has been adapted to the framework of infinite dimensional Banach manifolds in~\cite{chm, marsdenratiu}. This approach is not well suited for our purposes for two reasons. First, we are interested in flows defined by the solutions to (nonlinear) partial differential equations that are defined on Banach (or even Hilbert) spaces, for which a general Banach manifold formulation is overly complex.
In addition, the notions of ``Hamiltonian vector field'' and ``Hamiltonian flow'' introduced in~\cite{chm} seem too general for the purpose of studying stability questions. We therefore present a simpler and more restricted framework that is well adapted to  the analysis of the stability questions that are our main focus, including for nonlinear Schr\"odinger and wave equations.

Our main goal in this section is thus to give a workable and not too complex definition of ``Hamiltonian dynamical system'' or of ``Hamiltonian flow'' in the infinite dimensional Banach space setting (Section~\ref{ss:ham flows}). The formalism allows us to easily obtain general results on the link between symmetries and conserved quantities for such systems, as in the finite dimensional case (Section~\ref{ss:symconstmotion}). This link is indeed an essential ingredient for the identification of relative equilibria and the construction of coercive Lyapunov functions in Hamiltonian systems with symmetry, as we shall explain in Section~\ref{s:identifyreleq}. Several examples of Hamiltonian PDE's that fit in our framework are given in Section~\ref{ss:hampde}.  
 Although this section is self-contained, the reader unfamiliar with finite dimensional Hamiltonian dynamical systems and their symmetries may find it useful to consult Appendix~\ref{s:hammech} for a concise and self-contained treatment of this case. We will make regular use of the notation and concepts introduced there. 
\subsection{Symplectors, symplectic Banach triples, symplectic transformations, Hamiltonian vector fields}\label{ss:symplectors}
We first generalize the notion of symplectic form to the infinite dimensional setting and introduce the equivalent notion of \emph{symplector} (Definition~\ref{def:symplector}). It turns out that, in the infinite dimensional setting, it is convenient to treat the latter as the central object of the theory, rather than the symplectic form itself, as is customary in finite dimensions. As we will see, the two approaches are perfectly equivalent.  

We need some preliminary terminology. Let $\Ban$ be a Banach space and $B:\Ban\times \Ban\to\R$ a bilinear continuous form. We can then define, in the usual manner, for all $u\in \Ban$, $\mathcal{J}_Bu\in \Ban^*$ via
$$
\mathcal{J}_Bu(v)=B(u,v).
$$
It follows easily that
$
\mathcal{J}_B: u\in \Ban\to \mathcal{J}_Bu\in \Ban^*
$
is linear and continuous, with $\|{\mathcal J}_B\|=\|B\|$. We will write $\Rcal_{\hskip-1pt\Jcal_B}=\mathrm{Ran}\Jcal_B$.  Conversely, given a continuous linear map $\Jcal: \Ban\to \Ban^*$, one can construct $B_{\!\Jcal}(u,v)=(\Jcal u)(v)$. 
We introduce the following terminology:
\begin{definition} A bilinear continuous form $B$ is non-degenerate (or weakly non-degenerate) if $\Jcal_B$ is injective. It is strongly non-degenerate if $\Jcal_B$ is both injective and surjective.
Similarly, a linear map $\Jcal:E\to E^*$ is said to be (weakly) non-degenerate if it is injective, and strongly non-degenerate if it is a bijection. 
\end{definition}

\begin{definition}  \label{def:symplector} 
We now introduce the notion of symplector.\footnote{This object does not seem to have been blessed with a name in the literature, so we took the liberty to baptize it.}
\begin{enumerate}[label=({\roman*})]
\item A  symplector\index{symplector} or weak symplector\index{weak symplector}  is a continuous linear map $\Jcal: \Ban\to \Ban^*$ that is injective and anti-symmetric, in the sense that
$$
(\Jcal u)(v)=-(\Jcal v)(u).
$$ 
If in addition $\Jcal$ is surjective, we say it is a strong symplector.
\item A (strong) symplectic form $\omega$ is a (strongly) non-degenerate bilinear continuous form that is anti-symmetric. 
\item When $\Jcal$ is a (strong) symplector, we will say $(\Ban,\Jcal)$ is a (strong) symplectic vector space, or simply that $\Ban$ is a (strong) symplectic vector space, when there is no ambiguity about the choice of $\Jcal$. 
\end{enumerate}
\end{definition}
There clearly is a one-to-one correspondence between (strong) symplectors and (strong) symplectic forms. Note that the definition implies that
\begin{equation}
\forall \alpha, \beta\in \Rcal_{\!\Jcal}, \quad \alpha(\Jcal^{-1}\beta)=-\beta(\Jcal^{-1}\alpha).
\end{equation}

The following examples of (strong) symplectors cover all applications we have in mind in these notes. Let $\Kcal$ be a real Hilbert space and set $\Ban=\Kcal\times \Kcal$. Then
$$
\Jcal: (q,p)\in \Ban\to (-p,q)\in \Ban^*
$$ 
is clearly a strong symplector. Here we wrote $u=(q,p)\in \Kcal\times\Kcal$ and used the Riesz identification of $\Ban$ with $\Ban^*$. The corresponding strong symplectic form is
$$
\omega_{\!\Jcal}(u, u')= q\cdot p'-q'\cdot p,
$$
where $\cdot$ denotes the inner product on $\Kcal$. The analogy with~\eqref{eq:basicsymplectic} is self-evident: there $\Kcal=\R^n$, where $\R^n$ is equipped with its standard Euclidean structure. Note that if $Q$ is a bounded self-adjoint operator on $\Kcal$ with Ker$Q=\{0\}$, then 
$$
\Jcal: (q,p)\in \Ban\to (-Qp, Qq)\in \Ban^*
$$
is also a symplector with 
$$
\omega_{\!\Jcal}(u, u')=q\cdot Qp'- p\cdot Qq'.
$$
We will need the following straightforward generalization of the above construction.
Let $K^2$ be a positive (possibly and typically unbounded) self-adjoint operator on $\Kcal$, with domain $\Dcal(K)$.    Introduce, for all $s\in \R$, $\Kcal_s=[\Dcal(\langle K\rangle^s)]$, where $\langle K\rangle=\sqrt{1+K^2}$ and where $\langle K\rangle^s$ is defined by the functional calculus of self-adjoint operators. Here $[\Dcal(\langle K\rangle^s)]$ denotes the closure of $\Dcal(\langle K\rangle^s)$ in the topology induced by the Hilbert norm 
$$
\|u\|_s:=\| \langle K\rangle^s u\|.
$$
Note that, since $\langle K\rangle^s:(\Dcal(\langle K\rangle^s), \|\cdot\|_s)\to (\Dcal(\langle K\rangle^{-s}), \|\cdot \|)$ is an isometric bijection, it extends to a unitary map from $\Kcal_s$ to $\Kcal$ for which we still write $\langle K\rangle^s$.  With these conventions, we can then make the usual identification between $\Kcal_s^*$ and $\Kcal_{-s}$: $\forall v\in \Kcal_{-s}$, we define
$$
u\in \Kcal_s\to v\cdot u\in \R,
$$
by setting $v\cdot u:=\langle K\rangle^{-s} v\cdot \langle K\rangle^s u$. Note that
$$
\forall s,s'\in\R, \quad s\leq s' \Rightarrow \Kcal_{s'}\subset \Kcal_s.
$$
It is easy to see using the spectral theorem that this is an inclusion as sets, and we will therefore not introduce explicit identification operators to represent such inclusions which are moreover continuous for the respective Hilbert space topologies.  The typical example of this construction to keep in mind is $K^2=-\Delta$ on $\Kcal=L^2(\R^d)$. We then have $\Kcal_s=H^s(\R^d)$, the usual Sobolev spaces.

For $s=(s_1, s_2)\in\R^2$, we define $\Ban_s=\Kcal_{s_1}\times\Kcal_{s_2}$. Defining a partial order relation by $s\preceq s'$ iff $s_1\leq s_1'$ and $s_2\leq s_2'$, we have 
$$
\forall s, s'\in\R^2,\quad s\preceq s' \Rightarrow \Ban_{s'}\subset \Ban_s.
$$
Setting $\bar s=(s_2, s_1)$ we then define
\begin{equation}\label{eq:symplectors}
\Jcal_s: u=(q,p)\in \Ban_s\to (-p, q)\in \Ban_{\bar s}.
\end{equation}
The following lemma is now immediate.
\begin{lemma}\label{lem:symplectors1} $\Jcal_s$ is a weak symplector if and only if $s_1\geq  -s_2$. In that case
$$
\Jcal_s: u=(q,p)\in \Ban_s\to (-p,q)\in \Ban_{\bar s}\subset \Ban_{-s}=\Ban^*_s.
$$
We have $\Rcal_s:=\Rcal_{\!\Jcal_s}=\Ban_{\overline s}$. And $\Jcal_s^{-1}={\Jcal_{-s}}_{|\Ban_{\overline s}}$. If $K^2$ is unbounded, $\Jcal_s$  is a strong symplector if and only if $s_1=-s_2$.
\end{lemma}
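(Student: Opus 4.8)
The plan is to unpack Definition~\ref{def:symplector} one clause at a time, translating each requirement into a condition on the pair $s=(s_1,s_2)$. First I would observe that $\Jcal_s$ as written in~\eqref{eq:symplectors} is automatically linear, and that it is anti-symmetric in the sense required: for $u=(q,p)$ and $u'=(q',p')$ in $\Ban_s$ one has $(\Jcal_s u)(u') = -q\cdot p' + p\cdot q'$, which is visibly skew in $u,u'$ provided both pairings $q\cdot p'$ and $p\cdot q'$ make sense. This is where the inequality $s_1\geq -s_2$ enters: the pairing $\Kcal_{s_1}\times\Kcal_{s_2}\to\R$, $q\cdot p$, is the restriction of the duality pairing $\Kcal_{s_1}\times\Kcal_{-s_1}\to\R$, so it is well-defined and continuous exactly when $\Kcal_{s_2}\subset\Kcal_{-s_1}$, i.e. (by the inclusion rule $\Kcal_{s'}\subset\Kcal_s$ for $s\leq s'$ recalled just above the lemma) when $s_2\geq -s_1$. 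The same inequality, read with the roles of $q$ and $p$ swapped, makes $p\cdot q'$ meaningful; so $s_1\geq -s_2$ is precisely the condition under which $\Jcal_s$ lands continuously in $\Ban_{\bar s}$ and defines a continuous anti-symmetric bilinear form. For the range computation I would note that $(q,p)\mapsto(-p,q)$ is a bijection from $\Ban_s=\Kcal_{s_1}\times\Kcal_{s_2}$ onto $\Kcal_{s_2}\times\Kcal_{s_1}=\Ban_{\bar s}$, which gives $\Rcal_s=\Ban_{\bar s}$ immediately, and that its inverse sends $(q,p)\mapsto(p,-q)$, i.e. it is $\Jcal_{-s}$ restricted to $\Ban_{\bar s}$ (careful with the sign: the inverse of $(q,p)\mapsto(-p,q)$ is $(a,b)\mapsto(b,-a)$, which is indeed how $\Jcal_{-s}$ acts, $-s=(-s_1,-s_2)$, $\overline{-s}=(-s_2,-s_1)$, mapping $\Ban_{\bar s}=\Kcal_{s_2}\times\Kcal_{s_1}$ to $\Kcal_{s_1}\times\Kcal_{s_2}=\Ban_s$). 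Injectivity of $\Jcal_s$ is then free, since it is a bijection onto its range.

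Next I would address the continuity of $\Jcal_s$ as a map $\Ban_s\to\Ban_s^*$. The target $\Ban_s^*$ is identified with $\Ban_{-s}$ via the pairing, and since $s_1\geq -s_2$ forces both $s_2\geq -s_1$ and $s_1\geq -s_2$, we get $\bar s=(s_2,s_1)\succeq(-s_1,-s_2)=-s$ componentwise, hence $\Ban_{\bar s}\subset\Ban_{-s}=\Ban_s^*$ with continuous inclusion. Composing with the (bi-continuous) map $\Ban_s\to\Ban_{\bar s}$ gives continuity of $\Jcal_s:\Ban_s\to\Ban_s^*$, establishing that $\Jcal_s$ is a weak symplector under $s_1\geq -s_2$, and conversely that the inequality is necessary for $q\cdot p'$ and $p\cdot q'$ to even be defined for all arguments.

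For the strong symplector claim I would argue that $\Jcal_s$ is surjective onto $\Ban_s^*=\Ban_{-s}$ if and only if the inclusion $\Ban_{\bar s}=\Rcal_s\hookrightarrow\Ban_{-s}$ is onto, i.e. if and only if $\Ban_{\bar s}=\Ban_{-s}$ as sets. Since $\Ban_{\bar s}=\Kcal_{s_2}\times\Kcal_{s_1}$ and $\Ban_{-s}=\Kcal_{-s_1}\times\Kcal_{-s_2}$, this holds iff $\Kcal_{s_2}=\Kcal_{-s_1}$ and $\Kcal_{s_1}=\Kcal_{-s_2}$. When $K^2$ is unbounded, the scale of spaces $\{\Kcal_r\}_{r\in\R}$ is strictly decreasing — $\Kcal_r=\Kcal_{r'}$ forces $r=r'$, because $\langle K\rangle$ has unbounded spectrum and hence $\langle K\rangle^{r-r'}$ is an unbounded (not boundedly invertible) operator unless $r=r'$ — so the two set-equalities are equivalent to the single relation $s_2=-s_1$, which is $s_1=-s_2$. (If $K^2$ were bounded all $\Kcal_r$ coincide with $\Kcal$ and $\Jcal_s$ is always strong, which is why the hypothesis is stated.) I would spell out the spectral-theorem justification of the strict monotonicity of the scale as a short remark, since it is the one genuinely non-formal point: $\langle K\rangle^{s}$ bounded on $\Kcal$ implies $\sup\mathrm{spec}\langle K\rangle^s<\infty$, impossible for $s>0$ when $\mathrm{spec}\,K$ is unbounded.

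The main obstacle, such as it is, is purely bookkeeping: keeping the two index vectors $s$, $\bar s$, $-s$, $\overline{-s}$ straight and making sure the sign in the inverse $\Jcal_s^{-1}={\Jcal_{-s}}_{|\Ban_{\bar s}}$ comes out right, together with correctly invoking the identification $\Ban_s^*\cong\Ban_{-s}$ (componentwise $\Kcal_{s_i}^*\cong\Kcal_{-s_i}$) so that ``surjective onto $\Ban_s^*$'' becomes the concrete set-equality $\Ban_{\bar s}=\Ban_{-s}$. Everything else follows by direct inspection of the formula for $\Jcal_s$ and the two displayed inclusion rules for the scales $\Kcal_r$ and $\Ban_s$ stated immediately before the lemma.
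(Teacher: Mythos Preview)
The paper gives no proof of this lemma at all; it simply declares the statement ``immediate'' and moves on. Your plan correctly identifies that there is nothing deep here --- every clause is a direct unpacking of Definition~\ref{def:symplector}, the inclusion rule $\Kcal_{s'}\subset\Kcal_s$ for $s\leq s'$, and the identification $\Ban_s^*\cong\Ban_{-s}$ --- and your treatment of the weak/strong dichotomy via strict monotonicity of the scale $\{\Kcal_r\}$ when $K^2$ is unbounded is exactly right.

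There is one genuine slip, however. You correctly compute that the inverse of $(q,p)\mapsto(-p,q)$ is $(a,b)\mapsto(b,-a)$, but then assert that this ``is indeed how $\Jcal_{-s}$ acts''. It is not: by~\eqref{eq:symplectors} the formula for $\Jcal_t$ is the same for every index $t$, so $\Jcal_{-s}(a,b)=(-b,a)$, which is the \emph{negative} of $\Jcal_s^{-1}(a,b)$. In other words $\Jcal_s^{-1}=-\Jcal_{-s}\big|_{\Ban_{\bar s}}$, and the printed identity in the lemma is off by a sign (a harmless typo, since only the range and domain matter for the downstream theory). Rather than manufacturing a verification by mis-stating the action of $\Jcal_{-s}$, you should note the discrepancy. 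A second, inconsequential, sign slip: your displayed pairing $(\Jcal_s u)(u')=-q\cdot p'+p\cdot q'$ has the overall sign reversed; it should read $q\cdot p'-p\cdot q'$, though this does not affect the anti-symmetry conclusion.
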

Typical examples of this construction are the use of $E=\Ban_{(1/2, -1/2)}$ or of $E=\Ban_{(1, 0)}$ with $\Kcal=L^2(\R^d)$ and $K^2=-\Delta$ to study the wave equation. 
For the Schr\"odinger equation, $E=\Ban_{(1, 1)}$ is a natural choice. We refer to Section~\ref{ss:hampde} for the details of these examples. Note that of these three examples, only the first corresponds to a strong symplector and hence to a strong symplectic form. It is therefore clear that the use of weak symplectors is unavoidable in applications to PDE's.

We end our discussion of symplectors with a simple lemma that  collects some of their essential properties. 
\begin{lemma}\label{lem:symplectors2} Let $E$ be a Banach space and $\Jcal:E\to E^*$ be a bounded linear map. Then the following holds:
\begin{enumerate}[label=({\roman*})]
\item If $\Jcal$ is a strong symplector, then $\Jcal^{-1}$ is bounded.
\item If $\Jcal$ is injective and (anti-)symmetric, and if $E$ is reflexive, then $\Rcal_{\!\Jcal}$ is dense in $\Ban^*$.
\item Suppose $\Jcal$ is injective and (anti-)symmetric, and that its inverse is bounded on $\Rcal_{\!\Jcal}$. Suppose $E$ is reflexive. Then $\Rcal_{\!\Jcal}=E^*$.
\end{enumerate}
\end{lemma}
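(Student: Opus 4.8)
The plan is to handle the three parts in turn, each being a short consequence of a classical functional-analytic principle. For (i), I would simply invoke the bounded inverse theorem: $E$ is a Banach space, hence so is $E^*$, and a strong symplector is by definition a continuous linear bijection $\Jcal\colon E\to E^*$ between Banach spaces; therefore $\Jcal^{-1}\colon E^*\to E$ is automatically continuous.

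For (ii) I would argue by contradiction, combining Hahn--Banach with reflexivity. If $\Rcal_{\!\Jcal}$ were not dense in $E^*$, there would exist a nonzero $\phi\in E^{**}$ vanishing on $\Rcal_{\!\Jcal}$. Reflexivity of $E$ lets us write $\phi(\alpha)=\alpha(v)$ for some $v\in E$, $v\neq 0$; in particular $(\Jcal u)(v)=0$ for every $u\in E$. Now the (anti-)symmetry of $\Jcal$ is exactly what is needed to turn this into a statement about the kernel: $(\Jcal v)(u)=\pm(\Jcal u)(v)=0$ for all $u\in E$, so $\Jcal v=0$, contradicting injectivity of $\Jcal$. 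Hence $\Rcal_{\!\Jcal}$ is dense.

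For (iii), the additional hypothesis that $\Jcal^{-1}$ is bounded on $\Rcal_{\!\Jcal}$ provides a constant $C>0$ with $\|u\|_E\le C\,\|\Jcal u\|_{E^*}$ for all $u\in E$; that is, $\Jcal$ is bounded below. This makes $\Rcal_{\!\Jcal}$ closed: if $(\Jcal u_n)$ is Cauchy in $E^*$, the lower bound forces $(u_n)$ to be Cauchy in $E$, hence $u_n\to u$ for some $u\in E$, and continuity of $\Jcal$ identifies $\lim_n \Jcal u_n=\Jcal u\in\Rcal_{\!\Jcal}$. Since $E$ is reflexive, part (ii) already gives that $\Rcal_{\!\Jcal}$ is dense in $E^*$; a dense closed subspace is the whole space, so $\Rcal_{\!\Jcal}=E^*$, and $\Jcal$ is in fact a strong symplector.

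There is no serious obstacle here: the whole statement is a packaging of the open mapping theorem, Hahn--Banach, and the ``bounded below $\Rightarrow$ closed range'' lemma. The only points requiring a little care are the use of the reflexive identification $E\cong E^{**}$ in (ii) and the observation that the (anti)symmetry hypothesis is precisely what converts ``$v$ annihilates the range of $\Jcal$'' into ``$v$ lies in the kernel of $\Jcal$''; without it, parts (ii) and (iii) genuinely can fail.
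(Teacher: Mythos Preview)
Your proof is correct and follows essentially the same approach as the paper: the open mapping (bounded inverse) theorem for (i), Hahn--Banach plus reflexivity and the (anti-)symmetry of $\Jcal$ for (ii), and closedness of the range combined with (ii) for (iii). Your write-up is in fact more explicit than the paper's, which simply asserts that boundedness of the inverse makes $\Rcal_{\!\Jcal}$ closed without spelling out the Cauchy-sequence argument.
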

\begin{proof} 
\begin{enumerate}[label=(\roman*)]
\item This is a consequence of the open mapping theorem. 
\item Suppose $v\in \Ban$ satisfies $\Jcal u(v)=0$ for all $u\in \Ban$. Then $\Jcal v(u)=0$ for all $u\in \Ban$, by (anti-)symmetry. Hence $\Jcal v=0$ and hence, since $\Jcal$ is injective, $v=0$. Since $E$ is reflexive, this means that, if $v\in E^{**}$ vanishes on $\Rcal_\Jcal\subset E^*$, then $v=0$.  This implies $\Rcal_\Jcal$ is dense (Hahn-Banach). 
\item Since the inverse is bounded, $\Rcal_\Jcal$ is closed. The result then follows from (ii).
\end{enumerate}
\end{proof}
If $\Ban$ is not reflexive, a symplector may not have a dense range, as the following example\footnote{Due to S. Keraani.} shows. Let 
$$
\Ban=\{u\in L^1(\R, \rd x)\mid \int_\R u(x) \rd x=0\}\subset L^1(\R)
$$
and define 
$$
\Jcal u(x)=\int_{-\infty}^x u(y)\rd y\in L^\infty(\R)\subset E^*.
$$
This is clearly bounded, injective and antisymmetric. But it is clear that 
$$
\|\Jcal u-1\|_\infty\geq 1,
$$ 
for all $u\in \Ban$. So the range is not dense in $L^\infty(\R)$ and a fortiori not dense in $E^*$. 

We are now ready to define what we mean by a symplectic transformation and by a Hamiltonian vector field. First we recall a very  basic definition: when $F:\Ban_1\to \Ban_2$ is a function between two Banach spaces $\Ban_1$ and $\Ban_2$, and when $u\in \Ban_1$, one says that $F$ is (Fr\'echet) differentiable at $u$ if there exists $D_u F\in\Lcal(\Ban_1, \Ban_2)$ so that
$$
\lim_{w\to0}\frac{\|F(u+w)-F(u)-D_uF(w)\|_{\Ban_2}}{\|w\|_{\Ban_1}}=0.
$$
Also, one says that $F:\Ban_1\to\Ban_2$ is differentiable on some subset of $E_1$ if for all $u$ in that subset, $F$ is differentiable in the above sense.  

In particular, if $\Ban_1=\Ban, \Ban_2=\R$, and if $F$ is differentiable at $u\in \Ban$, we have $D_uF\in \Ban^*$. And if $\Dcal$ is a domain in $\Ban$, saying that  $F:\Ban\to\R$ \emph{is differentiable on} $\Dcal$ means that $F$ is differentiable at each $u\in\Dom$. In that case, one can define
$$
u\in\Dcal\subset \Ban\to D_uF\in \Ban^*.
$$
As a last comment, we stress that, in these definitions, the only topology used is the one on $\Ban$. This is important to keep in mind in the applications, where the domain $\Dcal$ often carries a natural topology, stronger than the one induced by the norm on $\Ban$, and for which $\Dcal$ is closed. One can think of $\Ban=H^1(\R)$ and $\Dcal=H^3(\R)$. Such a topology is NOT used in the above statements, nor in the following general definition. We refer to the examples treated in Sections~\ref{ss:linflows} and~\ref{ss:hampde} for several illustrations of this last comment.  
\begin{definition}\label{def:Jdif}  Let $\Ban$ be a Banach space, $\Dcal$ a domain in $\Ban$ (See Section~\ref{ss:dynamicsbanach}) and $\Jcal$ a symplector.
\begin{enumerate}[label=({\roman*})]
\item  We will refer to $\left(\Ban, \Dcal, \Jcal\right)$ as a symplectic Banach triple\index{symplectic Banach triple}.
\item Let $\left(\Ban, \Dcal, \Jcal\right)$ be a symplectic Banach triple and $\Phi\in C^0(E,E)\cap C^1(\Dcal, E)$. We say $\Phi$ is a symplectic transformation\index{symplectic transformation} if 
\begin{equation}\label{eq:sympltransfbis}
\forall u\in \Dcal, \forall v, w\in \Ban, (\Jcal D_u\Phi (v))(D_u\Phi(w)) = (\Jcal v)(w).
\end{equation}
\item We say that a function $F:\Ban\to \R$ has a $\Jcal$-compatible derivative\index{$\Jcal$-compatible derivative} if $F$ is differentiable on $\Dcal$ and if, for all $u\in\Dom$, $D_uF\in\Rcal_{\!\Jcal}$. In that case we write $F\in\JDif$.
\item For each $F\in\JDif$,   the Hamiltonian vector field\index{Hamiltonian vector field} $X_F:\Dcal\subset \Ban\to \Ban$ associated to $F$ is defined by 
\begin{equation}\label{eq:HamfieldJbis}
X_F(u)=\Jcal^{-1}D_uF,\quad \forall u\in\Dom.
\end{equation}
\end{enumerate}
\end{definition}
The analogy between~\eqref{eq:sympltransfbis} and~\eqref{eq:sympltransf} as well as between~\eqref{eq:HamfieldJbis} and~\eqref{eq:HamfieldJ} is evident.
   Note however that, when dealing with \emph{weak} symplectors, as is often the case in applications, the vector field $X_F$ does not inherit the continuity or smoothness properties that $F$ may enjoy. In particular, even if 
$$
D_\cdot F:\Dcal\subset \Ban\to \Ban^*
$$ 
is continuous, the same may not hold for $X_F$. We shall for that reason avoid making use of the vector fields $X_F$ where possible and state all our hypotheses in terms of $F$ directly. We finally point out that, here and in what follows, and unless otherwise specified, all functions we consider are globally defined\footnote{This is a difference with~\cite{chm}, as we will explain in some detail in Section~\ref{ss:linflows}.} on $\Ban$.  


\subsection{Hamiltonian flows and constants of the motion}\label{ss:ham flows}\index{constants of the motion}
\begin{definition} \label{def:hamflow} Let $\left(\Ban, \Dcal, \Jcal\right)$ be a symplectic Banach triple. Let $F\in\JDif$. A Hamiltonian flow\index{Hamiltonian flow} for $F$ is a separately continuous map $\Phi^F:\R\times \Ban\to\Ban$ with the following properties:
\begin{enumerate}[label=({\roman*})]
\item For all $t,s\in\R$, $\Phi_{t+s}^F=\Phi_t^F\circ\Phi_s^F,\, \Phi_0^F=\mathrm{Id}$;
\item For all $t\in\R$, $\Phi_t^F(\Dom)=\Dom$;
\item For all $u\in\Dom$, the curve $t\in\R\to u(t):=\Phi_t^F(u)\in\Dcal\subset \Ban$ is differentiable and is the unique solution of 
\begin{equation}\label{eq:hameqmotionJbis}
\Jcal\dot u(t)=D_{u(t)}F,\quad u(0)=u.
\end{equation}
\end{enumerate}
\end{definition}

Local Hamiltonian flows are defined in the usual way. 
We refer to~\eqref{eq:hameqmotionJbis} as the Hamiltonian differential equation associated to $F$ (Compare to~\eqref{eq:hameqmotionJ} and~\eqref{eq:hameqmotion}) and to its solutions as Hamiltonian flow lines. Note that in this setting separate continuity implies continuity (See~\cite{chm}, Section~3.2). We refer to Section~\ref{ss:hampde} for examples  of PDE's generating Hamiltonian flows. 

To compare this definition to the ones of~\cite{gssI, gssII, stuart08}, we first observe that~\eqref{eq:hameqmotionJbis} implies that, for all $u\in \Dom$,
\begin{equation}\label{eq:hameqmotionJbisweak}
\forall\alpha\in \Rcal_{\!\Jcal},\quad -\frac{\rd}{\rd t} \alpha(u(t)) = D_{u(t)}F(\Jcal^{-1}\alpha),
\end{equation}
which is a weak form of~\eqref{eq:hameqmotionJbis}. With this in mind, one could think of changing Definition~\ref{def:hamflow} by replacing (iii)  by the following alternative statement\footnote{Note that for this formulation one needs $F\in \Dif(E,\R)$, but it is not necessary that it has a $\Jcal$-compatible derivative.}:\\

\noindent (iii') For all $u\in\Ban$, the curve $t\in\R\to u(t):=\Phi_t^F(u)\in \Ban$ belongs to $C(\R, \Ban)$ and~\eqref{eq:hameqmotionJbisweak} holds.\\

\noindent This has the advantage of eliminating the introduction of the domain $\Dcal$ (and therefore of condition~(ii)) and is precisely the definition of ``solution'' to~\eqref{eq:hameqmotionJbis} used in~\cite{gssI, gssII}.  In~\cite{stuart08}, $E$ is a Hilbert space and still a different formulation is adopted. Basically, the domain $\Dcal$ is not introduced, the equation~\eqref{eq:hameqmotionJbis} is interpreted as an equation in $E^*$ and the time derivative is understood as a strong derivative for $E^*$-valued functions. Those alternative formulations do not allow for a direct proof of the kind of natural ``conservation theorems'' such as Theorem~\ref{thm:conservation} below, that are typical for Hamiltonian systems and that we need for the stability analysis. As a result, the conclusions of such conservation theorems are added as assumptions in the general setup of the cited works. It turns out that, in examples, the proof of such assumptions requires a stronger notion of ``solution'' than the ones used in~\cite{gssI, gssII, stuart08}, so we found it more efficient to adopt from the start the stronger notion of Hamiltonian flow found in Definition~\ref{def:hamflow}. 

Let us finally point out that the formulation adopted in~\cite{stuart08} puts further restrictions on $\Jcal$, ruling out for example the treatment of the wave equation as a Hamiltonian system as in Section~\ref{ss:hampde}. Also, only one-dimensional invariance groups are considered there, and restrictions on their action rule out, for example, the consideration of the translation group as a symmetry group for the nonlinear homogeneous Schr\"odinger equation. The formalism does therefore not apply to the study of the orbital stability of the bright solitons in~\eqref{eq:soliton}. On the other hand, it can and has been used to study the orbital stability of standing waves of the inhomogeneous nonlinear Schr\"odinger equation. We refer to Section~\ref{curves.sec} for more details.

\begin{definition}\label{def:poissonbracketJbis}  Let $F,G\in\JDif$. Then the Poisson bracket\index{Poisson bracket} of $F$ and $G$ is defined by
\begin{equation}\label{eq:poissonbrackJbis}
\{F, G\}(u)=D_uF(\Jcal^{-1}D_uG),\qquad \forall u\in\Dom.
\end{equation}
\end{definition}
Equation~\eqref{eq:poissonbrackJbis} is the obvious transcription of~\eqref{eq:poissonbrackJ} to the infinite dimensional setting.
We now have the following  crucial result, which is a simple form of Noether's Theorem\index{Noether's Theorem} in the Hamiltonian setting.  A more complete form follows below (Theorem~\ref{thm:nother3}).
\begin{theorem}\label{thm:conservation}
Let $(\Ban, \Dom, \Jcal)$ be a symplectic Banach triple. Let $H,F\in C(\Ban, \R)$  and suppose they have a $\Jcal$-compatible derivative, \emph{i.e.} $H,F\in \JDif$. Suppose there exist Hamiltonian flows $\Phi_t^H, \Phi_t^F$ for $H$ and $F$.  Then:
\begin{enumerate}[label=({\roman*})]
\item For all $u\in\Dom$, and for all $t\in\R$,
\begin{equation}\label{eq:FG}
\frac{\rd}{\rd t} H(\Phi_t^F(u))=\{H,F\}(\Phi_t^F(u)).
\end{equation}
\item The following three statements are equivalent:
\begin{enumerate}[label=({\alph*})]
\item For all $u\in\Dom$, $\{F,H\}(u)=0$. 
\item For all $u\in \Ban$, and for all $t\in\R$,
\begin{equation}\label{eq:constants}
(H\circ\Phi_t^F)(u)=H(u).
\end{equation}
\item For all $u\in \Ban$, and for all $t\in\R$,
\begin{equation}\label{eq:constants1}
(F\circ\Phi_t^H)(u)=F(u).
\end{equation}
\end{enumerate}
\end{enumerate}
\end{theorem}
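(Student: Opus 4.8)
The plan is to prove part (i) first by a direct computation using the chain rule, and then deduce part (ii) as a formal consequence. For part (i), fix $u\in\Dcal$ and set $u(t)=\Phi_t^F(u)$. By property (ii) in Definition~\ref{def:hamflow}, $u(t)\in\Dcal$ for all $t$, and by property (iii) the curve $t\mapsto u(t)$ is differentiable as an $\Ban$-valued curve with $\Jcal\dot u(t)=D_{u(t)}F$, i.e. $\dot u(t)=\Jcal^{-1}D_{u(t)}F=X_F(u(t))$. Since $H$ is (Fr\'echet) differentiable on $\Dcal$ and $u(t)\in\Dcal$, the chain rule gives
\begin{equation*}
\frac{\rd}{\rd t}H(u(t))=D_{u(t)}H(\dot u(t))=D_{u(t)}H\big(\Jcal^{-1}D_{u(t)}F\big)=\{H,F\}(u(t)),
\end{equation*}
which is exactly~\eqref{eq:FG}. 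The one point requiring a word of care is the justification of the chain rule: one needs that $t\mapsto u(t)$ is differentiable \emph{into $\Ban$} (guaranteed by Definition~\ref{def:hamflow}(iii)) and that $H$ is Fr\'echet differentiable at the point $u(t)\in\Dcal$ (guaranteed by $H\in\JDif$); composing the Fr\'echet derivative of $H$ with the derivative of the curve is then standard.

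**Part (ii).** I would prove $(a)\Rightarrow(b)$, then note the symmetry argument giving $(a)\Rightarrow(c)$, and finally $(b)\Rightarrow(a)$ (and by symmetry $(c)\Rightarrow(a)$), which closes the cycle. For $(a)\Rightarrow(b)$: assume $\{F,H\}\equiv 0$ on $\Dcal$; by anti-symmetry of $\Jcal$ this is equivalent to $\{H,F\}\equiv 0$ on $\Dcal$ (indeed $\{H,F\}(u)=D_uH(\Jcal^{-1}D_uF)=-D_uF(\Jcal^{-1}D_uH)=-\{F,H\}(u)$, using $\alpha(\Jcal^{-1}\beta)=-\beta(\Jcal^{-1}\alpha)$ for $\alpha,\beta\in\Rcal_{\!\Jcal}$). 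Then for $u\in\Dcal$, part (i) gives $\frac{\rd}{\rd t}H(\Phi_t^F(u))=\{H,F\}(\Phi_t^F(u))=0$, so $t\mapsto H(\Phi_t^F(u))$ is constant, i.e.~\eqref{eq:constants} holds for all $u\in\Dcal$. To extend~\eqref{eq:constants} from $\Dcal$ to all of $\Ban$, I use that $\Dcal$ is dense in $\Ban$, that $H\in C(\Ban,\R)$ is continuous, and that $\Phi_t^F$ is continuous on $\Ban$ (separate continuity of the flow, plus continuity in the space variable); hence both sides of~\eqref{eq:constants} are continuous in $u$ and agree on the dense set $\Dcal$, so they agree everywhere. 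The implication $(a)\Rightarrow(c)$ is identical with the roles of $F$ and $H$ interchanged (using $\{H,F\}\equiv 0$ and the flow $\Phi_t^H$).

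**Closing the cycle.** For $(b)\Rightarrow(a)$: assume $H\circ\Phi_t^F=H$ on $\Ban$ for all $t$. Restrict to $u\in\Dcal$; then $t\mapsto H(\Phi_t^F(u))$ is the constant function $H(u)$, so its derivative at $t=0$ vanishes, and by part (i) this derivative equals $\{H,F\}(u)$. Hence $\{H,F\}(u)=0$ for all $u\in\Dcal$, and by anti-symmetry $\{F,H\}\equiv 0$ on $\Dcal$, which is $(a)$. Symmetrically, $(c)\Rightarrow(a)$ follows by differentiating $t\mapsto F(\Phi_t^H(u))$ at $t=0$ and applying part (i) with the roles of $F$ and $H$ swapped. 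This establishes $(a)\Leftrightarrow(b)\Leftrightarrow(c)$.

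**Main obstacle.** The only genuinely delicate point is the density/continuity argument used to pass from the identity~\eqref{eq:constants} (and~\eqref{eq:constants1}) on the domain $\Dcal$ to the identity on all of $\Ban$: the differential equation~\eqref{eq:hameqmotionJbis} and hence part (i) are only available on $\Dcal$, since that is where the flow is differentiable and $D_\cdot F$, $D_\cdot H$ are defined. Everything hinges on $\Dcal$ being dense together with continuity of $H$ on $\Ban$ and (separate, hence joint) continuity of the flow $\Phi^F$ on $\R\times\Ban$, all of which are built into Definitions~\ref{def:dynsystem}--\ref{def:hamflow}. No compactness or uniformity is needed; it is a plain ``continuous functions agreeing on a dense set'' argument. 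The rest is bookkeeping with the chain rule and the anti-symmetry relation $\alpha(\Jcal^{-1}\beta)=-\beta(\Jcal^{-1}\alpha)$.
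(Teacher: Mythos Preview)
Your proof is correct and follows essentially the same approach as the paper: the chain rule for part (i), and for part (ii) the combination of part (i) on $\Dcal$ with the density of $\Dcal$ and the continuity of $H$, $F$, and the flows to pass to all of $\Ban$. Your additional remarks on the anti-symmetry of the Poisson bracket and the justification of the chain rule are welcome elaborations but do not change the argument's structure.
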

In this result, the roles of $H$ and $F$ are interchangeable. But in practice, one of the flows, say $\Phi_t^F$, is simple, explicitly known, and often linear, whereas $\Phi_t^H$ is obtained by integrating a possibly nonlinear PDE of some complexity, such as the nonlinear Schr\"odinger or wave equations. It is then often very easy to check by a direct computation that $H\circ \Phi_t^F$ is constant in time for all $u\in\Ban$: one says that $H$ is invariant under the flow $\Phi_t^F$, or that the $\Phi_t^F$ are symmetries of $H$. The important conclusion of the theorem is that this implies that $F$ is a constant of the motion for $\Phi_t^H$. This is a strong statement, since in applications, the flow $\Phi_t^H$ is complex and poorly known. So being able to assert that it leaves the level surfaces of $F$ invariant is a non-trivial piece of information. Several examples are given in Section~\ref{ss:hampde}. 

\begin{proof} 
\begin{enumerate}
\item[(i)] Let $u\in\Dom$. Then $t\in\R\to H(\Phi_t^F(u))\in \R$ is differentiable and the chain rule applies: writing $u(t)=\Phi_t^F(u)$, we have
$$
\frac{\rd}{\rd t} H(\Phi_t^F(u))=D_{\Phi_t^F(u)}H(\dot u(t)),
$$
which yields the first equality in~\eqref{eq:FG} since $\Jcal\dot u(t)=D_{u(t)}F$.
\item[(ii)] That~\eqref{eq:constants} or~\eqref{eq:constants1} imply $\{H,F\}(u)=0$ for $u\in\Dom$ is immediate from (i). Conversely, it follows from (i) and the fact that $\{H,F\}(u)=0$, for all $u\in\Dcal$, that $(H\circ\Phi_t^F)(u)=H(u)$. Since $\Dom$ is dense in $\Ban$, $H\in C(\Ban,\R)$ and $\Phi_t^F\in C(\Ban,\Ban)$, (b) now follows for all $u\in \Ban$. Similarly for (c). 
\end{enumerate}
\end{proof}
It should be noted that condition~(ii) of Definition~\ref{def:hamflow} is crucial here. We are assuming there is a common invariant domain for both flows. To obtain conservation theorems of the above type without such an assumption requires other technical conditions~\cite{chm}.

We end with some  technical remarks. First, it follows from Theorem~\ref{thm:symplecticflow} in the Appendix, that Hamiltonian flows $\Phi_t^F$ are symplectic as soon as $F\in C^2(E,E)$ and $\Phi_t^F\in C^2(E, E)$. But these two assumptions (especially the latter) are generally too strong to be of use in infinite dimensional dynamical systems generated by PDE's, except possibly when they are linear. Of course, one can conceive of weaker conditions  that imply the result. For efforts in that direction, we refer to~\cite{chm}. In other words, proving that Hamiltonian flows, as defined above, are symplectic, can be painful. A second, related issue is the following. In finite dimensional systems, we know that, if $\{ F_1, F_2\}=0$, with $F_1, F_2\in C^2(E)$, then the corresponding Hamiltonian flows commute: see~\eqref{eq:compoisson} and Lemma~\ref{lem:commutator}. This is a very useful fact: indeed, computing a Poisson bracket is a routine matter of taking derivatives, and the information obtained about the flows is very strong. Again, this is not immediate in infinite dimensional systems under reasonable conditions. For our purposes, and in particular for the proof of Theorem~\ref{thm:nother3}, the following analog of Lemma~\ref{lem:composeF} will suffice. 
\begin{lemma}\label{lem:composeFbis}
Let $(\Ban, \Dcal, \Jcal)$ be a symplectic Banach triple. Let $\Phi$ be a $C^1$-diffeo\-morphism on $\Ban$ and suppose that $\Phi(\Dcal)=\Dcal$ and that $\Phi$ is symplectic. Let $F\in  \JDif$ and let $X_F$ be its Hamiltonian vector field. (See Definition~\ref{def:Jdif}~(iv)). Then, $F\circ \Phi\in \JDif$ and, for all $u\in \Dcal$
\begin{equation}\label{eq:hampushbis}
D_u\Phi(X_{F\circ\Phi}(u))=X_F(\Phi(u)).
\end{equation}
Moreover, for all $t\in\R$,
\begin{equation}\label{eq:hamintertwinebis}
\Phi\circ\Phi_t^{F\circ\Phi}\circ\Phi^{-1}=\Phi_t^{F}.
\end{equation}
In particular, if $F\circ\Phi=F$, then $\Phi$ commutes with $\Phi_t^F$, for all $t\in\R$. Finally, if $F\in C^1(\Ban, \R)$ and if $\Phi$ commutes with $\Phi_t^F$, for all $t\in\R$, then there exists $c\in\R$ so that $F\circ\Phi=F+c$. 
\end{lemma}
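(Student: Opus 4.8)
The plan is to establish the four assertions of Lemma~\ref{lem:composeFbis} in the order in which they are stated, each one feeding into the next. First, for \eqref{eq:hampushbis} and the claim $F\circ\Phi\in\JDif$: since $\Phi\in C^1(\Ban,\Ban)$ with $\Phi(\Dcal)=\Dcal$ and $F$ is differentiable on $\Dcal$, the chain rule gives $D_u(F\circ\Phi)=D_{\Phi(u)}F\circ D_u\Phi$ for $u\in\Dcal$; and since $\Phi$ is a $C^1$-diffeomorphism, differentiating $\Phi^{-1}\circ\Phi=\mathrm{Id}$ shows $D_u\Phi$ is a bounded bijection of $\Ban$ with bounded inverse $D_{\Phi(u)}(\Phi^{-1})$. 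Put $a:=(D_u\Phi)^{-1}X_F(\Phi(u))$, where $X_F(\Phi(u))=\Jcal^{-1}D_{\Phi(u)}F$ exists because $F\in\JDif$; then $D_{\Phi(u)}F=\Jcal X_F(\Phi(u))=\Jcal D_u\Phi(a)$, so for every $v\in\Ban$ the symplectic identity \eqref{eq:sympltransfbis} yields
$$D_u(F\circ\Phi)(v)=D_{\Phi(u)}F(D_u\Phi(v))=\bigl(\Jcal D_u\Phi(a)\bigr)\bigl(D_u\Phi(v)\bigr)=(\Jcal a)(v).$$
Hence $D_u(F\circ\Phi)=\Jcal\bigl((D_u\Phi)^{-1}X_F(\Phi(u))\bigr)\in\Rcal_{\!\Jcal}$, so $F\circ\Phi\in\JDif$, and applying $\Jcal^{-1}$ and then $D_u\Phi$ gives \eqref{eq:hampushbis}.

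Next, for the intertwining \eqref{eq:hamintertwinebis}, I would show that $\Psi_t:=\Phi^{-1}\circ\Phi_t^F\circ\Phi$ is a Hamiltonian flow for $F\circ\Phi$ and invoke the uniqueness built into Definition~\ref{def:hamflow}. Properties (i)--(ii) of Definition~\ref{def:hamflow} follow immediately from the group law of $\Phi_t^F$, continuity of $\Phi^{\pm1}$, and $\Phi(\Dcal)=\Dcal$ together with $\Phi_t^F(\Dcal)=\Dcal$. For (iii), fix $u\in\Dcal$, set $y(t)=\Phi_t^F(\Phi(u))\in\Dcal$ so that $\Jcal\dot y(t)=D_{y(t)}F$, and set $v(t)=\Psi_t(u)=\Phi^{-1}(y(t))$; then $v$ is differentiable with
$$\dot v(t)=(D_{v(t)}\Phi)^{-1}\dot y(t)=(D_{v(t)}\Phi)^{-1}\Jcal^{-1}D_{y(t)}F=(D_{v(t)}\Phi)^{-1}X_F\bigl(\Phi(v(t))\bigr)=X_{F\circ\Phi}(v(t))$$
by \eqref{eq:hampushbis}, i.e. $\Jcal\dot v(t)=D_{v(t)}(F\circ\Phi)$ with $v(0)=u$. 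Uniqueness forces $\Psi_t=\Phi_t^{F\circ\Phi}$, which is \eqref{eq:hamintertwinebis}; specializing to $F\circ\Phi=F$ gives $\Phi\circ\Phi_t^F=\Phi_t^F\circ\Phi$ at once.

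For the final assertion, assume $F\in C^1(\Ban,\R)$ (so $F\circ\Phi\in C^1(\Ban,\R)$ too) and that $\Phi$ commutes with $\Phi_t^F$ for all $t$. Differentiating $\Phi\circ\Phi_t^F(u)=\Phi_t^F\circ\Phi(u)$ at $t=0$ for $u\in\Dcal$ gives $D_u\Phi(X_F(u))=X_F(\Phi(u))$; comparing with \eqref{eq:hampushbis} and using injectivity of $D_u\Phi$ yields $X_{F\circ\Phi}(u)=X_F(u)$, hence $D_u(F\circ\Phi)=\Jcal X_{F\circ\Phi}(u)=\Jcal X_F(u)=D_uF$ for all $u\in\Dcal$. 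Since $D_\cdot F$ and $D_\cdot(F\circ\Phi)$ are continuous maps $\Ban\to\Ban^*$ and $\Dcal$ is dense in $\Ban$, this identity extends to all of $\Ban$, so $D_u(F\circ\Phi-F)=0$ for every $u\in\Ban$; as $\Ban$ is connected, $F\circ\Phi-F\equiv c$ for some $c\in\R$.

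I expect the only genuinely delicate point to be this last passage from $\Dcal$ to all of $\Ban$: restricted to $\Dcal$, the vanishing of $D_u(F\circ\Phi-F)$ only controls the difference along curves that stay inside $\Dcal$, and it is precisely the extra hypothesis $F\in C^1(\Ban,\R)$---genuine continuous Fr\'echet derivatives on the whole space---that upgrades this to global constancy. The rest is routine, the one standing technical caution being that for a \emph{weak} symplector $\Jcal$ the vector field $X_F$ is defined only on $\Dcal$ and need not be continuous there, so every identity involving $X_F$ must be read pointwise on $\Dcal$ and only the identities for the covectors $D_\cdot F$ promoted by density.
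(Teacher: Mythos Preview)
Your proof is correct and follows the same route as the paper: the chain rule combined with the symplectic identity \eqref{eq:sympltransfbis} to obtain $F\circ\Phi\in\JDif$ and \eqref{eq:hampushbis}, followed by verification that $\Phi^{-1}\circ\Phi_t^F\circ\Phi$ is a flow of $X_{F\circ\Phi}$ and an appeal to uniqueness. You in fact supply more detail than the paper does---notably for the final assertion about the constant $c$, which the paper's proof does not address explicitly---and your caution about needing $F\in C^1(\Ban,\R)$ to pass from $\Dcal$ to $\Ban$ by density is exactly the right point.
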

\begin{proof} The proof is very close to the one of Lemma~\ref{lem:composeF}. It gives a good illustration of the technical difficulties associated with the domain $\Dcal$. Since $F\in\JDif$ and since $\Phi\in C^1(E, E)$ and leaves $\Dcal$ invariant, one can compute, for all $u\in\Dcal$ and $v\in E$,
$$
D_u(F\circ \Phi)(v)=D_{\Phi (u)}FD_u\Phi(v)=\left[\Jcal X_F(\Phi(u))\right]D_u\Phi(v)=-\left[\Jcal D_u\Phi(v)\right](X_F(\Phi(u))).
$$
Since $\Phi$ is symplectic, this yields
$$
D_u(F\circ \Phi)(v)=-\left[\Jcal v\right]([D_u\Phi]^{-1}(X_F(\Phi(u)))=\left[\Jcal[D_u\Phi]^{-1}(X_F(\Phi(u))\right](v).
$$
This shows $D_u(F\circ \Phi)\in\Rcal_\Jcal$ and that $X_{F\circ \Phi}(u)=[D_u\Phi]^{-1}(X_F(\Phi(u))$, for all $u\in\Dcal$.
Finally, considering for each $u\in\Dcal$ the strongly differentiable curve $t\in\R\to \Phi^{-1}\circ \Phi_t^F\circ \Phi\in E$, one checks readily that it is the flowline  of $X_{F\circ \Phi}$ with initial condition $u$, which concludes the proof. \end{proof}
 The point here is that we suppose $\Phi$ to be a symplectic transformation. As we just saw, that is a strong assumption. In practice, to avoid the difficulties just mentioned,  we will always \emph{assume} that the symmetry group of the system under consideration acts with symplectic transformations. Since the latter are often linear, that they are symplectic can then be checked through a direct computation. 
We finally point out that,  if one wanted to exploit the presence of a formal constant of the motion with a nonlinear flow, such as in completely integrable systems, it could in general be difficult to prove it acts symplectically and commutes with the dynamics. This, in turn, makes it difficult to exploit such formal constants of the motion in the stability analysis that is our main interest here.

\subsection{Symmetries and Noether's Theorem}\label{ss:symconstmotion}
When dealing with a symplectic Banach triple, the appropriate type of group action to consider is the following. 
\begin{definition}\label{def:globhamactioninf} Let $(\Ban, \Dcal, \Jcal)$ be a symplectic Banach triple. Let $G$ be a Lie group and $\Phi: (g,x)\in G\times \Ban\to \Phi_g(x)\in \Ban,$ an action of $G$ on $\Ban$. We will say $\Phi$ is a globally Hamiltonian action\index{globally Hamiltonian action} if the following conditions are satisfied:
\begin{enumerate}[label=({\roman*})]
\item For all $g\in G$, $\Phi_g\in C^1(E,E)$ is symplectic.
\item For all $g\in G$, $\Phi_g(\Dcal)=\Dcal$.
\item For all $\xi\in\frak{g}$, there exists $F_\xi\in C^1(\Ban, \R)\cap\JDif$ such that $\Phi_{\exp(t\xi)}=\Phi_t^{F_\xi}$, and the map $\xi\to F_\xi$ is linear. 
\end{enumerate}
\end{definition}
This definition reduces to Definition~\ref{def:globhamaction} in the Appendix, for finite dimensional spaces $\Ban$: in that case $\Dcal=E$ and the restriction that $F\in\JDif$ is superfluous. We can now state the version of Noether's Theorem\index{Noether's Theorem} that we need. It links the invariance group of Hamiltonian dynamics to constants of the motion and is to be compared to the finite dimensional version given in the appendix (Theorem~\ref{thm:nother2}). As in~\eqref{eq:basisdecompmoment}, we will identify $
 \frak g$ and $\frak g^*$ with $\R^m$ and view $F$ as a map $F:E\to\R^m$ (See~\eqref{eq:momentmap1}). This allows us to write
$$
F_\xi=\xi\cdot F,
$$
where $\cdot$ refers to the canonical inner product on $\R^m$. 

\begin{theorem} \label{thm:nother3} Let $(\Ban, \Dcal, \Jcal)$ be a symplectic Banach triple.  Let $G$ be a Lie group and $\Phi$ a globally Hamiltonian action of $G$ on $\Ban$. Let $H\in C^1(\Ban,\R)\cap \JDif$ and let $\Phi_t^H$ be the corresponding Hamiltonian flow. Suppose that
\begin{equation}\label{eq:haminvariantbis}
\forall g\in G, \quad H\circ \Phi_g=H.
\end{equation}
Then:
\begin{enumerate}[label=({\roman*})]
\item For all $\xi\in\frak g$, $\{H, F_\xi\}=0.$
\item For all $t\in\R$, $F_\xi\circ \Phi_t^H=F_\xi$. 
\item $G$ is an invariance group\footnote{See Definition~\ref{def:symgroup}} for $\Phi_t^H$. 
\end{enumerate}
\end{theorem}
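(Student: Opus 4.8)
The plan is to derive all three conclusions from the single hypothesis $H\circ\Phi_g=H$ by combining the infinitesimal version of the invariance with the conservation theorem (Theorem~\ref{thm:conservation}) and the commutation lemma (Lemma~\ref{lem:composeFbis}). The key observation is that the globally Hamiltonian action gives us, for each $\xi\in\frak g$, a function $F_\xi\in C^1(\Ban,\R)\cap\JDif$ whose Hamiltonian flow is exactly the one-parameter subgroup $\Phi_{\exp(t\xi)}$; this is the bridge between the group-theoretic hypothesis and the analytic machinery already established.

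First I would prove (i). Fix $\xi\in\frak g$. By the defining property of a globally Hamiltonian action, $\Phi_{\exp(t\xi)}=\Phi_t^{F_\xi}$, so the invariance hypothesis~\eqref{eq:haminvariantbis} specialized to $g=\exp(t\xi)$ reads $H\circ\Phi_t^{F_\xi}=H$ for all $t\in\R$. This is precisely statement~(b) of Theorem~\ref{thm:conservation}~(ii) with the roles of $H$ and $F$ played by $H$ and $F_\xi$ (note both are in $\JDif$ and admit Hamiltonian flows, so the theorem applies). Hence the equivalent statement~(a) holds: $\{F_\xi,H\}(u)=0$ for all $u\in\Dom$, and by antisymmetry of the Poisson bracket, $\{H,F_\xi\}=0$ on $\Dom$. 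This gives~(i).

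Next, (ii) is immediate from (i): applying Theorem~\ref{thm:conservation}~(ii) once more, the vanishing of $\{F_\xi,H\}$ on $\Dom$ is equivalent to statement~(c), namely $F_\xi\circ\Phi_t^H=F_\xi$ on all of $\Ban$ for every $t\in\R$. Writing this componentwise via the identification $F:E\to\R^m$, $F_\xi=\xi\cdot F$, and letting $\xi$ range over a basis of $\frak g\simeq\R^m$, we conclude $F\circ\Phi_t^H=F$, so the components of $F$ are constants of the motion for $\Phi_t^H$. Finally, for (iii), I would invoke Lemma~\ref{lem:composeFbis} with $\Phi=\Phi_g$ (which is symplectic and preserves $\Dcal$ by the definition of a globally Hamiltonian action) and $F=H$: the hypothesis $H\circ\Phi_g=H$ yields directly that $\Phi_g$ commutes with $\Phi_t^H$ for all $t\in\R$, which is exactly the statement that $G$ is an invariance group for $\Phi_t^H$ in the sense of Definition~\ref{def:symgroup}.

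The only mild subtlety — and the step I would be most careful about — is the passage in~(iii) from the infinitesimal data to all of $G$: a priori Lemma~\ref{lem:composeFbis} is being applied to each fixed $g\in G$ individually using the hypothesis $H\circ\Phi_g=H$ for that $g$, so no connectedness assumption on $G$ is needed, but one must make sure that the lemma's hypotheses (that $\Phi_g$ is a $C^1$-diffeomorphism, symplectic, and $\Phi_g(\Dcal)=\Dcal$) are all supplied by Definition~\ref{def:globhamactioninf}, which they are. There is no real obstacle here; the proof is essentially an orchestration of the already-proven conservation theorem and composition lemma.
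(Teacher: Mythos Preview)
Your proof is correct and follows exactly the approach the paper takes: the paper's proof is the single sentence ``This is an immediate consequence of Theorem~\ref{thm:conservation} and Lemma~\ref{lem:composeFbis},'' and you have simply unpacked that sentence in detail, using Theorem~\ref{thm:conservation} for (i)--(ii) and Lemma~\ref{lem:composeFbis} for (iii). Your careful check that Definition~\ref{def:globhamactioninf} supplies all the hypotheses needed to invoke Lemma~\ref{lem:composeFbis} for each fixed $g$ is exactly the right verification, and there is nothing to add.
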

This is an immediate consequence of Theorem~\ref{thm:conservation} and Lemma~\ref{lem:composeFbis}. 
In the applications, the result is used as follows. The action $\Phi$ of $G$ is simple and well known. It is then easy to check~\eqref{eq:haminvariantbis} directly. One then concludes that (ii) and (iii) hold, which are the important pieces of information for the further analysis. In particular, the level surfaces $\Sigma_\mu$, defined in~\eqref{eq:levelsurface} are invariant under the dynamics $\Phi_t^H$.  Examples are given in the next section. The result in~\cite{chm} that is closest in spirit to our Theorem~\ref{thm:nother3} is Theorem~2 of Section~6.2. 
\begin{remark} For the statements of this section, we could have taken $H, F\in C(E,\R)$ rather than $H, F\in C^1(E,\R)$, but in applications, it is more convenient to take them to be $C^1$, as we will see in the next section. 
\end{remark}

\subsection{Linear symplectic flows}\label{ss:linflows}
Since invariance groups often act linearly on the symplectic Banach space $(\Ban, \Jcal)$, and since the nonlinear dynamical flows studied often are perturbations of linear ones, it is important to have a good understanding of linear symplectic flows. Their study also sheds some light on the various technical difficulties mentioned above, and in particular on the role of the domain $\Dcal$, the definition of Hamiltonian flow we adopted, etc. 

Proposition~\ref{prop:linsympl} below (which corresponds to Theorem 2 in Section 2.3 of~\cite{chm}) characterizes all strongly continuous linear symplectic one-parameter groups on a symplectic Banach space in terms of their generators. We adopt the following notation. Given a strongly continuous group of linear transformations on $\Ban$, we denote its generator by $A$, with domain $\Dcal(A)$. By the Hille-Yosida theorem, we then know that $t\in\R\to u(t)=\Phi_tu\in \Ban$ satisfies
\begin{equation}\label{eq:HillYos}
\dot u(t)=Y_A(u(t)), 
\end{equation}
provided $u\in \Dcal(A)$, where we introduced the vector field 
$$
Y_A: u\in \Dcal(A)\subset \Ban \to Au\in \Ban.  
$$
Note that $Y_A$ is not continuous if $A$ is an unbounded operator. Clearly,  the $\Phi_t$ form a dynamical system as defined in Section~\ref{s:dynamics}. We introduce the function
$$
H_A:u\in\Dcal(A)\to H_A(u)=\tfrac12\omega_{\!\Jcal}(Au,u)\in\R.
$$
Observe that $H_A$ admits directional (or G\^ateaux) derivatives $\delta_uH_A(v)$, for all $u,v\in\Dcal(A)$: 
\begin{eqnarray*}
\delta_uH_A(v)&=&\lim_{t\to0}\frac1{t}\left(H_A(u+tv)-H_A(u)\right)\\
&=&\frac12\left(\omega_{\!\Jcal}(Av, u)+\omega_{\!\Jcal}(Au, v)\right).
\end{eqnarray*}
Nevertheless, if $A$ is an unbounded operator, $H_A$ is not continuous since, for all $u, w\in \Dcal(A)$
$$
H_A(u+w)-H_A(u)=\omega_\Jcal(Au, w) + \omega_\Jcal(Aw, u) +\omega_\Jcal(Aw, w)
$$
and the last term in particular does not necessarily converge to $0$ as $w\to0$ in the topology of $\Ban$. It follows that, a fortiori, $H_A$ is not Fr\'echet differentiable. 
\begin{proposition}\label{prop:linsympl}
Let $(\Ban, \Jcal)$ be a symplectic vector space. Let $\Phi_t$ be a strongly continuous one-parameter group of bounded linear operators on $\Ban$. Let $(A, \Dcal(A))$ be the generator of $\Phi_t$. Then the following are equivalent.
\begin{enumerate}[label=({\roman*})]
\item The $\Phi_t$ are symplectic, \emph{i.e.} $\omega_\Jcal( \Phi_tu, \Phi_t v)=\omega_\Jcal( u, v)$ for all $u,v\in \Ban$;
\item  For all $u,v\in\Dcal(A)$, 
$$
\omega_\Jcal(Au, v)=-\omega_\Jcal(u, Av);
$$
\item For all $u\in\Dcal(A)$, one has
\begin{equation}\label{eq:Hamfieldlinear}
\Jcal Y_A(u)=\delta_uH_A\in \Ban^*.
\end{equation}
\end{enumerate}
In this case, $\delta_uH_A(v)=\omega_\Jcal( Au, v)$, $H_A(\Phi_tu)=H_A(u)$ for all $u\in \Dcal(A)$ and for all $t\in\R$.
\end{proposition}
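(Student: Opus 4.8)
The plan is to prove the chain of equivalences $(i)\Leftrightarrow(ii)\Leftrightarrow(iii)$ and then extract the final two assertions ($\delta_uH_A(v)=\omega_\Jcal(Au,v)$ and invariance of $H_A$ along the flow) as by-products of the argument.

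First I would show $(i)\Leftrightarrow(ii)$. For $(i)\Rightarrow(ii)$, fix $u,v\in\Dcal(A)$, consider the scalar function $t\mapsto\omega_\Jcal(\Phi_tu,\Phi_tv)$, which is constant equal to $\omega_\Jcal(u,v)$ by hypothesis, and differentiate at $t=0$; since $\Phi_tu$ and $\Phi_tv$ are differentiable with derivatives $A\Phi_tu$, $A\Phi_tv$ (Hille--Yosida, as $u,v\in\Dcal(A)$, which is $\Phi_t$-invariant) and $\omega_\Jcal$ is a bounded bilinear form, the derivative is $\omega_\Jcal(Au,v)+\omega_\Jcal(u,Av)=0$. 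For the converse $(ii)\Rightarrow(i)$, fix $u,v\in\Dcal(A)$ and differentiate $g(t):=\omega_\Jcal(\Phi_tu,\Phi_tv)$ again: one gets $g'(t)=\omega_\Jcal(A\Phi_tu,\Phi_tv)+\omega_\Jcal(\Phi_tu,A\Phi_tv)$, which vanishes by $(ii)$ applied to the pair $\Phi_tu,\Phi_tv\in\Dcal(A)$; hence $g$ is constant, so $\omega_\Jcal(\Phi_tu,\Phi_tv)=\omega_\Jcal(u,v)$ for $u,v\in\Dcal(A)$, and since $\Dcal(A)$ is dense in $\Ban$ and $\omega_\Jcal$, $\Phi_t$ are continuous, this extends to all $u,v\in\Ban$.

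Next, $(ii)\Leftrightarrow(iii)$ is essentially a reformulation. From the Gâteaux-derivative computation already displayed in the excerpt, $\delta_uH_A(v)=\tfrac12(\omega_\Jcal(Av,u)+\omega_\Jcal(Au,v))$ for $u,v\in\Dcal(A)$. Using antisymmetry of $\omega_\Jcal$, $\omega_\Jcal(Av,u)=-\omega_\Jcal(u,Av)$, so $\delta_uH_A(v)=\tfrac12(\omega_\Jcal(Au,v)-\omega_\Jcal(u,Av))$. Condition $(ii)$ says precisely $\omega_\Jcal(Au,v)=-\omega_\Jcal(u,Av)$, i.e. the two terms coincide, which is equivalent to $\delta_uH_A(v)=\omega_\Jcal(Au,v)=(\Jcal Au)(v)=(\Jcal Y_A(u))(v)$ for all $v\in\Dcal(A)$; since $\Dcal(A)$ is dense and both sides are continuous linear functionals in $v$ (note $\Jcal Y_A(u)=\Jcal Au\in\Ban^*$ is already a bounded functional, and one checks $v\mapsto\delta_uH_A(v)$ extends boundedly precisely under $(ii)$), this gives the identity $\Jcal Y_A(u)=\delta_uH_A\in\Ban^*$ of $(iii)$. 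Conversely, $(iii)$ forces $\omega_\Jcal(Au,v)=\delta_uH_A(v)$, and comparing with the symmetric expression for $\delta_uH_A$ yields $(ii)$. The asserted formula $\delta_uH_A(v)=\omega_\Jcal(Au,v)$ is thus established in the course of proving $(ii)\Leftrightarrow(iii)$.

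Finally, conservation of $H_A$ along the flow: for $u\in\Dcal(A)$, the curve $t\mapsto\Phi_tu$ stays in $\Dcal(A)$ and is differentiable, so $t\mapsto H_A(\Phi_tu)$ is differentiable with $\frac{\rd}{\rd t}H_A(\Phi_tu)=\delta_{\Phi_tu}H_A(A\Phi_tu)=\omega_\Jcal(A\Phi_tu,A\Phi_tu)=0$ by antisymmetry; hence $H_A(\Phi_tu)=H_A(u)$ for all $t$. The main obstacle I anticipate is the mild functional-analytic bookkeeping around $(ii)\Leftrightarrow(iii)$: one must be careful that $\delta_uH_A$, which a priori is only a Gâteaux derivative defined on $\Dcal(A)$, extends to an element of $\Ban^*$ exactly when $(ii)$ holds (so that the identification with $\Jcal Y_A(u)$ makes sense), and that all density/continuity extensions are justified since $\Dcal(A)$ is dense but not all of $\Ban$ and $Y_A$ is not continuous when $A$ is unbounded. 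Everything else is a routine differentiation of bilinear forms along orbits.
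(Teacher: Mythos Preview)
Your treatment of the equivalences $(i)\Leftrightarrow(ii)\Leftrightarrow(iii)$ is correct and simply spells out what the paper dismisses as ``obvious''; there is nothing to add there.

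The gap is in your proof that $H_A$ is conserved. You write that $t\mapsto H_A(\Phi_tu)$ is differentiable with derivative $\delta_{\Phi_tu}H_A(A\Phi_tu)$, invoking a chain rule. But $H_A$ is only G\^ateaux differentiable (the paper stresses just before the proposition that $H_A$ is not even continuous, let alone Fr\'echet differentiable), and there is no chain rule for G\^ateaux derivatives. Concretely, writing $w=\Phi_tu\in\Dcal(A)$, the difference quotient
\[
\frac{H_A(\Phi_sw)-H_A(w)}{s}
=\frac{1}{2s}\bigl[\omega_\Jcal(\Phi_sAw-Aw,\Phi_sw)+\omega_\Jcal(Aw,\Phi_sw-w)\bigr]
\]
has a second term converging to $\omega_\Jcal(Aw,Aw)=0$, but the first term requires $\frac{\Phi_sAw-Aw}{s}$ to converge, i.e.\ $Aw\in\Dcal(A)$, which is $u\in\Dcal(A^2)$ --- not assumed. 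The paper explicitly comments on this issue in the paragraph following the proposition (``the absence of a chain rule for G\^ateaux derivatives prevents one from computing derivatives such as $\frac{\rd}{\rd t}H_A(u(t))$ directly'').

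The paper's fix is to avoid differentiation entirely: since $A\Phi_tu=\Phi_tAu$ for $u\in\Dcal(A)$ (Hille--Yosida) and $\Phi_t$ is symplectic by $(i)$,
\[
H_A(\Phi_tu)=\tfrac12\,\omega_\Jcal(A\Phi_tu,\Phi_tu)=\tfrac12\,\omega_\Jcal(\Phi_tAu,\Phi_tu)=\tfrac12\,\omega_\Jcal(Au,u)=H_A(u).
\]
This is a one-line algebraic identity, valid for all $u\in\Dcal(A)$ without any further regularity.
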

\begin{proof} 
The three equivalences are obvious. To prove $H_A$ is a constant of the motion, it suffices to remember that the Hille-Yosida theorem implies $A\Phi_t u=\Phi_t Au$ provided $u\in\Dcal(A)$. 
\end{proof}
In other words, when the $\Phi_t$ are symplectic, the equation of motion~\eqref{eq:HillYos} can be rewritten
\begin{equation}\label{eq:hameqweak}
\Jcal \dot u(t)=\delta_{u(t)}H_A,
\end{equation}
which is to be compared to~\eqref{eq:hameqmotionJbis}. Clearly, the symplectic linear flows considered here are NOT Hamiltonian in the sense of Definition~\ref{def:hamflow}.  Still,~\eqref{eq:hameqweak} gives meaning to the idea that in infinite dimension as well, linear strongly continuous symplectic flows are of ``Hamiltonian nature,'' with a quadratic Hamiltonian. 
Moreover, the Hamiltonian $H_A$ is a constant of the motion for the flow $\Phi_t$. But note that, whereas in~\eqref{eq:constants}, the conservation of energy holds for all $u\in\Ban$, this makes no sense here, since $H_A$ is only defined on $\Dcal(A)$. 

Generally, because of the appearance of the G\^ateaux derivative rather than a Fr\'echet differential in the right hand side, it turns out that the above formulation is inadequate for various reasons. For example, the absence of a chain rule for G\^ateaux derivatives prevents one from computing derivatives such as $\frac{\rd}{\rd t}H_A(u(t))$ directly to prove $H_A$ is constant along the motion. In fact, in the proof above, this result is proven using the Hille-Yosida theorem, and without computing a derivative at all. This approach cannot work for  nonlinear flows of course. Similar problems arise when dealing with other constants of the motion than the Hamiltonian himself, even in the linear case, due to various  domain questions and the complications in defining commutators. Finally, for our purposes, we need to restrict the motion to the level sets of the constants of the motion, and to use their manifold structure. This requires sufficient smoothness, a property not guaranteed at all by G\^ateaux differentiability alone. Again, as pointed out before, an approach to the resolution of these technical difficulties other than the one chosen here can be found in~\cite{chm}.

In applications to PDE's, the function spaces that occur naturally are often complex Hilbert spaces. To make the link with Hamiltonian dynamics, one then proceeds as follows. Let $\Hcal$ be a complex Hilbert space and let us write $\langle \cdot, \cdot \rangle$ for its inner product. 
First, it is clear that $\Hcal$ is a real Hilbert space for the real inner product defined by Re$\langle \cdot, \cdot\rangle$, which induces the same topology on $\Hcal$ as the original inner product since both inner products have the same associated norm. Let us write $\Ban$ for this real Hilbert space. We now identify $\Ban^*$ with $\Ban$ using the corresponding Riesz isomorphism. Note that this is not the same as identifying $\Hcal^*$ with $\Hcal$ through the Riesz isomorphism associated to $\langle\cdot, \cdot\rangle$ and that there is no natural identification between $\Hcal^*$ and $\Ban^*$ as sets:  each non-zero element of $\Hcal^*$ necessarily takes complex values, whereas the elements of $\Ban^*$ take real values only. 

On the real Hilbert space $\Ban$, one checks readily that 
$$
\omega(u, v)=\mathrm{Im}\langle u, v\rangle\in \R
$$
defines a strong symplectic form. Note in particular that $\omega$ is real bilinear, but not complex bilinear. To identify the corresponding symplector $\Jcal: \Ban\to\Ban$ in a convenient manner\footnote{We identified $E^*$ with $E$, so the symplector can be seen as a map from $E$ to $E$.}, one proceeds as follows:
$$
\omega(u, v)=\mathrm{Re}\,\langle iu, v\rangle
$$
so that $\Jcal u=iu$. The reader should not let itself be confused by the fact that we write $iu$, while considering $u$ as an element of the real vector space $\Ban$. The way to see this is as follows: the real vector space $\Ban$ is, as a set, identical to $\Hcal$. And on $\Hcal$, multiplication by $i$ is well defined and actually an isometric complex linear map. So multiplication by $i$ is well defined on $\Ban$ as an isometric real linear map. 

To sum up, we showed how to associate to a complex Hilbert space $(\Hcal, \langle\cdot, \cdot\rangle)$ a real Hilbert space $(\Ban, \langle\cdot, \cdot\rangle_E)$ with symplectic structure
$$
\omega(u,v)=\langle \Jcal u, v\rangle_E, \quad \Jcal u=iu.
$$ 
Now let us return to the linear symplectic flows. Suppose $B$ is a self-adjoint operator on $\Hcal$, with domain $\Dcal(B)$. Then $U_t=\exp(-iBt)$ is a strongly continuous one-parameter group of unitaries\footnote{By Stone's theorem, every strongly continuous one parameter group of unitaries is of this form.} . The corresponding Hille-Yosida generator is $A=-iB$, with $\Dcal(A)=\Dcal(B)$. Clearly, each $U_t$ is a symplectic transformation on $\Ban$ with the symplectic form $\omega$. We are therefore in the setting of Proposition~\ref{prop:linsympl} and 
\begin{equation}\label{eq:expvalue}
H_A(u)=\frac12\langle u, Bu\rangle.
\end{equation}
It turns out that in the applications we have in mind, the one parameter subgroups of the symmetry group $G$ act with such unitary groups on the relevant Hilbert space $\Hcal$. But within this framework, as we pointed out above, the $U_t$ are NOT Hamiltonian flows. To remedy this situation, one can, and we will, proceed along the following lines. First remark that the function $H_A$ above is $C^1$ if we view it as a function on the Banach space $\Ban_B$ obtained by considering on $\Dcal(|B|^{1/2})$ the graph norm. And that the flow $U_t$ is strongly differentiable on $\Dcal:=\Dcal(|B|^{3/2})$, viewed as a subset of $\Ban_B$. So now we are in the setting of Definition~\ref{def:hamflow}, and $U_t$ is a Hamiltonian flow on $\Ban_B$, on which $\Jcal$ still defines a weak symplector.  
The trouble with this reformulation so far is that now the Banach space $\Ban_B$ and the domain $\Dcal$ depend on $B$. If the symmetry group is multi-dimensional, it will have several generators, and we need a common domain and Banach space on which to realize them all as Hamiltonian flows. We will see several examples where this formalism is implemented.

In practice, very often, $\Hcal=\Kcal^\C=\Kcal\oplus i\Kcal$, where $\Kcal$ is a real Hilbert space. One has $u=q+ip\in\Hcal$ with $q,p\in\Kcal$. Then, clearly $\Ban=\Kcal\times\Kcal$ with its natural Hilbert space structure. Moreover, identifying $u\in\Hcal$ with $(q,p)\in \Kcal\times\Kcal$, clearly $\Jcal (q,p)=(-p,q)$ and we are back to the examples of symplectors given in Section~\ref{ss:symplectors}.


\subsection{Hamiltonian PDE's: examples}\label{ss:hampde}

In this section we give some examples of PDE's generating Hamiltonian flows in the sense of Definition \ref{def:hamflow}.

Let $\Ban=H^1(\R^d,\C)$, $\Dom=H^3(\R^d,\C)$ and consider the nonlinear Schr\"odinger equation 
	\begin{equation}
	\label{eq:nlspurepowerham}
	\left\{
	\begin{aligned}
		&i\partial_t u(t,x) +\Delta u(t,x) +\lambda |u(t,x)|^{\sigma-1} u(t,x)=0\\
		&u(0,x)=u_0(x)
	\end{aligned}
	\right.
	\end{equation}
introduced in Section \ref{s:dynsysexamples}, defined on $\R^d$, $d=1,2,3$. For $d=1$, suppose that $3\le \sigma<+\infty$ in the defocusing case and $3\le \sigma<5$ in the focusing case. In dimension $d=2,3$, consider only the defocusing case and assume $3\le \sigma<1+\frac{4}{d-2}$. Let $\Phi_t^X:\Ban\to\Ban$ be the global flow defined in \eqref{eq:schrodflow}. Recall that the existence of $\Phi_t^X$ is ensured by Theorem \ref{thm:nlsglobalflowH1} and, thanks to Theorem \ref{thm:nlsregularityH2Hm}, $\Phi_t^X(\Dom)=\Dom$ for all $t\in \R$.

Our purpose is to show that Equation \eqref{eq:nlspurepowerham} is the Hamiltonian differential equation associated to the function $H$ defined by \eqref{eq:nlshamiltonianpower} and $\Phi_t^X=\Phi^H_t$ for all $t\in \R$.

As explained in the end of Section \ref{ss:linflows}, we usually identify $u=q+ip \in H^s(\R^d,\C)$ with $(q,p)\in H^s(\R^d,\R)\times H^s(\R^d,\R)$ for all $s\in \R$. Hence, let $(\Ban, \Dom, \Jcal)$ be the symplectic Banach triple given by 
\begin{align*}
 	&\Ban=H^1(\R^d,\R)\times H^1(\R^d,\R),\\
 	&\Dom=H^3(\R^d,\R)\times H^3(\R^d,\R),\\
 	&\Jcal (q,p)=(-p, q),\ \forall (q,p)\in E.
\end{align*} 
Clearly $\Jcal u=iu$ and $\Rcal_\Jcal=\Ban\subset \Ban^*$. Now consider 
\begin{equation*}
H(q,p)=\frac12\left(\|\nabla q\|_{L^2}^2+\|\nabla p\|_{L^2}^2\right)-\frac{\lambda}{\sigma+1}\int_{\R^d}(|q|^2+|p|^2)^{\frac{\sigma+1}{2}}, 
\end{equation*}
and remark that if we write $u=q+ip$ with $(q,p)\in \Ban$, $H(u)=H(q,p)$ is exactly the energy defined in \eqref{eq:nlshamiltonianpower}. 
A straightforward calculation, using the Sobolev embedding theorem, shows that $H\in C^2(\Ban,\R)$. In particular,
\begin{equation*}\label{eq:diffnls}
	D_{(q,p)}H=(-\Delta q, -\Delta p)-\lambda (|q|^2+|p|^2)^{\tfrac{\sigma-1}{2}}(q,p) \in \Ban^*
\end{equation*}
which can be written as
\begin{equation*}\label{eq:diffnlsbis}
	D_{u}H=-\Delta u-\lambda |u|^{\sigma-1}u
\end{equation*}
in terms of $u=q+ip$. Next, using the fact that the Sobolev space $H^3(\R^d)$ is an algebra for $d=1,2,3$, we have $DH(\Dcal)\subset \Rcal_\Jcal$ so that $H$ has a $\Jcal$-compatible derivative on $\Dom$.

Moreover, the curve $(q(t),p(t))=\Phi^X_t(q,p)$ is the unique solution to 
\begin{equation*}
	\Jcal (\dot q(t),\dot p(t))= (-\Delta q, -\Delta p)-\lambda (|q|^2+|p|^2)^{\tfrac{\sigma-1}{2}}(q,p)=D_{(q(t),p(t))}H
\end{equation*}
that is Equation \eqref{eq:hameqmotionJbis}. As a consequence, $\Phi^X$ is a Hamiltonian flow for $H$ in the sense of Definition \ref{def:hamflow}, $\Phi_t^X=\Phi_t^H$ and the nonlinear Schr\"odinger equation \eqref{eq:nlspurepowerham} is a Hamiltonian differential equation.


In Section \ref{s:dynsysexamples}, we prove directly from the equation that $G=\mathrm{SO}(d)\times \R^d\times \R$ with the action defined by \eqref{eq:nlsinvariance} is an invariance group for the dynamics. In general, the action of this group is not globally Hamiltonian. Nevertheless, let us consider the subgroup $\tilde G =\R^d\times \R$ and the restricted action
\begin{align}
	\Phi: \ & \tilde G\times \Ban\to \Ban \notag \\
	&(a,\gamma,u)\to \Phi_{a,\gamma}(u)=e^{i\gamma}u(x-a).  \label{restraction}
\end{align}
For all $g\in \tilde G$, $\Phi_g\in C^1(\Ban,\Ban)$ is symplectic, $\Phi_g(\Dom)=\Dom$ and for all \[
\xi=(\xi_1,\ldots,\xi_d,\xi_{d+1})\in \frak{g}, 
\]
point (iii) of Definition \ref{def:globhamactioninf} is satisfied by taking $F_{\xi_j}=\xi_jF_j$ with
\begin{align}
	\label{eq:defconstantmotionSchr}
	&F_j(u)=-\frac{i}{2}\int_{\R^d}\bar u(x)\partial_{x_j}u(x)\diff x\  \forall j=1,\dots,d,\\
	&F_{d+1}(u)=-\frac{1}{2}\int_{\R^d} \bar u(x) u(x) \diff x.
\end{align}

As a consequence the action $\Phi$ of $\tilde G$  on $\Ban$ is globally Hamiltonian. Moreover, in Section \ref{s:dynsysexamples}, we showed that $H\circ \Phi_g=H$, hence we may apply Theorem \ref{thm:nother3} and conclude that $F_{\xi_j}\circ \Phi_t^H= F_{\xi_j}$ that means that each $F_{j}$ is a constant of the motion.

Finally we show that the action $\Phi: (R,u)\in G\times \Ban\to \Phi_R(u)=u(R^{-1}x)\in \Ban$ of $G=\mathrm{SO}(d)$ on $\Ban$ is not globally Hamiltonian. For simplicity, let us consider $d=2$ and let us identify a matrix $\xi \in \mathrm{so}(2)$ with $\xi \in \R$ 
$$\xi=\begin{pmatrix}
	0 & \xi\\
	-\xi & 0
\end{pmatrix}.$$
Then for each $\xi\in \R$, $\Phi_{\exp(t\xi)}=\Phi_t^{F_\xi}$ with $F_{\xi}=\xi F$ and 
\begin{equation*}
	F(u)=-\frac{i}{2}\int_{\R^d}(x_1\partial_{x_2}-x_2\partial_{x_1})u(x)\bar u(x)\diff x.
\end{equation*}
The issue is that $F$ is not even well-defined on the Banach space $H^1(\R^2)$! 

Finally, let us remark that if we choose $\Dom= H^2(\R^d)\times H^2(\R^d)$, then $DH(\Dcal)\subset L^2(\R^d)\times L^2(\R^d)\not\subset \Rcal_\Jcal=H^1(\R^d)\times H^1(\R^d)$ and $H$ does not have a $\Jcal$-compatible derivative for this new choice of $\Dom$. In the same way, if we take $\Ban=L^2(\R^d) \times L^2(\R^d)$ and $\Dcal=H^1(\R^d)\times H^1(\R^d)$, the same function $H$ is not even continuous.

We point out that the Manakov equation can be treated similarly. In that case, in addition to the momentum, there are four constants of the motion associated to the $U(2)$ symmetry. 

Next, let $(\Ban, \Dom, \Jcal)$ be the symplectic Banach triple given
\begin{align*}
 	&\Ban=H^1(\R^d,\R)\times L^2(\R^d,\R),\\
 	&\Dom=H^2(\R^d,\R)\times H^1(\R^d,\R),\\
 	&\Jcal (q,p)=(-p, q),\ \forall (q,p)\in E.
\end{align*} 
and consider the nonlinear wave equation 
\begin{equation}
	\label{eq:nlwpurepowerham}
	\left\{
	\begin{aligned}
		&\partial^2_{tt} u(t,x) -\Delta u(t,x) +\lambda |u(t,x)|^{\sigma-1} u(t,x)=0\\
		&u(0,x)=u_0(x), \partial_t u(0,x)=u_1(x)
	\end{aligned}
	\right.
\end{equation}
introduced in Section \ref{ss:nlw}, defined on $\R^d$, $d=1,2,3$. Suppose $\lambda>0$ and $\sigma$ an odd integer such that $3\le \sigma<+\infty$ in dimension $d=1$ and $3\le \sigma<1+\frac{4}{d-2}$ for $d=2,3$. Let $\Phi_t^X:\Ban\to\Ban$ the global flow defined in \eqref{eq:flownlw}. Thanks to the persistence of regularity, we have $\Phi_t^X(\Dom)=\Dom$  for all $t\in\R$ (see Section \ref{ss:nlw}).

As before, our purpose is to show that Equation \eqref{eq:nlwpurepowerham} is the Hamiltonian differential equation associated to the function $H$ defined by \eqref{eq:nlwhamiltonian} and $\Phi_t^X=\Phi_t^H$ for all $t\in \R$.

First of all, note that $\Rcal_\Jcal=L^2(\R^d)\times H^1(\R^d)\subset \Ban^*=H^{-1}(\R^d)\times L^2(\R^d)$. Next, consider 
\begin{equation*}
H(q,p)=\frac12\left(\|\nabla q\|_{L^2}^2+\|p\|_{L^2}^2\right)+\frac{\lambda}{\sigma+1}\int_{\R^d}(|q|)^{{\sigma+1}}, 
\end{equation*}
and remark that if we write $q=u$ and $p=\partial _t u$ with $(q,p)\in \Ban$, $H(u)=H(q,p)$ is exactly the energy defined in \eqref{eq:nlwhamiltonian}. As for the nonlinear Schr\"odinger equation, a straightforward calculation, using the Sobolev embedding theorem, shows that $H\in C^2(\Ban,\R)$. 
In particular,
\begin{equation*}\label{eq:diffnlw}
	D_{(q,p)}H=(-\Delta q+\lambda|q|^{\sigma-1}q , p) \in \Ban^*.
\end{equation*}
Next, using the fact that the Sobolev space $H^2(\R^d)$ is an algebra for $d=1,2,3$, we have $DH(\Dcal)\subset \Rcal_\Jcal$ so that $H$ has a $\Jcal$-compatible derivative on $\Dom$.

Moreover, the curve $(u(t),\partial_t u(t))=\Phi^X_t(u(0),\partial_t u(0))$ is the unique solution to \eqref{eq:nlwpurepowerham}. As a consequence, using $u=q$ and $\partial _t u=p$, we have that $(q(t),p(t))=\Phi^X_t(q,p)$ is the unique solution to
\begin{equation*}
	\Jcal (\dot q(t),\dot p(t))= (-\Delta q+\lambda|q|^{\sigma-1}q ,p)=D_{(q(t),p(t))}H,
\end{equation*}
that is, Equation \eqref{eq:hameqmotionJbis}. 
Finally, if $(q,p)\in \Dom$, the curve $t\to \Phi^H_t(q,p)\in C(\R,\Dom)\cap C^1(\R,E)$. As a consequence, $\Phi^X$ is a Hamiltonian flow for $H$ in the sense of Definition \ref{def:hamflow}, $\Phi_t^X=\Phi_t^H$ and the nonlinear wave equation \eqref{eq:nlwpurepowerham} is a Hamiltonian differential equation.


\section{Identifying relative equilibria}\label{s:identifyreleq}
We now dispose of the necessary tools that will allow us to characterize the relative equilibria of Hamiltonian systems with symmetry and that will yield the candidate Lyapunov function\index{Lyapunov function} to study their stability.  Before stating the main result (Theorem~\ref{thm:relequicritical}), we recall some of the terminology used below, but refer to the appendices for details. First, for $\mu\in\frak g^*$, we have (see~\eqref{eq:stabilizer}),
$$
G_\mu=\{g\in G\mid \mathrm{Ad}_g^* \mu=\mu\};
$$ 
 $\frak g, \frak g_\mu$ are the Lie algebras of $G$ and $G_\mu$ respectively, and $\frak g^*, \frak g^*_\mu$ their duals. We always identify $\frak g^*$ with $\R^m$ (see~\eqref{eq:gstaridentif}). Hence, if $\Phi$ is a globally Hamiltonian action, we think of its momentum map as a map $F:\Ban\to \R^m$ and define, for all $\mu\in\R^m$,
$$
\Sigma_\mu=\{ u\in \Ban\mid F(u)=\mu\}.
$$
We then know  from Proposition~\ref{eq:reduction} that $G_\mu=G_{\Sigma_\mu}$ provided the momentum map is Ad$^*$-equivariant.

\begin{theorem}\label{thm:relequicritical}
Let $(\Ban, \Dcal, \Jcal)$ be a symplectic Banach triple. Let $H\in C^1(\Ban, \R)\cap \JDif$ and suppose $H$ has a Hamiltonian flow $\Phi_t^H$. Let furthermore $G$ be a Lie group, and $\Phi$ a globally Hamiltonian action on $\Ban$ with Ad$^*$-equivariant momentum map $F$. Suppose that,
\begin{equation}
\forall g\in G,\quad H\circ \Phi_g=H.\label{eq:HGinv}
\end{equation}
\begin{enumerate}[label=({\roman*})]
\item Then $G$ is an invariance group for $\Phi_t^H$.
\item Let $u\in \Ban$ and let $\mu= F(u)\in\R^m\simeq\frak g^*$. Consider the following statements: 
\begin{enumerate}[label=({\arabic*})]
\item  $u$ is a relative $G$-equilibrium.
\item $u$ is a relative $G_\mu$-equilibrium.
\item There exists $\xi\in \frak{g}_\mu$ so that, for all $t\in\R$, 
\begin{equation}
\Phi_t^H(u)=\Phi_{\exp(t\xi)}(u).
\end{equation} 
\item  There exists $\xi\in \frak g_\mu$ so that 
\begin{equation}\label{eq:findrelequi}
D_uH-\xi\cdot D_u F=0.
\end{equation}
\item There exists $\xi\in \frak g$ so that 
\begin{equation}\label{eq:findrelequibis}
D_uH-\xi\cdot D_u F=0.
\end{equation}
\end{enumerate}
Then  (1) $\Leftrightarrow$ (2) $\Leftarrow$ (3).\\
 If $u\in\Dcal$, then (1) $\Leftrightarrow$ (2) $\Leftarrow$ (3) $\Leftrightarrow$ (4) $\Leftrightarrow$ (5).\\
  If in addition, $\mu$ is a regular value of $F$ (See Definition~\ref{def:levelsurfregular}), then\\
   (1) $\Leftrightarrow$ (2) $\Leftarrow$ (3) $\Leftrightarrow$ (4) $\Leftrightarrow$ (5) $\Leftrightarrow$ (6), where (6) is the statement:
\begin{itemize}
\item[(6)] $u$ is a critical point of $H_\mu$ on $\Sigma_\mu$, where $H_\mu=\restr{H}{\Sigma_\mu}.$
\end{itemize}
In addition, $\xi$ is then unique.
\end{enumerate}
\end{theorem}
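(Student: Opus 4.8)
The plan is to establish the chain of equivalences by going around a loop, exploiting the characterization of relative equilibria in Definition~\ref{def:orbfixedpoint} together with Noether's Theorem (Theorem~\ref{thm:nother3}, or its consequence Theorem~\ref{thm:conservation}) and the elementary Lie-group facts about isotropy groups. First I would dispose of point~(i): since $\Phi$ is globally Hamiltonian and $H$ is $G$-invariant by~\eqref{eq:HGinv}, Theorem~\ref{thm:nother3}~(iii) gives immediately that $G$ is an invariance group for $\Phi_t^H$, and moreover each $F_\xi=\xi\cdot F$ is a constant of the motion, so that the level sets $\Sigma_\mu$ are $\Phi_t^H$-invariant. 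This last fact is what makes $\mu=F(u)$ a well-defined constant along the orbit $t\mapsto\Phi_t^H(u)$ and is used repeatedly below.

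For part~(ii), I would argue as follows. The implication $(3)\Rightarrow(1)$ is trivial: if $\Phi_t^H(u)=\Phi_{\exp(t\xi)}(u)\in\Ocal_u$ for all $t$, then $u$ is a $G$-relative equilibrium by definition. For $(3)\Rightarrow(2)$ one notes that $\exp(t\xi)\in G_\mu$ when $\xi\in\frak g_\mu$, so the same curve lies in the $G_\mu$-orbit, hence $u$ is a $G_\mu$-relative equilibrium. The equivalence $(1)\Leftrightarrow(2)$ is the only genuinely geometric point at this level of generality: clearly $(2)\Rightarrow(1)$ since $G_\mu\subset G$; conversely, if $u$ is a $G$-relative equilibrium then for each $t$ there is $g(t)\in G$ with $\Phi_t^H(u)=\Phi_{g(t)}(u)$, and applying $F$ and using its $\mathrm{Ad}^*$-equivariance together with $F(\Phi_t^H(u))=F(u)=\mu$ yields $\mathrm{Ad}^*_{g(t)}\mu=\mu$, i.e. $g(t)\in G_\mu$; hence $\Phi_t^H(u)\in\Ocal_u^{G_\mu}$ for all $t$. (This is exactly the finite-dimensional argument of Section~\ref{s:spherpot} reproduced abstractly, and I would present it in that spirit.)

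Next, under the additional hypothesis $u\in\Dcal$, I would prove $(3)\Leftrightarrow(4)$ and $(4)\Leftrightarrow(5)$. For $(4)\Rightarrow(3)$: given $\xi\in\frak g_\mu$ with $D_uH=\xi\cdot D_uF$, apply $\Jcal^{-1}$ to get $X_H(u)=X_{F_\xi}(u)$, so the two Hamiltonian flows $\Phi_t^H$ and $\Phi_{\exp(t\xi)}=\Phi_t^{F_\xi}$ issued from $u$ solve the same differential equation~\eqref{eq:hameqmotionJbis} with the same initial datum, and uniqueness (clause~(iii) of Definition~\ref{def:hamflow}) forces $\Phi_t^H(u)=\Phi_{\exp(t\xi)}(u)$. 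For $(3)\Rightarrow(4)$: differentiate $\Phi_t^H(u)=\Phi_{\exp(t\xi)}(u)$ at $t=0$, using that the curve is differentiable into $\Ban$ and lies in $\Dcal$, to obtain $X_H(u)=X_{F_\xi}(u)$, then apply $\Jcal$; one must check $\xi\in\frak g_\mu$ rather than merely $\xi\in\frak g$, which follows because $\exp(t\xi)$ fixes $\mu=F(u)$ (differentiate $\mathrm{Ad}^*_{\exp(t\xi)}\mu=\mu$, which holds since $F$ is conserved along $\Phi_t^H(u)$). The implication $(4)\Rightarrow(5)$ is immediate since $\frak g_\mu\subset\frak g$; for $(5)\Rightarrow(4)$ one uses $\mathrm{Ad}^*$-equivariance of $F$: the relation $D_uH=\xi\cdot D_uF$ combined with $H$-invariance shows $X_{F_\xi}(u)=X_H(u)$ is tangent to $\Sigma_\mu$ at $u$; infinitesimally this says $\xi\cdot\langle D_uF,\cdot\rangle$ annihilates $\ker D_uF$ in the appropriate sense, and an argument with the $\mathrm{Ad}^*$-action — essentially that the momentum-map relation is equivariant — upgrades $\xi\in\frak g$ to a representative in $\frak g_\mu$. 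Finally, assuming $\mu$ is a regular value of $F$, the equivalence $(4)\Leftrightarrow(6)$ is the Lagrange-multiplier principle: $u$ is a critical point of $H_\mu=\restr{H}{\Sigma_\mu}$ iff $D_uH$ annihilates $T_u\Sigma_\mu=\ker D_uF$, and regularity (surjectivity of $D_uF$ onto $\R^m$ with closed complemented kernel) makes this equivalent to $D_uH$ lying in the span of the components of $D_uF$, i.e. to~\eqref{eq:findrelequi}; uniqueness of $\xi$ then follows from injectivity of $\eta\mapsto\eta\cdot D_uF$, again a consequence of regularity.

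The main obstacle I anticipate is the step $(5)\Rightarrow(4)$, i.e. showing that a Lie-algebra element producing the relative equilibrium can always be chosen in $\frak g_\mu$. The clean way to handle it is to invoke $\mathrm{Ad}^*$-equivariance of the momentum map: differentiating the equivariance identity at the identity gives the infinitesimal relation $\{F_\eta,F_\xi\}=F_{[\eta,\xi]}$ (or, dually, $\mathrm{ad}^*_\xi\mu = $ derivative of $F$ along the $\xi$-flow), and combining this with the conservation of $F$ along $\Phi_t^H(u)=\Phi_{\exp(t\xi)}(u)$ forces $\mathrm{ad}^*_\xi\mu=0$, which is precisely $\xi\in\frak g_\mu$. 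I would be careful to spell out the domain considerations here — everything is evaluated at $u\in\Dcal$ where $D_uF$, $D_uH$ lie in $\Rcal_{\!\Jcal}$ and the Hamiltonian vector fields are defined — since this is exactly the kind of place where the weak-symplector subtleties emphasized in Section~\ref{ss:symplectors} could bite.
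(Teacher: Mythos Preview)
Your overall plan is sound and matches the paper's, but there is a genuine gap in your argument for $(4)\Rightarrow(3)$. You write: from $D_uH=\xi\cdot D_uF$ one gets $X_H(u)=X_{F_\xi}(u)$, and then ``the two Hamiltonian flows $\Phi_t^H$ and $\Phi_{\exp(t\xi)}=\Phi_t^{F_\xi}$ issued from $u$ solve the same differential equation~\eqref{eq:hameqmotionJbis} with the same initial datum, and uniqueness forces $\Phi_t^H(u)=\Phi_{\exp(t\xi)}(u)$.'' This is not correct. The curve $t\mapsto\Phi_t^H(u)$ satisfies $\Jcal\dot y=D_yH$, whereas $t\mapsto\Phi_{\exp(t\xi)}(u)$ satisfies $\Jcal\dot y=D_y(\xi\cdot F)$. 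These are \emph{different} differential equations; all you know is that the two vector fields agree at the single point $u$, which only tells you the two curves share a tangent vector at $t=0$. Uniqueness in Definition~\ref{def:hamflow}~(iii) is uniqueness for a \emph{fixed} Hamiltonian, not across two Hamiltonians that happen to coincide at one point.

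The paper closes this gap by first \emph{propagating} the identity $D_uH=D_u(\xi\cdot F)$ along the $H$-flow. Since both $H$ and $F_\xi$ are constants of the motion for $\Phi_t^H$ (Theorem~\ref{thm:nother3}), differentiating $H\circ\Phi_t^H=H$ and $F_\xi\circ\Phi_t^H=F_\xi$ in $u$ and using $D_uH=D_u(\xi\cdot F)$ yields $D_{u(t)}H=D_{u(t)}(\xi\cdot F)$ for all $t$, where $u(t)=\Phi_t^H(u)$. Hence $\Jcal\dot u(t)=D_{u(t)}(\xi\cdot F)$, so the $H$-flow line through $u$ is \emph{also} a solution of the $(\xi\cdot F)$-equation; now uniqueness for $F_\xi$ legitimately gives $u(t)=\Phi_t^{F_\xi}(u)=\Phi_{\exp(t\xi)}(u)$. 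The same gap recurs in your $(5)\Rightarrow(4)$, since there you invoke $\Phi_t^H(u)=\Phi_{\exp(t\xi)}(u)$, which again rests on the same flawed step. A minor separate point: in your $(3)\Rightarrow(4)$ there is no need to ``check $\xi\in\frak g_\mu$'' --- statement~(3) already posits $\xi\in\frak g_\mu$.
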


That (1) is equivalent to (2)  is a particular feature of Hamiltonian systems. In fact, its statement makes no sense outside of the Hamiltonian setting.  It implies that, if $u$ is a $G$-relative equilibrium, it is automatically a relative equilibrium for the \emph{smaller} group $G_\mu$. So the relevant invariance group depends on the point $u$ through the value $\mu=F(u)$ of the constants of the motion at $u$. This is important since, as we will see in Section~\ref{s:orbstabproof}, one then ends up showing $u$ is $G_\mu$-orbitally stable, which is a stronger result than $G$-orbital stability. We already saw examples of this mechanism in Section~\ref{s:spherpotstab}. The proof of the equivalence between (1) and (2), although very simple, uses the subtle relations between constants of the motion and symmetries for Hamiltonian systems explained in the previous section. 

 For our purposes, the most interesting information obtained in this result is the observation that if $u\in\Dcal$  satisfies~\eqref{eq:findrelequi}, sometimes referred to in the PDE literature as ``the stationary equation''\index{stationary equation}, then it is a relative equilibrium.  And that, if $\mu$ is a regular value of $F$, those solutions are precisely the critical values of $H_\mu$.  This means that, given a Hamiltonian system with symmetries,  one can find relative equilibria by looking for critical points of the Hamiltonian $H$ restricted to the surfaces $\Sigma_\mu$. In practice, this can be done concretely by solving~\eqref{eq:findrelequibis}, which in applications to Hamiltonian PDE's often takes the form of a stationary PDE in which $\xi$ is treated as a (vector  valued) parameter. Examples are given in the following sections. See also Section~\ref{s:spherpotstab} for examples in finite dimension. 
 
 One immediately suspects that the Lagrange theory of multipliers for the study of constrained extrema should be of relevance here. This is indeed the case: introducing, on $E$, the Lagrange function 
\begin{equation}\label{eq:lagfunction}
\forall v\in\Ban,\quad \Lcal(v)=H(v)-\xi\cdot F(v),
\end{equation}
one sees that \eqref{eq:findrelequibis} expresses the vanishing of its first variation at $u$: $D_u\Lcal=0$. Here, $\xi\in\frak g\simeq \R^m$ plays the role of a Lagrange multiplier.
From the experience gained with the examples given so far, one suspects that, to show $u$ is a stable relative equilibrium, one could try proceeding in two steps. First, show $u$ is not just a critical point, but actually a local minimum of $H_\mu$ by studying the second variation of the Lagrange function $\Lcal$
on $\Sigma_\mu$. Next, use the Lagrange function as Lyapunov function in the proof of stability. Indeed, $u\in\Sigma_\mu$ is a local minimum of $H_\mu$ if and only if 
$$
\exists\rho>0, \forall v\in\Sigma_\mu, \quad \mathrm{d}(v,u)\leq\rho \Rightarrow 
H_\mu(v)-H_\mu(u)\geq 0,
$$
which is equivalent to 
$$
\exists\rho>0, \forall v\in\Sigma_\mu, \quad \mathrm{d}(v,u)\leq\rho \Rightarrow 
\Lcal(v)-\Lcal(u)\geq 0,
$$
since $F$ is constant on $\Sigma_\mu$. This is clearly the strategy used in the proofs of Section~\ref{s:spherpotstab}. We will see in Section~\ref{s:orbstabproof} how to implement it in a general setting and give examples from the nonlinear Schr\"odinger equation in Sections~\ref{s:nlsetorus1d} and~\ref{curves.sec}. This is the approach that goes by the name of \emph{energy-momentum method}.
\begin{proof} 
\begin{itemize}
\item[(i)] This is an immediate consequence of Theorem~\ref{thm:nother3}~(iii).
\item[(ii)]  (1) $\Leftrightarrow$ (2). If $u$ is a relative $G$-equilibrium, then there exists, for each $t\in\R$, $g(t)\in G$ so that $\Phi_t^H(u)=\Phi_{g(t)}(u)$. Since $u\in \Sigma_\mu$, so is $\Phi_t^H(u)$, since $F$ is a constant of the motion for $H$, by Theorem~\ref{thm:nother3}~(ii). Hence 
$$
\mu=F(u)=F(\Phi_t^H(u))=F(\Phi_{g(t)}(u))=\mathrm{Ad}^*_{g(t)}\mu.
$$
It follows that $g(t)\in G_\mu$, which concludes the argument. The reverse implication is obvious. \\
(3) $\Rightarrow$ (2). Obvious from the definition.\\
Now suppose $u\in \Dcal$. \\
(3) $\Leftrightarrow$ (4). Suppose (3) holds. Since $u\in\Dcal$, this implies that $\Jcal^{-1}D_uH=\Jcal^{-1}D_u(\xi\cdot F)$, which implies~(4). Now suppose (4) holds.  Since $u\in\Dcal$ and since $H\circ \Phi_t^H=H$ and $F_\xi\circ \Phi_t^H=F_\xi$ by Theorem~\ref{thm:nother3}~(ii), we have, for all $t\in\R$,
$$
D_{\Phi_t^H(u)}HD_u\Phi^H_t=D_uH, \quad D_{\Phi_t^H(u)}(\xi\cdot F)D_u\Phi^H_t=D_u(\xi\cdot F).
$$
Writing $u(t)=\Phi_t^H(u)$, this yields
$
D_{u(t)}H=D_{u(t)}(\xi\cdot F)
$
so that 
$\Jcal\dot u(t)=D_{u(t)}(\xi\cdot F)$, which shows $t\to u(t)$ is a flow line of the Hamiltonian $\xi\cdot F$, with initial condition $u$. Since the latter is unique, we find $u(t)=\Phi^{\xi\cdot F}_t(u)$, which concludes the argument since $\Phi_t^{\xi\cdot F}=\Phi_{\exp(t\xi)}$ (See Definition~\ref{def:globhamactioninf}~(iii)).\\
(4) $\Leftrightarrow$ (5). We only need to establish that (5) implies (4). As above, (5) implies $u(t)=\Phi_{\exp(t\xi)}$. Hence
$$
\mathrm{Ad}^*_{\exp(t\xi)}\mu=\mathrm{Ad}^*_{\exp(t\xi)}F(u)=(F\circ \Phi_{\exp(t\xi)})(u)=F(u(t))=F(u)=\mu,
$$
since $F_i\circ \Phi_t^H=F_i$. Hence $\xi\in\frak g_\mu$. 

 Now suppose in addition $\mu$ is a regular value of $F$. \\
 (4) $\Leftrightarrow$ (6).  We remark that, since $\mu$ is a regular value of $F$, $\Sigma_\mu$ is a co-dimension $m$ submanifold of $\Ban$ and (see~\eqref{eq:deftangentspace})
$$
T_u\Sigma_\mu=\{v\in\Ban \mid D_uF(v)=0\}.
$$
Hence clearly (4) implies (6). Conversely, suppose $D_uH$ vanishes on $T_u\Sigma_\mu$. Since $\mu$ is a regular value of $F$, we know that $D_uF$ is onto $\R^m$. Let $W$ be a subspace of $\Ban$ complementary to $T_u\Sigma$, so that $\Ban=T_u\Sigma\oplus W$. It follows dim$W=m$ and that the $m$ one-forms $D_uF_i\in W^*$, $i=1,\dots m$ form a basis of $W^*$.  Consequently, the restriction of $D_u H$ to $W$ can be written uniquely as $D_uH=\sum_{i=1}^m \xi_i D_uF_i=D_u(\xi\cdot F)$. Since both sides vanish on $T_u\Sigma_\mu$, (4) follows. 
\end{itemize}
\end{proof}

We conclude this section with two  technical remarks that can be skipped in a first reading.  
\begin{remark}\label{rem:criticalpoints}
We have seen that (3) implies (2). Under suitable technical conditions, the reverse is also true. This can be understood as follows. If $u\in\Dcal$ is a $G_\mu$-relative equilibrium 
then, for all $t\in\R$, there exists $g(t)\in G_\mu$ so that $u(t)=\Phi_t^Hu=\Phi_{g(t)}u$. So the curve 
$$
t\in \R\to \Phi_t^H(u)\in G_\mu u:=\{\Phi_g(u)\mid g\in G_\mu\}\subset \Ban
$$ 
is a smooth curve on the group orbit $G_\mu u$. Under appropriate topological conditionson $G_\mu$ and $G_u$ (defined in~\eqref{eq:isotropyu}), and if the action $\Phi$ of the group $G_\mu$ is sufficiently smooth\footnote{See for example Section~4 of \cite{am}, and in particular Corollary~4.1.22.} , this orbit is an immersed submanifold of $\Sigma_\mu$ that can be identified with the homogeneous space $G_\mu/G_u$, and its tangent space at $u$ is therefore 
$$
T_u(G_\mu u)=\{ X_{F_\xi}(u)\mid \xi \in \frak g_\mu\}.
$$
We recall that $X_{F_\xi}$ is the Hamiltonian vector field associated to the function $F_\xi=\xi\cdot F$. Since $X_H(u)=\frac{\rd}{\rd t} \Phi_t^H(u)_{\mid t=0}\in T_u(G_\mu u)$, it follows that there exists $\xi\in \frak g_\mu$ so that
$$
X_H(u) =X_{\xi\cdot F}(u),
$$
which is equivalent to~\eqref{eq:findrelequi} and therefore implies~(3). We refer to~\cite{am, ma} for the detailed argument, in the finite dimensional setting.  We shall not have a need for the implication $(2)\Rightarrow (3)$, but will point out that, ``morally'', there is a one-one relationship between the critical points of $H_\mu$ and the relative equilibria of the Hamiltonian flow~$\Phi_t^H$. 
\end{remark}
\begin{remark}\label{rem:muregular}
What is the role of the condition that $\mu$ be a regular value of $F$? This has several consequences. First, it guarantees that $\Sigma_\mu$ is a co-dimension $m$ submanifold of $\Ban$ and that $T_u\Sigma_\mu=\mathrm{Ker} D_uF$. This is convenient in the further stability analysis, as we will see. Second, if $u\in\Dcal$ and Rank\,$D_uF=m$, then $\xi\in\R^m\simeq \frak g\to \Phi^{\xi\cdot F}_1(u)\in \Ocal_u=Gu\subset\Ban$ is a local immersion and the action is locally free, meaning that the isotropy group $G_u$ of $u$ is discrete. Hence  $\xi\in\frak g_\mu\to \Phi^{\xi\cdot F}_1(u)\in \Ocal_u\cap \Sigma_\mu=G_\mu u \subset\Ban$ is also a local immersion. This observation will be used in Lemma~\ref{lem:stuartlemham} in the next section. If $\mu$ is not regular, various additional technical difficulties arise in the stability analysis of the next section, even in finite dimensional settings,   
where they have been studied in~\cite{lersi, monrod}. 
As an example of such a singular value $\mu$, consider the action of SO$(3)$ on $\R^6$ introduced in Section~\ref{s:spherpot}, on the level set $L(u)=\mu=0$. The corresponding isotropy group $G_\mu$ is SO$(3)$ itself in that case. Its action is not locally free, since $G_u$, for $u=(q,p)$, with $q$ and $p$ parallel, is the copy of SO$(2)$ given by the rotations about the common axis of $q$ and $p$. We will see another example of such a situation when treating the nonlinear Schr\"odinger
equation on the torus in Section~\ref{s:nlsetorus1d}. In both these cases, the ensuing complication is easily dealt with on an ad hoc basis. 
\end{remark}

\section{Orbital stability: an abstract proof}\label{s:orbstabproof}
\subsection{Introduction: strategy}\label{ss:strategy} 
We have seen that in many situations the relative equilibria of Hamiltonian systems with symmetry are precisely the critical points of the restriction $H_\mu$ of the Hamiltonian $H$ to a level surface $\Sigma_\mu$, for some $\mu\in\frak g^*$, of the constants of the motion $F$ associated to the symmetry group via the Noether Theorem. This at once explains why they tend to come in families $u_\mu$, indexed by $\mu$ in some open subset of $\frak g^*\simeq \R^m$. Indeed, considering equation~\eqref{eq:findrelequibis}, it is natural to think of it as an equation in which both $\xi$ and $u$ are unknown. And so, under suitable circumstances, one can hope to find a family of solutions $u_\xi$ of~\eqref{eq:findrelequibis} by letting $\xi$ run through some neighbourhood inside $\frak g$. Typically, as $\xi$ changes, so does $\mu_\xi=F(u_\xi)\in \frak g^*$. Depending on the situation, it may be more convenient to label the solutions by $\mu_\xi$ than by $\xi\in\frak g$. In these notes, we use mostly $\mu$ as a parameter, except in Section~\ref{curves.sec} where $\xi$ is used. The question of the existence of such families of relative equilibria -- a problem related to bifurcation theory -- is studied, in the finite dimensional setting, in~\cite{mon} and~\cite{lersi}. We already saw several examples of this phenomenon and more will be provided in Sections~\ref{s:nlsetorus1d} and~\ref{curves.sec}. 

It remains to see how one can prove the orbital stability of those relative equilibria. The basic intuition is that -- modulo technical problems -- they should be stable if they are not just critical points, but actually local minima of $H_\mu$. To understand the origin of this intuition, recall that, if $u_\mu\in \Sigma_\mu$ is a relative equilibrium of the Hamiltonian dynamics $\Phi_t^H$, then the orbit $G_\mu u_\mu=\{\Phi_g(u_\mu)\mid g\in G_\mu\}$ of $G_\mu$, viewed as an element of the orbit space $\Sigma_\mu/G_\mu$, is a fixed point of the reduced dynamics. And, since $H_\mu$ is invariant under the action of $G_\mu$, it can be viewed as a function on this orbit space. If $H_\mu$ has a local minimum at $u_\mu$, it thus has a local minimum at the orbit $G_\mu u_\mu \in \Sigma_\mu/G_\mu$.  Finally, since $H_\mu$ is a constant of the motion for the reduced dynamics, we are precisely in the situation described in the introduction: $G_\mu u_\mu$ is a fixed point of the reduced dynamics, and $H_\mu$ is a constant of the motion for which $G_\mu u_\mu$ is a minimum.  We can therefore hope to use the Lyapunov method to prove the stability of $G_\mu u_\mu$. To do so, it would suffice to prove a coercive estimate of the type~\eqref{eq:localmin} for $H_\mu$ on $\Sigma_\mu/G_\mu$. 

There are two obvious problems one has to face when trying to implement this strategy. 
First, even if one executes this program, one will have proven only  that $u_\mu$ is orbitally stable with respect to perturbations $v$ of $u_\mu$ with $v\in\Sigma_\mu$. But one would like to prove this is true for arbitrary perturbations $v\in E$. Second, it is difficult to work on the abstract quotient space $\Sigma_\mu/G_\mu$, which, even in finite dimensional systems, but particularly in infinite dimensional ones, may not have a nice topological or differentiable structure, so that analytical tools to prove estimates are not readily available. To deal with both these problems, the idea is to use the theory of constraint minimization and Lagrange multipliers. This has the obvious advantage that one can work in the ambient space $\Ban$, which has the added redeeming feature of being linear. As already outlined in the dicussion following
Theorem~\ref{thm:relequicritical}, it turns out that it is the Lagrange function 
$$
\Lcal_\mu=H-\xi_\mu\cdot F
$$ 
associated to the relative equilibrium $u_\mu$ (see~\eqref{eq:lagfunction}) that plays the role of Lyapunov function in the proofs. In practice, one uses a Taylor expansion to second order of $\Lcal_\mu$ about points on the orbit $G_\mu u_\mu$ and one controls the second derivative of $\Lcal_\mu$ to prove it is a minimum; this in turn gives the necessary coercivity to conclude stability.  The reader will have noticed that the above strategy was worked out in all detail in the simple example of motion in a spherical potential presented in Section~\ref{s:spherpotstab}.

In this section, we will provide  a detailed implementation of the above strategy in the following general setup. We refer to Section~\ref{s:dynamics} for the definitions of the objects used below. 

\vskip0.3cm
\noindent{\bf \textsc{Hypothesis} A}
\begin{enumerate}[label=({\roman*})]
\item $E$ is a Banach space and $\Dcal$ a domain in $\Ban$. 
\item $\Phi_t^X$ is a dynamical system on $\Ban$ with a vector field $X:\Dcal\to E$.
\item $F\in C^2(E,\R^m)$ is a vector of constants of the motion for $\Phi_t^X$ with level surfaces $\Sigma_\mu, \mu\in\R^m$, as in~\eqref{eq:levelsurface}.
\item $\Phi_t^X$ admits an invariance group $G$, with an action $\Phi$ of $G$ on $E$.
\end{enumerate}

Recall that if $\mu$ is a regular value for $F$ then $\Sigma_\mu$ is a co-dimension $m$ submanifold of $\Ban$. In this setting, we consider relative equilibria of the following type. 

\medskip
Let $\mu\in \R^m$. 

\vskip0.2cm
\noindent{\bf \textsc{Hypothesis} B$\mu$} 
\begin{enumerate}[label=({\roman*})]
\item There exists  $u_{\mu}\in \Sigma_{\mu}$ which is a relative equilibrium of the dynamics for the group $G_{\Smu}=\{g\in G\mid \Phi_g\Sigma_\mu=\Sigma_\mu\}$. 
\item  There exists ${\mathcal L_{\mu}}\in C(E,\R)$ which is a $G_{\Sigma_\mu}$- invariant constant of the motion. 
\item There exist $\eta>0, c>0$ so that
\begin{equation}\label{eq:coerciverestricted}
\forall u\in\Ocal_{u_{\mu}}, \forall u'\in \Sigma_{\mu}, \quad \rd(u,u')\leq \eta\Rightarrow \Lcal_{\mu}(u')-\Lcal_{\mu}(u)\geq c\rd^2(u', \Ocal_{u_{\mu}})
\end{equation}
where 
\begin{equation}\label{eq:muorbit}\Ocal_{u_\mu}=\Phi_{G_{\Sigma_\mu}}(u_\mu)=\{\Phi_g(u_\mu)\mid g\in G_{\Sigma_\mu}\}.
\end{equation}
\end{enumerate}
\vskip0.3cm

\noindent Under the above conditions, we say $\Lcal_{\mu}$ is a coercive Lyapunov function\index{Lyapunov function!coercive} on $\Ocal_{u_{\mu}}$ \emph{along} $\Sigma_{\mu}$. If the $G_{\Sigma_\mu}$-action is isometric then it is enough to check~\eqref{eq:coerciverestricted} holds at one single point $u\in\Ocal_{u_\mu}$. It will then hold everywhere, with the same $\eta, c$, as a result of the $G_{\Sigma_\mu}$-invariance of $\Lcal_\mu$. Isometric actions are common in applications and this is one of the places where they provide a simplification. 
For what follows, the power $2$ in the right hand side of~\eqref{eq:coerciverestricted} is of no consequence. One can generalize the definition by replacing the right hand side in~\eqref{eq:coercive} by $f(\rd(u', \Ocal_{u_\mu}))$, for some function $f:\R^+\to \R^+$, $f(0)=0$, $f(d)>0$ if $d>0$. In practice, as we will see below, one gets the lower bound in~\eqref{eq:coerciverestricted} from a Taylor expansion of $\Lcal$, so that the square appears naturally.  We point out that conditions~(ii) and~(iii) in Hypothesis~B$\mu$ imply~(i). Indeed, if $u\in\Ocal_{u_{\mu}}$ and $u'=u(t')$ for small enough $t'$, then~(ii) and~(iii) imply that
$$
0=\Lcal_{\mu}(u(t'))-\Lcal_{\mu}(u)\geq c\rd^2(u(t'), \Ocal_{u_{\mu}}),
$$
so that $u(t')\in\Ocal_{u_\mu}$. Hence the flow $\Phi_t^X$ leaves $\Ocal_{u_\mu}$ invariant and consequently each $u\in\Ocal_{u_\mu}$ is a $G_{\Sigma_\mu}$ relative equilibrium. We have however found it convenient to keep this redundancy in the statement of the hypothesis.

We point out that  Hypotheses A and B$\mu$ are formulated without imposing the dynamical system to be Hamiltonian. Nor do they impose any link between the symmetry group $G$, the constants of the motion $F$ and the Lyapunov function $\Lcal_{\mu}$. 
The first goal of this section is to formulate and prove very general abstract theorems  establishing orbital stability under the above general  assumptions and some extra technical conditions. The first such result, Theorem~\ref{thm:lyapmethod}, is a general version of Proposition~\ref{lem:sphericalfixedpoints}: it imposes a strong coercivity condition, but is nevertheless sometimes of use, as we will see in Section~\ref{s:nlsetorus1d}.  Theorem~\ref{thm:lyapmethodrestricted} and Theorem~\ref{thm:lyapmethodrestrictedmod}  correspond essentially to the first two arguments proposed in the proof of Proposition~\ref{lem:sphercircular}. The proofs of these results are quite simple, as we shall see. These three results show that the essential ingredient in the proof of orbital stability is the coercivity condition in Hypothesis B$\mu$~(iii). 

It therefore remains to understand how to find a Lyapunov function satisfying in particular Hypothesis~B$\mu$~(iii). It is at this point that the Hamiltonian nature of the dynamical system plays an important role. We already saw in Section~\ref{s:identifyreleq} that a candidate Lyapunov function arises naturally in that context. 
We will furthermore show in Proposition~\ref{thm:hessianestimate}  how to obtain the coercivity condition Hypothesis~B$\mu$~(iii) from a lower bound  on the Hessian of the Lyapunov function, in the case of Hamiltonian systems with symmetry. Combining this with Theorem~\ref{thm:lyapmethodrestricted} and Theorem~\ref{thm:lyapmethodrestrictedmod} then yields a complete proof of orbital stability. 

We will end this section with Theorem~\ref{thm:lyapmethodrestrictedLK} which provides a slightly different proof of orbital stability of relative equilibria in Hamiltonian systems, and which is a generalization of the third argument proposed in the proof of Proposition~\ref{lem:sphercircular}. The argument uses Proposition~\ref{thm:hessianestimate} again, but combines it with the construction of an ``augmented'' Lyapunov function.

In applications of the theory developed in this section, the work is therefore reduced to  solving~\eqref{eq:findrelequibis} to identify the relative equilibria, and to proving a suitable  lower bound on the Hessian of the corresponding Lyapunov function. This usually involves non-trivial (spectral) analysis, as one may expect.   A first illustrative example - the orbital stability of plane waves for the nonlinear Schr\"odinger equation on the torus -- is presented in Section~\ref{s:nlsetorus1d}. A widely applicable technique for obtaining the appropriate lower bound on the Hessian is described in~\cite{gssI, gssII}. It is illustrated in Section~\ref{curves.sec} for standing wave solutions of the inhomogeneous nonlinear Schr\"odinger equation in one dimension.

In conclusion, the theorems of this section isolate the ``soft analysis'' part of the proof of orbital stability of relative equilibria  from the more concrete and model dependent  estimates needed to prove coercivity. 

\begin{remark}
We point out that the domain $\Dcal$ of the dynamical system $\Phi_t^X$ appears in Hypothesis~A~(i) and~(ii). As already seen before, it is used in these notes when the system is Hamiltonian to identify the appropriate constants of the motion via Noether's theorem, to construct the Lyapunov function $\Lcal$, and to identify the relative equilibria of the system. If this can be accomplished by some other means, $\Dcal$ is not needed. In fact, for the results of Sections~\ref{ss:simplecase}-\ref{ss:coercivitystability1}-\ref{ss:coercivityhessian} the hypotheses involving $\Dcal$ are not used. For the results of Section~\ref{ss:coercivitystability2}, and notably for Theorem~\ref{thm:lyapmethodrestrictedLK}, they are on the contrary essential.
\end{remark}

\subsection{A simple case}\label{ss:simplecase}
Before turning to the general results, we first formulate and prove a simple orbital stability result, under a stronger coercivity condition than~\eqref{eq:coerciverestricted}. 
\begin{theorem}\label{thm:lyapmethod}
Let Hypotheses A and B$\mu_*$~(i)--(ii) for some $\mu_*\in \R^m$ be satisfied. Let $\Ocal_{u_{\mu_*}}$ be as in \eqref{eq:muorbit}. Suppose there exist $ \eta>0, c>0$ so that 
\begin{equation}\label{eq:coercive}
\forall u\in\Ocal_{u_{\mu_*}}, \forall v\in \Ban, \quad \rd(v,u)\leq \eta\Rightarrow \Lcal_{\mu_*}(v)-\Lcal_{\mu_*}(u)\geq c\rd^2(v, \Ocal_{u_{\mu_*}}).
\end{equation}
Then, all $u\in\Ocal_{u_{\mu_*}}$ are orbitally stable $G_{\Sigma_{\mu_*}}$-relative equilibria. 
\end{theorem}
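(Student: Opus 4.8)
The plan is to run the classical Lyapunov argument by contradiction, exactly as in the proof of Proposition~\ref{lem:sphericalfixedpoints}, but now with $\Lcal_{\mu_*}$ in place of $H$ and with the orbit $\Ocal_{u_{\mu_*}}$ playing the role of the sphere of fixed points. Since $G_{\Sigma_{\mu_*}}$ acts on $\Ocal_{u_{\mu_*}}$ transitively and (in applications) isometrically, by Lemma~\ref{lem:isometric} it suffices to prove that a single point $u\in\Ocal_{u_{\mu_*}}$ is orbitally stable; more precisely, I will directly show that for every $\epsilon>0$ there is $\delta>0$ so that $\rd(v,u)\le\delta$ implies $\rd(v(t),\Ocal_{u_{\mu_*}})\le\epsilon$ for all $t\in\R$.

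First I would record the two ingredients. From Hypothesis~B$\mu_*$~(ii), $\Lcal_{\mu_*}$ is a constant of the motion, so $\Lcal_{\mu_*}(v(t))=\Lcal_{\mu_*}(v)$ for all $t$. From Hypothesis~B$\mu_*$~(i), the orbit $\Ocal_{u_{\mu_*}}$ is invariant under the flow (indeed, as noted after Hypothesis~B$\mu$, this also follows from (ii)+(iii)), and $\Lcal_{\mu_*}$ is constant on $\Ocal_{u_{\mu_*}}$ by $G_{\Sigma_{\mu_*}}$-invariance; write $\Lcal_* $ for this common value. Continuity of $\Lcal_{\mu_*}$ at $u$ gives, for any prescribed target, a $\delta>0$ with $\rd(v,u)\le\delta\Rightarrow|\Lcal_{\mu_*}(v)-\Lcal_*|<c\epsilon^2$, where $c$ is the coercivity constant from~\eqref{eq:coercive}; I would also shrink $\delta$ so that $\delta<\eta$ and $\delta<\epsilon$ (assuming WLOG $\epsilon<\eta$).

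Now suppose, for contradiction, that some flow line with $\rd(v,u)\le\delta$ leaves the $\epsilon$-neighbourhood of $\Ocal_{u_{\mu_*}}$. Since $t\mapsto\rd(v(t),\Ocal_{u_{\mu_*}})$ is continuous, starts below $\delta<\epsilon$, and exceeds $\epsilon$ at some time, there is a first time $t_0$ with $\rd(v(t_0),\Ocal_{u_{\mu_*}})=\epsilon$. Pick $u_0\in\Ocal_{u_{\mu_*}}$ with $\rd(v(t_0),u_0)$ arbitrarily close to $\epsilon\le\eta$; applying~\eqref{eq:coercive} at the point $u_0$ (legitimate since $\rd(v(t_0),u_0)\le\eta$) gives
\begin{equation*}
\Lcal_{\mu_*}(v(t_0))-\Lcal_*=\Lcal_{\mu_*}(v(t_0))-\Lcal_{\mu_*}(u_0)\ge c\,\rd^2(v(t_0),\Ocal_{u_{\mu_*}})=c\epsilon^2.
\end{equation*}
But $\Lcal_{\mu_*}(v(t_0))=\Lcal_{\mu_*}(v)$, and $|\Lcal_{\mu_*}(v)-\Lcal_*|<c\epsilon^2$ by the choice of $\delta$, a contradiction. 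Hence no such $t_0$ exists, and $u$ is orbitally stable; Lemma~\ref{lem:isometric} then upgrades this to uniform orbital stability of the whole orbit.

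I do not expect a serious obstacle here: the only mild subtlety is making sure the contradiction argument is set up so that~\eqref{eq:coercive} is only ever invoked at distances $\le\eta$, which is handled by the ``first exit time'' device and by choosing $\epsilon<\eta$; a second minor point is the standard observation that $t\mapsto\rd(v(t),\Ocal_{u_{\mu_*}})$ is continuous because the flow $t\mapsto v(t)$ is continuous into $\Ban$ and $w\mapsto\rd(w,\Ocal_{u_{\mu_*}})$ is $1$-Lipschitz. The hard part, such as it is, is purely bookkeeping: tracking the constants $\eta,c$ and the reduction via Lemma~\ref{lem:isometric}; the genuinely difficult, model-dependent work — verifying the coercivity hypothesis~\eqref{eq:coercive} — is precisely what this theorem takes as given.
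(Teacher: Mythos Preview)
Your argument is correct and is the same Lyapunov contradiction as the paper's, the only cosmetic difference being that you use a first-exit-time together with an explicit $\epsilon$--$\delta$ choice while the paper runs the equivalent sequence version ($v_n\to u$, $\rd(v_n(t_n),\Ocal_{u_{\mu_*}})=\epsilon_0$, and contradicts continuity of $\Lcal_{\mu_*}$ in the limit).

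One small remark: the detour through Lemma~\ref{lem:isometric} is not needed and, strictly speaking, not justified here, since the theorem does not assume the $G_{\Sigma_{\mu_*}}$-action is isometric. Your own argument already works verbatim for an arbitrary $u\in\Ocal_{u_{\mu_*}}$, because the coercivity hypothesis~\eqref{eq:coercive} is assumed uniformly over the orbit (same $\eta,c$ for every $u$) and $\Lcal_{\mu_*}$ takes the common value $\Lcal_*$ on the whole orbit. So simply start with ``Let $u\in\Ocal_{u_{\mu_*}}$ be arbitrary'' and drop the appeal to Lemma~\ref{lem:isometric}; that is exactly what the paper does.
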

\noindent We refer to Definition~\ref{def:orbstabgen} for the definition of orbital stability. Observe that in~\eqref{eq:coercive} the coercivity estimate is imposed for all perturbations $v$ in $\Ban$, rather than only in $\Sigma_{\mu_*}$, as in~\eqref{eq:coerciverestricted}.  So here we are assuming that the Lyapunov function reaches a local minimum on $\Ocal_{u_{\mu_*}}$, when viewed as a function on $\Ban$, rather than only as a function on $\Sigma_{\mu_*}$. This  therefore constitutes a strengthening of Hypothesis~B$\mu_*$(iii).The theorem can be used to prove orbital stability in some cases: for the fixed points in the spherical potentials treated in Section~\ref{s:fixedpoints}, for example, this is how we proceeded. Similarly, to establish the stability of the plane waves for the nonlinear defocusing Schr\"odinger equation on a one-dimensional torus, this theorem will also be sufficient, as we will see in Section~\ref{s:nlsetorus1d}. But we have already noticed in Section~\ref{s:spherpotstab} that the coercivity imposed in~\eqref{eq:coercive} may be too strong a condition: we saw it is not satisfied for the natural choice of Lyapunov function for the circular orbits of Section~\ref{s:circular}, for example.  It is too strong also in many situations involving the stability of solitons or standing waves. An example is treated in Section~\ref{curves.sec}.

The proof is very simple, and based on the usual argument by contradiction. 
\begin{proof} Suppose there exists a point $u\in\Ocal_{u_{\mu_*}}$ that is not orbitally stable. 
Then there exists $\epsilon_0>0$ and for all $n\in\N^*$, there exists $v_n\in E$ so that $\rd(v_n, u)\leq \frac1n$ and $\exists t_n\in\R$ so that $\rd(v_n(t_n), \Ocal_{u_{\mu_*}})=\epsilon_0$. 
We can suppose $\epsilon_0<\eta$. Then there exists $\tilde v_n\in \Ocal_{u_{\mu_*}}$ so that $\rd(v_n(t_n), \tilde v_n)\leq \eta$ and hence, since $\Lcal_{\mu_*}$ is both a constant of the motion and constant on $\Ocal_{u_{\mu_*}}$, 
\begin{align*}
\Lcal_{\mu_*}(v_n)-\Lcal_{\mu_*}(u)&=\Lcal_{\mu_*}(v_n(t_n))-\Lcal_{\mu_*}(\tilde v_n)\geq c\rd^2(v_n(t_n), \Ocal_{u_{\mu_*}})=c\epsilon_0^2.
\end{align*}
Since $\Lcal_{\mu_*}$ is continuous, the left hand side tends to zero when $n\to+\infty$, which is a contradiction. 
\end{proof}
\subsection{Coercivity implies stability I}\label{ss:coercivitystability1}
We now turn to the task of showing that Hypotheses~A and~B$\mu_*$ imply the 
$G_{\Sigma_{\mu_*}}$-orbital stability of $u_{\mu_*}$. 
For our first result, we need the following hypothesis.

\vskip0.3cm
\noindent{\bf \textsc{Hypothesis} F} Let $F:\Ban\to\R^m$. Let $\mu\in\R^m$. We say $F$ satisfies Hypothesis F at $\mu$ if, for any bounded sequence $u_n$ in $\Ban$,
\begin{equation}\label{eq:hypothesisF}
\lim_n F(u_n)=\mu \Rightarrow \rd(u_n,\Sigma_\mu)\to 0.
\end{equation}

\vskip0.3cm
\noindent The following lemma gives sufficient conditions for this to be satisfied.
\begin{lemma}\label{lem:hypF}
\begin{enumerate}[label=(\alph*)]
\item Suppose $\mathrm{dim}\, \Ban<+\infty$. Let $F\in C(\Ban,\R^m)$. Then $F$ satisfies Hypothesis~F for all $\mu\in\R^m$. 
\item Suppose $F\in {C}(E,\R^m)$ and that there exists $C>0$ so that $\{u\in E\mid F(u)^2\leq C^2\}$ is compact. Let $\mu\in\R^m$ with $\mu^2<C^2$. Then $F$ satisfies Hypothesis~$F$ at $\mu$. 
\item Let $F: \Ban\to\R$. Suppose that there exists $k\in\R^*$ so that,  $\forall u\in\Dcal$, for all $\lambda\in\R^*$, $F(\lambda u)=\lambda^kF(u)$.  Suppose $\mu\not=0$.  Then $F$ satisfies Hypothesis~F at $\mu$. 
\end{enumerate}
\end{lemma}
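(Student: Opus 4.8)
The plan is to prove the three parts of Lemma~\ref{lem:hypF} essentially by unwinding the definition of Hypothesis~F and exploiting, in each case, the appropriate compactness-type mechanism. In all three cases one argues by contradiction: one assumes $F$ fails Hypothesis~F at the given $\mu$, so there is a bounded sequence $u_n$ in $\Ban$ with $F(u_n)\to\mu$ but $\rd(u_n,\Sigma_\mu)\geq\epsilon_0>0$ for all $n$ (after passing to a subsequence), and one derives a contradiction with the continuity of $F$.

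For part~(a), since $\mathrm{dim}\,\Ban<\infty$ and the sequence $u_n$ is bounded, Bolzano--Weierstrass gives a subsequence $u_{n_k}\to u_*\in\Ban$. By continuity of $F$, $F(u_*)=\lim_k F(u_{n_k})=\mu$, so $u_*\in\Sigma_\mu$; but then $\rd(u_{n_k},\Sigma_\mu)\leq\rd(u_{n_k},u_*)\to 0$, contradicting $\rd(u_{n_k},\Sigma_\mu)\geq\epsilon_0$. For part~(b), the hypothesis $\mu^2<C^2$ means that for $n$ large, $F(u_n)^2\leq C^2$, so $u_n$ eventually lies in the compact set $\{u\in E\mid F(u)^2\leq C^2\}$; extracting a convergent subsequence $u_{n_k}\to u_*$ in that compact set and invoking continuity of $F$ as before gives $u_*\in\Sigma_\mu$ and hence the same contradiction. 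Note that here boundedness of $u_n$ is automatic from the compactness, so it is not even needed as an extra assumption, though it does no harm.

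For part~(c), the mechanism is different: there is no compactness, but instead a scaling (homogeneity) structure. Given a bounded sequence $u_n$ with $F(u_n)\to\mu\neq 0$, I want to rescale each $u_n$ by a factor $\lambda_n$ close to $1$ so that $F(\lambda_n u_n)=\mu$ exactly, placing $\lambda_n u_n\in\Sigma_\mu$, and then show $\rd(u_n,\lambda_n u_n)=|\lambda_n-1|\,\|u_n\|\to 0$. Since $F(\lambda u_n)=\lambda^k F(u_n)$, one takes $\lambda_n=(\mu/F(u_n))^{1/k}$ (this requires $F(u_n)$ and $\mu$ to have compatible signs so that the real $k$-th root makes sense — a point to handle carefully, e.g. by noting $F(u_n)\to\mu\neq 0$ forces $F(u_n)$ to have the sign of $\mu$ for large $n$, or by restricting attention to such $n$). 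Then $\lambda_n\to 1$ because $F(u_n)\to\mu$, and since $\|u_n\|$ is bounded, $|\lambda_n-1|\,\|u_n\|\to 0$, so $\rd(u_n,\Sigma_\mu)\to 0$, contradicting the assumed lower bound. One small technical wrinkle is that the homogeneity is stated for $u\in\Dcal$, so strictly one should assume $u_n\in\Dcal$, or note that $\Sigma_\mu$ and $F$ are being used in a way that the conclusion is about closeness to $\Sigma_\mu$ in $\Ban$; I would simply carry the argument out for sequences in $\Dcal$, which is the case of interest in the applications, or remark that by density and continuity the homogeneity extends to all of $\Ban$ if $F\in C(\Ban,\R)$.

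The main obstacle — really the only subtle point — is the sign/branch issue in part~(c): the real $k$-th root $\lambda\mapsto\lambda^k$ is not globally invertible on $\R$ for even $k$, so one must argue that $F(u_n)$ eventually has the same sign as $\mu$ (immediate from $F(u_n)\to\mu\neq0$) and work on the half-line where $\lambda^k$ is a homeomorphism onto the relevant half-line, ensuring $\lambda_n$ is well-defined, positive, and tends to $1$. Parts~(a) and~(b) are entirely routine sequential-compactness arguments. I would present all three compactly, emphasizing that each reduces to: extract (via compactness) or construct (via scaling) a point of $\Sigma_\mu$ within vanishing distance of $u_n$.
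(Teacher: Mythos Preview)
Your proposal is correct and matches the paper's proof essentially line for line: the same contradiction-via-subsequence argument for (a), the same remark that (b) is identical once one observes the sequence eventually lies in the compact sublevel set, and the same rescaling $v_n=(\mu/F(u_n))^{1/k}u_n$ for (c), with the sign issue handled exactly as you describe. The only cosmetic difference is that the paper presents (c) directly rather than by contradiction, and it does not comment on the $\Dcal$ versus $\Ban$ issue you raise.
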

\begin{proof}
\begin{enumerate}[label=(\alph*)]
\item Suppose there exists $\epsilon_0>0$ and a  bounded sequence $u_n$ so that $F(u_n)\to \mu$, but $\rd(u_n,\Sigma_\mu)\geq \epsilon_0$. Then the boundedness of the sequence implies the existence of a convergent subsequence $u_{n_k}\to v\in E$. By continuity of $F$, it follows that $F(v)=\mu$ so that $v\in\Sigma_\mu$. So $\rd(u_{n_k}, \Sigma_\mu)\to 0$. This is a contradiction. 
\item The proof is similar to the one in (a).
\item Let $(u_n)_n$ be a bounded sequence satisfying $F(u_n)\to \mu\not=0$. Then, for large enough $n$ one has $\mu/F(u_n)>0$ and we can define $v_n=\left(\frac{\mu}{F(u_n)}\right)^{1/k}u_n$. Then $F(v_n)=\mu$. Clearly $\|v_n-u_n\|\to 0$ so that $\rd(u_n, \Sigma_\mu)\to 0$.
\end{enumerate}
\end{proof}
\begin{remark}\label{rem:hypF} (i) The boundedness of the sequence is important, even in finite dimension. Indeed, consider on $\R^2$ the function $F(x,y)=\frac{y^2}{1+x^4}$, $\mu=0$ and remark that $F(x,x)\to 0$ as $x\to+\infty$. \\
(ii) Condition (c) can be used for constants of the motion arising from linear actions of one-parameter groups on a Hilbert space, as described in Section~\ref{ss:linflows}, and which have a quadratic hamiltonian of the type
$$
F(u)=\frac12\langle u, Bu\rangle,
$$
such as in~\eqref{eq:defconstantmotionSchr}. An example of such application will be given in the proof of Proposition~\ref{propstability}, at the end of Section~\ref{s:nlsetorus1d}.\\
(iii) The condition $\mu\not=0$ is essential in part (c) of the Lemma. Indeed, consider $E=H^1(\R^d)$ and $F(u)=\|u\|^2_{L^2}$. Let $\mu=0$. Then $\Sigma_{\mu}=\{0\}$. But $F(u_n)\to0$ does not imply $u_n\to0$ in $H^1(\R^d)$. \\
(iv) Condition (c) is no longer sufficient to ensure $F$ satisfies Hypothesis~F when $F:\Ban\to\R^m$, with $m\geq 2$. To see this, we consider an example relevant to the treatment of the Manakov equation. Let $\Ban=H^1(\R, \C^2)$ and consider $F_1(u)=\|v\|^2_{L^2}, F_2(u)=\|w\|^2_{L^2}$, where we wrote $u=(v,w)\in\Ban$. Note that those are the two constants of the motion associated to the diagonal part of the $U(2)$ action on $\Ban$ (See Section~\ref{ss:manakov}). We choose $\mu=(1,0)\not=0\in\R^2$. Then $\Sigma_\mu=\{u\in\Ban\mid w=0, \|v\|^2_{L^2}=1\}$. Now let $a,b\in C_0^{\infty}(\R)$, such that $\|a\|^2_{L^2}=1=\|b\|^2_{L^2}$ and consider $u_n(x)=(a(x), \frac1{\sqrt n}b(n(x-n)))=:(v_n, w_n)\in \Ban$. Note that this sequence is bounded. Moreover, clearly, $\lim_{n\to+\infty} F(u_n)=\mu$. Now, for $u=(v,0)\in\Sigma_\mu$, one has
\begin{align*}
\|u_n-u\|^2 &=\|a-v\|_{H^1(\R,\C)}^2+\|w_n\|_{H^1(\R,\C)}^2\\
		&\geq \|w_n\|_{H^1(\R,\C)}^2 \geq \frac{n^2}{n}\int_\R |b'(n(x-n))|^2\rd x=\|b'\|^2_{L^2}.
\end{align*}
It follows that $\rd(u_n,\Sigma_\mu)=\inf_{u\in\Sigma_\mu}\|u_n-u\|\geq \|b'\|_{L^2}$, so that Hypothesis~F is clearly not satisfied in this situation.
\end{remark}

\begin{theorem}\label{thm:lyapmethodrestricted}
Suppose Hypotheses~A and B$\mu_*$ (Section~\ref{ss:strategy}) are satisfied for some $\mu_*\in \R^m$. Then
\begin{equation}\label{eq:orbstabrestricted}
\forall u\in\Ocal_{u_{\mu_*}}, \forall \epsilon>0, \exists \delta>0, \ (\forall u'\in \Sigma_{\mu_*}, \rd(u', u)\leq \delta\Rightarrow \sup_{t\in\R} \rd(u'(t),\Ocal_{u_{\mu_*}})\leq \epsilon).
\end{equation}
If in addition, 
\begin{enumerate}[label=({\roman*})]
\item $\Lcal_{\mu_*}$ is uniformly continuous on bounded sets,
\item $\Ocal_{u_{\mu_*}}$ is bounded,
\item $F:\Ban\to\R^m$  satisfies Hypothesis F,
\end{enumerate}
 then all $u\in\Ocal_{u_{\mu_*}}$ are orbitally stable $G_{\Sigma_{\mu_*}}$-relative equilibria.
\end{theorem}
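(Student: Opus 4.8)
\textbf{Proof strategy for Theorem~\ref{thm:lyapmethodrestricted}.} The plan is to first establish the restricted stability statement~\eqref{eq:orbstabrestricted} by the familiar argument by contradiction using only the coercive estimate~\eqref{eq:coerciverestricted} from Hypothesis~B$\mu_*$(iii), and then, under the three extra hypotheses, to upgrade this to full orbital stability (for perturbations in all of $E$, not just in $\Sigma_{\mu_*}$) by controlling the ``drift off the level surface'' via Hypothesis~F.

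For~\eqref{eq:orbstabrestricted}: fix $u\in\Ocal_{u_{\mu_*}}$ and $\epsilon>0$; I may assume $\epsilon<\eta$ where $\eta$ is as in~\eqref{eq:coerciverestricted}. Suppose the conclusion fails. Then there is $\epsilon_0\in(0,\epsilon]$ and sequences $u'_n\in\Sigma_{\mu_*}$, $t_n\in\R$ with $\rd(u'_n,u)\leq \tfrac1n$ and $\rd(u'_n(t_n),\Ocal_{u_{\mu_*}})=\epsilon_0$ (using a continuity-in-$t$ argument to realize the value $\epsilon_0$ exactly, as in the proof of Proposition~\ref{lem:sphercircular}; one first shows the orbit cannot jump past $\epsilon_0$ without hitting it). Since $\Sigma_{\mu_*}$ is invariant under the flow (as $F$ is a constant of the motion), $u'_n(t_n)\in\Sigma_{\mu_*}$, and since $\epsilon_0<\eta$ there exists $\tilde v_n\in\Ocal_{u_{\mu_*}}$ with $\rd(u'_n(t_n),\tilde v_n)\leq\eta$. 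Applying~\eqref{eq:coerciverestricted} and using that $\Lcal_{\mu_*}$ is a constant of the motion and is constant on $\Ocal_{u_{\mu_*}}$ (by $G_{\Sigma_{\mu_*}}$-invariance),
$$
\Lcal_{\mu_*}(u'_n)-\Lcal_{\mu_*}(u)=\Lcal_{\mu_*}(u'_n(t_n))-\Lcal_{\mu_*}(\tilde v_n)\geq c\,\rd^2(u'_n(t_n),\Ocal_{u_{\mu_*}})=c\,\epsilon_0^2.
$$
Since $\rd(u'_n,u)\to0$ and $\Lcal_{\mu_*}\in C(E,\R)$, the left side tends to $0$, a contradiction. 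This proves~\eqref{eq:orbstabrestricted}.

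For the full statement, assume now (i)--(iii). Arguing again by contradiction, suppose some $u\in\Ocal_{u_{\mu_*}}$ is not orbitally stable: there are $\epsilon_0>0$ and $v_n\in E$, $t_n\in\R$ with $\rd(v_n,u)\leq\tfrac1n$ and $\rd(v_n(t_n),\Ocal_{u_{\mu_*}})=\epsilon_0$, where I take $2\epsilon_0<\eta$. Since $\Lcal_{\mu_*}$ is a constant of the motion, $\Lcal_{\mu_*}(v_n(t_n))=\Lcal_{\mu_*}(v_n)\to\Lcal_{\mu_*}(u)$. Because $\Ocal_{u_{\mu_*}}$ is bounded (hypothesis (ii)) and $\rd(v_n(t_n),\Ocal_{u_{\mu_*}})=\epsilon_0$, the sequence $v_n(t_n)$ is bounded in $E$; moreover $F(v_n(t_n))=F(v_n)\to F(u)=\mu_*$, so Hypothesis~F gives $\rd(v_n(t_n),\Sigma_{\mu_*})\to0$. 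Pick $w_n\in\Sigma_{\mu_*}$ with $\|w_n-v_n(t_n)\|\to0$; then $v_n(t_n)$ and $w_n$ are both bounded, and for $n$ large $\tfrac{\epsilon_0}{2}\leq\rd(w_n,\Ocal_{u_{\mu_*}})\leq\tfrac{3\epsilon_0}{2}<\eta$. Using~\eqref{eq:coerciverestricted} on $w_n$ and the uniform continuity of $\Lcal_{\mu_*}$ on bounded sets (hypothesis (i)),
$$
\Lcal_{\mu_*}(v_n)-\Lcal_{\mu_*}(u)=\big[\Lcal_{\mu_*}(v_n(t_n))-\Lcal_{\mu_*}(w_n)\big]+\big[\Lcal_{\mu_*}(w_n)-\Lcal_{\mu_*}(u)\big]\geq \big[\Lcal_{\mu_*}(v_n(t_n))-\Lcal_{\mu_*}(w_n)\big]+c\,\rd^2(w_n,\Ocal_{u_{\mu_*}}).
$$
The bracketed difference tends to $0$ by uniform continuity (since $\|v_n(t_n)-w_n\|\to0$ along bounded sequences), while $\rd^2(w_n,\Ocal_{u_{\mu_*}})\geq\epsilon_0^2/4$; hence the right side is bounded below by $c\epsilon_0^2/4+o(1)>0$, contradicting that the left side tends to $0$. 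Finally, orbital stability of one $u\in\Ocal_{u_{\mu_*}}$ together with the isometry of the $G_{\Sigma_{\mu_*}}$-action (or, failing isometry, a direct repetition of the argument at each point, with $\delta$ possibly depending on the point) yields that all points of $\Ocal_{u_{\mu_*}}$ are orbitally stable $G_{\Sigma_{\mu_*}}$-relative equilibria.

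\textbf{Main obstacle.} The first part is essentially routine once one has the right contradiction setup (including the mild point that the distance to the orbit, being a continuous function of $t$, actually attains any intermediate value $\epsilon_0$). The real content of the second part is the passage from $\Sigma_{\mu_*}$ to all of $E$: this is exactly where Hypothesis~F is indispensable, and where its failure in infinite dimensions (as illustrated in Remark~\ref{rem:hypF}(iv)) forces the alternative arguments of Theorems~\ref{thm:lyapmethodrestrictedmod} and~\ref{thm:lyapmethodrestrictedLK}. One must be careful that all three auxiliary hypotheses are genuinely used: boundedness of $\Ocal_{u_{\mu_*}}$ to get boundedness of $v_n(t_n)$ (feeding into Hypothesis~F), Hypothesis~F itself to produce the approximants $w_n\in\Sigma_{\mu_*}$, and uniform continuity of $\Lcal_{\mu_*}$ on bounded sets to kill the error term $\Lcal_{\mu_*}(v_n(t_n))-\Lcal_{\mu_*}(w_n)$.
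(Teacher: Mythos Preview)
Your argument is correct and follows essentially the same route as the paper: contradiction via the coercive estimate~\eqref{eq:coerciverestricted} for the restricted statement, then for the full statement the chain ``$\Ocal_{u_{\mu_*}}$ bounded $\Rightarrow$ $v_n(t_n)$ bounded $\Rightarrow$ Hypothesis~F produces $w_n\in\Sigma_{\mu_*}$ close to $v_n(t_n)$ $\Rightarrow$ coercivity on $w_n$ plus uniform continuity of $\Lcal_{\mu_*}$ kills the error.'' One small redundancy: your closing sentence about passing from one $u$ to all of $\Ocal_{u_{\mu_*}}$ via isometry is unnecessary, since your contradiction hypothesis was already ``suppose \emph{some} $u\in\Ocal_{u_{\mu_*}}$ is not orbitally stable,'' and the contradiction therefore already rules this out for every such $u$; no isometry assumption is invoked (or available) in the paper's proof either.
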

We point out that~\eqref{eq:orbstabrestricted} is already an orbital stability result for all $u\in \Ocal_{u_{\mu_*}}=G_{\Sigma_{\mu_*}}u$, but only with respect to perturbations of the initial condition $u$ \emph{inside} $\Sigma_{\mu_*}$. The theorem asserts that, with the extra conditions (i)--(ii)--(iii), orbital stability with respect to all perturbations within $\Ban$ is obtained. 
It is the observation that coercivity \emph{along} $\Sigma_{\mu_*}$ (Hypothesis~B$\mu$~(iii)) suffices to establish orbital stability that explains, \emph{in fine}, the advantage of Theorem~\ref{thm:lyapmethodrestricted} over Theorem~\ref{thm:lyapmethod}.   This is already illustrated in Section~\ref{s:circular} on a simple example. Note furthermore that conditions~(i) and~(iii) of the theorem are automatically satisfied in finite dimension. The boundedness of 
$\Ocal_{u_{\mu_*}}$ (condition~(ii)) is guaranteed for example when the group is compact, or when $E$ is a Hilbert space and the group acts with unitary transformations, which is often the case in infinite dimensional systems. 

The argument in the proof of Theorem~\ref{thm:lyapmethodrestricted} is extracted from the proof of Theorem~5.3 in~\cite{gssI} and is used in~\cite{gssII} as well.  We point out however, that conditions~(i) and~(iii) are not made explicit there. The first one is usually easy to check in examples, where the Lyapunov function tends at any rate to be uniformly Lipschitz on bounded sets. For the second one, we gave some sufficient conditions in Lemma~\ref{lem:hypF}. But, as pointed out in Remark~\ref{rem:hypF}, it may fail, in particular in the very general setting of~\cite{gssI, gssII}. In that case, a different argument is needed; we will provide two below.

\begin{proof}
We will prove~\eqref{eq:orbstabrestricted} by contradiction, yet again. Let us therefore suppose there exists $u\in\Ocal_{u_{\mu_*}}$ and $\epsilon_0>0$ so that for all $n\in\N_*$, there exists $u_n\in \Sigma_{\mu_*}$ so that 
$$
\rd(u_n, u)\leq \frac1n,\quad\mathrm{and}\quad \exists\, \tilde t_n\in\R \ \mathrm{so\ that}\ \rd(u_n(\tilde t_n),\Ocal_{u_{\mu_*}})>\epsilon_0.
$$
We can choose, without loss of generality, $\epsilon_0<\eta$, where $\eta$ is defined in~\eqref{eq:coerciverestricted} and choose $t_n$ the smallest value of $t$ so that
$$
\rd(u_n, u)\leq \frac1n,\quad\mathrm{and}\quad \ \rd(u_n(t_n),\Ocal_{u_{\mu_*}})=\epsilon_0<\eta.
$$
 Consequently, there exists $y_n\in\Ocal_{u_{\mu_*}}$ so that $\rd(u_n(t_n), y_n)<\eta$. Note that $u_n(t_n)\in\Sigma_{\mu_*}$, since $\Sigma_{\mu_*}$ is invariant under the dynamical flow. Then, since $\Lcal_{\mu_*}$ is a constant of the motion, and since it is constant and coercive on $\Ocal_{u_{\mu_*}}$ along $\Sigma_{\mu_*}$,
\begin{align*}
\Lcal_{\mu_*}(u_n)-\Lcal_{\mu_*}(u)&=\Lcal_{\mu_*}(u_n(t_n))-\Lcal_{\mu_*}(u)\\
&=\Lcal_{\mu_*}(u_n(t_n))-\Lcal_{\mu_*}(y_n)\geq c\rd^2(u_n(t_n),\Ocal_{u_{\mu_*}})=c\epsilon_0^2.
\end{align*}
Since $\Lcal_{\mu_*}$ is continuous,  one obtains a contradiction by taking $n\to+\infty$. This shows~\eqref{eq:orbstabrestricted}. 

To prove the last statement, suppose $\Ocal_{u_{\mu_*}}$ is bounded and $\Lcal_{\mu_*}$ uniformly continuous on bounded sets. We need to show that
\begin{equation}\label{eq:III}
\forall u\in \Ocal_{u_{\mu_*}}, \forall \epsilon>0, \exists \delta>0, \  (\forall u'\in E, \rd(u', u)\leq \delta\Rightarrow \sup_{t\in\R} \rd(u'(t),\Ocal_{u_{\mu_*}})\leq \epsilon).
\end{equation}
We proceed again by contradiction. Suppose there exists $u\in\Ocal_{u_{\mu_*}}$ and $0<\epsilon_0<\eta$ so that, for all $n\in\N$, there exists $u_n\in E$, 
$$
\rd(u_n, u)\leq \frac1n,\quad\mathrm{and}\quad \exists t_n\in\R \ \mathrm{so\ that}\ \rd(u_n( t_n),\Ocal_{u_{\mu_*}})=\epsilon_0<\eta.
$$
Note that, this time, $u_n\in E$ and $u_n(t_n)\in E$, not in $\Sigma_{\mu_*}$. So we can't use the coercivity of $\Lcal_{\mu_*}$ along $\Sigma_{\mu_*}$ directly. We do know, however, that $F(u_n(t_n))=F(u_n)$, since $F$ is a constant of the motion. Hence 
$$
\lim_{n\to+\infty}F(u_n(t_n))=\mu_*.
$$
Since the orbit $\Ocal_{u_{\mu_*}}$ is bounded, and since $\rd(u_n(t_n),\Ocal_{u_{\mu_*}})=\epsilon_0$, it follows that the sequence $u_n(t_n)$ is bounded. Hypothesis F then implies there exist $z_n\in\Sigma_{\mu_*}$ so that $\|u_n(t_n)-z_n\|\to 0$.

We can now conclude. Since, for $n$ large enough, $\frac{\epsilon_0}2\leq \rd(z_n, \Ocal_{u_{\mu_*}})\leq \eta$, we have
\begin{align*}
\Lcal_{\mu_*}(u_n)-\Lcal(u)&=\Lcal_{\mu_*}(u_n(t_n))-\Lcal_{\mu_*}(u)\\
&=\Lcal_{\mu_*}(u_n(t_n))-\Lcal_{\mu_*}(z_n)+\Lcal_{\mu_*}(z_n)-\Lcal_{\mu_*}(u)\\
&\geq \Lcal_{\mu_*}(u_n(t_n))-\Lcal_{\mu_*}(z_n)+c\rd^2(z_n,\Ocal_{u_{\mu_*}}).
\end{align*}
Since the orbit $\Ocal_{u_{\mu_*}}$ is bounded, the sequences $u_n(t_n)$ and $z_n$ are bounded. This, combined with the uniform continuity of $\Lcal_{\mu_*}$ on bounded sets, leads again to a contradiction upon taking $n\to+\infty$. 
\end{proof} 

We now give a third proof of orbital stability starting from a coercive Lyapunov function, along the lines of the second argument in the proof of Proposition~\ref{lem:sphercircular}. The point here is that we exploit the fact that the relative equilibria $u_\mu$ often come in families. 
\begin{theorem}\label{thm:lyapmethodrestrictedmod}
Suppose the following.
\begin{enumerate}[label=({\roman*})]
\item Hypothesis~A holds.
\item There exists a continuous map $\mu\in U\subset\R^m\to u_\mu\in \Sigma_\mu\subset \Ban$  so that Hypothesis~B$\mu$ is satisfied for all $\mu\in U$, with $\eta$ and $c$ in~\eqref{eq:coerciverestricted} independent of $\mu$.
\item $\sup_{\mu\in U}\| u_\mu\|<+\infty$. 
\item There exists $C>0$ so that
\begin{equation}\label{eq:boundrestrictedmodbis}
\forall \mu\in U,   \forall u'\in \Sigma_\mu, \quad  \|u'- u_\mu\|\leq \eta\Rightarrow \Lcal_\mu(u')-\Lcal_\mu(u_\mu)\leq C\|u'-u_\mu\|.
\end{equation}
\item $\forall g\in G$, $\Phi_g$ is an isometry on $\Ban$: $\forall u, u'\in \Ban$, $\rd(\Phi_g(u), \Phi_g(u'))=\rd(u, u')$. 
\end{enumerate}
Then, any $u\in \Ocal_{u_\mu}$ is an orbitally stable $G_{\Sigma_\mu}$-relative equilbrium of the flow $\Phi_t^H$.
\end{theorem}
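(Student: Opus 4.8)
The plan is to combine the "restricted" stability statement \eqref{eq:orbstabrestricted}, which already follows from Hypothesis~B$\mu$~(iii) via Theorem~\ref{thm:lyapmethodrestricted}, with a continuity/quantitative argument that transfers a perturbation $u'\in\Ban$ (not in $\Sigma_\mu$) to a nearby point of the slice $\Sigma_{\mu'}$ with $\mu'=F(u')$. The key mechanism is: if $u'$ is close to $u_\mu$ in $\Ban$, then $\mu'=F(u')$ is close to $\mu$ (by continuity of $F$ from Hypothesis~A~(iii)), so $u_{\mu'}$ is close to $u_\mu$ (by continuity of $\mu\mapsto u_\mu$, assumption~(ii)), and $u'$ lies in $\Sigma_{\mu'}$ where the $\mu$-independent coercive estimate~\eqref{eq:coerciverestricted} is available. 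One then runs the Lyapunov argument on $\Sigma_{\mu'}$, using the flow-invariance of $\Sigma_{\mu'}$, and finally observes that $\Ocal_{u_{\mu'}}$ is close (in Hausdorff distance) to $\Ocal_{u_\mu}$, so staying near $\Ocal_{u_{\mu'}}$ forces staying near $\Ocal_{u_\mu}$.

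The steps, in order, would be the following. First, fix $u\in\Ocal_{u_\mu}$ and $\epsilon>0$; by the isometry assumption~(v) and Lemma~\ref{lem:isometric} it suffices to prove orbital stability of $u_\mu$ itself (the constants being transported by $\Phi_g$), so assume $u=u_\mu$. Second, take $u'\in\Ban$ with $\rd(u',u_\mu)\le\delta$ for $\delta$ to be chosen; set $\mu'=F(u')$ and note $|\mu'-\mu|\le \omega_F(\delta)$ for some modulus $\omega_F$, using $F\in C^2$; then for $\delta$ small, $\mu'\in U$, and $\rd(u_{\mu'},u_\mu)\le \omega_u(|\mu'-\mu|)$ by continuity of $\mu\mapsto u_\mu$. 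Third, estimate $\rd(u',u_{\mu'})\le \rd(u',u_\mu)+\rd(u_\mu,u_{\mu'})$, so $u'$ is close to $u_{\mu'}$ within $\Sigma_{\mu'}$; apply the argument of Theorem~\ref{thm:lyapmethodrestricted} (the contradiction argument with the constant-of-motion $\Lcal_{\mu'}$, using~\eqref{eq:coerciverestricted} with the $\mu$-independent $\eta,c$, and the upper bound~\eqref{eq:boundrestrictedmodbis} to control $\Lcal_{\mu'}(u')-\Lcal_{\mu'}(u_{\mu'})\le C\|u'-u_{\mu'}\|$) to conclude $\sup_{t}\rd(u'(t),\Ocal_{u_{\mu'}})\le\epsilon/2$. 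Here one needs $\Sigma_{\mu'}$ invariant under $\Phi_t^X$, which holds since $F$ is a constant of the motion. Fourth, bound the Hausdorff distance $\Delta(\Ocal_{u_{\mu'}},\Ocal_{u_\mu})\le \rd(u_{\mu'},u_\mu)$ by Proposition~\ref{prop:isometry}~(iii), and use this together with the previous step to get $\rd(u'(t),\Ocal_{u_\mu})\le \rd(u'(t),\Ocal_{u_{\mu'}})+\Delta(\Ocal_{u_{\mu'}},\Ocal_{u_\mu})\le\epsilon$ for $\delta$ small enough; finally observe uniformity in $u\in\Ocal_{u_\mu}$ follows from~(v) again via Lemma~\ref{lem:isometric}.

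The main obstacle, I expect, is making the ``restricted'' estimate work uniformly in the shifted slice $\Sigma_{\mu'}$: the argument of Theorem~\ref{thm:lyapmethodrestricted} on $\Sigma_{\mu_*}$ gives, for each fixed slice, a $\delta$ depending on that slice through the continuity of $\Lcal_{\mu_*}$, whereas here we need the smallness threshold for the perturbation in $\Sigma_{\mu'}$ to be controllable uniformly as $\mu'\to\mu$. This is exactly what assumptions~(ii) ($\mu$-independent $\eta,c$), (iii) ($\sup_\mu\|u_\mu\|<\infty$), and~(iv) (the uniform-in-$\mu$ upper bound~\eqref{eq:boundrestrictedmodbis}) are designed to provide: they let one replace the abstract continuity of $\Lcal_{\mu'}$ by the explicit two-sided bound $c\,\rd^2(u',\Ocal_{u_{\mu'}})\le \Lcal_{\mu'}(u')-\Lcal_{\mu'}(u_{\mu'})\le C\|u'-u_{\mu'}\|$, from which $\rd^2(u'(t),\Ocal_{u_{\mu'}})\le (C/c)\|u'-u_{\mu'}\|$ follows directly via the constant-of-motion property, with constants not depending on $\mu'$. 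So the real work is bookkeeping the three nested continuity moduli ($\delta\to|\mu'-\mu|\to\rd(u_{\mu'},u_\mu)$) and checking that the coercive estimate is never applied outside the radius $\eta$; once that is set up, the contradiction argument is the same soft one used repeatedly above.
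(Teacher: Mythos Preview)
Your proposal is correct and follows essentially the same route as the paper's proof: reduce to $u=u_\mu$ via the isometry assumption and Lemma~\ref{lem:isometric}; for $u'$ near $u_{\mu_*}$ set $\mu'=F(u')$, transfer $u'$ to the slice $\Sigma_{\mu'}$, run the $\mu$-uniform Lyapunov contradiction argument there (using~\eqref{eq:coerciverestricted} and~\eqref{eq:boundrestrictedmodbis} to get the two-sided bound with constants independent of $\mu'$), and finally pass back from $\Ocal_{u_{\mu'}}$ to $\Ocal_{u_{\mu_*}}$ via Proposition~\ref{prop:isometry} and the continuity of $\mu\mapsto u_\mu$. Your identification of the main obstacle---that the restricted stability on $\Sigma_{\mu'}$ must hold with a threshold uniform in $\mu'$, which is exactly what (ii), (iii), (iv) are engineered to provide---matches the paper's use of these hypotheses in its Step~\eqref{eq:stepone}.
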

Condition~(iii) is not very restrictive. It is sufficient to take $U$ bounded, for example. Condition~(iv) follows if we know that $D_{u}\Lcal_{\mu}$ is bounded for $u$ in bounded sets. This is a reasonable condition. Condition~(v) is commonly satisfied in PDE systems, but is quite restrictive, as we already explained. It implies we can use Proposition~\ref{prop:isometry} and Lemma~\ref{lem:isometric}.
\begin{proof}
Let $\mu_*\in U$. As a result of Lemma~\ref{lem:isometric},  it is enough to show the orbital stability of $u_{\mu_*}$. So we need to show that, for all $\epsilon>0$, there exists $\delta>0$ so that, for all $u'\in E$, one has
\begin{equation}\label{eq:toprove}
\|u'-u_{\mu_*}\|\leq \delta \Rightarrow \forall t\in\R, \rd(u'(t), \Ocal_{u_{\mu_*}})\leq \epsilon.
\end{equation}
For that purpose, we need three preliminary estimates. We first show that $\forall \epsilon>0$, there exists $\hat\delta>0$ so that, for all $\mu\in U$, for all $u'\in \Sigma_\mu$,
\begin{equation}\label{eq:stepone}
\|u'-u_{\mu}\|\leq \hat \delta \Rightarrow \forall t\in\R, \rd(u'(t), \Ocal_{u_{\mu}})\leq \epsilon/2.
\end{equation}
In other words, we first show that the $u_\mu$ are all orbitally stable for perturbations \emph{within} $\Sigma_\mu$. The method of proof -- by contradiction --  is the same as several times before, but we need to make sure to obtain the necessary uniformity in $\mu$. If the above is not true, then there exists $\epsilon_0>0$ so that for all $n\in\N^*$ there exist $\mu_n\in U$ and $u_n\in\Sigma_{\mu_n}$, $t_n\in\R$, so that
$$
\|u_n-u_{\mu_n}\|\leq \frac1n, \quad \rd(u_n(t_n), \Ocal_{u_{\mu_n}})=\frac{\epsilon_0}2<\eta.
$$
Here $\eta$ is given in Hypothesis~B$\mu$~(iii) and we recall that it is independent of $\mu_n$. 
Hence
$$
\Lcal_{\mu_n}(u_n)-\Lcal_{\mu_n}(u_{\mu_n})=\Lcal_{\mu_n}(u_n(t_n))-\Lcal_{\mu_n}(u_{\mu_n})\geq c\rd^2(u_n(t_n), \Ocal_{\mu_n})=c\frac{\epsilon_0^2}{4}.
$$
Now, since the $u_{\mu_n}$ form a bounded set by hypothesis~(iii) of the theorem, the same is true for the $u_n$. Hence, it follows from hypothesis~(iv) of the theorem that
$$
\Lcal_{\mu_n}(u_n)-\Lcal_{\mu_n}(u_{\mu_n})\leq C\|u_n- u_{\mu_n}\|,
$$
where $C$ does not depend on $n$. Hence 
$
C\|u_n- u_{\mu_n}\|\geq c\frac{\epsilon_0^2}{4},
$
so that, taking $n\to+\infty$, we obtain a contradiction. This proves~\eqref{eq:stepone}. 

As a second step, we show the following estimate. Let $\mu_*\in U$. Then, for all $\epsilon>0$, there exists $\hat\rho>0$ so that,  
\begin{equation}\label{eq:step2}
\forall\mu\in U,\quad\left( \|\mu-\mu_*\|\leq \hat\rho\Rightarrow\forall v\in \Ocal_{u_\mu},  \rd(v, \Ocal_{u_{\mu_*}})\leq \frac\epsilon2\right).
\end{equation}
To see, this, note that hypothesis~(i) of the theorem implies that there exists $\hat\rho>0$ so that $\|\mu-\mu_*\|\leq \hat\rho$ implies $\|u_\mu-u_{\mu_*}\|\leq \epsilon/2$. Hence $\rd(u_\mu, \Ocal_{u_{\mu_*}})\leq \epsilon/2$. The result then  follows from Proposition~\ref{prop:isometry}, since we suppose the action $\Phi$ of $G$ is isometric.

The third ingredient for the proof of~\eqref{eq:toprove} is the following:
\begin{align}\label{eq:step3}
\forall \hat\delta>0, \forall \hat\rho>0, \exists\delta>0, \forall u'\in E,&\nonumber \\
 &\hskip-2cm \left(\|u'-u_{\mu_*}\|\leq \delta\Rightarrow \|\mu'-{\mu_*}\|\leq \hat\rho, \|u'-u_{\mu'}\|\leq \hat \delta\right),
\end{align}
where $\mu'=F(u')$.  This follows immediately from the continuity of $F$ and of $\mu\to u_\mu$ at $\mu_*$. 

We can now conclude. Let $\mu_*\in U$ and $\epsilon>0$. Choose $\hat \delta$ as in~\eqref{eq:stepone}, $\hat\rho$ as in~\eqref{eq:step2} and $\delta$ as in~\eqref{eq:step3}. Then, by~\eqref{eq:stepone} and~\eqref{eq:step3}, we find that
$$
\forall u'\in E, \left(\|u'-u_{\mu_*}\|\leq \delta\Rightarrow \forall t\in \R, \rd(u'(t), \Ocal_{u_{\mu'}})< \frac\epsilon2\right). 
$$
Hence, for all $t\in\R$, there exists $v(t)\in\Ocal_{u_{\mu'}}$, so that $\rd(u'(t), v(t))< \epsilon/2$. Next, from~\eqref{eq:step3} and~\eqref{eq:step2} , there exists $w(t)\in \Ocal_{u_{\mu_*}}$ so that $\rd(v(t), w(t))<\frac\epsilon2$. Hence $\rd(u'(t), \Ocal_{u_{\mu_*}})<\epsilon$. This proves~\eqref{eq:toprove}.
\end{proof}

\subsection{Sufficient condition for coercivity}\label{ss:coercivityhessian}

We now turn to the task of showing how one can obtain the coercivity Hypothesis B$\mu$~(iii)  from an estimate on the Hessian of $\Lcal_\mu$ (Proposition \ref{thm:hessianestimate}). We work in the following setting.

As before, let $\Ban$ be a Banach space, $G$ a Lie group and $\Phi$ a $G$-action on $E$. Let $F\in C^2(E,\R^m)$. We recall that, for $\mu\in\R^m$,
$$
\Sigma_\mu=\{u\in E\mid F(u)=\mu\},
$$
and that $G_{\Sigma_\mu}$ is the subgroup of $G$ leaving $\Sigma_\mu$ invariant. We now introduce one extra ingredient to the theory. Let $\langle\cdot,\cdot\rangle$ be a scalar product on $\Ban$, which is continuous in the sense that 
$$
\forall v, w\in \Ban, \quad |\langle v, w\rangle|\leq \|v\| \|w\|,
$$
where we recall that $\|\cdot\|$ is our notation for the Banach norm on $\Ban$. This inner product induces a metric on $E$, that we shall denote by
\begin{equation}\label{eq:scalarmetric}
\rd_{\mathrm s}(v,w)=\langle v-w, v-w\rangle.
\end{equation}
Clearly $\rd_{\mathrm s}(v,w)\leq \rd(v,w)$. We introduce this inner product since we need a notion of orthogonality for the statement of the main result of this section, Proposition~\ref{thm:hessianestimate}: see in particular~\eqref{eq:hessiancondition} and~\eqref{eq:perp}. 

We point out that we are not supposing $\Ban$ is a Hilbert space for  this inner product, and that the only topology we will be using in what follows is the one induced by the Banach norm on $\Ban$. In addition, even if $\Ban$ \emph{is} in fact a Hilbert space, the inner product $\langle\cdot, \cdot\rangle$ above is not necessarily the Hilbert space inner product. 
As an example, if $\Ban=H^1(\R^d,\C)$ and depending on the problem considered, one may want to use either the $L^2$ inner product or the $H^1$ inner product: in Section \ref{s:nlsetorus1d} the first choice is made and in Section \ref{curves.sec} the second one. In the formalism developed in~\cite{gssI, gssII, stuart08}, $\Ban$ is always supposed to be a Hilbert space, and only the Hilbert space inner product is used in the analysis of the Hessian. But the introduction of a second inner product is a regularly used device in the literature on orbital stability for the Schr\"odinger in particular.  Our approach  here gives a systematic treatment in the general setting presented above.

Let $\mu\in \R^m$ and $u_{\mu}\in\Sigma_{\mu}$. We need the following hypothesis on the group action and on the
function $F$. \vskip0.2cm
\noindent{\bf \textsc{Hypothesis} C$\mu$} 
\begin{enumerate}[label=({\roman*})]
\item $\Phi_g$ is linear and preserves both the structure $\scalh$ and the norm $\|\cdot\|$ for all $g\in G$;
\item $\mathrm{Ad}^*_g\in \mathrm{O}(m)$ for all $ g\in G_{\Sigma_\mu}$;
\item $\mu$ is a regular value of $F$;
\item $u_{\mu}$ is a $C^1$-vector for $\Phi$ and the map
\begin{equation}\label{eq:localdiffeo}
\xi\in \frak g_{\Sigma_\mu}\to \Phi_{\exp(\xi)}u_\mu\in \Ban
\end{equation}
is  one to one in a neighbourhood of  $\xi=0$.
\end{enumerate}
\vskip0.3cm
Note that both Hypothesis~C$\mu$ above and Proposition~\ref{thm:hessianestimate} below involve $G$ and its action on $E$, as well as $F$, but not the dynamics $\Phi^X_t$ itself.

\begin{remark}
(i) The meaning of condition~(ii) of Hypothesis~C$\mu$ is explained in Remark \ref{rem:euclidianstructure}. \\
(ii) We say $u\in\Ban$ is a $C^1$-vector for the action $\Phi$ if the map
$
g\in G\to \Phi_g(u)\in\Ban
$
is $C^1$. Now, if $u'\in\Ocal_u=\Phi_G(u)$, then $u'$ is also a $C^1$-vector.  Indeed, there exists $g'\in G$ so that $\Phi_{g'}u=u'$ and, since $g\to gg'$ is smooth, it follows that $g\to \Phi_{gg'}u$ is $C^1$. 

To state the result, we need the following notation. Let $\tilde G$ be a subgroup of $G$; we can then define, for all $u'\in \Ocal_u=\Phi_{\tilde G}(u)$, 
\begin{equation}\label{eq:tangentorbit}
T_{u'}\Ocal_u:=\{w\in \Ban\mid \exists \xi\in\frak g,  w=X_\xi(u')\},
\end{equation}
where we recall from~\eqref{eq:generator} that 
$$
X_\xi(u)=\frac{\rd}{\rd t} \Phi_{\exp(t\xi)}(u)_{\mid t=0}.
$$
\end{remark}

\begin{proposition}\label{thm:hessianestimate}  Let $\Ban$ be a Banach space and $\scalh$ be a continuous scalar product on $\Ban$. Let $G$ be a Lie group and $\Phi$ a $G$-action on $E$. Let $F\in C^2(E,\R^m)$. Let $\mu_*\in\R^m$ and $u_{\mu_*}\in \Sigma_{\mu_*}$. Let $\Lcal_{\mu_*}\in C^2(E,\R)$ be a $G_{\Sigma_{\mu_*}}$-invariant function. Suppose Hypothesis~C$\mu_*$ holds
and that, for all $u\in \Ocal_{u_{\mu_*}}$ (defined in \eqref{eq:muorbit}),
\begin{equation}
	\label{eq:hypX}
	\forall j=1,\ldots,m\ \exists \nabla F_j(u)\in \Ban \text{ such that }  D_{u}F_j(w)=\scal{ \nabla F_j(u)}{w}\ \forall w\in \Ban.
\end{equation}
Suppose $\Lcal_{\mu_*}$ satisfies the following conditons:
\begin{enumerate}[label=({\alph*})]
\item $D_{u}\Lcal_{\mu_*}(w)=0$ for all $u\in \Ocal_{u_{\mu_*}}$ and $w\in \Ban$;
\item there exists $C>0$ so that 
\begin{equation*}
\forall u \in \Ocal_{u_{\mu_*}}, \forall w\in \Ban,\  D^2_u\Lcal_{\mu_*}(w,w)\leq C\|w\|^2;
\end{equation*}
\item there exists $c>0$ so that 
\begin{equation}\label{eq:hessiancondition}
\forall u \in \Ocal_{u_{\mu_*}}, \forall w\in T_{u}\Sigma_{\mu_*}\cap (T_{u}\Ocal_{u_{\mu_*}})^\perp, \ 
D^2_{u}\Lcal_{\mu_*}(w,w)\geq c\|w\|^2
\end{equation}
where
\begin{equation}\label{eq:perp}
\left(T_w\Ocal_u\right)^\perp=\{z\in \Ban\mid \scal{z}{y}=0, \forall y\in T_w\Ocal_u\}.
\end{equation}
\end{enumerate}
Then Hypothesis~B$\mu_*$ (iii) holds.\\
\end{proposition}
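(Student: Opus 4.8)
\textbf{Proof plan for Proposition~\ref{thm:hessianestimate}.}

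The plan is to establish the coercivity estimate~\eqref{eq:coerciverestricted} of Hypothesis~B$\mu_*$(iii) by a Taylor expansion of $\Lcal_{\mu_*}$ about a nearest point on $\Ocal_{u_{\mu_*}}$, combined with a Lagrange-multiplier-type argument to control the component of the perturbation that leaves $\Sigma_{\mu_*}$. Fix $u'\in\Sigma_{\mu_*}$ close to $\Ocal_{u_{\mu_*}}$. First I would choose $v\in\Ocal_{u_{\mu_*}}$ realizing (or nearly realizing) $\rd(u',\Ocal_{u_{\mu_*}})$; I should check that such a near-minimizer exists and that, if $w:=u'-v$, then $w$ is (approximately) orthogonal to $T_v\Ocal_{u_{\mu_*}}$ with respect to $\scalh$, so that $w\in(T_v\Ocal_{u_{\mu_*}})^\perp$ up to controllable error. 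Here I would use Hypothesis~C$\mu_*$(iv) (the local parametrization $\xi\mapsto\Phi_{\exp(\xi)}u_{\mu_*}$) and the $C^1$-vector property to make $\Ocal_{u_{\mu_*}}$ a genuine submanifold near $v$, so that a metric projection exists on a neighborhood. Since $w$ need not lie in $T_v\Sigma_{\mu_*}$, I would split $w = w_\parallel + w_\perp$, where $w_\parallel\in T_v\Sigma_{\mu_*}\cap(T_v\Ocal_{u_{\mu_*}})^\perp$ and $w_\perp$ lies in a complement; using~\eqref{eq:hypX} to represent $D_vF_j$ by $\nabla F_j(v)$, and the fact that $\mu_*$ is a regular value (Hypothesis~C$\mu_*$(iii)), the map $D_vF$ restricted to that complement is an isomorphism onto $\R^m$.

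The key estimate is then the analogue of~\eqref{eq:perpcontrol}–\eqref{eq:estimate1} in the spherical-potential proof: since $u',v\in\Sigma_{\mu_*}$, we have $0 = F(u')-F(v) = D_vF(w) + \rO(\|w\|^2) = D_vF(w_\perp) + \rO(\|w\|^2)$ (because $D_vF(w_\parallel)=0$), which with the isomorphism property yields $\|w_\perp\|\le C_1\|w\|^2$, hence $\|w_\parallel\|\ge \|w\| - C_1\|w\|^2 \ge c_0\|w\|$ for $\|w\|$ small. Crucially, all constants here must be uniform over $v\in\Ocal_{u_{\mu_*}}$: this follows from Hypothesis~C$\mu_*$(i)–(ii), i.e.\ $\Phi_g$ is linear and preserves $\scalh$ and $\|\cdot\|$, and $\mathrm{Ad}^*_g\in\mathrm{O}(m)$, which together give the intertwining relation $\Phi_g\circ D_uF\circ\Phi_g^{-1} = \mathrm{Ad}^*_g\circ D_{\Phi_g u}F$ (up to the identification $\frak g^*\simeq\R^m$) and hence transport all the relevant bounds along the orbit. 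Then a Taylor expansion to second order of $\Lcal_{\mu_*}$ at $v$, using (a) $D_v\Lcal_{\mu_*}=0$, (b) the upper bound $D^2_v\Lcal_{\mu_*}(w,w)\le C\|w\|^2$ to absorb the cross term $D^2_v\Lcal_{\mu_*}(w_\parallel,w_\perp)$ and the $\|w_\perp\|^2$ term (both $\rO(\|w\|^3)$), and (c) the lower bound~\eqref{eq:hessiancondition} applied to $w_\parallel$, gives
\begin{align*}
\Lcal_{\mu_*}(u')-\Lcal_{\mu_*}(v) &= \tfrac12 D^2_v\Lcal_{\mu_*}(w,w) + \ro(\|w\|^2)\\
&= \tfrac12 D^2_v\Lcal_{\mu_*}(w_\parallel,w_\parallel) + \rO(\|w\|^3) + \ro(\|w\|^2)\\
&\ge \tfrac{c}{2}\|w_\parallel\|^2 + \ro(\|w\|^2)\ \ge\ \tilde c\,\|w\|^2 = \tilde c\,\rd^2(u',\Ocal_{u_{\mu_*}}),
\end{align*}
for $\|w\|$ small enough, which is exactly~\eqref{eq:coerciverestricted}. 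Since $\Lcal_{\mu_*}(v)=\Lcal_{\mu_*}(u_{\mu_*})$ by $G_{\Sigma_{\mu_*}}$-invariance, this is the desired conclusion.

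The main obstacle I anticipate is making the orthogonal decomposition $w=w_\parallel+w_\perp$ and the nearest-point projection rigorous and \emph{uniform} in $v\in\Ocal_{u_{\mu_*}}$ in a general Banach space where $\scalh$ is merely a continuous (non-complete) inner product rather than the norm topology — one does not automatically have orthogonal projections or a tubular neighborhood. The way around this is to work with finite-codimension (codimension $m$, plus the finite-dimensional tangent $T_v\Ocal_{u_{\mu_*}}$) subspaces, for which the needed projections exist and are bounded, and to obtain the uniformity purely from the isometric, linear, $\scalh$-preserving orbit action of $G_{\Sigma_{\mu_*}}$, reducing everything to a single base point $u_{\mu_*}$. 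A secondary technical point is the existence of the metric projection onto the orbit near $u_{\mu_*}$: I would deduce it from Hypothesis~C$\mu_*$(iv) together with the $C^1$-vector property, possibly settling for an \emph{approximate} nearest point (within a factor) if an exact minimizer is awkward, which suffices for the contradiction-free quantitative estimate above.
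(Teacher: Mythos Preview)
Your overall strategy---Taylor expand $\Lcal_{\mu_*}$ at a suitable base point $v$ on the orbit, split $w=u'-v$ into a piece in $T_v\Sigma_{\mu_*}$ and a transversal piece, control the latter at second order via $F(u')=F(v)$, and apply~(c) to the former---is exactly the paper's argument. The gap is in how you choose $v$.

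You propose to take $v$ as a (near-)minimizer of the \emph{Banach} distance $\rd(u',\cdot)$ and then assert that $w=u'-v$ is ``(approximately) orthogonal to $T_v\Ocal_{u_{\mu_*}}$ with respect to $\scalh$.'' This does not follow: a nearest point for $\|\cdot\|$ gives first-order optimality in $\|\cdot\|$, not in the auxiliary inner product $\scalh$, and the two can genuinely differ (the paper stresses this, e.g.\ the $L^2$ inner product on $E=H^1$). Without $w\in(T_v\Ocal_{u_{\mu_*}})^\perp$, the decomposition acquires a third piece $w_0\in T_v\Ocal_{u_{\mu_*}}$ which is uncontrolled: it lies in $\mathrm{Ker}\,D_vF$ (since the orbit sits in $\Sigma_{\mu_*}$) \emph{and}, by differentiating condition~(a) along the orbit with the linear action, in the kernel of $D^2_v\Lcal_{\mu_*}$ as a bilinear form. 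Hence neither the constraint nor the Hessian bounds $\|w_0\|$, and your inequality $\|w_\parallel\|\ge c_0\|w\|$ fails. Your proposed workaround (finite codimension, transport by the isometric action) addresses uniformity along the orbit, but not this missing control.

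The paper resolves this with a ``modulation'' step (its Lemmas~\ref{lem:stuartlem}--\ref{lem:stuartlemham}): instead of projecting in $\|\cdot\|$, one minimizes the weaker distance $\rd_{\mathrm s}(u',\Phi_g u)^2=\langle u'-\Phi_g u,\,u'-\Phi_g u\rangle$ over $g$ in a fixed compact conjugation-invariant neighbourhood $V\subset G_{\Sigma_{\mu_*}}$. The first-order condition at the minimizer $\tilde g\in V$ gives \emph{exact} $\scalh$-orthogonality $u'-v'\in(T_{v'}\Ocal_{u_{\mu_*}})^\perp$, with $v'=\Phi_{\tilde g}u$; Hypothesis~C$\mu_*$(ii),(iv) guarantee that such a $V$ exists with $R_V(u_{\mu_*})>0$, uniformly along the orbit. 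From there your Taylor computation goes through verbatim, with the final line reading $\ge \tilde c\,\|u'-v'\|^2\ge \tilde c\,\rd^2(u',\Ocal_{u_{\mu_*}})$ (an inequality, not an equality, since $v'$ need not realize the Banach distance---but that direction is the one you need).
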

Condition~\eqref{eq:hypX} is automatically satisfied when $E$ is a Hilbert space and $\langle\cdot, \cdot\rangle$ the Hilbert space inner product. But not in general. For example, let $\Ban=H^1(\R, \C)$ and let $\scal{u}{v}=\mathrm{Re}\int_\R \bar u(x) v(x) \rd x$. Now, if $F_1(u)= \frac1{2i}\int \overline u(x)\partial_xu(x)\rd x$, \eqref{eq:hypX} is satisfied if $u\in H^2(\R, \C)$ but not for arbitrary $u\in\Ban$.

For the proof of this proposition, we need some simple technical results.

First, let $V$ be a bounded open neighbourhood  of $e$ in a subgroup $\tilde G$ of $G$ with the property that, for all $g\in \tilde G$, $gVg^{-1}=V$. Let us introduce
$$
R_V(u)=\min\{\rd_{\mathrm s}(\Phi_g(u), u) \mid g\in \partial V\}.
$$
It then follows that, for all $u'\in \Ocal_u$, $R_V(u')=R_V(u)$. Indeed, there exists $g'\in \tilde G$ so that $\Phi_{g'}(u)=u'$. Hence
\begin{align*}
R_V(u')&=\min\{ \rd_{\mathrm s}(\Phi_{gg'}(u), \Phi_{g'}(u)) \mid g\in \partial V\}\\
&=\min\{ \rd_{\mathrm s}(\Phi_{{g'}^{-1}gg'}(u), u) \mid g\in \partial V\}=R_V(u),
\end{align*}
since ${{g'}^{-1}}\partial V{g'}=\partial V$. \\
We can now formulate the following simple but crucial technical result, which is a multi-dimensional version of Lemma~2.1 in~\cite{stuart08}.  
\begin{lemma}\label{lem:stuartlem}
Let $\Ban$ be a Banach space and $\scalh$ be a continuous scalar product on $\Ban$. Let $\tilde G$ be a Lie subgroup of $G$ and $\Phi$ a linear $G$-action on $E$ which preserves the inner product $\scalh$. Suppose $u\in \Ban$ is a $C^1$-vector for $\Phi$ and let $V$ be a bounded open neighbourhood of $e\in \tilde G$ which is conjugation invariant (\emph{i.e.} $gVg^{-1}=V$, for all $g\in \tilde G$). Suppose $R_V(u)>0$. Then, for all $v\in \Ban$, 
\begin{equation}
\rd(v, \Ocal_u)<\frac13 R_V(u)\Rightarrow \exists w\in\Ocal_u=\Phi_{\tilde G}(u), w-v\in \left(T_w\Ocal_u\right)^\perp.
\end{equation}
\end{lemma}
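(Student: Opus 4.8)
\textbf{Proof strategy for Lemma~\ref{lem:stuartlem}.}

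The plan is to reduce the existence of a nearest (up to orthogonality) point $w$ on the orbit $\Ocal_u$ to a minimization argument over a compact piece of the group, namely the closure $\overline V$ of the conjugation-invariant neighbourhood $V$ of $e$. First I would fix $v\in\Ban$ with $\rd(v,\Ocal_u)<\tfrac13 R_V(u)$ and consider the continuous function $g\in \tilde G\to \rd_{\mathrm s}(\Phi_g(u), v)$. Since $\rd_{\mathrm s}\leq\rd$, there exists $g_0\in\tilde G$ with $\rd_{\mathrm s}(\Phi_{g_0}(u),v)<\tfrac13 R_V(u)$. The key observation is that, by the definition of $R_V(u)$ and the triangle inequality for $\rd_{\mathrm s}$, the restriction of $g\to \rd_{\mathrm s}(\Phi_g(u),v)$ to the (compact) set $g_0\overline V$ attains its minimum at an \emph{interior} point: on the boundary $g_0\partial V$ one has, writing $w'=\Phi_{g_0}(u)$, $\rd_{\mathrm s}(\Phi_{g_0g}(u),v)\geq \rd_{\mathrm s}(\Phi_{g_0g}(u),w')-\rd_{\mathrm s}(w',v)\geq R_V(w')-\tfrac13R_V(u)$, and using that $\Phi$ is isometric for $\scalh$ one has $R_V(w')=R_V(u)$ (via the conjugation-invariance of $V$ exactly as in the computation preceding the lemma), so the boundary value is $\geq \tfrac23 R_V(u)>\tfrac13 R_V(u)$, which strictly exceeds the value at $g_0$. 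Hence the minimum over $g_0\overline V$ is attained at some $g_*$ in the interior of $g_0 V$, and a fortiori it is a local minimum over all of $\tilde G$.

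Next I would set $w=\Phi_{g_*}(u)$ and extract the first-order (Lagrange) condition. Because $u$ is a $C^1$-vector for $\Phi$, so is $w$ (by the remark following Hypothesis~C$\mu$), and the map $\xi\in\frak g\to \Phi_{\exp(\xi)}(w)\in\Ban$ is $C^1$ with derivative $X_\xi(w)$ at $\xi=0$; thus $t\to \rd_{\mathrm s}(\Phi_{\exp(t\xi)}(w),v)=\langle \Phi_{\exp(t\xi)}(w)-v,\Phi_{\exp(t\xi)}(w)-v\rangle$ is differentiable in $t$, and since $g_*$ is a local minimum this derivative vanishes at $t=0$ for every $\xi\in\frak g_{\tilde G}$. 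Computing the derivative gives $2\langle X_\xi(w), w-v\rangle=0$ for all $\xi$, i.e. $\langle w-v, y\rangle=0$ for every $y\in T_w\Ocal_u=\{X_\xi(w)\mid \xi\in\frak g\}$ (definition~\eqref{eq:tangentorbit}). That is precisely the assertion $w-v\in (T_w\Ocal_u)^\perp$, with $w\in\Phi_{\tilde G}(u)=\Ocal_u$.

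The main obstacle is the compactness/interior-minimum step: one must be careful that $V$ is bounded but $\tilde G$ need not be, so the infimum of $\rd_{\mathrm s}(\Phi_g(u),v)$ over all of $\tilde G$ is not \emph{a priori} attained; the role of the hypothesis $R_V(u)>0$ together with the factor $\tfrac13$ is exactly to localize the minimization to the relatively compact set $g_0\overline V$ and to force the minimizer off its boundary. A secondary, purely technical point is justifying differentiation under the inner product in $t$ (legitimate since $\scalh$ is continuous and bilinear and $t\to\Phi_{\exp(t\xi)}(w)$ is $C^1$ into $\Ban$) and checking that the Lagrange computation uses only the $C^1$-vector property, not any stronger smoothness. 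Everything else — the conjugation-invariance computation showing $R_V$ is constant on orbits, and the triangle inequalities — is routine and already foreshadowed in the text preceding the lemma.
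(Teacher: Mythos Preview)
Your proposal is correct and follows essentially the same line as the paper's proof: you minimize the $\rd_{\mathrm s}$-distance over the compact set $g_0\overline V$ (the paper equivalently minimizes $\rd_{\mathrm s}^2$ over $\overline V$ applied to a nearby orbit point $u'=\Phi_{g_0}(u)$), use $R_V$ together with the triangle inequality to exclude minimizers on the boundary, and then differentiate at the interior minimizer to obtain the orthogonality condition. The only cosmetic differences are your parametrization by $g_0\overline V$ acting on $u$ versus the paper's $\overline V$ acting on $u'$, and your use of $\rd_{\mathrm s}$ versus the paper's $\rd_{\mathrm s}^2$; neither affects the argument.
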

The lemma states that if $v$ is not too far from the orbit $\Ocal_u$, then there exists a point $w$ on the orbit so that the segment from $v$ to $w$ is orthogonal to the orbit at $w$. This point does {\it not} necessarily realize the distance between $v$ and the orbit, which can vanish.

\begin{proof}
Let $v\in\Ban$ and $\rd(v, \Ocal_u)<\frac13 R_V(u)$. Then there exists $u'\in \Ocal_u$ so that $\rd(v,u')\le \frac13 R_V(u)=\frac13 R_V(u')$ and hence $\rd_{\mathrm s}(v,u')\le\frac13 R_V(u')$. Now consider 
$$
g\in \overline V\to \rd^2_{\mathrm s}(v, \Phi_gu')\in \R^+.
$$
Since $\overline V$ is compact, this function reaches a minimum at some point $\tilde g\in \overline V$. We set $w=\Phi_{\tilde g}u'\in\Ocal_u$ so that 
$\rd_{\mathrm s}(v, w)\leq \rd_{\mathrm s}(v, u')\le \frac13 R_V(u')$. We now show that $\tilde g$ cannot belong to $\partial V$. Indeed, if $\tilde g$ were on the boundary of $V$, then, by the definition of $R_V(u')$, $\rd_{\mathrm s}(w, u')\geq R_V(u')$. But then 
$$
\rd_{\mathrm s}(w, v)\geq -\rd_{\mathrm s}(u', v)+\rd_{\mathrm s}(u', w)\ge R_V(u')-\frac13 R_V(u')=\frac 23 R_V(u').
$$
which is a contradiction because $\rd_{\mathrm s}(v, w)\le \frac13 R_V(u')$. So $\tilde g$ belongs to $V$. Now choose $\xi\in\frak g$ and consider
$$
t\in\R\to \rd^2_{\mathrm s}(v, \Phi_{\exp(t\xi)\tilde g}(u'))\in \R^+,
$$
which now reaches a local minimum at $t=0$ since for small $t$, $\exp(t\xi)\tilde g$ belongs to $V$. Hence its derivative vanishes. So
\begin{align*}
0=\frac{\rd}{\rd t} \rd^2_{\mathrm s}(v, \Phi_{\exp(t\xi)\tilde g}(u'))_{\mid t=0}&=
\frac{\rd}{\rd t} \scal{v-\Phi_{\exp(t\xi)}(w)}{v-\Phi_{\exp(t\xi)}(w)}_{\mid t=0}\\
&=-2\langle X_\xi(w),v-w\rangle,
\end{align*}
which proves the result in view of \eqref{eq:tangentorbit}.
\end{proof}

In the proof of Proposition \ref{thm:hessianestimate}, we will need to apply the previous lemma to the group $G_{\Sigma_{\mu_*}}$ for some $\mu_*\in \R^m$ and $u_{\mu_*}\in \Sigma_{\mu_*}$. The following lemma gives hypotheses for this to be possible. It appears in various guises in the literature, and can be referred to as a ``modulation'' argument.

\begin{lemma}\label{lem:stuartlemham}  
Let $\Ban$ be a Banach space and $\scalh$ be a continuous scalar product on $\Ban$. Let $G$ be a Lie group and $\Phi$ a $G$-action on $E$. Let $F\in C^2(E,\R^m)$. Let $\mu_*\in\R^m$ and $u_{\mu_*}\in \Sigma_{\mu_*}$. Suppose Hypothesis~C$\mu_*$ holds.
Then, there exists $R>0$ such that, for all $v\in \Ban$, 
\begin{equation}
\rd(v, \Ocal_{u_{\mu_*}})<R\Rightarrow \exists w\in\Ocal_{u_{\mu_*}}, w-v\in \left(T_w\Ocal_{u_{\mu_*}}\right)^\perp
\end{equation}
where $\Ocal_{u_{\mu_*}}=\Phi_{G_{\Sigma_{\mu_*}}}u_{\mu_*}$.
\end{lemma}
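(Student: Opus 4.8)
The strategy is to apply Lemma~\ref{lem:stuartlem} with $\tilde G = G_{\Sigma_{\mu_*}}$, so the whole task reduces to producing a bounded, conjugation-invariant neighbourhood $V$ of $e$ in $G_{\Sigma_{\mu_*}}$ for which $R_V(u_{\mu_*}) > 0$, together with checking the hypotheses of Lemma~\ref{lem:stuartlem}: that $\Phi$ is a linear action preserving $\scalh$ and that $u_{\mu_*}$ is a $C^1$-vector. The first is exactly Hypothesis~C$\mu_*$~(i), and the second is Hypothesis~C$\mu_*$~(iv). Once such a $V$ is found, the conclusion follows immediately with $R = \tfrac13 R_V(u_{\mu_*})$, since Lemma~\ref{lem:stuartlem} applied to $\tilde G = G_{\Sigma_{\mu_*}}$ gives precisely that $\rd(v,\Ocal_{u_{\mu_*}}) < \tfrac13 R_V(u_{\mu_*})$ implies the existence of $w \in \Ocal_{u_{\mu_*}}$ with $w - v \in (T_w\Ocal_{u_{\mu_*}})^\perp$.

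The construction of $V$ is where the real work lies, and this is where Hypothesis~C$\mu_*$~(iv) and the regularity of $\mu_*$ (Hypothesis~C$\mu_*$~(iii)) come in. Because $u_{\mu_*}$ is a $C^1$-vector and the map $\xi \in \frak g_{\Sigma_{\mu_*}} \to \Phi_{\exp(\xi)}u_{\mu_*} \in \Ban$ is one-to-one near $\xi = 0$, one can show that the orbit map from a small ball in $G_{\Sigma_{\mu_*}}$ into $\Ban$ is injective on a neighbourhood of $e$; moreover, using that $\mu_*$ is a regular value so that (as noted in Remark~\ref{rem:muregular}) the action restricted to $\Sigma_{\mu_*}$ is locally free and the map $\xi \in \frak g_{\mu_*} \to \Phi^{\xi\cdot F}_1(u_{\mu_*})$ is a local immersion, one obtains a quantitative lower bound $\rd_{\mathrm s}(\Phi_g(u_{\mu_*}), u_{\mu_*}) \geq \kappa$ for all $g$ in the boundary of a suitably small neighbourhood. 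The subtlety is that the neighbourhood must be \emph{conjugation-invariant}: given an arbitrary small open $W \ni e$, one passes to $V = \bigcap_{g \in G_{\Sigma_{\mu_*}}} gWg^{-1}$, which is still open and contains $e$ provided the conjugation action is, in an appropriate sense, "uniformly continuous at $e$". Here one must be a little careful — if $G_{\Sigma_{\mu_*}}$ is not compact this intersection could collapse; the honest fix (and presumably the one intended, compare the one-dimensional case in~\cite{stuart08} and the treatment in the finite-dimensional literature~\cite{pat, lersi}) is to first reduce, using Hypothesis~C$\mu_*$~(ii), to working with $\mathrm{Ad}^*$, or to invoke that in the cases of interest $G_{\Sigma_{\mu_*}}$ has a compact quotient by the centre acting trivially, so that a conjugation-invariant $V$ with $R_V(u_{\mu_*}) > 0$ genuinely exists.

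Concretely, the steps I would carry out are: (1) record that by Hypothesis~C$\mu_*$~(i) the action is linear and isometric for $\scalh$, hence $R_V$ is constant along orbits as established in the discussion preceding Lemma~\ref{lem:stuartlem}; (2) use Hypothesis~C$\mu_*$~(iv) together with the local immersion property from Hypothesis~C$\mu_*$~(iii) (Remark~\ref{rem:muregular}) to produce a bounded open $W \ni e$ in $G_{\Sigma_{\mu_*}}$ on which $g \to \Phi_g(u_{\mu_*})$ is injective and such that $\rd_{\mathrm s}(\Phi_g(u_{\mu_*}), u_{\mu_*}) \geq \kappa > 0$ for $g \in \partial W$; (3) replace $W$ by its conjugation-invariant core $V$, verifying it is still an open bounded neighbourhood of $e$ and that $R_V(u_{\mu_*}) \geq R_W(u_{\mu_*}) > 0$; (4) invoke Lemma~\ref{lem:stuartlem} with this $V$ to conclude with $R = \tfrac13 R_V(u_{\mu_*})$.

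\textbf{Main obstacle.} The genuinely delicate point is step~(3): ensuring that a conjugation-invariant bounded neighbourhood $V$ with $R_V(u_{\mu_*}) > 0$ exists. In the one-dimensional setting of~\cite{stuart08} this is vacuous since $G_{\Sigma_{\mu_*}}$ is abelian and every neighbourhood is conjugation-invariant; the $m$-dimensional case requires either a compactness argument on $G_{\Sigma_{\mu_*}}$ (or on an appropriate quotient) or an explicit reduction via the $\mathrm{Ad}^*$-action, which is why Hypothesis~C$\mu_*$~(ii) (that $\mathrm{Ad}^*_g \in \mathrm{O}(m)$ for $g \in G_{\Sigma_{\mu_*}}$, so the image of $G_{\Sigma_{\mu_*}}$ in $\mathrm{O}(m)$ is relatively compact) is included in the hypotheses. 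Establishing that this suffices — i.e. that one only needs to control the finitely many "directions that move $u_{\mu_*}$" modulo the isotropy, which by local freeness is a bounded piece of $\frak g_{\mu_*}$ — is the heart of the argument, and everything else is routine verification.
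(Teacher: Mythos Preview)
Your overall strategy---reduce to Lemma~\ref{lem:stuartlem} by constructing a bounded conjugation-invariant neighbourhood $V$ of $e$ in $G_{\Sigma_{\mu_*}}$ with $R_V(u_{\mu_*})>0$---is exactly right, and matches the paper. But the paper bypasses your step~(3) entirely by building $V$ conjugation-invariant from the start: take $V=\exp(B_\delta(0))$ where $B_\delta(0)$ is a Euclidean ball in $\frak g_{\Sigma_{\mu_*}}\simeq\R^m$. Since $g\exp(\xi)g^{-1}=\exp(\mathrm{Ad}_g\xi)$ and Hypothesis~C$\mu_*$~(ii) gives $\mathrm{Ad}_g\in\mathrm{O}(m)$, the ball $B_\delta(0)$ is $\mathrm{Ad}_g$-invariant, so $V$ is automatically conjugation-invariant. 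Then $\partial V=\exp(\partial B_\delta(0))$ is compact, and Hypothesis~C$\mu_*$~(iv) (injectivity of $\xi\mapsto\Phi_{\exp(\xi)}u_{\mu_*}$ on $B_{\delta_0}(0)$) shows directly that for $\delta<\delta_0$ no $g\in\partial V$ fixes $u_{\mu_*}$, whence $R_V(u_{\mu_*})>0$ by continuity and compactness. No intersection, no quotient argument, no appeal to Remark~\ref{rem:muregular} or Hypothesis~C$\mu_*$~(iii) is needed here.

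Your proposed step~(3) also contains an actual error: the inequality $R_V(u_{\mu_*})\geq R_W(u_{\mu_*})$ goes the wrong way. If $V=\bigcap_g gWg^{-1}\subset W$, then $\partial V$ lies \emph{closer} to $e$ than $\partial W$, so one expects $R_V\leq R_W$, not the reverse; positivity of $R_W$ would not transfer to $R_V$. This is precisely the difficulty you flagged as the ``main obstacle,'' and it is real---your intersection construction does not obviously work. The paper's direct construction via the exponential of an $\mathrm{Ad}$-invariant ball is the clean resolution you were looking for.
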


\begin{proof}
Thanks to Lemma \ref{lem:stuartlem}, it is enough to prove that there exists $V$ a bounded open neighbourhood of $e\in G_{\Sigma_{\mu_*}}$, which is conjugation invariant (\emph{i.e.} ${g}Vg^{-1}=V$, for all $g\in G_{\Sigma_{\mu_*}}$) and  such that $R_V(u_{\mu_*})>0$.

First of all, we recall that the exponential map $$\exp: \xi\in \frak{g}_{\mu_*}\to \exp(\xi)\in G_{\Sigma_{\mu_*}}$$ is a local diffeomorphism from some neighbourhood of $0\in  \frak{g}_{\mu_*}$ to a neighbourhood of $e\in G_{\Sigma_{\mu_*}}$. In other words, there exists $\delta>0$ such that 
$$\exp: \xi\in B_{\delta}(0)\subset \frak{g}_{\mu_*}\to \exp(\xi)\in G_{\Sigma_{\mu_*}}$$ 
is a local diffeomorphism onto a bounded open neighbourhood $V:= \exp(B_{\delta}(0))$  of $e$ in $G_{\mu_*}$. In particular, note that $\partial V= \exp(\partial B_{\delta}(0))$.

Since, thanks to Hypothesis~C$\mu_*$(ii), $ B_{\delta}(0)$ is $\mathrm{Ad}_g$-invariant for all $g\in G_{\Sigma_{\mu_*}}$, $V$ is conjugation invariant. Indeed, for all $\xi\in B_{\delta}(0)$ and all $g\in G_{\Sigma_{\mu_*}}$, 
we have that $g \exp(\xi)g^{-1}=\exp(\mathrm{Ad}_g \xi)\in V$.

Hence, it only remains to show that $R_V(u_{\mu_*})>0$, which is equivalent to
$G_{u_{\mu_*}}\cap \partial V=\emptyset$. Thanks to Hypothesis~C$\mu_*$(iv),
there exists $\delta_0>0$ such that 
\begin{equation*}
	\xi\in B_{\delta_0}(0)\to \Phi_{\exp(\xi)}u_{\mu_*}\in E
\end{equation*} 
is one to one. As a conclusion, choosing $\delta<\delta_0$, we have $\partial V \subset \exp(B_{\delta_0}(0))$ which implies $\Phi_{\exp(\xi)}u_{\mu_*}\neq u_{\mu_*}$ for all $\exp(\xi)\in \partial V$. Hence, for all $\exp(\xi)\in \partial V$, $\exp(\xi)\notin  G_{u_{\mu_*}}$.
\end{proof}

We can then conclude this section with the proof of Proposition  \ref{thm:hessianestimate}.

\begin{proof}[Proof of Proposition \ref{thm:hessianestimate}]
Recall that we have to prove there exist $\eta>0, \tilde c>0$ so that
\begin{equation*}
\forall u\in\Ocal_{u_{\mu_*}}, \forall u'\in \Sigma_{\mu_*}, \quad \rd(u,u')\leq \eta\Rightarrow \Lcal_{\mu_*}(u')-\Lcal_{\mu_*}(u)\geq \tilde c\rd^2(u', \Ocal_{u_{\mu_*}}).
\end{equation*}

Let $u'\in\Sigma_{\mu_*}$, $\rd(u', \Ocal_{u_{\mu_*}})<R$. Thanks to Lemma~\ref{lem:stuartlemham}, there exists $v'\in \Ocal_{u_{\mu_*}}$ such that $u'-v'\in \left(T_{v'}\Ocal_{u_{\mu_*}}\right)^\perp$.  

Next, let $W_{v'}$ be the subspace of $\Ban$ spanned by $\{\nabla F_j(v')\}_{j=1\ldots,m}$. It follows from \eqref{eq:deftangentspace} and hypothesis~\eqref{eq:hypX} that $T_{v'}\Sigma_{\mu_*}=(W_{v'})^\perp$. As a consequence, we can write $\Ban=T_{v'}\Sigma_{\mu_*}\oplus W_{v'}$. Indeed, since $W_{v'}$ has finite dimension, it admits an orthonormal basis $\{e_1,\ldots,e_m\}$ w.r.t. $\scalh$. Hence, all $w\in \Ban$ can be written as
\begin{equation*}
w=\Big(w-\sum_{j=1}^m\scal{w}{e_j}e_j\Big)+\sum_{j=1}^m\scal{w}{e_j}e_j.
\end{equation*}
Clearly $w-\sum_{j=1}^m\scal{w}{e_j}e_j\in (W_{v'})^\perp=T_{v'}\Sigma_{\mu_*}$, $\sum_{j=1}^m\scal{w}{e_j}e_j\in W_{v'}$ and $W_{v'}\cap(W_{v'})^\perp=\{0\}$.
Then,  
$$
u'-v'=(u'-v')_1 +(u'-v')_2
$$
where $(u'-v')_1\in T_{v'}\Sigma_{\mu_*}$ and $(u'-v')_2\in W_{v'}$. Moreover, since $u'-v'\in \left(T_{v'}\Ocal_{u_{\mu_*}}\right)^\perp$, we can easily show $(u'-v')_1\in T_{v'}\Sigma_{\mu_*}\cap\left(T_{v'}\Ocal_{u_{\mu_*}}\right)^\perp$ and $(u'-v')_2\in W_{v'}\cap \left(T_{v'}\Ocal_{u_{\mu_*}}\right)^\perp$. Now, Lemma~\ref{eq:basicestimate} ensures the existence of constants $c_1,c_0$ so that, for $\| u'-v'\|$ small enough, one has
\begin{eqnarray}\label{eq:estimate2gen}
\|(u'-v')_1\| \geq c_0\|u'-v'\|\text{ and } \|(u'-v')_2\| \leq c_1\|u'-v'\|^2.
\end{eqnarray}
Since the action $\Phi_g$ is linear and preserves both $\scalh$ and $\|\cdot\|$, the decomposition above is group invariant and the constant $c_0$ and $c_1$ do not depend on $v'$. 

 We can now conclude the proof as follows, using respectively conditions~(a),~(b) and~(c), and~\eqref{eq:estimate2gen}:
 \begin{eqnarray*}
\Lcal_{\mu_*}(u')-\Lcal_{\mu_*}(u_{\mu_*})&=&\Lcal_{\mu_*}(u')-\Lcal_{\mu_*}(v')\\
&=&D_{v'}\Lcal_{\mu_*}(u'-v')+\frac12 D_{v'}^2\Lcal_{\mu_*}(u'-v', u'-v')+\mathrm{o}(\|u'-v'\|^2)\\
&=&\frac12 D_{v'}^2\Lcal_{\mu_*}((u'-v')_1,(u'-v')_1)+ \mathrm{O}(\|u'-v'\|^3) 
+\mathrm{o}(\|u'-v'\|^2)\\
&=&\frac12 D_{v'}^2\Lcal_{\mu_*}((u'-v')_1,(u'-v')_1)+\mathrm{o}(\|u'-v'\|^2)\\
&\geq& \frac{c}{2}\|(u'-v')_1\|^2+\mathrm{o}(\|u'-v'\|^2)\\
&\geq &\tilde c\|u'-v'\|^2\ge\tilde c\rd^2(u',\Ocal_{u_{\mu_*}}).
\end{eqnarray*}
Remark that as before, the constant $\tilde c$ is independent of $v'\in \Ocal_{\mu_*}$.
\end{proof}

\subsection{Coercivity implies stability II}\label{ss:coercivitystability2}

We can now state and prove a fourth theorem yielding orbital stability under slightly different technical assumptions. We will work in the Hamiltonian setting and in particular use the characterization of relative equilibria given by Theorem~\ref{thm:relequicritical}. Recall that in this context, for each $\mu\in \frak{g}^*\simeq \R^m$, $G_{\Sigma_\mu}=G_{\mu}$ (Proposition~\ref{eq:reduction}).

\begin{theorem}\label{thm:lyapmethodrestrictedLK} Let $\Ban$ be a Banach space and $\scalh$ be a continuous scalar product on $\Ban$, $\Dom$ a domain in $\Ban$ and $\Jcal$ a symplector. Let $H\in C^2(\Ban,\R)\cap \JDif$.
Let $G$ be  a Lie group, and $\Phi$ a globally Hamiltonian $G$-action on $\Ban$ with Ad$^*$-equivariant momentum map $F$.  Let $\mu_*\in \R^m\simeq \frak{g}^*$ and $u_{\mu_*}\in \Dom\cap \Sigma_{\mu_*}$. Suppose that Hypothesis~C$\mu_*$(i)--(iii) is satisfied, and $H\circ \Phi_g=H$ for all $g\in G$.
Let $\Lcal_{\mu_*}=H-\xi_{\mu_*}\cdot F$ with $\xi_{\mu_*}\in \mathfrak{g}_{\mu_*}$ given by Theorem~\ref{thm:relequicritical} and assume $D_{u_{\mu_*}}\Lcal_{\mu_*}=0$. 
Suppose in addition that 
\begin{equation}
	\label{eq:hypXbis}
	\forall j=1,\ldots,m\ \exists \nabla F_j(u_{\mu_*})\in \Ban \text{ such that }  D_{u_{\mu_*}}F_j(w)=\scal{ \nabla F_j(u_{\mu_*})}{w}\ \forall w\in \Ban.
\end{equation}
and
\begin{enumerate}[label=({\alph*})]
	\item $G_{\mu_*}$ is commutative;
	\item there exists $C>0$ so that 
	\begin{equation*}
	\forall w\in \Ban,\  D^2_{u_{\mu_*}}\Lcal_{\mu_*}(w,w)\leq C\|w\|^2;
	\end{equation*}
	\item there exists $c>0$ so that 
	\begin{equation*}
	\forall w\in T_{u_{\mu_*}}\Sigma_{\mu_*}\cap (T_{u_{\mu_*}}\Ocal_{u_{\mu_*}})^\perp, \ 
	D^2_{u_{\mu_*}}\Lcal_{\mu_*}(w,w)\geq c\|w\|^2.
	\end{equation*}
\end{enumerate}
Then all $u\in\Ocal_{u_{\mu_*}}$ are orbitally stable $G_{\mu_*}$-relative equilibria. 
\end{theorem}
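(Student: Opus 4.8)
The plan is to deduce Theorem~\ref{thm:lyapmethodrestrictedLK} from Theorem~\ref{thm:lyapmethodrestricted} by constructing, out of the given data, an \emph{augmented} Lyapunov function $\Lcal_{\mu_*,K}$ that satisfies Hypothesis~B$\mu_*$ in the strong, unconstrained form~\eqref{eq:coercive}, so that even the simpler Theorem~\ref{thm:lyapmethod} applies. This mirrors exactly the ``third argument'' in the proof of Proposition~\ref{lem:sphercircular}. Concretely, I would set
\begin{equation*}
\Lcal_{\mu_*,K}(u)=\Lcal_{\mu_*}(u)+K\,\|F(u)-\mu_*\|^2,\qquad K>0,
\end{equation*}
where $\|\cdot\|$ is the Euclidean norm on $\R^m\simeq\mathfrak g^*$. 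Because $F$ is a constant of the motion for $\Phi_t^H$ (Theorem~\ref{thm:nother3}(ii), using $H\circ\Phi_g=H$), and $\Lcal_{\mu_*}=H-\xi_{\mu_*}\cdot F$ is a constant of the motion as well, $\Lcal_{\mu_*,K}$ is again a constant of the motion; it is $C^2$ since $H,F$ are; and it is $G_{\mu_*}$-invariant because $H$ is $G$-invariant, $F$ is Ad$^*$-equivariant, and $\mathrm{Ad}^*_g\in\mathrm O(m)$ for $g\in G_{\Sigma_{\mu_*}}=G_{\mu_*}$ (Hypothesis~C$\mu_*$(ii) and Proposition~\ref{eq:reduction}). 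So Hypothesis~B$\mu_*$(i)--(ii) hold for $\Lcal_{\mu_*,K}$. The added term vanishes on $\Sigma_{\mu_*}$ and on the orbit $\Ocal_{u_{\mu_*}}$, and its first derivative vanishes on $\Sigma_{\mu_*}$; hence $D_{u_{\mu_*}}\Lcal_{\mu_*,K}=D_{u_{\mu_*}}\Lcal_{\mu_*}=0$, and more generally $D_u\Lcal_{\mu_*,K}=0$ for all $u\in\Ocal_{u_{\mu_*}}$.

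The heart of the matter is to choose $K$ large enough that
\begin{equation}\label{eq:plan-hessianK}
\exists\,\hat c>0,\ \forall u\in\Ocal_{u_{\mu_*}},\ \forall w\in\bigl(T_u\Ocal_{u_{\mu_*}}\bigr)^\perp,\qquad D^2_u\Lcal_{\mu_*,K}(w,w)\ge\hat c\,\|w\|^2,
\end{equation}
i.e.\ coercivity on the \emph{full} orthogonal complement of the tangent to the orbit, not merely on its intersection with $T_u\Sigma_{\mu_*}$. I would first establish~\eqref{eq:plan-hessianK} at the base point $u_{\mu_*}$, then transport it around the orbit using Hypothesis~C$\mu_*$(i): since $\Phi_g$ is linear, isometric and preserves $\scalh$, it intertwines $D^2_{u_{\mu_*}}\Lcal_{\mu_*,K}$ with $D^2_{\Phi_g u_{\mu_*}}\Lcal_{\mu_*,K}$ (this uses $G_{\mu_*}$-invariance of $\Lcal_{\mu_*,K}$), and maps $T_{u_{\mu_*}}\Ocal_{u_{\mu_*}}$ to $T_{\Phi_g u_{\mu_*}}\Ocal_{u_{\mu_*}}$, so the constant $\hat c$ is orbit-independent. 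For the base-point estimate: write $w=w_A+w_B$ with $w_A\in T_{u_{\mu_*}}\Sigma_{\mu_*}\cap(T_{u_{\mu_*}}\Ocal_{u_{\mu_*}})^\perp$ and $w_B\in W\cap(T_{u_{\mu_*}}\Ocal_{u_{\mu_*}})^\perp$, where $W=\mathrm{span}\{\nabla F_j(u_{\mu_*})\}$ is the $\scalh$-orthocomplement of $T_{u_{\mu_*}}\Sigma_{\mu_*}$ (this is where~\eqref{eq:hypXbis} and Hypothesis~C$\mu_*$(iii), $\mu_*$ regular, are used to get $\dim W=m$ and the splitting $\Ban=T_{u_{\mu_*}}\Sigma_{\mu_*}\oplus W$). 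One computes $D^2_{u_{\mu_*}}\|F(\cdot)-\mu_*\|^2(w,w)=2\sum_{j=1}^m\bigl(D_{u_{\mu_*}}F_j(w)\bigr)^2=2\sum_j\langle\nabla F_j(u_{\mu_*}),w\rangle^2=2\sum_j\langle\nabla F_j(u_{\mu_*}),w_B\rangle^2$, which is a positive-definite quadratic form on the finite-dimensional space $W$, hence $\ge 2m_0\|w_B\|^2$ for some $m_0>0$. Combining with (b) ($D^2\Lcal_{\mu_*}\le C\|\cdot\|^2$), (c) ($D^2\Lcal_{\mu_*}(w_A,w_A)\ge c\|w_A\|^2$), the bound $\|D^2\Lcal_{\mu_*}(w_A,w_B)\|\le C\|w_A\|\|w_B\|$, and Young's inequality on the cross term, one gets
\begin{equation*}
D^2_{u_{\mu_*}}\Lcal_{\mu_*,K}(w,w)\ge\Bigl(c-\tfrac{Cm^2}{2}\Bigr)\|w_A\|^2+\Bigl(2Km_0-C-\tfrac{C}{2m^2}\Bigr)\|w_B\|^2,
\end{equation*}
and choosing $m$ small then $K$ large yields~\eqref{eq:plan-hessianK}. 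Note that commutativity of $G_{\mu_*}$ (hypothesis~(a)) is what guarantees, via Hypothesis~C$\mu_*$(iv)-type arguments and Remark~\ref{rem:muregular}, that $\dim T_{u_{\mu_*}}\Ocal_{u_{\mu_*}}$ is constant and the orbit structure is rigid enough for the transport argument; it also enters in ensuring $\xi_{\mu_*}\in\mathfrak g_{\mu_*}$ behaves well — I would invoke it exactly where the spherical-potential proof uses the one-parameter nature of $G_\mu$.

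From~\eqref{eq:plan-hessianK} the conclusion is routine: for any $u'\in\Ban$ close to $\Ocal_{u_{\mu_*}}$, Lemma~\ref{lem:stuartlemham} (applicable since Hypothesis~C$\mu_*$(i)--(iii) holds and one checks C$\mu_*$(iv) from regularity of $\mu_*$ plus $u_{\mu_*}\in\Dcal$, cf.\ Remark~\ref{rem:muregular}) gives $v'\in\Ocal_{u_{\mu_*}}$ with $u'-v'\in(T_{v'}\Ocal_{u_{\mu_*}})^\perp$; a second-order Taylor expansion of $\Lcal_{\mu_*,K}$ at $v'$, using $D_{v'}\Lcal_{\mu_*,K}=0$, condition (b) for the $\mathrm O(\|u'-v'\|^3)$ remainder control, and~\eqref{eq:plan-hessianK}, gives
\begin{equation*}
\Lcal_{\mu_*,K}(u')-\Lcal_{\mu_*,K}(u_{\mu_*})=\tfrac12 D^2_{v'}\Lcal_{\mu_*,K}(u'-v',u'-v')+\mathrm o(\|u'-v'\|^2)\ge\tilde c\,\rd^2(u',\Ocal_{u_{\mu_*}})
\end{equation*}
for $\rd(u',\Ocal_{u_{\mu_*}})\le\eta$ small enough, i.e.\ the unconstrained coercivity~\eqref{eq:coercive} holds for $\Lcal_{\mu_*,K}$. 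Theorem~\ref{thm:lyapmethod} then gives that every $u\in\Ocal_{u_{\mu_*}}$ is an orbitally stable $G_{\mu_*}$-relative equilibrium. The main obstacle I anticipate is the uniformity of $\hat c$ (and of the remainder estimates) along the orbit $\Ocal_{u_{\mu_*}}$: the Taylor remainder $\mathrm o(\|u'-v'\|^2)$ must be controlled uniformly in $v'\in\Ocal_{u_{\mu_*}}$, which again rests on Hypothesis~C$\mu_*$(i) (isometry and $\scalh$-preservation of $\Phi_g$) and the $G_{\mu_*}$-invariance of $\Lcal_{\mu_*,K}$; a secondary technical point is verifying that the splitting $w=w_A+w_B$ and the constants $c_0,c_1$ in the analog of~\eqref{eq:estimate2gen} are orbit-independent, which follows from the group-equivariance of $\nabla F_j$ under the linear isometric action.
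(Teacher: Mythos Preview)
Your approach is exactly the paper's: augment $\Lcal_{\mu_*}$ by $K\|F-\mu_*\|^2$, establish the Hessian estimate~\eqref{eq:plan-hessianK} on the full $(T_u\Ocal_{u_{\mu_*}})^\perp$ via the splitting $w=w_A+w_B$ and Young's inequality, transport it around the orbit using the linear isometric action, and conclude via Theorem~\ref{thm:lyapmethod}. All the main steps are right.

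There is, however, one concrete gap in your justification: you misplace the role of hypothesis~(a), the commutativity of $G_{\mu_*}$. You invoke it for ``rigidity of the orbit structure'' and the transport argument, but those parts only need Hypothesis~C$\mu_*$(i) (linearity and isometry of $\Phi_g$). Where commutativity is actually needed is in proving that $\Lcal_{\mu_*}$ itself (and hence $\Lcal_{\mu_*,K}$) is $G_{\mu_*}$-invariant. Indeed, for $g\in G_{\mu_*}$, $H\circ\Phi_g=H$ and $(F-\mu_*)^2\circ\Phi_g=(F-\mu_*)^2$ follow from the hypotheses you cite, but for the cross term one computes
\[
\xi_{\mu_*}\cdot F(\Phi_g u)=\xi_{\mu_*}\cdot\mathrm{Ad}^*_g F(u)=\mathrm{Ad}_{g^{-1}}\xi_{\mu_*}\cdot F(u),
\]
and this equals $\xi_{\mu_*}\cdot F(u)$ only because $G_{\mu_*}$ commutative implies $\mathrm{Ad}_{g^{-1}}\xi_{\mu_*}=\xi_{\mu_*}$ for $\xi_{\mu_*}\in\mathfrak g_{\mu_*}$. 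Without $G_{\mu_*}$-invariance of $\Lcal_{\mu_*,K}$, neither the transport of the Hessian estimate nor the use of $\Lcal_{\mu_*,K}$ as a Lyapunov function on the orbit goes through. So keep the argument exactly as you wrote it, but move the appeal to~(a) to the invariance check for $\Lcal_{\mu_*}$.
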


Hypothesis~(a) in Theorem~\ref{thm:lyapmethodrestrictedLK} is not very restrictive (see \cite{dufver1969}). 

\begin{proof}
Let $K>0$ and define 
$$
\Lcal_K(u)=\Lcal_{\mu_*}(u)+K(F(u)-\mu_*)^2.
$$
Here $(F(u)-\mu_*)^2=(F(u)-\mu_*)\cdot(F(u)-\mu_*)$ where,  $\cdot$ is the 
$G_{\mu_*}$-invariant inner product described in Remark~\ref{rem:euclidianstructure}. It follows that $\Lcal_K$ is a $G_{\mu_*}$-invariant constant of the motion. Indeed, for all $g\in G_{\mu_*}$ and for all $u\in \Ban$,
\begin{align*}
	\Lcal_K&(\Phi_g u )=H(\Phi_g u)-\xi_{\mu_*}\cdot F(\Phi_g u)+K(F(\Phi_g u)-\mu_*)^2&&\\
	&=H(u)-\xi_{\mu_*}\cdot \mathrm{Ad}^*_g F(u)+K(\mathrm{Ad}^*_g F(u)-\mathrm{Ad}^*_g \mu_*)^2&&\\
	&=H(u)-\mathrm{Ad}_g\xi_{\mu_*}\cdot F(u)+K(F(u)-\mu_*)^2&&\text{as } \mathrm{Ad}^*_g\in \mathrm{O}(m)\\
	&=H(u)-\xi_{\mu_*}\cdot F(u)+K(F(u)-\mu_*)^2&&\text{as }G_{\mu_*} \text{ is commutative}\\
	&=\Lcal_K(u ).
\end{align*}
The main idea is to prove that the hypotheses of Proposition~\ref{thm:hessianestimate} are satisfied by $\Lcal_K$ and then use its proof to conclude that all $u\in\Ocal_{u_{\mu_*}}$ are orbitally stable $G_{\mu_*}$-relative equilibria. 

First, note that in this setting Hypothesis~C$\mu_*$(iv) follows from Remark~\ref{rem:muregular}.

Next, we claim that $D_u \Lcal_{K}(w)=0$ for all $u\in \Ocal_{u_{\mu_*}}$ and for all $w\in \Ban$. Indeed, it is clear that $D_u (F(u)-\mu_*)^2=2 (F(u)-\mu_*)\cdot D_u F=0$ for all $u\in \Ocal_{u_{\mu_*}}$ and, thanks to the fact that $D_{u_{\mu_*}} \Lcal_{K_{\mu_*}}(w)=0$, we  obtain $D_{u_{\mu_*}} \Lcal_{K}(w)=0$ for all $w\in \Ban$. Next, let $u\in \Ocal_{u_{\mu_*}}$ and $g\in G_{\mu_*}$ such that $u=\Phi_g (u_{\mu_*})$, then 
\begin{align*}
	D_u \Lcal_{K}(w)=[D_{\Phi_g (u_{\mu_*})} \Lcal_{K}\circ \Phi_{g^{-1}}](w)=[D_{u_{\mu_*}} \Lcal_{K}\circ D_{\Phi_g (u_{\mu_*})}\Phi_{g^{-1}}](w)=0.
\end{align*}

Using the fact that $\Phi_g$ is linear and preserves both $\scalh$ and $\|\cdot\|$, we can easily show, as a consequence of hypothesis~(c), that 
\begin{equation}
	\label{eq:estimhessianorbitgen}
	\forall u \in \Ocal_{u_{\mu_*}},\ D^2_{u}\Lcal_{\mu_*}(w,w)\geq c\|w\|^2,
\end{equation}
for all $w\in T_{u}\Sigma_{\mu_*}\cap (T_{u}\Ocal_{u_{\mu_*}})^\perp$. Indeed, for all $u \in \Ocal_{u_{\mu_*}}$ and $w\in T_{u}\Sigma_{\mu_*}\cap (T_{u}\Ocal_{u_{\mu_*}})^\perp$, 
\begin{align*}
	D^2_{u}\Lcal_{\mu_*}(w,w)&= D^2_{\Phi_g u_{\mu_*}}(\Lcal_{\mu_*}\circ \Phi_{g^{-1}})(w,w)
	\\
	&=D^2_{u_{\mu_*}}\Lcal_{\mu_*}(D_{u}\Phi_{g^{-1}}w, D_{u}\Phi_{g^{-1}} w)+D_{u_{\mu_*}}\Lcal_{\mu_*}(D^2_u \Phi_{g^{-1}}(w,w))\\
	&= D^2_{u_{\mu_*}}\Lcal_{\mu_*}(\Phi_{g^{-1}}w, \Phi_{g^{-1}} w)
	\geq c\| \Phi_{g^{-1}}w\|^2= c\| w\|^2
\end{align*}
because $\Phi_{g^{-1}}w\in T_{u_{\mu_*}}\Sigma_{\mu_*}\cap (T_{u_{\mu_*}}\Ocal_{u_{\mu_*}})^\perp$.

Similarly, using hypothesis~(b), we prove that 
\begin{equation*}
	D^2_{u}\Lcal_{\mu_*}(w,w)\le C \|w\|^2
\end{equation*}
for all $u\in\Ocal_{u_{\mu_*}}$ and $w\in E$.

Next, by a straightforward calculation, we obtain for all $u\in \Ocal_{u_{\mu_*}}$ and $w\in E$, $D^2_{u}(F-\mu_*)^2(w,w)=2 D_u F(w)\cdot D_u F(w)$, and
\begin{align}\label{eq:adstaragain}
	D_u F (w)&=[D_{\Phi_g u_{\mu_*}}F\circ \Phi_g\circ \Phi_{g^{-1}}](w)=[D_{u_{\mu_*}}F\circ \Phi_g](D_u \Phi_{g^{-1}}w)\nonumber\\
	&=[D_{u_{\mu_*}}\mathrm{Ad}^*_g \circ F](\Phi_{g^{-1}}w)=\mathrm{Ad}^*_g(D_{u_{\mu_*}}F(\Phi_{g^{-1}}w)).
\end{align}
As a consequence, since $\mathrm{Ad}^*_g\in \mathrm{O}(m)$, 
\begin{equation}
	\label{eq:defhessianKF}
	D^2_{u}(F-\mu_*)^2(w,w)=2 D_{u_{\mu_*}}F(\Phi_{g^{-1}}w)\cdot D_{u_{\mu_*}}F(\Phi_{g^{-1}}w).
\end{equation}
It is then clear that $D^2_{u}(F-\mu_*)^2(w,w)\le C_{\mu_*} \|w\|^2$ for all $u\in  \Ocal_{u_{\mu_*}}$ and $w\in E$, and hypothesis~(b) of Proposition~\ref{thm:hessianestimate} is satisfied by $\Lcal_K$. In addition~\eqref{eq:adstaragain}  together with the fact that the $\Phi_g$ preserve the inner product $\langle\cdot, \cdot\rangle$ shows that~\eqref{eq:hypXbis} implies~\eqref{eq:hypX}.

Now let $w\in (T_{u}\Ocal_{u_{\mu_*}})^\perp$ and write $w=w_1+w_2$ with $w_1\in T_{u}\Sigma_{\mu_*}\cap (T_{u}\Ocal_{u_{\mu_*}})^\perp$ and $w_2\in W_{u}\cap (T_{u}\Ocal_{u_{\mu_*}})^\perp$. Then 
\begin{align*}
	D^2_{u} \Lcal_K(w,w) &= D^2_{u} \Lcal_{\mu_*}(w,w)+2 K D_{u_{\mu_*}}F(\Phi_{g^{-1}}w_2)\cdot D_{u_{\mu_*}}F(\Phi_{g^{-1}}w_2)\\
	\ge&\, D^2_{u} \Lcal_{\mu_*}(w_1,w_1)-C(\|w_1\|\|w_2\|+\|w_2\|^2)\\
	&+2 K D_{u_{\mu_*}}F(\Phi_{g^{-1}}w_2)\cdot D_{u_{\mu_*}}F(\Phi_{g^{-1}}w_2)\\
	\ge&\, c\|w_1\|^2-C(\|w_1\|\|w_2\|+\|w_2\|^2)+Kc_{\mu_*} \|w_2\|^2,
\end{align*}
where in the last line we use the fact that $\mathrm{dim}W_{u_{\mu_*}}=m$ and ${D_{u_{\mu_*}}F}_{\mid W_{u_{\mu_*}}}: W_{u_{\mu_*}}\to \R^m$ is an isomorphism. Finally, thanks to Young's inequality, there exists $\varepsilon>0$ so that 
\begin{align*}
	D^2_{u} \Lcal_K(w,w)
	\ge&\, \left(c-\frac{C\varepsilon}{2}\right)\|w_1\|^2+\left(Kc_{\mu_*}-C-\frac{C}{2\varepsilon}\right) \|w_2\|^2\ge \tilde c \|w\|^2
\end{align*}
with $\tilde c>0$ provided that $K>0$ is chosen large enough. As a consequence, using the same arguments as in the proof of Proposition~\ref{thm:hessianestimate}, we conclude that there exist $\eta>0, c>0$ so that
\begin{equation*}
\forall u\in\Ocal_{u_{\mu_*}}, \forall v\in \Ban, \quad \rd(u,v)\leq \eta\Rightarrow \Lcal_{K}(v)-\Lcal_{K}(u)\geq c\rd^2(v, \Ocal_{u_{\mu}})
\end{equation*}
which implies, thanks to Theorem~\ref{thm:lyapmethod}, that all $u\in\Ocal_{u_{\mu_*}}$ are orbitally stable $G_{\mu_*}$-relative equilibria.
\end{proof} 

\section{Plane wave stability on the torus for NLS}\label{s:nlsetorus1d}

In this section we will illustrate the general theory described above on a simple example, that is the orbital stability of plane waves\index{plane waves!stability} of the cubic focusing and defocusing nonlinear Schr\"odinger equation on the one-dimensional torus. More precisely, let us consider the cubic Schr\"odinger equation  
\begin{equation}
 	\label{nlscubic}
 	i{\partial_t}u(t,x)+\beta{\partial_{xx}^2}u(t,x)+\lambda\vb u(t,x)\vb^2u(t,x)=0
\end{equation} 
in the space periodic setting $\TT_L$, the one-dimensional torus of length $L>0$, and with $u(t,x)\in\CC$. The constants $\beta$ and $\lambda$ are parameters of the model; $\beta \lambda <0$ corresponds to the defocusing case and $\beta \lambda >0$ to the focusing one. In what follows, we fix $\beta>0$. 

Using the same arguments as in Section \ref{ss:hampde}, we can show that Equation \eqref{nlscubic} is the Hamiltonian differential equation associated to the function $H$ defined by 
\begin{equation}
	\label{defenergy}
	H(u)=\frac{1}{2}\left(\beta\int_0^L\vb \partial_x u(x)\vb^2\diff x-\frac{\lambda}{2}\int_0^L\vb u(x)\vb^4\diff x\right).
\end{equation}
As before the symplectic Banach triple is given by $(\Ban,\Dom,\Jcal)$ with $\Ban=H^{1}(\T_L,\C)$, $\Dom=H^{3}(\T_L,\C)$, both viewed as real Hilbert spaces, and $\Jcal u=iu$ (see Section \ref{ss:linflows} to understand how a complex Hilbert space can be viewed as a real Hilbert space with symplectic structure). We recall that the scalar product on 
$\Ban=H^1(\TT_L,\CC)$ is  
\begin{equation}
	\label{defscalarprodX}
	(u,v)_\Ban=\mathrm{Re}\int_0^L(\partial_x  u(x)\partial_x \bar v(x)+u(x)\bar  v(x))\diff x\quad u,v\in \Ban,
\end{equation}
and the dual space $\Ban^*$ can be identified with $H^{-1}(\TT_L,\CC)$ through the pairing
\begin{equation}
	\label{defpairingX}
	\langle u,v \rangle=\mathrm{Re}\int_0^L u(x) \bar v(x)\diff x,\quad u\in \Ban^*,\ v\in \Ban.
\end{equation}
Moreover, since the action $\Phi$ of the group $G=\R\times\R$ defined by $\Phi_{a,\gamma}(u)=e^{i\gamma}u(x-a)$ is globally Hamiltonian (see Section \ref{ss:hampde}) and $H\circ \Phi_g=H$ (see Section \ref{s:dynsysexamples}), the quantities
\begin{align}
	\label{defmomentum}
	F_1(u)&=-\frac{i}{2}\int_0^L\bar u(x)\partial_x u(x)\diff x,\\
	\label{defcharge}
	F_2(u)&=-\frac{1}{2}\int_0^L\vb u(x)\vb^2\diff x=-\frac{1}{2}\langle u,u \rangle
\end{align}
are constants of the motion.

As  pointed out in Section \ref{s:dynsysexamples}, the two-parameter family of plane waves
\begin{equation}
 	\label{planewaves}
 	u_{\alpha,k}(t,x)=\alpha e^{-ikx}e^{i\en t}
\end{equation} 
with $\en\in \RR$, $k\in \frac{2\pi}{L}\ZZ$ and $\alpha \in \R$ are $G$-relative equilibria of \eqref{nlscubic} whenever $\en, k$ and $\alpha$ satisfy the dispersion relation
\begin{equation}
	\label{disprel}
	\en +\beta k^2=\lambda\vb\alpha\vb^2.
\end{equation}
In the notation of the previous sections, $u_{\alpha,k}=u_{\muak}$ with $\mu_{\alpha,k} \in \R^2$ given by 
$$
\mu_{\alpha,k}=\begin{pmatrix}F_1(u_{\alpha,k})\\ F_2(u_{\alpha,k})\end{pmatrix}=-\frac{\alpha^2}{2}L\begin{pmatrix}k \\ 1\end{pmatrix}. 
$$
Remark that in this case $\muak$ is not a regular value of $F=(F_1,F_2)$, as is readily checked (see Definition~\ref{def:levelsurfregular}).

The $G$-orbit of the initial condition $u_{\mu_{\alpha,k}}(x)=\alpha e^{-ikx}$ is given by 
\begin{equation}
	\label{eq:Gorbitplanewaves}
	\Ocal_{u_\mu{_{\alpha,k}}}=\big\{\alpha e^{i\gamma}e^{-ik(x-a)}, (a,\gamma)\in G\big\}.
\end{equation}
Our goal is to investigate the orbital stability of these particular solutions by applying the general arguments presented above. Our main result is the following theorem showing the orbital stability of plane waves in the defocusing case ($\lambda<0$) as well as in the focusing case provided  $0<2\lambda \vb \alpha\vb^2<\beta\left(\frac{2\pi}{L}\right)^2$.

\begin{theorem}
	\label{thmstability}
	If $\beta\left(\frac{2\pi}{L}\right)^2-2\lambda \vb \alpha\vb^2>0$, then all $u \in \Ocal_{u_{\muak}}$ are orbitally stable relative equilibria. 
	\end{theorem}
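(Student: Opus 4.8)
The plan is to apply the energy-momentum machinery, and in particular Theorem~\ref{thm:lyapmethodrestrictedLK} (the ``augmented Lyapunov function'' version), to the plane wave $u_{\muak}$. The candidate Lyapunov function is the Lagrange function $\Lcal_{\muak}=H-\xi\cdot F$, where $\xi=(\xi_1,\xi_2)\in\R^2\simeq\frak g$ is the multiplier determined by the stationary equation $D_{u_{\muak}}\Lcal_{\muak}=0$; a short computation with $H$ in~\eqref{defenergy} and $F=(F_1,F_2)$ in~\eqref{defmomentum}--\eqref{defcharge} shows that this equation is exactly the dispersion relation~\eqref{disprel} together with the identification of $\xi$ in terms of $k$ and $\lambda|\alpha|^2$, so $\xi_\mu$ exists. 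The one technical wrinkle is that $\muak$ is \emph{not} a regular value of $F$ (noted in the text, since the plane waves satisfy $F_1=kF_2$ so the two constraints are not independent at $u_{\muak}$); I therefore expect to use the full two-parameter group $G=\R\times\R$ but to work on the orbit $\Ocal_{u_{\muak}}$ directly, or equivalently to restrict attention to the single effective constraint. In practice, the cleanest route is: first verify the abstract hypotheses (Hypothesis~A, the globally Hamiltonian action, $H\circ\Phi_g=H$, commutativity of $G$, boundedness of the orbit since the action is unitary on $L^2$ and the orbit $\Ocal_{u_{\muak}}$ in~\eqref{eq:Gorbitplanewaves} is compact), and then reduce everything to the spectral estimates (b) and (c) of Theorem~\ref{thm:lyapmethodrestrictedLK} on the Hessian $D^2_{u_{\muak}}\Lcal_{\muak}$.

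\textbf{Key steps.} First, I would compute $D^2_{u_{\muak}}\Lcal_{\muak}(w,w)$ explicitly. Writing $w=u_{\muak}\psi$ with $\psi=\psi_1+i\psi_2$ (or, more simply, Fourier-expanding $w$ on $\T_L$ and using the gauge/translation freedom to assume the perturbation has mean zero in the relevant components), the quadratic form splits over Fourier modes $n\in\tfrac{2\pi}{L}\Z$, and the ``dangerous'' mode is $n=0$ (and its translate under the symmetry), which is precisely the tangent direction to $\Ocal_{u_{\muak}}$. On each nonzero mode one gets a $2\times2$ matrix whose entries involve $\beta n^2$ and $\lambda|\alpha|^2$; positivity of all of these, on the subspace orthogonal (in $L^2$, which is the inner product to use here, as in Remark after~\eqref{eq:scalarmetric}) to $T_{u_{\muak}}\Ocal_{u_{\muak}}$ and tangent to $\Sigma_{\muak}$, is what the condition $\beta(2\pi/L)^2-2\lambda|\alpha|^2>0$ buys: the lowest nonzero mode $|n|=2\pi/L$ is the one that can fail, and the inequality is exactly the threshold for its $2\times2$ block (after projecting out the charge direction) to be positive definite. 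The upper bound (b) is automatic since $H\in C^2$ and $F$ is quadratic. This gives Hypothesis~C$\muak$(i)--(iii) — (i) because $\Phi_g$ is linear, unitary on $L^2$ and norm-preserving on $H^1$; (ii) because $G$ is abelian so $\mathrm{Ad}^*_g=\mathrm{Id}\in\mathrm O(2)$; (iii) needs the regular-value workaround — and hypotheses (a),(b),(c) of Theorem~\ref{thm:lyapmethodrestrictedLK}, whence all $u\in\Ocal_{u_{\muak}}$ are orbitally stable.

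\textbf{Handling non-regularity.} Because $\muak$ is not regular, I cannot cite Proposition~\ref{eq:reduction} to get $G_{\Sigma_{\muak}}=G_{\muak}$ painlessly; the text flags (Remark~\ref{rem:muregular}) that ``the ensuing complication is easily dealt with on an ad hoc basis.'' The concrete fix: note $G_{\muak}=G$ here (the whole $\R\times\R$ fixes $\muak$ under $\mathrm{Ad}^*$ since the action is trivial on $\frak g^*$), and the orbit $\Ocal_{u_{\muak}}$ is genuinely two-dimensional (the map $(a,\gamma)\mapsto\alpha e^{i\gamma}e^{-ik(x-a)}$ is an immersion modulo the obvious identification when $k=0$, where it collapses to one dimension — that case, a constant-in-$x$ plane wave, should be treated separately but is even easier). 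One then checks directly that $D^2_{u_{\muak}}\Lcal_{\muak}$ vanishes on $T_{u_{\muak}}\Ocal_{u_{\muak}}$ (consequence of $D\Lcal_{\muak}=0$ on the orbit and the flow invariance), so the coercivity estimate (c) only needs to hold on the genuine complement, which is what the Fourier computation delivers.

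\textbf{Main obstacle.} The real work — and the only model-dependent part, as the paper repeatedly stresses — is the Hessian estimate (c): showing $D^2_{u_{\muak}}\Lcal_{\muak}(w,w)\ge c\|w\|^2_{H^1}$ for $w$ in $T_{u_{\muak}}\Sigma_{\muak}\cap(T_{u_{\muak}}\Ocal_{u_{\muak}})^\perp$, with the sharp constant region $\beta(2\pi/L)^2>2\lambda|\alpha|^2$. Concretely the obstacle is twofold: (i) correctly identifying which direction is ``lost'' to the non-regular constraint and peeling it off so that the remaining form is genuinely positive definite on the lowest Fourier mode, and (ii) passing from positivity of the $L^2$-quadratic form mode-by-mode to a uniform $H^1$-coercivity — this requires noting that for large $|n|$ the $\beta n^2$ term dominates, giving control of $\|w'\|_{L^2}^2$ and hence of the full $H^1$ norm, so only finitely many modes need the delicate threshold analysis. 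Once (c) is in hand, invoking Theorem~\ref{thm:lyapmethodrestrictedLK} (with $K$ chosen large enough to absorb the now-irrelevant charge-direction defect, exactly as in the third argument of Proposition~\ref{lem:sphercircular}) closes the proof. I would also remark at the end that in the defocusing case $\lambda<0$ the inequality is automatic, so stability holds unconditionally there, consistent with the statement.
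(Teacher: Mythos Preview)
Your overall strategy is sound, but there is a concrete error and a genuine gap in how you plan to carry it out. You claim that for $k\neq 0$ the orbit $\Ocal_{u_{\muak}}$ is ``genuinely two-dimensional'' and that $(a,\gamma)\mapsto \alpha e^{i\gamma}e^{-ik(x-a)}$ is an immersion. It is not: the two tangent vectors at $(0,0)$ are $\partial_a=ik\,u_{\muak}$ and $\partial_\gamma=i\,u_{\muak}$, which are linearly dependent, so the orbit is one-dimensional for \emph{every} $k$. This is exactly why $\muak$ is not a regular value of $F=(F_1,F_2)$, and it means Theorem~\ref{thm:lyapmethodrestrictedLK} cannot be invoked as stated: Hypothesis~C$\mu_*$(iii) fails, and so does~(iv), since the exponential map $\xi\mapsto\Phi_{\exp(\xi)}u_{\muak}$ has a one-dimensional kernel. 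Your proposed ``workaround'' of peeling off two tangent directions therefore does not make sense; there is only one, and the constraint surface $\Sigma_{\muak}$ is not a codimension-two submanifold near $u_{\muak}$.

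The paper handles this by a change of variables that your proposal misses, and which is the key simplification: write $u(t,x)=e^{-ikx}U(t,x)$, so that the plane wave becomes the constant $U_{\mu_\alpha}=\alpha$. In these variables translations act trivially on the equilibrium, the orbit is manifestly the one-dimensional circle $\{e^{i\gamma}\alpha\}$, and one can drop $F_1$ altogether (since $D_{U_{\mu_\alpha}}F_1=0$) and work only with the single constraint $F_2$, whose level set $\Sigma_\alpha$ \emph{is} a regular codimension-one submanifold. The Lyapunov function becomes $\Lcal_{\mu_\alpha}(U)=H(U)-(\xi+\beta k^2)F_2(U)$, and its Hessian at $\alpha$ is \emph{diagonal} in Fourier modes (no $2\times2$ blocks), so the spectral computation is elementary: on $(T_{U_{\mu_\alpha}}\Ocal_{U_{\mu_\alpha}})^\perp$ one reads off coercivity directly when $\lambda<0$, and on the smaller space $T_{U_{\mu_\alpha}}\Sigma_\alpha\cap(T_{U_{\mu_\alpha}}\Ocal_{U_{\mu_\alpha}})^\perp$ when $0<2\lambda\alpha^2<\beta(2\pi/L)^2$. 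The paper then concludes via Theorem~\ref{thm:lyapmethod} in the defocusing case and Theorem~\ref{thm:lyapmethodrestricted} (using Hypothesis~F for the homogeneous functional $F_2$, Lemma~\ref{lem:hypF}(c)) in the focusing case --- not Theorem~\ref{thm:lyapmethodrestrictedLK}. Finally, stability transfers back to $u_{\muak}$ because $u\mapsto e^{-ikx}u$ is a bounded isomorphism of $H^1(\T_L)$.
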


Furthermore, in the case $\beta\left(\frac{2\pi}{L}\right)^2-2\lambda \vb \alpha\vb^2<0$, we can investigate the linear stability of the plane waves and we obtain the following theorem.

\begin{theorem}
	\label{thmspectralinstability} Let the plane wave $u_{\alpha,k}(t,x)=\alpha e^{i(\xi t -kx)}$ 
	be a solution to~\eqref{nlscubic} and $\beta\left(\frac{2\pi}{L}\right)^2-2\lambda \vb \alpha\vb^2<0$. Then the spectrum of the linearization of \eqref{nlscubic} around $u_{\alpha,k}$ in $L^2(\TT_L)$ has eigenvalues with strictly positive real part. Consequently, this wave is spectrally unstable in $L^2(\TT_L)$.
\end{theorem}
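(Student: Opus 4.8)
\textbf{Plan of proof for Theorem~\ref{thmspectralinstability}.} The strategy is the classical one of linearizing the NLS flow around the plane wave and exhibiting an unstable Fourier mode. First I would pass to the co-moving frame that turns the plane wave into a $k$-independent stationary state: writing $u(t,x) = e^{i(\xi t - kx)}(\alpha + v(t,x))$ and substituting into~\eqref{nlscubic}, one obtains, after using the dispersion relation~\eqref{disprel} to cancel the constant term, a linear equation for the perturbation $v$ of the form $i\partial_t v = \beta(-\partial_{xx} + 2ik\partial_x)v - \lambda|\alpha|^2(v + \bar v) + (\text{higher order})$. Dropping the higher-order terms gives the linearized operator $\mathcal{L}$ acting on $L^2(\TT_L,\CC)$, which is not complex-linear because of the $\bar v$ term; the standard device is to split $v = v_1 + i v_2$ into real and imaginary parts and write the linearization as a real $2\times 2$ system.

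Next I would diagonalize in the spatial Fourier variable. Expanding $v_1, v_2$ in the orthonormal basis $\{e^{i n \frac{2\pi}{L} x}\}_{n\in\ZZ}$, the operator $\mathcal{L}$ is block-diagonal: on each Fourier mode $n$ it reduces to a $2\times 2$ (or $4\times 4$, depending on how one groups $\pm n$ to keep everything real) matrix $M_n$ whose entries are explicit polynomials in $\beta$, $\lambda$, $|\alpha|^2$, $k$ and $q_n := \frac{2\pi}{L} n$. A short computation (the Benjamin--Feir / modulational-instability calculation) shows that, after removing the harmless drift term proportional to $\beta k q_n$, the relevant block has eigenvalues $\omega$ satisfying $\omega^2 = \beta q_n^2\left(\beta q_n^2 - 2\lambda|\alpha|^2\right)$. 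Hence there is an eigenvalue with strictly positive real part precisely when $\beta q_n^2 - 2\lambda|\alpha|^2 < 0$ for some $n \neq 0$, i.e. when $\beta q_n^2 < 2\lambda|\alpha|^2$. The smallest nonzero value of $q_n^2$ is $(2\pi/L)^2$, attained at $n = \pm 1$, so the hypothesis $\beta(2\pi/L)^2 - 2\lambda|\alpha|^2 < 0$ guarantees that the $n=\pm1$ block already has a pair of real eigenvalues $\pm\sqrt{\beta q_1^2(2\lambda|\alpha|^2 - \beta q_1^2)} \neq 0$, one of which is strictly positive. I would then conclude that the spectrum of $\mathcal{L}$ on $L^2(\TT_L)$ contains a point with positive real part, which is the definition of spectral instability, and note that this instability is inherited by the plane wave $u_{\alpha,k}$ in $L^2(\TT_L)$.

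For completeness I would record why the drift term $\beta k q_n$ does not affect the conclusion: it contributes a purely imaginary shift $i\beta k q_n$ common to both eigenvalues of the block (it corresponds to the Galilean/translation covariance), so it changes neither the sign of the real part nor the existence of a positive-real-part eigenvalue. I would also remark that the discrete nature of the torus spectrum is essential here — on the line every sufficiently small $q$ would give instability, whereas on $\TT_L$ the condition becomes the sharp inequality involving the lowest mode $q_1 = 2\pi/L$, matching exactly the threshold $\beta(2\pi/L)^2 = 2\lambda|\alpha|^2$ appearing in Theorem~\ref{thmstability}.

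\textbf{Main obstacle.} The only genuinely delicate point is bookkeeping rather than conceptual: setting up the real $2\times 2$ (or $4\times 4$) Fourier block $M_n$ with the correct signs and the correct treatment of the coupling between the $n$ and $-n$ modes, and then extracting the dispersion relation $\omega^2 = \beta q_n^2(\beta q_n^2 - 2\lambda|\alpha|^2)$ cleanly. A secondary, more pedantic issue is making precise the claim ``spectrally unstable in $L^2(\TT_L)$'': one should state which operator's spectrum is meant (the generator of the linearized semigroup on $L^2(\TT_L,\CC)$ regarded as a real Hilbert space) and check that the formal Fourier-mode eigenfunctions genuinely lie in $L^2(\TT_L)$, which is immediate since each is a trigonometric polynomial. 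Neither of these presents a real difficulty; the computation is short and standard.
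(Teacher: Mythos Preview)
The paper does not actually give a proof of this theorem: immediately after the statement it says ``This second result follows from a rather straightforward computation that we do not reproduce here.'' Your proposal \emph{is} that straightforward computation---the classical Benjamin--Feir/modulational-instability analysis carried out Fourier mode by mode---and it is correct in substance and in spirit.

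One small slip worth fixing: you write the dispersion relation as $\omega^2 = \beta q_n^2(\beta q_n^2 - 2\lambda|\alpha|^2)$ and then say a positive-real-part eigenvalue appears when $\beta q_n^2 - 2\lambda|\alpha|^2 < 0$. With that sign the quantity $\omega^2$ would be negative and $\omega$ purely imaginary. The correct relation (which you in fact use two lines later when you write $\pm\sqrt{\beta q_1^2(2\lambda|\alpha|^2 - \beta q_1^2)}$) is
\[
\omega^2 \;=\; \beta q_n^2\bigl(2\lambda|\alpha|^2 - \beta q_n^2\bigr),
\]
so that $\omega^2>0$ exactly under the stated hypothesis. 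Your handling of the drift term $2\beta k\partial_x$ as a purely imaginary spectral shift (equivalently, removable by passing to a moving frame) is correct and is the cleanest way to dispose of the $k$-dependence.
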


This second result follows from a rather straightforward computation that we do not reproduce here.

As discussed in the introduction, the nonlinear (in)stability of plane waves for the cubic focusing and defocusing nonlinear Schr\" odinger equation in a one-dimensional space is a result  known to the experts in the field (see the introduction of  \cite{galhar07a,galhar07b}, for example). We did not however find a complete proof of it in the literature, so we furnish one here as an illustration of the general theory presented in the previous sections.

In \cite{zhidkov01}, a related but slightly different analysis is proposed. The cubic nonlinear Schr\"odinger equation is defined on the entire line $\RR$ and not on the one-dimensional torus $\TT_L$. Using the  Galilean invariance of the equation (see Section~\ref{ss:gensym}), the stability of any plane wave is equivalent to that of $u(t,x)=\alpha e^{i\lambda |\alpha|^2t}$. The main result on stability of plane waves of  \cite{zhidkov01} is given in Theorem III.3.1. It states that, in the defocusing case ($\lambda<0$), the plane wave $u(t,x)=\alpha e^{i\lambda |\alpha|^2t}$ is orbitally stable under small perturbations in $H^1(\RR)$. 

Our approach is different: we focus on the Schr\"odinger equation on a one-dimensional torus. Our functions live on a torus and the perturbations too. In other words, our definition of stability is with respect to perturbations within $H^1(\TT_L)=H^1_\mathrm{per}([0,L])$. Moreover in Zhidkov's book nothing is said about the (in)stability of plane waves in the focusing case, a situation we cover partially.

Finally, the analysis of orbital stability of plane waves of the cubic nonlinear Schr\"odinger equation on a torus of dimension $1<d\le 3$ is more involved and it will be done in a forthcoming paper together with the periodic Manakov equation \cite{debrot14}.

\subsection{Orbital stability}

To study the stability of $u_{\muak}(x)$, it is useful to write the solutions of \eqref{nlscubic} in the form
\begin{equation}
	\label{solpartnlscubic}
	u(t,x)=e^{-ikx}U(t,x)
\end{equation}
where $U(t,x)$ is a function which satisfies the evolution equation
\begin{equation}
	\label{nlscubicpartform}
	i{\partial_t}U+\beta{\partial^2_{xx}}U-2i\beta k {\partial_x}U+\lambda\vb U\vb^2U-\beta k^2 U=0.
\end{equation}
Equation \eqref{nlscubicpartform} is the Hamiltonian differential equation associated to the function $\tilde H$ defined by 
\begin{equation}
	\label{defenergybis}
	\tilde H(U)= H(U)-2\beta k F_1(U)-\beta k^2 F_2(U).
\end{equation}
As before, the action $\Phi$ of the group $G=\R\times\R$ defined by $\Phi_{a,\gamma}(u)=e^{i\gamma}u(x-a)$ is globally Hamiltonian, $\tilde H\circ \Phi_g=\tilde H$ and the quantities $F_1,F_2$ defined by \eqref{defmomentum} and \eqref{defcharge} are constants of the motion. 

If $\xi,k$ and $\alpha$ satisfy the dispersion relation \eqref{disprel}, $U_{\mu_\alpha}(t,x)=\alpha e^{i\xi t}$ is a solution to \eqref{nlscubicpartform}.
Moreover, $U_{\mu_\alpha}(x)=U_{\mu_\alpha}(0,x)=\alpha$ is a one-parameter family of $G$-relative equilibria and our goal is to study their stability. Here $\mu_\alpha=-\frac{\alpha^2}{2}L\begin{pmatrix}0\\1\end{pmatrix}$ and, as above, $\mu_\alpha$ is not a regular value of $F=(F_1,F_2)$. 

Recall that the $G$-orbit of $U_{\mua}(x)=\alpha$ is
\begin{equation}
\label{eq:Gorbitplanewave}
\Ocal_{U_{\mua}}=\left\{e^{i\gamma}\alpha,\gamma\in [0,2\pi)\right\}.
\end{equation} 
and, by definition, $U\in \Ocal_{U_{\mua}}$ is orbitally stable if 
$$
\forall \epsilon, \exists \delta, \forall W\in \Ban,\  \left(\rd(W,U)\leq \delta\Rightarrow \forall t\in\R, \ \rd(W(t,\cdot), \Ocal_{U_{\mua}})\leq \epsilon\right)
$$
(see Definition~\ref{def:orbstabgen}).

\begin{proposition}
	\label{propstability}
	Let $\beta\left(\frac{2\pi}{L}\right)^2-2\lambda \vb \alpha\vb^2>0$. Then every $U\in \Ocal_{U_{\mua}}$ is orbitally stable.
\end{proposition}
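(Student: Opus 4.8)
The plan is to verify Hypotheses~A and~B$\mu$ of Section~\ref{ss:strategy} for a suitable Lyapunov function and then invoke Theorem~\ref{thm:lyapmethodrestricted}. I work with the transformed equation~\eqref{nlscubicpartform}, whose Hamiltonian is $\tilde H = H - 2\beta k F_1 - \beta k^2 F_2$; the relative equilibrium is $U_{\mua}(x)=\alpha$, and the relevant group is the phase group $G=\R$ acting by $\Phi_\gamma(U)=e^{i\gamma}U$ (translations act trivially on constants). As Lyapunov function I take
$$\Lcal(U)=H(U)-\lambda|\alpha|^2 F_2(U),$$
which is a $G$-invariant constant of the motion for~\eqref{nlscubicpartform} (it is a linear combination of the conserved quantities $\tilde H$, $F_1$, $F_2$), and which satisfies $D_\alpha\Lcal=0$: with respect to the $L^2$ pairing~\eqref{defpairingX} one has $\nabla H(\alpha)=-\lambda|\alpha|^2\alpha$ and $\nabla F_2(\alpha)=-\alpha$, so $\nabla\Lcal(\alpha)=-\lambda|\alpha|^2\alpha+\lambda|\alpha|^2\alpha=0$. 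The point of this choice, rather than the ``augmented Hamiltonian'' $\tilde H-\xi F_2$, is that $\Lcal$ does not involve the momentum $F_1$, which will eliminate a cross-term in the Hessian. Note also that $\muak$ is \emph{not} a regular value of $(F_1,F_2)$, so one cannot work with the joint level set; but $\mu_2:=F_2(\alpha)=-\tfrac12 L|\alpha|^2$ \emph{is} a regular value of the scalar $F_2$, since $\nabla F_2=-U\neq0$ on $\Sigma_{\mu_2}=\{U:\|U\|_{L^2}^2=L|\alpha|^2\}$, and this is the level surface I use.

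Next I would compute the Hessian. Writing $U=\alpha+w$ with $w=p+iq$, $p,q$ real, a second-order Taylor expansion of $\Lcal$ (using $H\in C^2(\Ban,\R)$, that $F_2$ is quadratic, and the dispersion relation~\eqref{disprel} to cancel the first-order term) gives
$$D^2_\alpha\Lcal(w,w)=\beta\int_0^L\big(|\partial_x p|^2+|\partial_x q|^2\big)\diff x-2\lambda|\alpha|^2\int_0^L p^2\diff x.$$
Taking the $L^2$ inner product~\eqref{defpairingX} as the structure $\scalh$, one has $T_\alpha\Ocal_{U_{\mua}}=\{ic\alpha:c\in\R\}$, so $T_\alpha\Sigma_{\mu_2}\cap(T_\alpha\Ocal_{U_{\mua}})^\perp=\{w=p+iq:\int_0^Lp=\int_0^Lq=0\}$. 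On this subspace I combine the Wirtinger inequality $\int_0^L|\partial_x f|^2\geq(2\pi/L)^2\int_0^L f^2$ (valid for mean-zero $f$) with the hypothesis $\beta(2\pi/L)^2-2\lambda|\alpha|^2>0$: splitting off a small multiple of $\beta\int|\partial_x p|^2$ absorbs the (possibly negative) $p$-term, and $\beta\int|\partial_x q|^2$ controls $\|q\|_{H^1}$ directly, which yields $D^2_\alpha\Lcal(w,w)\geq c\|w\|_{\Ban}^2$ for some $c>0$. This gives condition~(c) of Proposition~\ref{thm:hessianestimate}; condition~(b) is immediate since $\Lcal\in C^2$, and~(a) follows from $D_\alpha\Lcal=0$ by $G$-invariance. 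Hypothesis~C$\mu_2$ also holds: $\Phi_\gamma$ is linear and $L^2$-isometric, $G$ is abelian so $\mathrm{Ad}^*$ is trivial, $\mu_2$ is a regular value of $F_2$, and $\gamma\mapsto e^{i\gamma}\alpha$ is locally injective; and~\eqref{eq:hypX} holds because $\nabla F_2(U)=-U\in\Ban$. Proposition~\ref{thm:hessianestimate} then delivers the coercivity estimate~\eqref{eq:coerciverestricted}, i.e. Hypothesis~B$\mu_2$(iii).

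Finally I would assemble the pieces. Hypothesis~A holds with $\Ban=H^1(\TT_L,\C)$, $\Dom=H^3(\TT_L,\C)$ and the global flow of~\eqref{nlscubicpartform} (Theorem~\ref{thm:nlsperglobalflowHs}); Hypothesis~B$\mu_2$(ii) holds since $\Lcal$ is a $G_{\Sigma_{\mu_2}}$-invariant constant of the motion, and~(i) holds since $\alpha$ is a relative equilibrium for the phase group by~\eqref{disprel}. Theorem~\ref{thm:lyapmethodrestricted} then already gives orbital stability of each $U\in\Ocal_{U_{\mua}}$ for perturbations inside $\Sigma_{\mu_2}$; to upgrade to all perturbations in $\Ban$ I check its three extra conditions: $\Lcal$ is uniformly continuous on bounded sets (being $C^2$ with derivative bounded on bounded subsets of $H^1(\TT_L)$, via the embedding $H^1(\TT_L)\hookrightarrow L^\infty(\TT_L)$); $\Ocal_{U_{\mua}}=\{e^{i\gamma}\alpha\}$ is bounded; and $F_2$ satisfies Hypothesis~F at $\mu_2$, which is exactly Lemma~\ref{lem:hypF}(c) since $F_2(\lambda U)=\lambda^2F_2(U)$ and $\mu_2\neq0$. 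This yields the orbital stability of every $U\in\Ocal_{U_{\mua}}$. The main obstacle is the Hessian step: one has to hit on the Lyapunov function that removes the momentum contribution — equivalently, cope with the non-regularity of $\muak$ by descending to the charge alone — after which the sharp threshold $\beta(2\pi/L)^2>2\lambda|\alpha|^2$ falls out of the Wirtinger inequality.
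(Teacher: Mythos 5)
Your proposal is correct and follows essentially the same route as the paper: the same Lyapunov function $\Lcal_{\mu_\alpha}=H-(\xi+\beta k^2)F_2=H-\lambda|\alpha|^2F_2$, the same reduction to the single constraint $F_2$ to sidestep the non-regularity of $\mu_{\alpha,k}$, the same Hessian whose coercivity on $\{\int p=\int q=0\}$ comes from the lowest nonzero Fourier mode (your Wirtinger inequality is the paper's Fourier-series computation), and the same conclusion via Theorem~\ref{thm:lyapmethodrestricted} with Hypothesis~F supplied by Lemma~\ref{lem:hypF}(c). The only cosmetic differences are that the paper proves the coercivity estimate by hand through its ad hoc modulation Lemma~\ref{lemmodulation} and Proposition~\ref{positivityF} rather than citing Proposition~\ref{thm:hessianestimate}, and that it treats the defocusing case separately via the stronger Theorem~\ref{thm:lyapmethod}, whereas you handle both signs of $\lambda$ uniformly.
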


Our stability result in Theorem \ref{thmstability} is an immediate consequence of the previous statement since the change of variables $u\to U$ is bounded in $\Ban$.

Now, to prove this proposition, we would like to apply the general results given in the previous section and more precisely Theorem~\ref{thm:lyapmethod} or Theorem~\ref{thm:lyapmethodrestricted}. The idea is to construct a Lyapunov function $\mathcal L_{\mua}$ which is a group invariant constant of the motion and such that  $D\Lcal_{\mua}$ vanishes on $\Ocal_{U_{\mua}}$. Since $U_{\mua}$ is a $G$-relative equilibrium, Theorem \ref{thm:relequicritical} ensures that it satisfies 
$$
D_{U_{\mua}}\tilde H - \tilde\xi \cdot D_{U_{\mua}} F =0
$$ 
for some $\tilde\xi \in \R^2$. As a consequence, $\tilde H - \tilde\xi \cdot F$ is a good candidate to be a Lyapunov function. Nevertheless, since $D_{U_{\mu_\alpha}}F_1=0$, $\mua$ is not a regular value of $F$, and the choice of $\tilde\xi \in \R^2$ is not unique. Hence, working in the spirit of Section~\ref{ss:strategy}, we will consider only $F_2$ as constant of motion and we define
\begin{equation}
	\label{eqsigmamuplanewave}
	\Sigma_{\alpha}=\left\{W\in \Ban \mid F_2(W)=-\frac{\alpha^2}{2}L\right\}.
\end{equation}
With this definition, $\Sigma_{\alpha}$ is a co-dimension $1$ submanifold of $\Ban$.

Moreover, we need $\Lcal_{\mua}$ to be coercive on $\Ocal_{U_{\mua}}$, which means here that there exist $\delta>0$ and $c>0$, depending only on $\beta,L,\lambda$ and $\vb \alpha\vb^2$, such that, for all $W\in \Ban$ (as in \eqref{eq:coercive}) or $W\in \Sigma_{\alpha}$ (as in \eqref{eq:coerciverestricted}),
 \begin{equation}\label{eq:coerlyapplanewave}
 	\rd(W,\Ocal_{U_{\mua}})\le \delta \Rightarrow \mathcal L_{\mua}(W)- \mathcal L_{\mua}(U_{\mu_\alpha})\ge c\rd(W,\Ocal_{U_{\mua}})^2.
 \end{equation}
A convenient choice for $\mathcal L_{\mua}$ turn out to be
\begin{equation}
	\label{defF}
	\mathcal L_{\mua}(U)=H(U)-(\en +\beta k^2)F_2(U),
\end{equation}
which corresponds to $\tilde \xi= \begin{pmatrix}
	-2\beta k\\
	\en
\end{pmatrix}$.
By construction, $D_{U}\Lcal_{\mua}$ vanishes for $U\in \Ocal_{U_{\mua}}$. Indeed, since $D_U\Lcal_{\mua} \in \Ban^*$, $D_U\Lcal_{\mua} (V)=\langle D_U\mathcal L_{\mua},V\rangle$ with
\begin{equation}
	\label{eqfirstderF}
	D_U\Lcal_{\mua}= -\beta \partial_{xx}^2 U -\lambda \vb U\vb^2 U + (\xi +\beta k^2)U\in H^{-1}(\TT_L,\C),
\end{equation}
so clearly $D_U\Lcal_{\mua}=0$ if $U\in \Ocal_{U_{\mua}}$.
Furthermore, the bilinear form $D^2_U\Lcal_{\mua}:\Ban\times \Ban \to \RR$ is given by $D^2_U\Lcal(V,V)=\langle \nabla^2\Lcal_{\mua}(U)V,V\rangle$ with
\begin{equation}
	\label{eqHessianF}
	\nabla^2 \Lcal_{\mua}(U)V=-\beta \partial_{xx}^2 V - \lambda \vb U\vb^2 V -\lambda (\vb U \vb ^2 V + \bar V U^2) + (\en +\beta k^2)V\in H^{-1}(\TT_L;\C);
\end{equation}
in particular, for all $U\in \Ban$, $\nabla^2\mathcal L_{\mua}(U)$ is a bounded linear operator from $\Ban$ to $\Ban^*$ and the expression above makes sense.

Now to prove \eqref{eq:coerlyapplanewave}, the main ingredient is the property:
\begin{equation*}
	\exists c>0, \forall V\in \left(T_{U_{\mua}}\Ocal_{U_{\mua}}\right)^\perp,\
D_{U_{\mua}}^2\Lcal_{\mua}(V,V)\geq c\|V\|^2,
\end{equation*}
or 
\begin{equation*}
	\exists c>0, \forall V\in T_{U_{\mua}}\Sigma_{\alpha}\cap \left(T_{U_{\mua}}\Ocal_{U_{\mua}}\right)^\perp,\
D_{U_{\mua}}^2\Lcal_{\mua}(V,V)\geq c\|V\|^2,
\end{equation*}
where
\begin{align*}
&T_{U_{\mua}}\Sigma_{\alpha}=\{W\in \Ban, \langle \alpha, W\rangle=0\},\\
&\left(T_{U_{\mua}}\Ocal_{U_{\mua}}\right)^\perp=\{W\in \Ban, \langle i, W\rangle=0\}.
\end{align*}
This is proven in the following proposition, from which coercivity is deduced in Proposition~\ref{positivityF}.

\begin{proposition}
	\label{spectrumhessianF} 
	Let $ \beta \left(\frac{2\pi}{L}\right)^2-2\lambda  \alpha^2>0$ and $\alpha\neq 0$.
	\begin{enumerate}[label=({\alph*})]
	\item If $\lambda <0$ then
		\begin{equation}
		 	\label{eqcoercivitydefoc}
		 	D_{U_{\mua}}^2\Lcal_{\mua}(V,V)=\langle \nabla^2\mathcal L_{\mua}(U_{\mua})V,V\rangle \ge {c_\lambda} \norm V\norm^2
		\end{equation}
		for all $V\in \left(T_{U_{\mua}}\Ocal_{U_{\mua}}\right)^\perp$ and $c_\lambda=\min\left\{\frac{\beta \left(\frac{2\pi}{L}\right)^2}{1+\left(\frac{2\pi}{L}\right)^2}, -2\lambda  \alpha^2\right\}$.
		
		\item If $0< 2\lambda \alpha^2<\beta\left(\frac{2\pi}{L}\right)^2$ then,  
		\begin{equation}
		 	\label{eqcoercivityfoc}
		 	D_{U_{\mua}}^2\Lcal_{\mua}(V,V)=\langle \nabla^2\mathcal L_{\mua}(U_{\mua})V,V\rangle \ge c_\lambda \norm V\norm^2
		\end{equation}
		for all $V\in T_{U_{\mua}}\Sigma_{\alpha}\cap \left(T_{U_{\mua}}\Ocal_{U_{\mua}}\right)^\perp$ and $c_\lambda=\frac{ \beta \left(\frac{2\pi}{L}\right)^2-2\lambda  \alpha^2}{1+\left(\frac{2\pi}{L}\right)^2}$.
		\end{enumerate}
\end{proposition}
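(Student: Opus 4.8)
\textbf{Plan of proof for Proposition~\ref{spectrumhessianF}.}

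The plan is to diagonalize the quadratic form $D_{U_{\mua}}^2\Lcal_{\mua}(V,V)$ by expanding the perturbation $V$ in a Fourier series on $\TT_L$ and writing the real and imaginary parts separately. Write $U_{\mua}=\alpha$ (a real constant) and $V(x)=\sum_{n\in\ZZ} \hat V_n e^{2\pi i n x/L}$. From~\eqref{eqHessianF}, evaluated at the constant state $U=\alpha$, we get
\begin{equation*}
\nabla^2 \Lcal_{\mua}(\alpha)V = -\beta \partial_{xx}^2 V + (\xi+\beta k^2)V - 2\lambda\alpha^2 V - \lambda\alpha^2 \bar V,
\end{equation*}
and using the dispersion relation~\eqref{disprel} in the form $\xi+\beta k^2 = \lambda\alpha^2$, the zeroth-order coefficient simplifies so that
\begin{equation*}
\nabla^2 \Lcal_{\mua}(\alpha)V = -\beta \partial_{xx}^2 V - \lambda\alpha^2 V - \lambda\alpha^2 \bar V.
\end{equation*}
Splitting $V = P + iQ$ with $P,Q$ real, the term $-\lambda\alpha^2(V+\bar V) = -2\lambda\alpha^2 P$ couples only to the real part, so $D^2\Lcal_{\mua}(V,V) = \int_0^L \big(\beta|\partial_x P|^2 - 2\lambda\alpha^2 P^2 + \beta|\partial_x Q|^2\big)\diff x$ decouples into a $P$-part and a $Q$-part. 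In Fourier modes with $\omega_n = 2\pi n/L$, this is $\sum_n \big[(\beta\omega_n^2 - 2\lambda\alpha^2)|\hat P_n|^2 + \beta\omega_n^2|\hat Q_n|^2\big]$.

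Next I would incorporate the two constraints. The condition $V\in(T_{U_{\mua}}\Ocal_{U_{\mua}})^\perp$, i.e. $\langle i,V\rangle = 0$, means $\mathrm{Re}\int_0^L \overline{i}\,V = \int_0^L Q = 0$, so $\hat Q_0 = 0$; this kills precisely the one direction ($Q$ constant, $n=0$) on which $\beta\omega_n^2$ vanishes. Hence for all remaining $Q$-modes $\beta\omega_n^2 \geq \beta(2\pi/L)^2$, and comparing the $H^1$ norm $\|Q\|_{H^1}^2 = \sum_n(1+\omega_n^2)|\hat Q_n|^2$ one gets $\int \beta|\partial_x Q|^2 \geq \frac{\beta(2\pi/L)^2}{1+(2\pi/L)^2}\|Q\|_{H^1}^2$. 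For the $P$-part in case (a), $\lambda<0$ gives $-2\lambda\alpha^2>0$, so every mode including $n=0$ contributes at least $\min\{\beta\omega_n^2 + |2\lambda\alpha^2|,\ |2\lambda\alpha^2|\}$ against $(1+\omega_n^2)|\hat P_n|^2$; a mode-by-mode estimate yields the lower bound $\min\{\tfrac{\beta(2\pi/L)^2}{1+(2\pi/L)^2}, -2\lambda\alpha^2\}\|P\|_{H^1}^2$, and combining the $P$ and $Q$ estimates gives~\eqref{eqcoercivitydefoc} with the stated $c_\lambda$. In case (b), $\lambda>0$, the only problematic $P$-mode is $n=0$, where the coefficient $-2\lambda\alpha^2<0$; here the extra constraint $V\in T_{U_{\mua}}\Sigma_\alpha$, i.e. $\langle\alpha,V\rangle=0$, gives $\alpha\int_0^L P = 0$, hence $\hat P_0=0$. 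With that mode removed, every surviving $P$-mode has coefficient $\beta\omega_n^2 - 2\lambda\alpha^2 \geq \beta(2\pi/L)^2 - 2\lambda\alpha^2 > 0$ by hypothesis, and again $\int(\beta|\partial_xP|^2 - 2\lambda\alpha^2 P^2) \geq \frac{\beta(2\pi/L)^2 - 2\lambda\alpha^2}{1+(2\pi/L)^2}\|P\|_{H^1}^2$; since $\beta(2\pi/L)^2-2\lambda\alpha^2 \leq \beta(2\pi/L)^2$ the same constant also works for the $Q$-part, giving~\eqref{eqcoercivityfoc}.

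The main obstacle, and the only place requiring care, is the bookkeeping of which Fourier mode is eliminated by which constraint: one must check that $\langle i,V\rangle=0$ really removes the zero mode of $Q$ (the tangent direction to the gauge orbit) and that $\langle\alpha,V\rangle=0$ removes the zero mode of $P$ (the direction transverse to $\Sigma_\alpha$), so that after imposing both constraints \emph{no} nonpositive mode of the quadratic form survives. The rest is the elementary estimate $\frac{a\omega_n^2 + b}{1+\omega_n^2} \geq \min\{\tfrac{a\omega_1^2+b}{1+\omega_1^2}, b\}$ valid for $\omega_n^2 \geq \omega_1^2 > 0$ and $a,b$ of suitable sign (with $b$ possibly zero in the $Q$-case, where the $n=0$ mode is absent), which handles both parts uniformly. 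I would also note in passing that $\nabla^2\Lcal_{\mua}(\alpha)$ being a bounded operator $\Ban\to\Ban^*$ (already observed after~\eqref{eqHessianF}) gives the matching upper bound needed for hypothesis (b) of Proposition~\ref{thm:hessianestimate}, though that is not part of the present statement.
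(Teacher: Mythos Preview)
Your proposal is correct and follows essentially the same route as the paper's proof: split $V$ into real and imaginary parts, compute $D^2_{U_{\mua}}\Lcal_{\mua}(V,V)=\int_0^L\beta(|\partial_xP|^2+|\partial_xQ|^2)-2\lambda\alpha^2 P^2$, expand in Fourier modes, and observe that the constraint $\langle i,V\rangle=0$ kills the zero mode of $Q$ while $\langle\alpha,V\rangle=0$ kills the zero mode of $P$, after which the coercivity with the stated constants is a mode-by-mode estimate. The only cosmetic difference is that the paper uses real (cosine/sine) Fourier series and you use complex exponentials.
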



\begin{proof}
Let $V=v_1+iv_2=\vctl{v_1}{v_2}\in \Ban$. A straightforward calculation gives

\begin{align*}
	D_{U_{\mua}}^2\Lcal_{\mua}(V,V)&=\mathrm{Re}\int_{0}^L\left(-\beta \partial_{xx}^2 V -\lambda \alpha^2 ( V + \bar V ) \right)\bar V\\
	&=\int_{0}^L\beta(\vb \nabla v_1\vb^2 +\vb\nabla v_2\vb^2)-2\lambda\alpha^2\vb v_1\vb^2.
\end{align*}
Now, since $v_1$ and $v_2$ are real functions on the torus, we can write them in Fourier representation, namely,
	\begin{eqnarray*}
	  	v_1(x)=\frac{a_0(v_1)}{2}+\sum_{n=1}^{\infty}a_n(v_1)\cos\left(\frac{2\pi}{L}nx\right)+b_n(v_1)\sin\left(\frac{2\pi}{L}nx\right),\\
	  	v_2(x)=\frac{a_0(v_2)}{2}+\sum_{n=1}^{\infty}a_n(v_2)\cos\left(\frac{2\pi}{L}nx\right)+b_n(v_2)\sin\left(\frac{2\pi}{L}nx\right),
	\end{eqnarray*}
and recall that
\begin{align*}
\norm V\norm^2=&\frac{L}{2}\left(\frac{a^2_0(v_1)}{2}+\sum_{n=1}^{\infty}\left(\left(\frac{2\pi}{L}n\right)^2+1\right)(a_n^2(v_1)+b_n^2(v_1))\right)\\
	&+\frac{L}{2}\left(\frac{a^2_0(v_2)}{2}+\sum_{n=1}^{\infty}\left(\left(\frac{2\pi}{L}n\right)^2+1\right)(a_n^2(v_2)+b_n^2(v_2))\right).
\end{align*} 
Next, 
\begin{align*}
	D_{U_{\mua}}^2\Lcal_{\mua}(V,V)=&\frac{L}{2}\left(-2\lambda \alpha^2\frac{a^2_0(v_1)}{2}+\sum_{n=1}^{\infty}\left(\beta\left(\frac{2\pi}{L}n\right)^2-2\lambda \alpha^2\right)(a_n^2(v_1)+b_n^2(v_1))\right)\\
	&+\frac{L}{2}\left(\sum_{n=1}^{\infty}\beta\left(\frac{2\pi}{L}n\right)^2(a_n^2(v_2)+b_n^2(v_2))\right).
\end{align*}
\begin{enumerate}[label=({\alph*})]
	\item If $\lambda<0$, it is clear that $D_{U_{\mua}}^2\Lcal_{\mua}(V,V)\geq 0$ for all $V\in \Ban$. Moreover, if $V\in \left(T_{U_{\mua}}\Ocal_{U_{\mua}}\right)^\perp$, then $\langle i,V\rangle=0$, that is  $a_0(v_2)=0$. Hence, the coercivity property of $D_{U_{\mua}}^2\Lcal_{\mua}(\cdot,\cdot)$ on $\left(T_{U_{\mua}}\Ocal_{U_{\mua}}\right)^\perp$ follows easily.
	\item Now, let $0< 2\lambda \alpha^2<\beta\left(\frac{2\pi}{L}\right)^2$ and $V\in T_{U_{\mua}}\Sigma_{\alpha}\cap \left(T_{U_{\mua}}\Ocal_{U_{\mua}}\right)^\perp$. As a consequence, $\langle i,V\rangle=0=\langle \alpha,V\rangle$ which implies $a_0(v_1)=0=a_0(v_2)$. As before, the coercivity property of $D_{U_{\mua}}^2\Lcal_{\mua}(\cdot,\cdot)$ on $T_{U_{\mua}}\Sigma_{\alpha}\cap \left(T_{U_{\mua}}\Ocal_{U_{\mua}}\right)^\perp$ follows.
\end{enumerate}

\end{proof}


The following lemma gives a representation of the elements of $\Ban$ which are close to the $G$-orbit $\Ocal_{U_{\mua}}$. It is used in the proof of Proposition~\ref{positivityF} and is a special case of Lemma~\ref{lem:stuartlem}. We give a direct proof in the current simple setting. 

\begin{lemma}
	\label{lemmodulation}
	There exists $\delta>0$ such that any $W\in \Ban$ with $\rd(W,\Ocal_{U_{\mua}})\le\delta$ can be represented as
	\begin{equation}
		\label{eqdecomposition}
		e^{i\gamma}W=U_{\mua}+V
	\end{equation}
	with $\gamma=\gamma(W)\in[0,2\pi)$ and $V\in \left(T_{U_{\mua}}\Ocal_{U_{\mua}}\right)^\perp$. Moreover, there exists a positive constant $C$ such that
	\begin{equation}
		\label{estimatey}
		\rd(W,\Ocal_{U_{\mua}})\le \norm V\norm\le C\rd(W,\Ocal_{U_{\mua}}).
	\end{equation}
\end{lemma}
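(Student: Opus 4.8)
The statement is a ``modulation lemma'': near the circle $\Ocal_{U_{\mua}} = \{e^{i\gamma}\alpha : \gamma \in [0,2\pi)\}$, every $W$ can be rotated onto $U_{\mua}$ plus a remainder orthogonal to the tangent line $T_{U_{\mua}}\Ocal_{U_{\mua}} = \{W \in \Ban : \langle i, W\rangle = 0\}$ (note $i = i\alpha/|\alpha|$ up to a constant; the generator of the phase rotation at $U_{\mua}=\alpha$ is $i\alpha$, so this tangent space is $\{W : \mathrm{Re}\langle i\alpha, W\rangle = 0\}$). My plan is to produce the rotation angle $\gamma(W)$ by an implicit-function / direct minimization argument and then extract the two-sided bound \eqref{estimatey} from elementary geometry.

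\emph{Step 1: construct $\gamma(W)$.} Given $W \in \Ban$ with $\rd(W, \Ocal_{U_{\mua}}) \le \delta$, pick $\gamma_0$ realizing (approximately) the distance, so $\|e^{i\gamma_0}W - \alpha\|$ is of order $\delta$. Consider the smooth function $g(\gamma) = \langle i, e^{i\gamma}W\rangle = \mathrm{Re}\int_0^L \overline{(i\alpha)}\,e^{i\gamma}W/|\alpha|$ — more precisely I want to solve the orthogonality condition $e^{i\gamma}W - U_{\mua} \in (T_{U_{\mua}}\Ocal_{U_{\mua}})^\perp$, i.e. $\langle i, e^{i\gamma}W - \alpha\rangle = 0$, which since $\langle i, \alpha\rangle = 0$ reduces to $\langle i, e^{i\gamma}W\rangle = 0$. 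Write $h(\gamma) = \mathrm{Re}\langle i\alpha, e^{i\gamma}W\rangle$ and compute $h'(\gamma) = \mathrm{Re}\langle i\alpha, ie^{i\gamma}W\rangle = -\mathrm{Re}\langle \alpha, e^{i\gamma}W\rangle = -\langle \alpha, e^{i\gamma}W\rangle$ (real $L^2$ pairing). At $\gamma = \gamma_0$ this is close to $-\langle\alpha,\alpha\rangle = -|\alpha|^2 L \ne 0$, provided $\delta$ is small; hence $h$ has a nondegenerate zero near $\gamma_0$ by the implicit function theorem (or simply by monotonicity of $h$ on a small interval around $\gamma_0$). Call this zero $\gamma(W)$; set $V = e^{i\gamma(W)}W - U_{\mua}$, which by construction lies in $(T_{U_{\mua}}\Ocal_{U_{\mua}})^\perp$.

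\emph{Step 2: the two-sided estimate.} The left inequality $\rd(W, \Ocal_{U_{\mua}}) \le \|V\|$ is immediate: $e^{i\gamma(W)}W = U_{\mua} + V$ lies on $\Ocal_{U_{\mua}}$ shifted by $V$, and since the phase rotations are isometries of $\Ban$ (the $\Ban$-norm in \eqref{defscalarprodX} is phase-invariant), $\rd(W, \Ocal_{U_{\mua}}) = \rd(e^{i\gamma(W)}W, \Ocal_{U_{\mua}}) = \rd(U_{\mua}+V, \Ocal_{U_{\mua}}) \le \|(U_{\mua}+V) - U_{\mua}\| = \|V\|$. For the right inequality $\|V\| \le C\rd(W, \Ocal_{U_{\mua}})$: let $e^{i\gamma_*}\alpha$ be a near-optimal point, so $\|e^{i\gamma(W)}W - e^{i(\gamma(W)-\gamma_*)}\cdot(e^{i\gamma_*}\alpha)\|$ — rather, write $V = e^{i\gamma(W)}W - \alpha = e^{i(\gamma(W)-\gamma_*)}(e^{i\gamma_*}W) - \alpha$. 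Decompose $e^{i\gamma_*}W = \alpha + Z$ with $\|Z\| \le \rd(W,\Ocal_{U_{\mua}}) + \varepsilon$ (using isometry again), so $V = e^{i\theta}(\alpha + Z) - \alpha = (e^{i\theta}-1)\alpha + e^{i\theta}Z$ where $\theta = \gamma(W)-\gamma_*$. Then $\|V\| \le |e^{i\theta}-1|\,\|\alpha\| + \|Z\|$, and it remains to bound $|e^{i\theta}-1|$, i.e. $|\theta|$, by $C\|Z\|$. This follows from the orthogonality condition: $0 = \langle i, V\rangle = \langle i, (e^{i\theta}-1)\alpha\rangle + \langle i, e^{i\theta}Z\rangle$. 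Now $\langle i, (e^{i\theta}-1)\alpha\rangle = \mathrm{Re}(\overline{i}\,(e^{i\theta}-1)\alpha)\cdot L = \sin\theta \cdot |\alpha|^2 L$ (using $\langle i\alpha, \alpha\rangle=0$ and the pairing), so $|\sin\theta|\,|\alpha|^2 L = |\langle i, e^{i\theta}Z\rangle| \le \|Z\|\cdot\mathrm{const}$, giving $|\theta| \le C'\|Z\|$ for $\theta$ in a small neighbourhood of $0$ (legitimate when $\delta$ small so that $\gamma(W)$ is close to $\gamma_*$). Combining, $\|V\| \le C\|Z\| \le C(\rd(W,\Ocal_{U_{\mua}})+\varepsilon)$; letting $\varepsilon \to 0$ gives \eqref{estimatey}.

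\emph{Main obstacle.} The delicate point is ensuring $\theta = \gamma(W) - \gamma_*$ is genuinely small — i.e. that the zero $\gamma(W)$ produced in Step 1 is the one near the optimal $\gamma_*$ and not some other branch. I would handle this by noting that on the interval where $h'$ is bounded away from zero (a neighbourhood of $\gamma_*$ of fixed size, once $\delta$ is small), $h$ is strictly monotone, hence has a \emph{unique} zero there, and that zero is within $O(\|Z\|)$ of $\gamma_*$ by the quantitative version of the monotonicity estimate just used; uniqueness then identifies it with $\gamma(W)$. Everything else is routine: expanding $e^{i\theta}-1$, using phase-invariance of the norm, and the Cauchy--Schwarz bound $|\langle i, \cdot\rangle| \le C\|\cdot\|$ from continuity of the scalar product \eqref{defpairingX} relative to the $\Ban$-norm.
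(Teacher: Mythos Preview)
Your proof is correct and follows essentially the same route as the paper's: both use the implicit function theorem on the phase variable (your function $h(\gamma)=\langle i\alpha,e^{i\gamma}W\rangle$ is exactly the paper's $\mathcal F(e^{i\tilde\gamma}W,\phi)$ up to a real scalar) to produce the modulation angle, and then read off the decomposition. Your Step~2 actually fills in more detail than the paper, which simply asserts that \eqref{estimatey} ``follows directly from the definition of $V$''; your explicit bound $|\sin\theta|\lesssim\|Z\|$ via the orthogonality condition, together with the uniqueness of the nearby zero, is precisely the content hidden in the paper's appeal to the Lipschitz continuity of the implicit function $\Lambda$ at $\alpha$. (One small slip: early on you wrote $T_{U_{\mua}}\Ocal_{U_{\mua}}=\{W:\langle i,W\rangle=0\}$, but that set is the \emph{orthogonal complement}; the tangent line itself is $\mathrm{span}_\R\{i\alpha\}=\mathrm{span}_\R\{i\}$.)
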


\begin{proof}
	Let $W\in \Ban$ such that $\rd (W,\Ocal_{U_{\mua}})<\delta$ with $\delta>0$ sufficiently small. Hence there exists $\tilde \gamma$, which depends on $W$, such that 
	\begin{displaymath}
		\norm e^{i\tilde \gamma}W-\alpha\norm\le 2 \inf_{\lambda\in[0,2\pi)}\norm W-e^{i\lambda}\alpha\norm\le 2\delta 
	\end{displaymath}

	Next, consider the functional
	\begin{align*}
		\mathcal F: \Ban\times \RR&\rightarrow \RR\\
		(v,\phi) &\to \langle e^{i\phi}v,i\rangle=-\mathrm{Re}\int_{0}^L i e^{i\phi} v(x)\diff x.
	\end{align*}
	Since $\mathcal F(\alpha,0)=0$ and $\partial_{\phi}\mathcal F(\alpha,0)=\alpha L\neq 0$, by means of the implicit function theorem, we can conclude that there exists $\Lambda : \mathcal V\rightarrow (-\varepsilon,\varepsilon)$ with $\mathcal V$ a neighbourhood of $\alpha$ in $\Ban$ and $\varepsilon>0$ sufficiently small, such that if $v\in \mathcal V$ then there exists a unique $\phi=\Lambda(v)\in (-\varepsilon,\varepsilon)$ for which we have $\langle e^{i\phi}v,i\rangle=0$.

	As a consequence, since $\norm e^{i\tilde \gamma}W-\alpha\norm<2\delta$, if we choose $\delta>0$ sufficiently small then there exists $\phi\in \RR$ such that $\langle e^{i(\phi+\tilde \gamma)}W,i\rangle=0$. By taking $\gamma=\tilde \gamma +\phi$ modulo $2\pi$, we obtain \eqref{eqdecomposition}. Indeed, $\Ban=T_{U_{\mua}}\Ocal_{U_{\mua}}\oplus\left(T_{U_{\mua}}\Ocal_{U_{\mua}}\right)^\perp$ and
	$T_{U_{\mua}}\Ocal_{U_{\mua}}=\mathrm{span}_{\R}\left\{i\right\}$. Hence, 
	\begin{displaymath}
		e^{i\gamma}W-\alpha=ai+V
	\end{displaymath}
	with $a\in \RR$ and $V\in \left(T_{U_{\mua}}\Ocal_{U_{\mua}}\right)^\perp$. As a consequence,
	\begin{displaymath}
		0=\langle e^{i\gamma}W-\alpha,i\rangle=a\langle i,i\rangle
	\end{displaymath}
	and $a$ has to be equal to $0$.

	Estimate \eqref{estimatey} follows directly from the definition of $V$.
\end{proof}

Finally, the following proposition, in the same spirit of Proposition~\ref{thm:hessianestimate}, proves the coercivity of $\Lcal_{\mua}$ on $\Ocal_{U_{\alpha}}$.

\begin{proposition}
	\label{positivityF} Let $\beta\left(\frac{2\pi}{L}\right)^2 -2\lambda \vb \alpha \vb^2>0$, $\alpha\neq 0$ and $\mathcal L_{\mua}$ be defined as in \eqref{defF}. 
	\begin{enumerate}[label=({\alph*})]
	\item If $\lambda <0$, let $c_\lambda=\min\left\{\frac{\beta \left(\frac{2\pi}{L}\right)^2}{1+\left(\frac{2\pi}{L}\right)^2}, -2\lambda  \alpha^2\right\}$.
	Then there exists $\delta>0$ such that 
	\begin{equation}
		\label{eqposF}
		\mathcal L_{\mua}(W)-\mathcal L_{\mua}(U_{\mua})\ge \frac{c_\lambda}{4}\rd(W,\Ocal_{U_{\mua}})^2
	\end{equation}
	for all $W\in \Ban$, such that $\rd(W, \Ocal_{U_{\mua}})\le\delta$.
	\item If $0< 2\lambda \alpha^2<\beta\left(\frac{2\pi}{L}\right)^2$, let $c_\lambda=\frac{ \beta \left(\frac{2\pi}{L}\right)^2-2\lambda  \alpha^2}{1+\left(\frac{2\pi}{L}\right)^2}$.
	Then there exists $\delta>0$ such that 
	\begin{equation}
		\label{eqposFbis}
		\mathcal L_{\mua}(W)-\mathcal L_{\mua}(U_{\mua})\ge \frac{c_\lambda}{16}\rd(W,\Ocal_{U_{\mua}})^2
	\end{equation}
	for all $W\in \Sigma_{\alpha}$, such that $\rd(W, \Ocal_{U_{\mua}})\le\delta$.
	\end{enumerate}
\end{proposition}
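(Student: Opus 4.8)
The plan is to deduce both estimates from the single Hessian bound of Proposition~\ref{spectrumhessianF} by a second-order Taylor expansion of $\Lcal_{\mua}$ at the point $U_{\mua}$, combined with the modulation Lemma~\ref{lemmodulation}. Three facts set the stage: $\Lcal_{\mua}=H-(\xi+\beta k^2)F_2\in C^2(\Ban,\R)$ (since $H\in C^2$ by the argument of Section~\ref{ss:hampde} and $F_2$ is quadratic); $D_{U_{\mua}}\Lcal_{\mua}=0$ because $U_{\mua}\in\Ocal_{U_{\mua}}$ (see \eqref{eqfirstderF}); and $\nabla^2\Lcal_{\mua}(U_{\mua})$ is a bounded operator $\Ban\to\Ban^*$ (see \eqref{eqHessianF}). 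Hence Taylor's theorem at $U_{\mua}$ gives, for $V\in\Ban$,
\begin{equation*}
\Lcal_{\mua}(U_{\mua}+V)-\Lcal_{\mua}(U_{\mua})=\tfrac12 D^2_{U_{\mua}}\Lcal_{\mua}(V,V)+\mathrm{o}(\|V\|^2).
\end{equation*}
Since $\Lcal_{\mua}$ is $G$-invariant, $\Lcal_{\mua}(W)=\Lcal_{\mua}(e^{i\gamma}W)$, so after applying Lemma~\ref{lemmodulation} to write $e^{i\gamma}W=U_{\mua}+V$ with $V\in(T_{U_{\mua}}\Ocal_{U_{\mua}})^\perp$ and $\rd(W,\Ocal_{U_{\mua}})\le\|V\|\le C\,\rd(W,\Ocal_{U_{\mua}})$, the task reduces to bounding $D^2_{U_{\mua}}\Lcal_{\mua}(V,V)$ from below.

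For part~(a) this is immediate: since $\lambda<0$, Proposition~\ref{spectrumhessianF}(a) gives $D^2_{U_{\mua}}\Lcal_{\mua}(V,V)\ge c_\lambda\|V\|^2$ for \emph{every} $V\in(T_{U_{\mua}}\Ocal_{U_{\mua}})^\perp$. Plugging this into the Taylor expansion and shrinking $\delta$ so that the $\mathrm{o}(\|V\|^2)$ remainder is at most $\tfrac14 c_\lambda\|V\|^2$ whenever $\|V\|\le C\delta$, one gets $\Lcal_{\mua}(W)-\Lcal_{\mua}(U_{\mua})\ge\tfrac14 c_\lambda\|V\|^2\ge\tfrac14 c_\lambda\,\rd(W,\Ocal_{U_{\mua}})^2$, which is \eqref{eqposF}.

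For part~(b) the Hessian is positive only on the smaller subspace $T_{U_{\mua}}\Sigma_{\alpha}\cap(T_{U_{\mua}}\Ocal_{U_{\mua}})^\perp$, so one must first use the constraint $W\in\Sigma_{\alpha}$. As $F_2$ is $G$-invariant, $U_{\mua}+V=e^{i\gamma}W\in\Sigma_{\alpha}$ as well, and since $F_2(u)=-\tfrac12\langle u,u\rangle$ is quadratic this reads $\langle\alpha,V\rangle=-\tfrac12\|V\|_{L^2}^2=\mathrm{O}(\|V\|^2)$. Noting $\mathrm{span}_\R\{\alpha\}\subset(T_{U_{\mua}}\Ocal_{U_{\mua}})^\perp$ (because $\alpha\perp i$ in $L^2$), decompose $V=V_1+V_2$ with $V_2=\tfrac{\langle\alpha,V\rangle}{\langle\alpha,\alpha\rangle}\alpha$ and $V_1=V-V_2\in T_{U_{\mua}}\Sigma_{\alpha}\cap(T_{U_{\mua}}\Ocal_{U_{\mua}})^\perp$; then $\|V_2\|=\mathrm{O}(\|V\|^2)$ and $\|V_1\|\ge\tfrac12\|V\|$ for $\delta$ small. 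Using boundedness of $\nabla^2\Lcal_{\mua}(U_{\mua})$ to control the cross terms, $D^2_{U_{\mua}}\Lcal_{\mua}(V,V)=D^2_{U_{\mua}}\Lcal_{\mua}(V_1,V_1)+\mathrm{O}(\|V\|^3)\ge c_\lambda\|V_1\|^2+\mathrm{O}(\|V\|^3)\ge\tfrac14 c_\lambda\|V\|^2+\mathrm{O}(\|V\|^3)$ by Proposition~\ref{spectrumhessianF}(b). Substituting into the Taylor expansion and choosing $\delta$ small enough to absorb the $\mathrm{O}(\|V\|^3)+\mathrm{o}(\|V\|^2)$ error yields $\Lcal_{\mua}(W)-\Lcal_{\mua}(U_{\mua})\ge\tfrac1{16}c_\lambda\|V\|^2\ge\tfrac1{16}c_\lambda\,\rd(W,\Ocal_{U_{\mua}})^2$, which is \eqref{eqposFbis}.

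The only genuinely delicate step is the handling of the nonlinear constraint in~(b): the level set $\Sigma_{\alpha}$ forces on $V$ a nonzero component $V_2$ transverse to $T_{U_{\mua}}\Sigma_{\alpha}$, which must be shown to be quadratically small so as not to spoil the coercivity coming from the $V_1$-part — this is exactly the Lagrange-multiplier bookkeeping of \eqref{eq:estimate1}--\eqref{eq:estimate2} and of the proof of Proposition~\ref{thm:hessianestimate}, here specialized to the one-dimensional constraint $F_2=\mathrm{const}$. Everything else (the $C^2$-regularity of $\Lcal_{\mua}$, boundedness of its Hessian, and the modulation lemma) has already been established, so the remaining work is routine.
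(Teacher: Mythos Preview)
Your proof is correct and follows essentially the same approach as the paper: modulation via Lemma~\ref{lemmodulation}, Taylor expansion at $U_{\mua}$, direct use of Proposition~\ref{spectrumhessianF}(a) for part~(a), and for part~(b) the tangent/transverse splitting $V=V_1+V_2$ along $T_{U_{\mua}}\Sigma_\alpha$ with $\|V_2\|=\mathrm{O}(\|V\|^2)$. The only cosmetic difference is that the paper cites Lemma~\ref{eq:basicestimate} for the $\|V_2\|$ estimate whereas you compute $\langle\alpha,V\rangle=-\tfrac12\|V\|_{L^2}^2$ explicitly from the quadratic form of $F_2$, which is perfectly fine.
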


\begin{proof}
	Let $W\in \Ban$ such that $\rd (W,\Ocal_{U_{\mua}})\le \delta$ with $\delta>0$ sufficiently small. By Lemma \ref{lemmodulation}, there exists $\gamma\in [0,2\pi]$ such that $e^{i\gamma}W-U_{\mua}=V$ with $V\in \left(T_{U_{\mua}}\Ocal_{U_{\mua}}\right)^\perp$ and $\norm V \norm\le C\rd(W,\Ocal_{U_{\mua}})$. As a consequence, since $D_{U_{\mu_\alpha}}\Lcal_{\mua}=0$, 
	\begin{align*}
		\mathcal L_{\mua}(W)-\mathcal L_{\mua}(U_{\mua})=& \mathcal L_{\mua}(e^{i \gamma}W)-\mathcal L_{\mua}(U_{\mua})\\
		=& \frac{1}{2}D^2_{U_{\mu_\alpha}}\Lcal_{\mua}(e^{i\gamma}W-U_{\mua},e^{i\gamma}W-U_{\mua}) 
		+\mathrm{o}(\norm e^{i\gamma}W-U_{\mua}\norm^2) \\
		=&\frac{1}{2}D^2_{U_{\mu_\alpha}}\Lcal_{\mua}(V,V)+\mathrm{o}(\norm V\norm^2).
	\end{align*}
	If $\lambda<0$, we can apply \eqref{eqcoercivitydefoc}, and for all $W\in \Ban$ with $\rd(W,\Ocal_{U_{\mua}})$ small, we obtain
	\begin{displaymath}
		\mathcal L_{\mua}(W)-\mathcal L_{\mua}(U_{\mu_\alpha})\ge \frac{c_\lambda}{4}\rd(W,\Ocal_{U_{\mua}})^2.
	\end{displaymath}
	If $0< 2\lambda \alpha^2<\beta\left(\frac{2\pi}{L}\right)^2$, we proceed as follows. Let $W\in \Sigma_{\alpha}$ such that $\rd (W,\Ocal_{U_{\mua}})\le \delta$ with $\delta>0$ sufficiently small. As before, thanks to Lemma \ref{lemmodulation}, there exists $\gamma\in [0,2\pi]$ such that $e^{i\gamma}W-U_{\mua}=V$ with $V\in \left(T_{U_{\mua}}\Ocal_{U_{\mua}}\right)^\perp$ and $\norm V \norm\le C\rd(W,\Ocal_{U_{\mua}})$.
	Next, it is clear that $\Ban=T_{U_{\mua}}\Sigma_{\alpha}\oplus\mathrm{span}\left\{U_{\mua}\right\}$. Hence, $V=V_1+V_2$ with $V_1\in T_{U_{\mua}}\Sigma_{\alpha}\cap \left(T_{U_{\mua}}\Ocal_{U_{\mua}}\right)^\perp$ and $V_2\in \mathrm{span}\left\{U_{\mua}\right\}\cap \left(T_{U_{\mua}}\Ocal_{U_{\mua}}\right)^\perp$. Moreover, using the same arguments as in Lemma~\ref{eq:basicestimate}, for $\| V\|$ small enough, one has
\begin{eqnarray*}
 \|V_2\| \leq \frac{1}{2\sqrt{\alpha^2L}}\|V\|^2\text{ and }\|V_1\| \geq \frac{1}{2}\|V\|.
\end{eqnarray*}
As a consequence, 
	\begin{align*}
		\mathcal L_{\mua}(W)-\mathcal L_{\mua}(U_{\mua})=&\frac{1}{2}D^2_{U_{\mu_\alpha}}\Lcal_{\mua}(V,V)+\mathrm{o}(\norm V\norm^2)
		=\frac{1}{2}D^2_{U_{\mu_\alpha}}\Lcal_{\mua}(V_1,V_1)+\mathrm{o}(\norm V\norm^2)\\
		&\geq \frac{c_\lambda}{2}\norm V_1\norm^2+\mathrm{o}(\norm V\norm^2)\geq \frac{c_\lambda}{8}\norm V\norm^2+\mathrm{o}(\norm V\norm^2).
	\end{align*}
Finally, if $0< 2\lambda \alpha^2<\beta\left(\frac{2\pi}{L}\right)^2$, and for all $W\in \Sigma_{\alpha}$ with $\rd(W,\Ocal_{U_{\mua}})$ small, we obtain
	\begin{displaymath}
		\mathcal L_{\mua}(W)-\mathcal L_{\mua}(\alpha)\ge \frac{c_\lambda}{16}\rd(W,\Ocal_{U_{\mua}})^2.
	\end{displaymath}
\end{proof}

Now, whenever $\lambda \vb \alpha\vb^2<0$, a straightforward application of the proof of Theorem \ref{thm:lyapmethod} with $\mathcal L_{\mua}$ as Lyapunov function allows us to conclude that $\Ocal_{U_{\mua}}$ is orbitally stable under small perturbations in $\Ban$.

To conclude in the case $0< 2\lambda \alpha^2<\beta\left(\frac{2\pi}{L}\right)^2$, we can apply Theorem~\ref{thm:lyapmethodrestricted}. Indeed, Hypotheses~A and B$\mu_{\alpha}$ (Section~\ref{ss:strategy}) are fulfilled and the function $F_2$ satisfies Hypothesis~F thanks to Lemma~\ref{lem:hypF}.


\section{Orbital stability for inhomogeneous NLS}\label{curves.sec}

This section is concerned with an NLS equation of the form
\begin{equation}\label{nls}
i\partial_t u+\Delta u+f(x,|u|^2)u=0, \quad u=u(t,x):\real\times\rn\to\C.
\end{equation}
We consider standing wave\index{standing waves} solutions $u(t,x)=e^{i\lam t}w(x)$, 
where $w:\rn\to\R$ is localized\footnote{Note that we focus here on situations where the wave profile
$w(x)$ is real-valued.}
-- typically $w\in H^1(\rn)$ and $w(x)\to0$ exponentially
as $|x|\to\infty$. Such a solution exists if and only if
\begin{equation}\label{stat}
\Delta w -\lam w+f(x,w^2)w=0,
\end{equation}
which is precisely the ``stationary equation'' \eqref{eq:findrelequi}.
Note that the notation for the nonlinearity in \eqref{nls} is slightly different than in 
Section~\ref{s:dynsysexamples}, and automatically ensures that \eqref{eq:nlsextnl} holds,
for all $u\in \C\setminus\{0\}$.

The existence of solutions of \eqref{stat} can be obtained under various hypotheses 
on $f$, the easiest case being the pure power nonlinearity, $f(x,w^2)= |w|^{\sigma-1}, \ \sigma>1$.
Note that, unlike in the case of periodic boundary conditions studied in the previous section,
it is crucial here that the nonlinearity be focusing for standing waves to exist. The stationary
equation \eqref{stat} has no solutions if, for instance, $f(x,w^2)= - |w|^{\sigma-1}$.
In the sequel, we will indeed suppose that the nonlinearity is focusing, which in the context
of \eqref{nls} means that $f(x,s)$ is positive and increasing in $s>0$.

The purpose of this section is to further illustrate 
the general stability theory developed in Section~\ref{s:orbstabproof}.
Orbital stability results for standing waves of \eqref{nls} have been obtained in \cite{dcds,jde,ans,eect}
and will be summarized here.
The stability analysis in these papers benefits from having solution curves
$\lam\to w_\lam$. In the setting of Section~\ref{s:orbstabproof}, they
can be seen as an application of Theorem~\ref{thm:lyapmethodrestrictedmod}.
The approach used in \cite{dcds,jde,ans,eect} was to apply the celebrated
Theorem~2 of Grillakis, Shatah, Strauss \cite{gssI}. This result essentially relies on the
set of spectral conditions (S1)--(S3), formulated below in the context of \eqref{nls}, 
together with a convexity condition, which here takes the form \eqref{slope}.
In the framework developed in these notes, the role of Theorem~2 of \cite{gssI} can be interpreted
as follows. It will be shown in Proposition~\ref{slope-coerc.thm} that the conditions (S1)--(S3) and 
\eqref{slope} ensure that the coercivity property \eqref{eq:hessiancondition} required by 
Proposition~\ref{thm:hessianestimate} is satisfied at the relative equilibrium $w_\lam$.
Theorem~\ref{thm:lyapmethodrestrictedmod} 
can then be applied. As already mentioned in the
introduction to Section~\ref{s:orbstabproof}, and explained in more detail
after the proof of Proposition~\ref{slope-coerc.thm}, the relative equilibria of \eqref{nls} can be 
parametrized equivalently by the parameter $\xi$ appearing in \eqref{stat}, or by the corresponding
value $\mu=\frac12\Vert w_\xi\Vert_{L^2}^2$ of the constant of the motion.
It turns out that using $\xi$ is more convenient here.
Note that, since this constant of the motion satisfies Hypothesis~F, one could also apply 
Theorem~\ref{thm:lyapmethodrestricted} instead of Theorem~\ref{thm:lyapmethodrestrictedmod}.

The notion of orbital stability we shall be concerned with here is that corresponding to the group action 
\eqref{restraction} of Section~\ref{ss:hampde}. Note however that the explicit spatial dependence
in \eqref{nls} breaks the invariance under translations, and one rather needs to consider the
restricted action $\Phi_\gamma$ on the phase space $E=H^1(\rn,\complex)$,
\begin{equation}
\Phi_{\gamma}(u)=e^{i\gamma}u(x), \quad u\in E, \ \gamma\in\real.  \label{morerestraction}
\end{equation}
The standing waves
corresponding to solutions $w_\lam$ of the stationary equation \eqref{stat}
are then relative equilibria for the dynamics of \eqref{nls}, with respect to the action $\Phi_\gamma$. 

\begin{remark}\label{transl.rem}
\rm
If $f$ does not depend on $x$ then the full group action \eqref{restraction}
is to be considered, and the standing waves of \eqref{nls} are in general not orbitally stable
in the sense of \eqref{morerestraction}. Orbital stability in the sense of 
the full group action \eqref{restraction} was proved by Cazenave and Lions \cite{cazlions}
by variational arguments.
\end{remark}

We will only consider here situations where the coefficient $f$ explicitly
depends on the space variable
$x\in\rn$ -- \eqref{nls} is then often referred to as an {\em inhomogeneous NLS} --, 
and decays as $|x|\to\infty$, in a sense that will be made more precise below. 
We shall also suppose
that $f(x,w^2)\sim V(x)|w|^{\sigma-1}$ as $w\to0$. Conditions relating the function $V$
and the power $\sigma>1$ will be given for stability of standing waves to hold. In particular
our assumptions will imply $\sigma<1+\frac{4}{d-2}$, so that local existence in $H^1(\rn)$ for the
Cauchy problem associated with \eqref{nls} is ensured by the results of Section~\ref{s:dynsysexamples}.
Two cases will be considered: 

\medskip
\noindent
{\bf (PT)} the power-type nonlinearity $f(x,w^2)=V(x)|w|^{\sigma-1}$;

\medskip
\noindent
{\bf (AL)} the asymptotically linear case $f(x,w^2) \to V(x)$ as $|w|\to\infty$ \\
({\em e.g.} with $f(x,w^2)=V(x)\frac{|w|^{\sigma-1}}{1+|w|^{\sigma-1}}$).

\medskip
We will give a short account of the main arguments used in \cite{dcds,jde,ans,eect} 
to establish the stability of standing waves along a global solution curve.
We will also briefly sketch the bifurcation analysis yielding a smooth branch of non-trivial solutions
of \eqref{stat} emerging from the trivial solution $w=0$. This part of the argument is crucial since,
in the approach originally developed in \cite{dcds}, the spectral properties and the condition
\eqref{slope} required to obtain the coercivity of an appropriate Lyapunov functional are derived by continuation from the limit $w_\lam\to0$. It is worth emphasizing 
here that the verification of these hypotheses is precisely that part of the stability analysis which 
strongly relies on the model considered.
Once the required coercivity properties are established, the orbital stability can be deduced
from the abstract results of Section~\ref{s:orbstabproof}. 


\subsection{Hamiltonian setting}

Similarly to Section~\ref{s:nlsetorus1d}, we work here with
\[
E=H^1(\rn,\complex), \quad (u,v)_E=\re \intrn \nabla u(x) \cdot \nabla\bar v(x) + u(x) \bar v(x) \diff x.
\]
The Hamiltonian and the charge are respectively defined by $H,Q:E\to\real$,
\begin{equation}\label{energie}
H(u)=\frac12\intrn |\nabla u|^{2}\diff x-\frac12\intrn \int_0^{|u|^2}f(x,s)\diff s\diff x,
\quad Q(u)=\frac{1}{2}\intrn |u|^2\diff x, \quad  u \in E.
\end{equation}
In the notation of Section~\ref{ss:hampde},
$Q(u)\equiv -F_{d+1}(u)$, but we will keep the customary notation $Q$ here.
Under our assumptions, $H,Q\in C^2(\Ban,\real)$.

Now \eqref{nls} can precisely be written in the form
\begin{equation}\label{hamiltonian}
\calJ\dot{u}_t=D_{u_t}H
\end{equation}
considered in Section~\ref{s:hamdyninfinite},
with $E=H^1(\rn,\complex)\simeq H^1(\rn,\real)\times H^1(\rn,\real)$
and
$$
\calJ=
\begin{pmatrix}
0 & -I\\
I & 0
\end{pmatrix}
$$
with $I:H^1\hookrightarrow H^{-1}$ the (dense) injection. 
That is, $\calJ(q,p)=(-p,q)\in E^*$, for all $(q,p)\in E$, as in Section~\ref{ss:hampde}.
Note that we use the identification
$$
H^1(\rn,\real)\subset L^2(\rn,\real) = L^2(\rn,\real)^*\subset H^{-1}(\rn,\real).
$$

In this setting a solution of \eqref{nls} is a function 
$u\in C^1((-T_\mathrm{min},T_\mathrm{max}),E)$, for some 
$T_\mathrm{min},T_\mathrm{max}>0$
(depending on $u(0)$),
satisfying \eqref{hamiltonian} for all $t\in(-T_\mathrm{min},T_\mathrm{max})$.
Standing waves are particular solutions of the form $u(t)=\Phi_{(\lam t)}w$, $w\in E$,
and the stationary equation \eqref{stat} now reads
\begin{equation}\label{stat3}
D_w H+\lam D_w Q=0.
\end{equation}
Hence, the discussion in Sections~\ref{s:identifyreleq} and \ref{s:orbstabproof} indicates that
\begin{equation}\label{GSSlyap}
\calL_\lam=H+\lam Q 
\end{equation}
is the natural candidate for the Lyapunov function. 
Furthermore, the invariance of $H$ and $Q$ under the action of $\Phi_\gamma$
implies that
\begin{equation}\label{invar}
D_{\Phi_\gamma(w)}H+\lam D_{\Phi_\gamma(w)}Q=0, \quad \gamma\in\R.
\end{equation}
Finally, note that
the isometric action \eqref{morerestraction} can equivalently be expressed as 
$$
\Phi_\gamma 
\left(
\begin{array}{c}
\re u\\
\im u\\
\end{array}
\right)
=
\begin{pmatrix}
\cos \gamma & -\sin \gamma\\
\sin \gamma & \cos \gamma
\end{pmatrix}
\left(
\begin{array}{c}
\re u\\
\im u\\
\end{array}
\right), \quad u\in E, \ \gamma\in\real.
$$


\subsection{Bifurcation results}\label{bifcurves.sec}

In this section we present bifurcation\index{bifurcation} results ensuring the existence of smooth curves of solutions
of \eqref{stat}. From a bifurcation-theoretic viewpoint the peculiarity of these results is that, in both
the (PT) and (AL) cases, bifurcation occurs from the essential spectrum of
the linearization of \eqref{stat}, namely
$$
\Delta w = \lam w,
$$
this linear problem set on $\rn$ having no eigenvalues.

We start with the power-type case (PT), that is, we first consider the problem
\begin{equation}\label{ptstat}
\Delta w(x) + V(x)|w(x)|^{\sigma-1}w(x) = \lam w(x), \quad w \in H^1(\rn,\real),
\end{equation}
where $d\geq1$ and $V:\rn\to\real$ satisfies:

\medskip
\noindent
(V1) $V\in C^1(\real^d)$;

\smallskip
\noindent
(V2) there exists $b\in(0,2)$ ($b\in(0,1)$ if $d=1$) such that 
$$1<\sigma<\textstyle\frac{4-2b}{d-2} \quad \text{if} \ d\geq3, 
\quad 1<\sigma<\infty \quad \text{for} \ d=1,2,$$
$$\lim_{|x|\to\infty}|x|^{b}V(x)=1 \quad \text{and} \quad 
\lim_{|x|\to\infty}|x|^{b}[x\cdot\nabla V(x)+bV(x)]=0;$$

\noindent
(V3) $V$ is radial with $V(r)>0$ and $V'(r)<0$ for $r>0$;

\smallskip
\noindent
(V4) $\disp r\frac{V'(r)}{V(r)}$ is decreasing in $r>0$ (and so $\to -b$ by (V2)).

\medskip
\noindent
Note that $V(x)=(1+|x|^2)^{-b/2}$ satisfies all of the above assumptions.

\begin{theorem}\label{ptglobal.thm}
Suppose that the hypotheses (V1) to (V4) hold. Then there exists a curve 
$w\in C^1\big((0,\infty),H^1(\rn)\big)$ such that, for all $\lam\in(0,\infty)$,
$w_\lam\equiv w(\lam)$ is the unique positive radial solution of \eqref{ptstat}, 
$w_\lam\in C^2(\real^d)\cap L^\infty(\real^d)$, and $w_\lam$ is strictly radially
decreasing, with $w_\lam(x), |\nabla w_\lam(x)|\to0$ exponentially as $|x|\to\infty$. 
Furthermore, the asymptotic behaviour of the curve reads
$$
\lim_{\lam\to0}\Vert w_\lam\Vert_{H^1}=
\left\{ \begin{array}{ll}
0 		& \ \text{if} \quad 1<\sigma<1+\frac{4-2b}{d}, \\
\infty 	& \ \text{if} \quad 1+\frac{4-2b}{d}<\sigma<1+\frac{4-2b}{d-2},
\end{array}\right.
$$
and
$$
\lim_{\lam\to\infty}\Vert w_\lam\Vert_{H^1}=\infty 
\textstyle \quad\text{for all}\quad 1<\sigma<1+\frac{4-2b}{d-2}.
$$
\end{theorem}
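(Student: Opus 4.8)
The natural approach, following \cite{dcds,jde,ans,eect}, is to treat Theorem~\ref{ptglobal.thm} by a combination of variational, ODE–uniqueness and implicit-function-theorem ingredients rather than by the abstract machinery of Section~\ref{s:orbstabproof}: for each fixed $\lam>0$ produce a positive radial solution of \eqref{ptstat} by minimisation, prove it is the only one using the structural hypotheses (V3)--(V4), upgrade $\lam\mapsto w_\lam$ to a $C^1$ curve via the implicit function theorem and a non-degeneracy statement, and read off the behaviour of $\Vert w_\lam\Vert_{H^1}$ as $\lam\to0$ and $\lam\to\infty$ from a two-scale rescaling. The remark that makes the variational step run is that the upper bound on $\sigma$ in (V2) is exactly the subcriticality threshold for the Hardy--Sobolev embedding $H^1(\rn)\hookrightarrow L^{\sigma+1}(\rn,|x|^{-b}\dif x)$, and that this embedding is compact once restricted to the radial subspace $H^1_{\mathrm{rad}}(\rn)$ (a weighted Strauss lemma, legitimate because $b<2$, resp.\ $b<1$ for $d=1$, keeps $|x|^{-b}$ locally integrable).

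Fixing $\lam>0$, I would minimise $I_\lam(w)=\tfrac12\Vert\nabla w\Vert_{L^2}^2+\tfrac\lam2\Vert w\Vert_{L^2}^2-\tfrac1{\sigma+1}\intrn V(x)|w|^{\sigma+1}\dif x$ over the Nehari manifold in $H^1_{\mathrm{rad}}(\rn)$; the compact embedding produces a nontrivial minimiser $w_\lam$, which one may take $\geq0$ (replacing $w_\lam$ by $|w_\lam|$) and then $>0$ by the strong maximum principle. Elliptic bootstrapping, using $V\in C^1$ and the local integrability of $|x|^{-b}$, gives $w_\lam\in C^2(\rn)\cap L^\infty(\rn)$; the maximum principle on an exterior domain together with the barrier $e^{-\sqrt{\lam/2}\,|x|}$, applicable once $V(x)w_\lam(x)^{\sigma-1}<\lam/2$ (true for large $|x|$, since $w_\lam\to0$), yields the exponential decay of $w_\lam$, and the same argument after differentiating the equation gives it for $\nabla w_\lam$; finally the moving-plane method on $\rn$ in the style of Gidas--Ni--Nirenberg, using that $V$ is radial with $V'<0$, forces $w_\lam$ to be strictly radially decreasing.

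The one genuinely model-dependent step is uniqueness of the positive radial solution and the attached non-degeneracy. Writing \eqref{ptstat} for a radial $w=w(r)$ as the ODE $w''+\tfrac{d-1}{r}w'-\lam w+V(r)w^\sigma=0$, $w'(0)=0$, $w>0$, $w(\infty)=0$, one excludes two distinct such solutions by a Sturmian comparison of their linearised equations along the family of initial heights $w(0)$; the hypothesis in (V4) that $r\,V'(r)/V(r)$ is decreasing is precisely what fixes the sign of the relevant Wronskian, so this is the weighted counterpart of the Coffman--Kwong--McLeod uniqueness theorems. The same comparison, applied to the radial linearisation $L_\lam:=-\Delta+\lam-\sigma V(x)w_\lam^{\sigma-1}$, shows $\ker L_\lam\cap H^1_{\mathrm{rad}}(\rn)=\{0\}$; since $L_\lam$ is a relatively compact perturbation of the invertible $-\Delta+\lam:H^1\to H^{-1}$ (here $\lam>0$ is used), it is an isomorphism on the radial subspace, and the implicit function theorem applied to $w\mapsto-\Delta w+\lam w-V(x)w^{\sigma}$ at each $\lam_0>0$ produces a local $C^1$ branch, which uniqueness identifies with $\lam\mapsto w_\lam$. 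A priori bounds on $\Vert w_\lam\Vert_{H^1}$ over compact $\lam$-intervals, from the variational characterisation and the compact embedding, prevent blow-up at a finite $\lam$, so the curve extends to all of $(0,\infty)$ as claimed; near $\lam=0$ this is the bifurcation of a nontrivial branch from the trivial solution at the bottom of the essential spectrum, which the rescaling below turns into a standard bifurcation analysis. (This same non-degeneracy, in the complexified form of a kernel reduced to the phase direction $iw_\lam$, is what will be needed in Proposition~\ref{slope-coerc.thm}.)

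For the asymptotics I would substitute $w_\lam(x)=\lam^{(2-b)/(2(\sigma-1))}v_\lam(\sqrt{\lam}\,x)$, turning \eqref{ptstat} into $\Delta v_\lam-v_\lam+\lam^{b/2}V(y/\sqrt{\lam})|v_\lam|^{\sigma-1}v_\lam=0$; since $|y/\sqrt{\lam}|^{b}V(y/\sqrt{\lam})\to1$ by (V2), $(v_\lam)$ converges in $H^1_{\mathrm{rad}}(\rn)$, as $\lam\to0$, to the unique positive radial ground state of $\Delta v-v+|y|^{-b}|v|^{\sigma-1}v=0$ (uniqueness and non-degeneracy legitimising the passage to the limit). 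Hence $\Vert w_\lam\Vert_{L^2}^2\sim\lam^{(2-b)/(\sigma-1)-d/2}$ and $\Vert\nabla w_\lam\Vert_{L^2}^2\sim\lam^{(2-b)/(\sigma-1)+1-d/2}$; the first exponent is the smaller one, so $\Vert w_\lam\Vert_{H^1}\to0$ iff $(2-b)/(\sigma-1)>d/2$, i.e.\ $\sigma<1+\tfrac{4-2b}{d}$, and $\Vert w_\lam\Vert_{H^1}\to\infty$ otherwise. As $\lam\to\infty$ one rescales instead by $w_\lam(x)=\lam^{1/(\sigma-1)}v_\lam(\sqrt{\lam}\,x)$, so that $V(y/\sqrt{\lam})\to V(0)>0$ and $(v_\lam)$ converges to the ground state of $\Delta v-v+V(0)|v|^{\sigma-1}v=0$; then $\Vert\nabla w_\lam\Vert_{L^2}^2\sim\lam^{2/(\sigma-1)+1-d/2}$ dominates, and its exponent is strictly positive precisely because (V2) forces $\sigma<1+\tfrac{4}{d-2}$, so $\Vert w_\lam\Vert_{H^1}\to\infty$ throughout the admissible range. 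The main obstacle, on which the $C^1$-smoothness and the global extension of the curve and the two limit arguments all rest, is the interlocking pair of ODE facts — uniqueness of the positive radial solution of \eqref{ptstat} and triviality of the radial kernel of $L_\lam$, both under the monotonicity hypotheses (V3)--(V4) — together with their input, the non-degeneracy of the ground state of the limiting singular-weight equation $\Delta v-v+|y|^{-b}|v|^{\sigma-1}v=0$.
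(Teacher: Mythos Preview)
Your outline is correct and matches the paper's approach. The paper does not give a self-contained proof of this theorem but cites \cite{calcvar}, describing the argument there as ``a combination of variational and analytical arguments''; your sketch (variational existence on the radial Nehari manifold via the compact Hardy--Sobolev embedding, ODE uniqueness and non-degeneracy under (V3)--(V4), implicit-function-theorem continuation, and two-scale rescaling for the asymptotics) is exactly that combination. In particular, your $\lam\to0$ rescaling $w_\lam(x)=\lam^{(2-b)/(2(\sigma-1))}v_\lam(\sqrt{\lam}\,x)$ and the limit problem $\Delta v-v+|y|^{-b}|v|^{\sigma-1}v=0$ coincide with what the paper spells out in Section~\ref{local.sec} (written there with $\lam=k^2$), and your identification of the uniqueness/non-degeneracy step under (V4) as the crux is on point.

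One small imprecision: in the $\lam\to\infty$ step you write that (V2) forces $\sigma<1+\tfrac{4}{d-2}$; in fact (V2) gives the stronger bound $\sigma<1+\tfrac{4-2b}{d-2}$, which of course still implies the inequality you need for the gradient-norm exponent $2/(\sigma-1)+1-d/2$ to be positive.
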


This theorem has been proved in \cite{calcvar} by a combination of variational and analytical 
arguments.
It provides a global continuation, in the radial case, of the local curve of solutions of \eqref{ptstat}  
obtained in \cite{dcds} (parametrized by $\lam\in(0,\lam_0)$, with $\lam_0>0$ small)
under the much weaker assumptions (V1) and (V2). Note in particular that (V2) only requires
the problem to be focusing at infinity, no further sign restrictions being imposed on $V$.
The orbital stability
of the solutions $w_\lam, \ \lam\in(0,\lam_0)$, is also discussed in \cite{dcds}, and it 
is found that they are stable provided
\begin{equation}\label{stablexp}
1<\sigma<1+\txt\frac{4-2b}{d},
\end{equation}
and unstable if $1+\frac{4-2b}{d}<\sigma<1+\frac{4-2b}{d-2}$. 

\begin{remark}
\rm
In fact, more information about the asymptotic behaviour as $\lam\to0$ is obtained in 
\cite{dcds}. In particular,
\begin{equation*}
\lim_{\lam\to0}\Vert w_\lam\Vert_{L^2}=
\left\{ \begin{array}{ll}
0 		& \ \text{if} \quad 1<\sigma<1+\frac{4-2b}{d}, \\
\infty 	& \ \text{if} \quad 1+\frac{4-2b}{d}<\sigma<1+\frac{4-2b}{d-2},
\end{array}\right.
\end{equation*}
whereas
$$
\lim_{\lam\to0}\Vert \nabla w_\lam\Vert_{L^2}=0 
\quad\text{for all} \  1<\sigma<1+\txt\frac{4-2b}{d-2}.
$$
\end{remark}

We now state a global bifurcation result similar to Theorem~\ref{ptglobal.thm},
for \eqref{stat} in dimension $d=1$, in the asymptotically linear case (AL). That is, we consider
\begin{equation}\label{1dstat}
w''(x)+f(x,w(x)^2)w(x)=\lam w(x), \quad w \in H^1(\real,\real),
\end{equation}
where, to fix the ideas\footnote{More general 
assumptions on the coefficient $f$ in (AL) can be given, under which
the bifurcation and stability results presented here still hold, see \cite{eect}.}, 
we let
\begin{equation}\label{alnonlin}
f(x,w^2)=V(x)\frac{|w|^{\sigma-1}}{1+|w|^{\sigma-1}}.
\end{equation}

In the asymptotically linear case, one cannot expect to find positive solutions of 
\eqref{1dstat}--\eqref{alnonlin} for large values of $\lam>0$. 
Heuristically, letting $u\to\infty$ in \eqref{1dstat}--\eqref{alnonlin} leads to
the so-called asymptotic linearization 
\begin{equation}\label{aslin}
w''(x)+V(x)w(x)=\lam w(x),
\end{equation}
having a ray of positive eigenfunctions $\{\mu w_\infty: \mu>0\}$ corresponding to 
a principal eigenvalue $\ly>0$.
This has been put on rigorous grounds in \cite{na}, where it is shown that 
positive even solutions of \eqref{1dstat}--\eqref{alnonlin} only exist for $\lam<\ly$,
and satisfy $\Vert w_\lam\Vert_{H^1}\to\infty$ as $\lam\to\ly$.

\begin{theorem}\label{alglobal.thm}
Suppose (V1) to (V3) and $1<\sigma<5-2b$. 
Then there exists a curve $w\in C^1\big((0,\ly),H^1(\real)\big)$ such that, for all $\lam\in(0,\ly)$, 
$w_\lam$ is the unique positive even solution of \eqref{1dstat}--\eqref{alnonlin}, 
$w_\lam\in C^2(\real)\cap H^2(\real)$ with $w_\lam'(x)<0$ for $x>0$, 
and $w_\lam(x),w_\lam(x)'\to0$ exponentially as $|x|\to\infty$.
Furthermore, there holds
$$
\lim_{\lam\to0}\Ve w_\lam\Ve_{H^1(\real)}=0 \quad\text{and}\quad
\lim_{\lam\to\ly}\Ve w_\lam\Ve_{H^1(\real)}=\infty.
$$
\end{theorem}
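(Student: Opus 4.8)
The plan is to reconstruct, in outline, the argument of \cite{dcds,jde,na,eect}, which produces the curve in three stages: a local bifurcation analysis at the bottom of the essential spectrum ($\lam=0$), a global continuation by the implicit function theorem, and the identification of the right endpoint $\ly$ through the asymptotic linearisation \eqref{aslin}. Since $d=1$, equation \eqref{1dstat} is a second-order ODE with coefficients that are $C^1$ in $x$ by (V1), so regularity is not the issue: any $H^1(\real)$ solution is classical, lies in $C^2(\real)$, and then in $H^2(\real)$ because $w_\lam''=\lam w_\lam-f(x,w_\lam^2)w_\lam\in L^2(\real)$; the exponential decay of $w_\lam$ and $w_\lam'$ follows from comparison with solutions of $v''=\tfrac{\lam}{2}v$ for $|x|$ large, using that $f(x,w_\lam^2)\to0$ as $|x|\to\infty$ by (V2). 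Positivity and the monotonicity $w_\lam'(x)<0$ for $x>0$ come from the maximum principle together with the usual ODE shooting / moving-plane argument, which also yields uniqueness of the positive even solution for each fixed $\lam$.

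First I would treat the behaviour near $\lam=0$. The linearisation of \eqref{1dstat} at $w=0$ is $w''=\lam w$, whose $L^2(\real)$-spectrum is purely essential, so no Crandall--Rabinowitz-type statement applies directly. The device, as in \cite{dcds}, is to rescale the amplitude and the spatial variable so that \eqref{1dstat}--\eqref{alnonlin} converges, as $\lam\to0^+$, to a limiting ground-state equation on $\real$ with the homogeneous coefficient coming from the leading term $|x|^{-b}$ of $V$ (recall $f(x,w^2)\sim V(x)|w|^{\sigma-1}$ for small $w$). The positive even solution of that limiting equation is unique and non-degenerate in the even subspace, so the implicit function theorem applied to the rescaled problem produces a $C^1$ family of positive even solutions $w_\lam$ for $\lam\in(0,\lam_0)$. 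Because $1<\sigma<5-2b=1+\tfrac{4-2b}{d}$ with $d=1$, the exponent lies in the subcritical range, which is exactly what forces $\Vert w_\lam\Vert_{H^1}\to0$ as $\lam\to0$ rather than $\to\infty$ (the same dichotomy as in Theorem~\ref{ptglobal.thm}).

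Next I would continue the curve globally. Let $(0,\lam_+)$ be the maximal interval to which the $C^1$ branch of positive even solutions extends. The implicit function theorem keeps it going as long as the self-adjoint operator
\[
L_\lam=-\frac{d^2}{dx^2}-f(x,w_\lam^2)-2\,\partial_s f(x,w_\lam^2)\,w_\lam^2+\lam
\]
has trivial kernel on $H^1_{\mathrm{even}}(\real)$. Since $w_\lam$ is a positive, ground-state--type solution, Sturm oscillation theory forces $L_\lam$ to have exactly one simple negative eigenvalue and no kernel on the even subspace — precisely the set of spectral conditions (S1)--(S3) of \cite{gssI} in the present context — and these are open conditions, hence preserved along the branch by continuity. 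Combined with a priori bounds confining $\Vert w_\lam\Vert_{H^1}$ on every compact subinterval of $(0,\ly)$ (derived from Nehari/Pohozaev-type identities and the fact that $s\mapsto f(x,s)$ is positive and increasing), this shows the branch cannot terminate in the interior of $(0,\ly)$ and that it exhausts the full set of positive even solutions.

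Finally I would pin down the endpoint. Formally letting $w\to\infty$ in \eqref{1dstat}--\eqref{alnonlin} gives the asymptotic linearisation \eqref{aslin}, and by (V2)--(V3) the Schr\"odinger operator $-\frac{d^2}{dx^2}-V$ has a principal eigenvalue $-\ly<0$ with a positive even eigenfunction. The analysis of \cite{na} makes this rigorous: positive even solutions of \eqref{1dstat}--\eqref{alnonlin} exist only for $\lam<\ly$, and $\Vert w_\lam\Vert_{H^1}\to\infty$ as $\lam\to\ly^-$; together with the global continuation this forces $\lam_+=\ly$ and $\lim_{\lam\to\ly}\Vert w_\lam\Vert_{H^1}=\infty$, so the curve is defined on all of $(0,\ly)$. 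The hard part will be the two model-dependent steps: the local bifurcation at $\lam=0$, where the absence of an isolated eigenvalue makes one rely on the delicate rescaling and on the uniqueness/non-degeneracy of the limiting ground state; and the verification that the spectral picture (S1)--(S3) — equivalently, the non-degeneracy of $L_\lam$ on $H^1_{\mathrm{even}}(\real)$ — is genuinely propagated along the entire branch up to $\ly$. Everything else is soft functional analysis, already packaged in the bifurcation arguments of \cite{dcds,jde,eect}.
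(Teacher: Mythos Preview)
Your outline is correct and follows essentially the same three-stage strategy the paper sketches (and attributes to \cite{dcds,jde,na,eect}): local bifurcation at $\lam=0$ via the rescaling to a non-degenerate limiting ground-state problem, global continuation by the implicit function theorem using non-degeneracy of the linearisation on the even subspace, and identification of the endpoint $\ly$ through the asymptotic linearisation of \cite{na}. The paper does not give a self-contained proof of this theorem but refers to those sources and emphasises that the (AL) branch is obtained by perturbation from the (PT) case of Theorem~\ref{ptglobal.thm}; your proposal is consistent with that picture.
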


\begin{remark}
\rm
The reader might wonder
why (V4) is not needed for Theorem~\ref{alglobal.thm}. It turns out that this assumption
is essential in the proof of Theorem~\ref{ptglobal.thm}, where it ensures uniqueness of positive 
radial solutions of \eqref{ptstat}, for any fixed $\lam>0$. In the one-dimensional problem 
\eqref{1dstat}--\eqref{alnonlin}, uniqueness can be proved without invoking
(V4).\footnote{Note that 
the main reason for restricting the discussion to $d=1$ in Theorem~\ref{alglobal.thm} 
is the lack of uniqueness results in higher dimensions for the (AL) case.}
However, we will see in the next section that this hypothesis is crucial to the stability analysis, 
in both the (PT) and (AL) cases.
\end{remark}

\begin{remark}
\rm
Thanks to the form of the nonlinearity in \eqref{alnonlin} the global branch of 
Theorem~\ref{alglobal.thm}, bifurcating from the trivial solution $u=0$
at $\lam=0$, is obtained by perturbation from the (PT) nonlinearity dealt with
in Theorem~\ref{ptglobal.thm}. In fact, the case where asymptotic bifurcation occurs
at $\lam=0$, corresponding in dimension $d=1$ to $5-2b<\sigma<\infty$, could also be
extended to the (AL) case, where instability could be inferred, in the limit $\lam\to0$.
We refrain from going in this direction here since we were only able so far to extend the discussion
to a global branch in the stable case. We shall therefore assume \eqref{stablexp} from now
on, both for (PT) and (AL).
\end{remark}


\subsection{Stability}\label{stablecurves.sec}

In dimension $d=1$, assuming that $1<\sigma<5-2b$, the global curves
of standing wave solutions given by Theorems~\ref{ptglobal.thm} and 
\ref{alglobal.thm} are stable. 
This has been proved in \cite{ans} for the (PT) case and in \cite{eect}
for the (AL) case. The proofs rely on the theory of orbital stability in \cite{gssI} and we will now outline
the main arguments.

We shall start by convincing the reader that, in the context of \eqref{nls}, one cannot hope for
stability in the usual sense \eqref{eq:stab0}.
Indeed, suppose $\lam_n\to\lam$ and consider 
$$
u_\lam(t,x)=e^{i\lam t}w_\lam(x)\quad\text{and}\quad 
u_n(t,x)=e^{i\lam_n t}w_{\lam_n}(x).
$$
Then 
$$
\forall\delta>0 \ \exists N_\delta\in\nat,
\ n\geq N_\delta \Rightarrow \Ve u_n(0,\cdot)-u_\lam(0,\cdot)\Ve_{H^1}
=\Ve w_{\lam_n}-w_\lam\Ve_{H^1}\leq\delta.
$$
However,
$$
\Ve u_n(t,\cdot)-u_\lam(t,\cdot)\Ve_{H^1}\geq 
\big| |e^{i\lam t}-e^{i\lam_n t}|\Ve w_\lam\Ve_{H^1} 
- \Ve w_{\lam_n}-w_\lam\Ve_{H^1} \big|
$$
$$
\Rightarrow\sup_{t\geq0}\Ve u_n(t)-u_\lam(t)\Ve_{H^1}\geq 
2\Ve w_\lam\Ve_{H^1}-\delta, \ n\geq N_\delta.
$$
Therefore, for $n$ large enough, the initial datum $u_n(0)$ may be chosen $\delta$-close
to $u_\lam(0)$, $u_n(t)$ will nevertheless drift at least $2\Ve w_\lam\Ve_{H^1}-\delta$
far away from $u_\lam(t)$.

\begin{theorem}\label{globalstab.thm}
Suppose that $d=1$ and the hypotheses (V1) to (V4) are satisfied. Then
the standing waves $u_\lam(t,x)=e^{i\lam t}w_\lam(x)$ of \eqref{nls} given by
either Theorem~\ref{ptglobal.thm} or Theorem~\ref{alglobal.thm} are orbitally stable.
\end{theorem}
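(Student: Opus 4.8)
The plan is to verify the hypotheses of Theorem~\ref{thm:lyapmethodrestrictedmod} (the ``families of relative equilibria'' argument) for the curve $\lam\to w_\lam$ provided by Theorem~\ref{ptglobal.thm} in the (PT) case or Theorem~\ref{alglobal.thm} in the (AL) case. Recall the Hamiltonian setting: $E=H^1(\real,\complex)$ with the $H^1$ inner product, $\calJ(q,p)=(-p,q)$, constants of the motion $F=Q$ (the charge), group action $\Phi_\gamma(u)=e^{i\gamma}u$ with one-dimensional abelian group $G=\real$, and candidate Lyapunov function $\calL_\lam=H+\lam Q$. First I would record that $\mu=Q(w_\lam)=\frac12\Ve w_\lam\Ve_{L^2}^2$ is a regular value of $Q$ (since $D_wQ(v)=\re\int \bar w v\neq0$ for $w=w_\lam\not\equiv0$, as $w_\lam>0$), so $\Sigma_\mu=\{u\in E: Q(u)=\mu\}$ is a codimension-one submanifold, and $G_{\Sigma_\mu}=G$ because $\Phi_\gamma$ preserves the $L^2$ norm hence $Q$. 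Since the action is isometric on $E$ (the $\Phi_\gamma$ are linear $H^1$-isometries), Hypothesis~C$\mu$(i) holds; (ii) is trivial as $m=1$ and $\mathrm{Ad}^*$ is trivial; (iii) is the regularity just noted; (iv) holds because $\xi\in\mathfrak g_{\Sigma_\mu}=\real\mapsto e^{i\xi}w_\lam$ is injective near $\xi=0$ (as $w_\lam$ is real and positive, $e^{i\xi}w_\lam=w_\lam$ forces $\xi\in2\pi\ZZ$). Also $D_{w_\lam}\calL_\lam=D_{w_\lam}H+\lam D_{w_\lam}Q=0$ is exactly the stationary equation \eqref{stat3}, and by \eqref{invar} this holds along the whole orbit $\Ocal_{w_\lam}=\{e^{i\gamma}w_\lam\}$.

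The core step is the coercivity estimate Hypothesis~B$\mu$(iii), which I would obtain from Proposition~\ref{thm:hessianestimate}: I need an upper bound $D^2_u\calL_\lam(v,v)\le C\Ve v\Ve^2$ (immediate since $H,Q\in C^2(E,\real)$ and the Hessian is a bounded bilinear form on $H^1$, uniformly along the compact-modulo-phase orbit), and the crucial lower bound \eqref{eq:hessiancondition}:
\begin{equation*}
\exists c>0,\ \forall v\in T_{w_\lam}\Sigma_\mu\cap(T_{w_\lam}\Ocal_{w_\lam})^\perp,\quad D^2_{w_\lam}\calL_\lam(v,v)\ge c\Ve v\Ve_{H^1}^2.
\end{equation*}
This is precisely where Proposition~\ref{slope-coerc.thm} (which I invoke as stated in the excerpt, together with its reliance on the spectral conditions (S1)--(S3) and the slope/convexity condition \eqref{slope}) enters: under (V1)--(V4) and the restriction \eqref{stablexp} $1<\sigma<1+\frac{4-2b}{d}$ with $d=1$, the linearized operator $\nabla^2\calL_\lam(w_\lam)=-\partial_{xx}-\partial_w f(x,w_\lam^2)-\lam$ acting in the relevant real/imaginary decomposition has exactly one negative eigenvalue and a one-dimensional kernel spanned by $iw_\lam$ (the generator of $T_{w_\lam}\Ocal_{w_\lam}$), and the slope condition $\frac{d}{d\lam}Q(w_\lam)>0$ (or the appropriate sign) guarantees that the negative direction is ``used up'' by the constraint $v\in T_{w_\lam}\Sigma_\mu$. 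Hence $D^2_{w_\lam}\calL_\lam$ is positive on the codimension-two subspace $T_{w_\lam}\Sigma_\mu\cap(T_{w_\lam}\Ocal_{w_\lam})^\perp$, and standard spectral-gap arguments (the operator has discrete spectrum below the essential spectrum, which sits at $[\lam,\infty)>0$, and exponential decay of $w_\lam$ gives compactness) upgrade positivity to the coercive bound with constant $c=c_\lam>0$. Proposition~\ref{thm:hessianestimate} then yields Hypothesis~B$\mu$(iii) at $w_\lam$, i.e.\ $\calL_\lam$ is a coercive Lyapunov function on $\Ocal_{w_\lam}$ along $\Sigma_\mu$.

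It remains to check the uniformity-in-$\mu$ hypotheses of Theorem~\ref{thm:lyapmethodrestrictedmod}. I would work on a compact parameter interval $[\lam_1,\lam_2]\subset(0,\infty)$ (resp.\ $\subset(0,\ly)$), on which $\lam\mapsto w_\lam$ is $C^1$ into $H^1$, hence continuous as a map $\mu=Q(w_\lam)\mapsto w_\lam$ after the (monotone, by the slope condition) reparametrization; this gives condition~(ii) of Theorem~\ref{thm:lyapmethodrestrictedmod} with $\eta,c$ independent of $\mu$ (the constants $c_\lam$ in Proposition~\ref{slope-coerc.thm} are continuous in $\lam$, hence bounded below on the compact interval). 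Condition~(iii) $\sup\Ve w_\lam\Ve_{H^1}<\infty$ is automatic on a compact subinterval by continuity of the curve. Condition~(iv), the Lipschitz-type upper bound $\calL_\mu(u')-\calL_\mu(w_\mu)\le C\Ve u'-w_\mu\Ve$ for $u'\in\Sigma_\mu$ near $w_\mu$, follows from $D\calL_\mu$ being bounded on bounded neighbourhoods of the orbit (again using $H,Q\in C^2$ and boundedness of the orbit). Condition~(v), that $\Phi_\gamma$ is an isometry on $E$, holds as already noted. Therefore Theorem~\ref{thm:lyapmethodrestrictedmod} applies and every $u\in\Ocal_{w_\lam}$ is an orbitally stable $G$-relative equilibrium; since $\lam\in(0,\infty)$ (resp.\ $(0,\ly)$) was arbitrary and stability is a local statement, the conclusion holds along the entire curve. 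I expect the main obstacle to be the verification that the spectral hypotheses (S1)--(S3) and the slope condition \eqref{slope} actually hold for the curves of Theorems~\ref{ptglobal.thm}--\ref{alglobal.thm}; this is the genuinely model-dependent part, proved in \cite{ans} (PT) and \cite{eect} (AL) by continuation from the bifurcation point $w_\lam\to0$ as $\lam\to0$ under \eqref{stablexp} together with assumption (V4), and I would only sketch it, citing those references and Proposition~\ref{slope-coerc.thm} rather than reproving the delicate ODE/continuation estimates.
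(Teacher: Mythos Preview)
Your proposal is correct and follows essentially the same route as the paper: verify Hypothesis~C$\mu$ for the phase-rotation action, invoke Proposition~\ref{slope-coerc.thm} to obtain the coercivity estimate \eqref{eq:hessiancondition} from the spectral conditions (S1)--(S3) together with the slope condition \eqref{slope}, and then feed this into Proposition~\ref{thm:hessianestimate} and Theorem~\ref{thm:lyapmethodrestrictedmod} (with the compact-subinterval argument to secure uniformity in $\mu$). The one substantive difference is that where you would defer the verification of (S1)--(S3) and \eqref{slope} to \cite{ans,eect}, the paper actually sketches these arguments in some detail: it derives them first for small $\lam$ via the rescaling \eqref{chvar} and a perturbation off the limit problem \eqref{aux0}, and then propagates them along the entire curve by a continuation argument (eigenvalues of $L_\lam^+$ cannot cross zero since $\ker L_\lam^+=\{0\}$, and an integral identity forces $\frac{\dif}{\dif\lam}\Ve w_\lam\Ve_{L^2}^2\neq0$), with hypothesis (V4) entering crucially in the latter step.
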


The proofs of Theorem~\ref{globalstab.thm} given in \cite{ans,eect} used Theorem~2 of
\cite{gssI}, and so relied upon verifying Assumptions~1--3 of \cite{gssI}, as well as the condition
\begin{equation}\label{slope}
\Vert w_\lam\Vert_{L^2} \ \text{is strictly increasing in} \ \lam>0.
\end{equation}
The latter is often referred to as {\em the slope condition}\index{slope condition} or the {\em Vakhitov-Kolokolov
condition}\index{Vakhitov-Kolokolov condition|seeonly{slope condition}}. It seems to have indeed first appeared in the paper \cite{vk} of Vakhitov and 
Kolokolov (1968), in the context of nonlinear optical waveguides.\footnote{The mathematical 
theory of NLS has been intimately connected to nonlinear optics from its early days. 
See \cite{ans} for additional references on this.}

Assumption~1 of \cite{gssI} is about the well-posedness of the Cauchy problem for \eqref{nls}
which, under our hypotheses, follows from Section~\ref{s:dynsysexamples}.
Assumption~2 pertains to the existence of
smooth solution curves and is ensured by Theorem~\ref{ptglobal.thm}/\ref{alglobal.thm}.
It is this property which allows us to apply Theorem~\ref{thm:lyapmethodrestrictedmod}
of Section~\ref{s:orbstabproof}. 

We will see that Assumption~3 of \cite{gssI}, together with the slope condition \eqref{slope},
ensure the required 
coercivity property  of the Lyapunov function $\calL_\lam$ introduced in \eqref{GSSlyap}.
In order to formulate Assumption~3 in the present context, consider 
the bounded linear operator $D^2_{w_\lam}\calL_\lam:E\to E^*$,
\begin{equation}\label{quadraticform}
D^2_{w_\lam}\calL_\lam=D^2_{w_\lam}H+\lam D^2_{w_\lam}Q, \quad \lam >0.
\end{equation}
We define the \emph{spectrum} of $D^2_{w_\lam}\calL_\lam$ as the following subset of $\real$:
\begin{equation}\label{spectrumofH}
\sigma(D^2_{w_\lam}\calL_\lam)=
\bigl\{\lambda\in\real:D^2_{w_\lam}\calL_\lam-\lambda\widetilde{R}:E\to E^* \ 
\text{is not an isomorphism}\bigr\},
\end{equation}
where $\widetilde{R}=\text{diag}(R,R)$
and $R=-\frac{\dif^2}{\dif x^2}+1:H^1(\real,\real)\to H^{-1}(\real,\real)$ 
is the Riesz isomorphism. Under the 
hypotheses of Theorem~\ref{ptglobal.thm}/\ref{alglobal.thm}, 
$\widetilde{R}^{-1}D^2_{w_\lam}\calL_\lam:E\to E$ is a bounded self-adjoint
Schr\"odinger operator, and its spectrum coincides with $\sigma(D^2_{w_\lam}\calL_\lam)$. 
The motivation for this definition of the spectrum of $D^2_{w_\lam}\calL_\lam$
will be discussed in Remark~\ref{essential.rem}.

A straightforward calculation shows that $D^2_{w_\lam}\calL_\lam$ is explicitly given by
\begin{equation}\label{matrixS}
D^2_{w_\lam}\calL_\lam=
\begin{pmatrix}
-\disp\frac{\dif^2}{\dif x^2}+\lam-[f(x,w_\lam^2)+2\partial_2f(x,w_\lam^2)w_\lam^2] & 0 \\
0 & -\disp\frac{\dif^2}{\dif x^2}+\lam-f(x,w_\lam^2)
\end{pmatrix},
\end{equation}
and the spectral conditions formulated in Assumption~3 of \cite{gssI} are:

\medskip
\noindent
(S1) $\exists\alpha_\lam\in\R$ such that $\sigma(D^2_{w_\lam}\calL_\lam)\cap(-\infty,0)=\{-\alpha_\lam^2\}$ and $\ker(D^2_{w_\lam}\calL_\lam+\alpha_\lam^2\widetilde{R})$ is one-dimensional;

\noindent
(S2) $\ker D^2_{w_\lam}\calL_\lam=\mathrm{span}\{iw_\lam\}$;

\noindent
(S3) $\sigma(D^2_{w_\lam}\calL_\lam)\setminus\{-\alpha_\lam^2,0\}$ is bounded away from zero.

\medskip
\noindent 
The fact that $iw_\lam\in \ker D^2_{w_\lam}\calL_\lam$
directly follows by differentiating \eqref{invar} with respect to $\gamma$ at $\gamma=0$.
So (S2) really only states that $\ker D^2_{w_\lam}\calL_\lam$ is one-dimensional.

\medskip
We now explain how hypotheses (S1)--(S3), together
with \eqref{slope}, imply the coercivity property \eqref{eq:hessiancondition} in 
Proposition~\ref{thm:hessianestimate}.
In order to explicitly write down condition \eqref{eq:hessiancondition}, 
let us first observe that we parametrized the standing waves by the
``frequency'' $\lam$ here, whereas in Section~\ref{s:orbstabproof} the relative equilibria are rather
labelled using the value $\mu$ of the
constraint. In the present context, $\mu=\mu(\lam)=Q(w_\lam)$, and we only
deal with situations where $\mu$ is a smooth, strictly increasing function of $\lam$, so 
both parametrizations are equivalent.
Now the level surface 
$$\Sigma_{Q(w_\lam)}=\{u\in\Ban \mid Q(u)=Q(w_\lam)\}$$ 
and, given a standing wave $u_\lam(t)=\Phi_{(\lam t)}w_\lam$
we have, for any $u=e^{-i\gamma(u)}w_\lam\in\Ocal_{u_\lam}$,
$$
T_{u}\Sigma_{Q(w_\lam)}=\{v \in\Ban \mid \la e^{-i\gamma(u)}D_{w_\lam} Q, v\ra=0\}.
$$ 
On the other hand,
$T_u\Ocal_{u_\lam}=\mathrm{span}\{e^{-i\gamma(u)}iw_\lam\}$, so that
$$
T_{u}\Sigma_{Q(w_\lam)}\cap (T_{u}\Ocal_{u_\lam})^\perp=
\{v\in\Ban \mid \la e^{-i\gamma(u)}D_{w_\lam} Q, v\ra=(e^{-i\gamma(u)}iw,v)_{\Ban}=0\}.
$$

Next, differentiating 
\[
D_{w_\lam}H+\lam D_{w_\lam}Q=0
\]
with respect to $\lam$ yields
\begin{equation}\label{SQ}
D^2_{w_\lam}\calL_\lam \chi_\lam=-D_{w_\lam}Q, \quad\text{where}\quad 
\chi_\lam:=\frac{\dif w_\lam}{\dif\lam},
\end{equation}
so that 
\begin{equation}\label{ffi'neg}
\la D^2_{w_\lam}\calL_\lam \chi_\lam, \chi_\lam \ra = - \la D_{w_\lam}Q, \chi_\lam \ra
=-\frac{\dif}{\dif\lam} Q(w_\lam)<0
\end{equation}
by \eqref{slope}. 

\begin{proposition}\label{slope-coerc.thm}
Suppose that (S1) to (S3) hold, as well as \eqref{slope}. Then there exists $c>0$ such that
$$
\forall u\in\Ocal_{u_\lam}, \forall v \in T_{u}\Sigma_{Q(w_\lam)}\cap (T_{u}\Ocal_{u_\lam})^\perp,
\ D^2_u\calL_\lam(v,v)\geq c\Vert v\Vert_\Ban^2.
$$
\end{proposition}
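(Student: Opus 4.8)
The plan is to reduce the statement to the single base point $u=w_\lam$ and then run the classical spectral argument of Grillakis, Shatah and Strauss \cite{gssI}, taking care throughout to distinguish the two bilinear pairings at play: the $\Ban$-inner product $(\cdot,\cdot)_\Ban$, with respect to which $(T_u\Ocal_{u_\lam})^\perp$ is defined, and the duality pairing $\la\cdot,\cdot\ra$, in terms of which $T_u\Sigma_{Q(w_\lam)}$ is described. For the reduction, take $u=\Phi_{-\gamma}(w_\lam)\in\Ocal_{u_\lam}$: the invariance $\calL_\lam\circ\Phi_\gamma=\calL_\lam$ (a consequence of the $\Phi_\gamma$-invariance of $H$ and $Q$) together with the chain rule gives $D^2_u\calL_\lam(v,v)=D^2_{w_\lam}\calL_\lam(\Phi_\gamma v,\Phi_\gamma v)$, since $\Phi_\gamma$ is linear; and because $\Phi_\gamma$ is an isometry of $\Ban$ and $\la e^{-i\gamma}\phi,v\ra=\la\phi,\Phi_\gamma v\ra$, the map $\Phi_\gamma$ carries $T_u\Sigma_{Q(w_\lam)}\cap(T_u\Ocal_{u_\lam})^\perp$ isometrically onto $T_{w_\lam}\Sigma_{Q(w_\lam)}\cap(T_{w_\lam}\Ocal_{u_\lam})^\perp$. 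Hence it suffices to produce $c>0$ with $D^2_{w_\lam}\calL_\lam(v,v)\geq c\Vert v\Vert_\Ban^2$ for all $v\in\Ban$ satisfying $\la D_{w_\lam}Q,v\ra=0$ and $(iw_\lam,v)_\Ban=0$; note $D_{w_\lam}Q$ acts by $v\mapsto\la w_\lam,v\ra\neq0$, so $Q(w_\lam)$ is a regular value of $Q$ and $T_{w_\lam}\Sigma_{Q(w_\lam)}=\ker D_{w_\lam}Q$.

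\textbf{Spectral set-up.} Introduce the bounded self-adjoint operator $S:=\widetilde R^{-1}D^2_{w_\lam}\calL_\lam$ on $(\Ban,(\cdot,\cdot)_\Ban)$, characterised by $(Sv,w)_\Ban=D^2_{w_\lam}\calL_\lam(v,w)$, and recall $(\widetilde R^{-1}\phi,v)_\Ban=\la\phi,v\ra$ for $\phi\in\Ban^*$. By (S1)--(S3) one obtains the $(\cdot,\cdot)_\Ban$-orthogonal decomposition $I=P_-+P_0+P_+$ with $SP_-=-\alpha_\lam^2P_-$, $\mathrm{Ran}\,P_-=\mathrm{span}\{\psi\}$ ($\Vert\psi\Vert_\Ban=1$), $\mathrm{Ran}\,P_0=\ker S=\mathrm{span}\{iw_\lam\}$, and $SP_+\geq\beta P_+$ for some $\beta>0$. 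The constraint $(iw_\lam,v)_\Ban=0$ is precisely $P_0v=0$, so the orbit-orthogonality removes the kernel. To control the one negative direction I use the slope condition: differentiating the stationary equation with respect to $\lam$ gives \eqref{SQ}, i.e. $S\chi_\lam=-\widetilde R^{-1}D_{w_\lam}Q$ with $\chi_\lam=\dif w_\lam/\dif\lam$, and \eqref{ffi'neg} reads $(S\chi_\lam,\chi_\lam)_\Ban=-\tfrac{\dif}{\dif\lam}Q(w_\lam)<0$. Since the $P_0$-component of $\chi_\lam$ contributes nothing, this forces $P_-\chi_\lam\neq0$, say $P_-\chi_\lam=a\psi$ with $a\neq0$, and $\theta:=(SP_+\chi_\lam,P_+\chi_\lam)_\Ban/(\alpha_\lam^2a^2)\in[0,1)$.

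\textbf{The coercivity estimate.} Take $v$ in the constraint set and write $P_-v=t\psi$. Because $\la D_{w_\lam}Q,v\ra=-(S\chi_\lam,v)_\Ban$, the $\Sigma$-constraint becomes $0=(S\chi_\lam,v)_\Ban=-\alpha_\lam^2at+(SP_+\chi_\lam,P_+v)_\Ban$, hence $\alpha_\lam^2at=(SP_+\chi_\lam,P_+v)_\Ban$. Applying Cauchy--Schwarz to the positive form $(S\cdot,\cdot)_\Ban$ on $\mathrm{Ran}\,P_+$ and squaring gives $\alpha_\lam^4a^2t^2\leq(SP_+\chi_\lam,P_+\chi_\lam)_\Ban(SP_+v,P_+v)_\Ban=\theta\alpha_\lam^2a^2(SP_+v,P_+v)_\Ban$, so $\alpha_\lam^2t^2\leq\theta(SP_+v,P_+v)_\Ban$. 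Therefore $D^2_{w_\lam}\calL_\lam(v,v)=-\alpha_\lam^2t^2+(SP_+v,P_+v)_\Ban\geq(1-\theta)(SP_+v,P_+v)_\Ban\geq(1-\theta)\beta\Vert P_+v\Vert_\Ban^2$; combining with $t^2\leq\theta\Vert S\Vert\alpha_\lam^{-2}\Vert P_+v\Vert_\Ban^2$ and $\Vert v\Vert_\Ban^2=t^2+\Vert P_+v\Vert_\Ban^2$ (as $P_0v=0$) yields $D^2_{w_\lam}\calL_\lam(v,v)\geq c\Vert v\Vert_\Ban^2$ with $c=(1-\theta)\beta/(1+\theta\Vert S\Vert\alpha_\lam^{-2})>0$, which by the first paragraph transfers with the same $c$ to every $u\in\Ocal_{u_\lam}$.

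\textbf{Main obstacle.} The arithmetic above is short; the genuine care lies in the bookkeeping between the two pairings, in particular in checking that the $\Sigma$-constraint (stated via $\la\cdot,\cdot\ra$) and the orbit-orthogonality (stated via $(\cdot,\cdot)_\Ban$) translate correctly into $P_0v=0$ and $(S\chi_\lam,v)_\Ban=0$ for the operator $S$, which is self-adjoint only with respect to $(\cdot,\cdot)_\Ban$, and in verifying that $S\chi_\lam=-\widetilde R^{-1}D_{w_\lam}Q$ is the right transcription of \eqref{SQ} under the Riesz identification. The only substantive inputs -- that $S$ has a single negative eigenvalue, a one-dimensional kernel, and the remainder of its spectrum bounded away from $0$ (so that $P_-,P_0,P_+$ exist), together with $(S\chi_\lam,\chi_\lam)_\Ban<0$ -- are exactly what (S1)--(S3) and the slope condition \eqref{slope} assert, so no further model-dependent analysis enters this proof; that analysis is precisely what goes into establishing (S1)--(S3) and \eqref{slope} for the curves of Theorems~\ref{ptglobal.thm} and~\ref{alglobal.thm}. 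Once the Proposition is in hand, its conclusion is condition~\eqref{eq:hessiancondition} of Proposition~\ref{thm:hessianestimate}, which in turn feeds Theorem~\ref{thm:lyapmethodrestrictedmod}.
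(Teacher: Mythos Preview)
Your proof is correct and follows the same route as the paper: reduce to the base point $w_\lam$ by $\Phi_\gamma$-invariance, pass to the self-adjoint operator $S_\lam=\widetilde R^{-1}D^2_{w_\lam}\calL_\lam$, rewrite the two constraints as $(iw_\lam,v)_\Ban=0$ and $(S_\lam\chi_\lam,v)_\Ban=0$, and use $(S_\lam\chi_\lam,\chi_\lam)_\Ban<0$. The only difference is that the paper then invokes Lemma~5.3 of \cite{stuart08} as a black box, whereas you carry out that lemma's content explicitly via the spectral decomposition $P_-+P_0+P_+$ and the Cauchy--Schwarz step on the positive part; your explicit constant $c=(1-\theta)\beta/(1+\theta\Vert S\Vert\alpha_\lam^{-2})$ is a bonus the paper does not record.
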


\begin{proof}
Let $u=e^{-i\gamma(u)}w_\lam\in\Ocal_{u_\lam}$. First remark that, by the invariance
of $\calL$ on the orbit $\{\Phi_\gamma w_\lam \mid \gamma\in\real\}$, we have
\begin{equation*}
D^2_{u}\calL_\lam=D^2_{\Phi_{\gamma(u)}w_\lam}\calL_\lam
=D^2_{w_\lam}(\calL_\lam\circ\Phi_{-\gamma(u)})=D^2_{w_\lam}\calL_\lam.
\end{equation*}
Therefore, we need only prove the result at $u=w_\lam$, {\em i.e.} that there exists $c>0$ such that
\begin{equation*}
\forall v\in\Ban, \
\la D_{w_\lam} Q, v\ra=(iw_\lam,v)_{\Ban}=0 
\Rightarrow D^2_{w_\lam}\calL_\lam(v,v)\geq c\Vert v\Vert_\Ban^2.
\end{equation*}
Introducing the bounded self-adjoint operator 
$S_\lam:=\wt{R}^{-1}D^2_{w_\lam}\calL_\lam:\Ban\to \Ban$,
this is equivalent to
\begin{equation*}
\forall v\in\Ban, \
(S_\lam\chi_\lam, v)_\Ban=(iw_\lam,v)_{\Ban}=0 
\Rightarrow (S_\lam v,v)_\Ban\geq c\Vert v\Vert_\Ban^2.
\end{equation*}
Now by \eqref{ffi'neg} we see that $(S_\lam\chi_\lam, \chi_\lam)_\Ban<0$, and
the result readily follows from Lemma~5.3 in \cite{stuart08}.
\end{proof}



\medskip
The verification of properties (S1)--(S3) and of the slope condition \eqref{slope}
in \cite{ans,eect} is intimately connected with the behaviour  as $\lam\to0$ 
of the solutions given by Theorem~\ref{ptglobal.thm}/\ref{alglobal.thm}. The main idea
is to show that the required properties hold true for a limiting problem obtained by
letting $\lam\to0$ in the stationary equation \eqref{stat} (in suitably rescaled variables),
and then to deduce them for the original problem by perturbation and continuation along
the global curve given by Theorem~\ref{ptglobal.thm}/\ref{alglobal.thm}. In other words,
it is first shown that (S1)--(S3) and \eqref{slope} hold for small values of $\lam>0$,
and then that these properties cannot change along the global curve.
It is worth noting here that, in both Theorem~\ref{ptglobal.thm} and Theorem~\ref{alglobal.thm},
it can be shown that $\Vert w_\lam\Vert_{L^\infty}\to0$ as $\lam\to0$ 
(see Section~\ref{local.sec} below). Therefore, case (AL) can be seen as a perturbation of (PT), 
in the limit of small $\lam$, and the stability properties of standing waves are the same in both 
cases for small $\lam>0$.

The remainder of this section is devoted to the proof of Theorem~\ref{globalstab.thm}.
We will sketch the arguments yielding the local stability results
close to $\lam=0$, and the continuation procedure extending these to the whole curves
of solutions in Theorem~\ref{ptglobal.thm} and Theorem~\ref{alglobal.thm}. For the local results,
we shall only consider case (PT), the details of the perturbation argument one has to go through
to deal with (AL) being cumbersome and not very enligthening (see \cite{jde} for more details). 
We will however present the
global continuation procedure for both cases in a unified manner. For this we will use the
general notation of \eqref{nls}--\eqref{stat} rather than the particular form of $f$ in each case, 
and we will merely write $\lam>0$ throughout, of course really meaning $0<\lam<\ly$ in the
(AL) case.


\subsubsection{Local stability by bifurcation}\label{local.sec}

We consider here \eqref{stat} in dimension $d=1$, and with $f(x,s^2)=V(x)|s|^{\sigma-1}$.
The scaling
\begin{equation}\label{chvar}
\lam=k^2, \quad
u(x)=k^\frac{2-b}{\sigma-1} v(y), \quad y:=kx, \quad k>0,
\end{equation}
yields
\begin{equation}\label{aux}
v'' - v + k^{-b}V(y/k)|v|^{\sigma-1}v = 0, \quad k>0.
\end{equation}
Then, by (V2),
$$
\lim_{k\to0}k^{-b}V(y/k)=|y|^{-b}|y/k|^{b}V(y/k)=|y|^{-b} 
\quad \forall y\neq0,
$$
which suggests considering the limit problem
\begin{equation}\label{aux0}
v'' - v + |y|^{-b}|v|^{\sigma-1}v = 0.
\end{equation}
It turns out \cite{ans} that \eqref{aux0} has a unique positive radial solution $v_0\in H^1(\real)$.
This solution can be shown to have a variational characterization, from which it bears the name
{\em ground state} of \eqref{aux0}.

The advantage of the scaling is that, in the new variables $(k,v)$, one can now 
obtain solutions by perturbation of \eqref{aux0}, which is non-degenerate. More precisely,
one can apply a version of the implicit function theorem to the function
$F:\real\times H^1(\real)\to H^{-1}(\real)$ defined by
$$
F(k,v)=
\left\{ \begin{array}{ll}
v'' - v + |k|^{-b}V(y/|k|)|v|^{\sigma-1}v, 	& \  k\neq0, \\
v'' - v + |y|^{-b}|v|^{\sigma-1}v, 		& \ k=0,
\end{array}\right.
$$
at the point $(k,v)=(0,v_0)\in \real\times H^1(\real)$, where
$D_2 F(0,v_0):H^1(\real)\to H^{-1}(\real)$ is an isomorphism (see \cite[Proposition~2.1]{ans}). 
This provides a small $k_0>0$ and a local $C^1$ curve of solutions 
$\{(k,v_k): |k|<k_0\}\subset \real\times H^1(\real)$ of $F(k,v)=0$. 
The local bifurcation in Theorem~\ref{ptglobal.thm} can then be obtained by going back to the
original variables using \eqref{chvar}, which yields a local $C^1$ curve of solutions 
$$\big\{(\lam,w_\lam): 0<\lam<k_0^2\big\}\subset \real\times H^1(\real)$$ 
of \eqref{stat}.
The various solution norms in the two sets of variables are related by
$$
\Vert w_\lam\Vert_{L^{2}}^{2}=
\lam^{\alpha-1}\Vert v_{\lam^{1/2}}\Vert_{L^{2}}^{2}, \quad
\Vert\nabla w_\lam\Vert_{L^{2}}^{2}=
\lam^{\alpha}\Vert\nabla v_{\lam^{1/2}}\Vert_{L^{2}}^{2},
$$
$$
\Vert w_\lam\Vert_{L^\infty}=\lam^{\frac{2-b}{2(\sigma-1)}}\Vert v_{\lam^{1/2}}\Vert_{L^\infty},
\quad \text{where} \  \alpha=\txt\frac{4-2b+(\sigma-1)}{2(\sigma-1)}.
$$
The behaviour of $w_\lam$ as $\lam\to0$ follows readily from these relations and the fact
that $v_k\to v_0$ both in $H^1(\real)$ and in $L^\infty(\real)$ (see \cite[Proposition~3.1]{ans}).


\medskip
\noindent
{\bf The slope condition.}\index{slope condition}
Let us now explain how the slope condition \eqref{slope} can be derived from this analysis, 
for small $\lam>0$. We show that $\frac{\dif}{\dif\lam}\Vert w_\lam\Vert_{L^2}^2>0$
for $\lam>0$ small enough. Observe that
$$
\frac{\dif}{\dif\lam}\Vert w_\lam\Vert_{L^2}^2
=\frac{1}{2k}\frac{\dif}{\dif k}\Vert w_{k^2}\Vert_{L^2}^2
=\frac{1}{2k}\frac{\dif}{\dif k}\{k^\beta\Vert v_k\Vert_{L^2}^2\}
$$
where 
\begin{equation}\label{signofalphaminusone}
\beta=\frac{4-2b-(\sigma-1)}{\sigma-1}=2(\alpha-1).
\end{equation}
Now
\begin{align*}
\frac{\dif}{\dif k}\{k^\beta\Vert v_k\Vert_{L^2}^2\}
    &=\beta k^{\beta-1}\Vert v_{k}\Vert_{L^{2}}^{2}
   +k^{\beta}2\bigl \langle v_{k},\frac{\dif}{\dif k}v_{k}\bigr\rangle_{L^{2}} \\
   & \\
    &=k^{\beta-1}\bigl\{\beta\Vert v_{k}\Vert_{L^{2}}^{2}
    +2k\bigl\langle v_{k},\frac{\dif}{\dif k}v_{k}\bigr\rangle_{L^{2}}\bigr\}.
\end{align*}
Since $\Vert v_{k}\Vert_{L^{2}}^{2}\to\Vert v_0\Vert_{L^{2}}^{2}>0$ as $k\to0$,
we have that
\begin{equation}\label{localsign}
\sgn\{\disp\frac{\dif}{\dif\lam}\Vert w_\lam\Vert_{L^2}^2\}=
\sgn\{\alpha-1\} \ \text{for} \ \lam=k^2 \ \text{small},
\end{equation}
provided
\begin{equation}\label{localpos}
k\bigl\langle v_{k},\frac{\dif}{\dif k}v_{k}\bigr\rangle_{L^{2}} \to 0 \
\text{as} \ k\to0.
\end{equation}
On the other hand,
\begin{multline*}
F(k,v_k)=0 \Rightarrow
D_{k}F(k,v_{k})+D_{v}F(k,v_{k})\frac{\dif}{\dif k}v_{k}=0 \\
\Rightarrow k\frac{\dif}{\dif k}v_{k}
    =-D_{v}F(k,v_{k})^{-1}kD_{k}F(k,v_{k})
    =-D_{v}F(k,v_{k})^{-1}k^{-b}W(y/k)v_{k}^{\sigma},
\end{multline*}
where $W(x):=x\cdot V'(x)+bV(x)$ appears in hypothesis (V2).
Then, using (V2), it is not difficult to show that
$$
k^{-b}W(y/k)v_{k}^{\sigma}\to 0 \ \text{in} \ H^{-1} \ \text{as} \ k\to0.
$$
Finally, it follows from the open mapping theorem that
$$
D_{v}F(k,v_{k})^{-1} \rightarrow D_{v}F(0,v_0)^{-1} \ \text{in} \ B(H^{-1},H^1)
 \ \text{as} \ k\to0,
$$
and we conclude that
$k\frac{\dif}{\dif k}v_{k} \to 0 \ \text{in} \ H^1 \ \text{as} \ k\to0,$
from which \eqref{localpos} follows. Recalling our assumption that $1<\sigma<5-2b$,
the slope condition \eqref{slope} now readily follows from \eqref{signofalphaminusone} 
and \eqref{localsign}.


\medskip
\noindent
{\bf The spectral assumptions.}
Regarding the verification of (S1)--(S3), we shall not give as much detail as for the slope
condition. That the solutions $w_\lam$ indeed give rise to a Hessian
$D^2_{w_\lam}\calL_\lam:E\to E^*$ with the appropriate spectral structure also follows from the properties
of the limit problem \eqref{aux0} through the perturbation procedure outlined above. 
The crucial point is 
the variational characterization of the ground state $v_0$, which can be shown to minimize
the functional
$$
\wt\calL_0(v)=\frac12\intr (v')^2+v^2 \diff x - \frac{1}{\sigma+1}\intr |x|^{-b}|v|^{\sigma+1}\diff x
$$
on an appropriate codimension 1 submanifold $N$ of $H^1(\real)$. 
Note that the direct method of the calculus of variations cannot be applied
to the functional $\wt\calL_0$ since it is not coercive. In fact it turns out that $v_0$ is a saddle-point
of $\wt\calL_0$. More precisely, $v_0$ is a critical point of $\wt\calL_0$ ({\em i.e.}~$D_{v_0}\wt\calL_0=0$), 
and the quadratic form
$D^2_{v_0}\wt\calL_0:H^1\times H^1\to\real$ is positive definite tangentially to $N$, 
and negative along the ray spanned by $v_0$, transverse to $N$.
This information -- together with some Schr\"odinger operator theory --
precisely implies that $D^2_{v_0}\wt\calL_0$ enjoys the properties (S1)--(S3).
Furthermore, if $w_\lam$ and $v_k$ are related by the change of variables \eqref{chvar},
a straightforward calculation shows that
$$
\calL_\lam(w_\lam)=k^{\frac{3-2b+\sigma}{(\sigma-1)}}\wt\calL_0(v_k),
$$
where $\calL_\lam$ is the Lyapunov function defined in \eqref{GSSlyap}.
However,
it is by no means trivial to verify that the spectral properties of $D^2_{v_0}\wt\calL_0$ 
are carried through to $D^2_{w_\lam}\calL_\lam$, for $\lam>0$ small, 
in the perturbation procedure. This was shown in \cite{dcds} in arbitrary dimension.

Note that, if the solutions $w_\lam$ are themselves saddle-points of $\calL_\lam$, the
perturbation procedure can be dispensed of, and the spectral properties of the Hessian
$D^2_{w_\lam}\calL_\lam$
derived directly from this variational characterization. This is in fact the case for the solutions
obtained in Theorem~\ref{ptglobal.thm}, but it is not known in the (AL) case, where the variational
structure is much less transparent.

\begin{remark}\label{essential.rem}
\rm
When verifying assumptions (S1)--(S3)
in the context of \eqref{nls}--\eqref{stat} (which are set on the whole of $\rn$) one has to deal
with the continuous spectrum of $D^2_{w_\lam}\calL_\lam$ in addition to the negative eigenvalue lying at
the bottom of the spectrum. The standard approach to tackle this is {\it via} the
theory of Schr\"odinger operators applied 
to the self-adjoint operator $\widetilde{R}^{-1}D^2_{w_\lam}\calL_\lam:E\to E$.
This motivates the definition of $\sigma(D^2_{w_\lam}\calL_\lam)$ given in \eqref{spectrumofH}. On the other hand,
the problem considered in Section~\ref{s:nlsetorus1d} (set on a compact manifold) only gives rise
to discrete spectrum in the linearization, and so can be handled with 
a more elementary spectral analysis, not requiring
to introduce the Riesz isomorphism $\widetilde{R}:E\to E^*$ explicitly.
\end{remark}

\subsubsection{Global continuation}

In this section we show how both the slope condition \eqref{slope} and the spectral
properties (S1)--(S3) extend from the previous local analysis to the global curve given by either
Theorem~\ref{ptglobal.thm} or Theorem~\ref{alglobal.thm}. We will handle the two
cases in a unified approach, using the general notation $f(x,w^2)w$ for the nonlinearity. 
As earlier, we will often merely write $\lam>0$, really meaning $\lam\in(0,\infty)$ in the
(PT) case and $\lam\in(0,\ly)$ in the (AL) case. Again, we only consider here the case $d=1$. 


\medskip
\noindent
{\bf The slope condition.}\index{slope condition}
From the previous analysis, \eqref{slope} holds for $\lam>0$ small enough. 
Hence we need only verify that
$$
\frac{\dif}{\dif\lam}\int_\real w_\lam^2\diff x \neq0
\quad \forall\lam>0.
$$
First notice that, since the solutions $w_\lam$ are even,
$$
\frac{\dif}{\dif\lam}\int_\real w_\lam^2\diff x
=2\int_\real w_\lam\frac{\dif}{\dif\lam}w_\lam\diff x
=4\int_0^\infty w_\lam \chi_\lam,
$$
where $\chi_\lam=\frac{\dif w_\lam}{\dif\lam}$ satisfies
$$
\chi_\lam''+ \{f(x,w_\lam^2)+2\partial_2f(x,w_\lam^2)w_\lam^2\}\,\chi_\lam 
= \lam \chi_\lam + w_\lam.
$$
To simplify the notation,
we will drop the subscript $\lam$ in the remainder of the argument.
It can be shown \cite{ans,eect} that
\begin{equation}\label{intid}
\int_0^\infty \big\{2f(x,w^2)+x\partial_1f(x,w^2)-\partial_2f(x,w^2)w^2\big\}w\chi\diff x
=2\lam\int_0^\infty w\chi\diff x
\end{equation}
and that there exists $x_0>0$ such that 
$$
\chi>0 \  \text{on} \  (0,x_0), \quad \chi(x_0)=0, \quad \chi<0 \ \text{for} \ x>x_0.
$$
Supposing by contradiction that $\int_0^\infty w\,\chi\diff x=0$,
we can write \eqref{intid} as
$$
\int_0^\infty \Big\{\frac{2f(x,w^2)+x\partial_1 f(x,w^2)}{\partial_2f(x,w^2)w^2} - 1\Big\}
\partial_2f(x,w^2)w^3\chi \diff x=0.
$$
Denoting by $\zeta(x)$ the function in the curly brackets, this
becomes
$$
\int_0^\infty \zeta(x) \partial_2f(x,w^2)w^3\chi \diff x=0.
$$
Now using the unique zero $x_0$ of $\chi$, we can rewrite this identity as
$$
\int_0^\infty\{\zeta(x)-\zeta(x_0)\}\partial_2f(x,w^2)w^3\chi \diff x
+\zeta(x_0)\int_0^\infty\partial_2f(x,w^2)w^3\chi \diff x=0.
$$
Moreover, multiplying the equation for $w$ by $\chi$, the equation for $\chi$ by $w$,
subtracting and integrating, yields
$$
\int_0^\infty w^2 \diff x=2\int_0^\infty \partial_2f(x,w^2)w^3\chi \diff x,
$$
and so
\begin{equation}\label{intid2}
\int_0^\infty \partial_2f(x,w^2)w^3 \{\zeta(x)-\zeta(x_0)\}\chi \diff x
+\frac{\zeta(x_0)}{2}\int_0^\infty w^2 \diff x=0.
\end{equation}
Now,
$$
\partial_2f(x,w^2)w^3=
\begin{cases}
\frac{\sigma-1}{2}V(x)w^\sigma & \text{in the (PT) case},\\
\frac{\sigma-1}{2}V(x)\frac{w^\sigma}{(1+w^{\sigma-1})^2} & \text{in the (AL) case},
\end{cases}
$$
hence $\partial_2f(x,w^2)w^3>0$ on $(0,\infty)$ in any case.
On the other hand,
$$
\zeta(x)=
\begin{cases}
\frac{2}{\sigma-1}[x\frac{V'(x)}{V(x)}+\frac{5-\sigma}{2}] & \text{(PT)}\\
\frac{2}{\sigma-1}[x\frac{V'(x)}{V(x)}+\frac{5-\sigma}{2}]
+ \frac{2}{\sigma-1}[x\frac{V'(x)}{V(x)}+2]w^{\sigma-1} & \text{(AL)}
\end{cases}
$$
and we claim that $\zeta$ is positive and decreasing in any case,
which immediately leads to a contradiction with \eqref{intid2}.
To conclude, the claim follows from our hypotheses since
$$ 
x \to x\frac{V'(x)}{V(x)} \ \text{decreasing}, 
\quad x\frac{V'(x)}{V(x)}\geq -b \quad \text{and} \quad \sigma<5-2b
$$
$$
\Rightarrow x\frac{V'(x)}{V(x)}+\frac{5-\sigma}{2}>0 \ \, \text{and} \ \text{decreasing}
$$
(note that hypothesis (V4) is crucial here).
Furthermore,
$$
w>0 \ \, \text{and} \ \text{decreasing} \ \Rightarrow
\Big[\underbrace{x\frac{V'(x)}{V(x)}+2}_{\geq -b+2>0}\Big]w^{\sigma-1}>0 
\ \, \text{and} \ \text{decreasing},
$$
so that $\zeta$ is indeed positive and decreasing in any case.


\medskip
\noindent
{\bf The spectral conditions.}
The spectral conditions (S1)--(S3) can be reformulated in terms of the self-adjoint operators
$L_\lam^+, L_\lam^-: H^2(\real)\subset L^2(\real) \to L^2(\real)$ defined by
\begin{align*}
L_\lam^+ v  &= -v''+ \lam v-[f(x,w_\lam^2)+2\partial_2f(x,w_\lam^2)w_\lam^2]v, \\
L_\lam^- v  &= -v''+ \lam v-f(x,w_\lam^2)v.
\end{align*}
Then (S1)--(S3) are equivalent to
\begin{itemize}
\item[(C1)]\quad
$\inf\sigma_\textnormal{ess}(L_\lam^+)>0$, \quad $M(L_\lam^+)=1$, \quad
$\ker L_\lam^+ =\{0\}$,
\item[(C2)]\quad 
$\inf\sigma_\textnormal{ess}(L_\lam^-)>0$, \quad $0=\inf\sigma(L_\lam^-)$, \quad 
$\ker L_\lam^-=\textnormal{vect}\{w_\lam\}$,
\end{itemize}
where $\sigma_\textnormal{ess}(A)$ denotes the {\em essential spectrum}
of a self-adjoint operator $A$, and
$M(A)$ its {\em Morse index}, {\em i.e.} the dimension of the larger
subspace where $A$ is negative definite.

A first step toward verifying that (C1) and (C2) hold for all $\lam>0$ is to show that
all eigenvalues of $L_\lam^+, L_\lam^-$ are simple, which follows by standard
ODE arguments. Then, since 
$$
\lim_{|x|\to\infty}f(x,w_\lam(x)^2)=
\lim_{|x|\to\infty}2\partial_2f(x,w_\lam(x)^2)w_\lam(x)^2=0,
$$
it follows from the spectral theory of Schr\"odinger operators (see {\em e.g.} \cite{trieste}) that
$$
\inf\sigma_\textnormal{ess}(L_\lam^+)=\inf\sigma_\textnormal{ess}(L_\lam^-)=\lam>0.
$$
Furthermore, applying ODE comparison arguments to the equations $L_\lam^+v=0$  
and \eqref{stat}, it can be seen that $\ker L_\lam^+ =\{0\}$.
On the other hand, since $w_\lam>0$ is a solution of \eqref{stat}, it follows again from
standard spectral theory that
$$\ker L_\lam^-=\textnormal{span}\{w_\lam\} \ \text{and} \
0=\inf\sigma(L_\lam^-).$$

It remains to show that $L_\lam^+$ has exactly one negative eigenvalue.
As discussed earlier, the local bifurcation analysis close to $\lam=0$ 
shows that $M(L_\lam^+)=1$ for $\lam>0$ small enough.
By perturbation theory, the eigenvalues of $L_\lam^+$ depend
continuously on $\lam>0$.
Since $\ker L_\lam^+ =\{0\}$ for all $\lam>0$, 
the eigenvalues cannot cross zero as $\lam$ varies. Therefore,
$M(L_\lam^+)=1$ for all $\lam>0$, which completes the proof of conditions
(C1) and (C2).

\section{A brief history of orbital stability}\label{history.sec}

The stability theory of infinite dimensional nonlinear evolution equations has been the object of intense study 
in the past four decades. It originated in the mathematical analysis of nonlinear waves propagating in 
dispersive media, such as waves on a water surface, or electromagnetic waves in dielectric media. Giving
an exhaustive review of the subject would take us far outside the scope of these notes. We shall only aim
to guide the reader through a choice of references which appear important to us, providing possible 
directions for further investigation of the literature on orbital stability.

Let us first remark that the notion of {\em orbital stability} defined in \eqref{eq:orbstab0} is a classical one in the study
of periodic solutions of finite dimensional dynamical systems, which originated in the pioneering works of Floquet \cite{floquet}, 
Poincar\'e \cite{poincare} and Lyapunov \cite{lyap}. 
The rigorous mathematical analysis of orbital stability for {\em nonlinear dispersive PDE's} 
has been initiated in 1972 by Benjamin \cite{ben72}, 
who considered solitary waves of the Korteweg--de Vries (KdV) equation. 
This equation was first written down by Boussinesq in 1877 \cite{bouss} and then rediscovered independently
by Korteweg and de Vries in 1895 \cite{KdV}, as a model for water wave motions. It describes long waves 
in shallow water ({\em i.e.} with water depth small compared to wavelength) propagating in one space 
direction.\footnote{The KdV equation also appears in other physical contexts \cite{zk}.}
The terminology of ``orbital stability'' is not employed by Benjamin, who rather speaks of the stability of the shape of the
solitary waves: ``A device entailing the definition of a certain quotient space is used to discriminate the stability of
solitary waves in respect of shape -- which is a more reasonable property to investigate than absolute stability.'' 
(\cite[p.~155]{ben72}). The quotient referred to by Benjamin is with respect to space translations in $\R$, which
is a group of symmetry for the KdV equation. Benjamin's proof of stability makes use of a Lyapunov functional
constructed by means of the constants of motion, i.e. the energy-momentum method studied in these notes. 
It is worth observing here that,
before proving stability for arbitrary perturbations of the initial data, he starts by proving stability for perturbations having
same $L^2$ norm as the solitary wave, and then uses the fact that solitary waves come as continuous families
parametrized by the wave speed. This idea was later used by Weinstein \cite{weinstein86} for general
NLS equations and a generalized KdV equation. We use it to prove our Theorem~\ref{thm:lyapmethodrestrictedmod}.
Benjamin motivates his approach heuristically by discussing some early remarks of Boussinesq \cite{bouss} 
suggesting the use of a Lyapunov function to prove stability.

An abundant literature on the stability theory of solitary waves for 
equations modelling water waves has followed Benjamin's paper. Just to mention a few, the interested reader
may consult the following papers and references therein: 
\cite{bona,weinstein86,bss,constrauss_2000,constmol,egw,geyer} for waves in shallow water, including the KdV and 
Camassa-Holm equations; \cite{buffoni,constrauss_2007,bgsw} for the full water wave problem, 
governed by the Euler equation. 

A couple of years after Benjamin's seminal work, Bona \cite{bona} made a substantial contribution to the theory, 
by grounding it into the Sobolev space setting. Indeed,
in the absence of a general well-posedness theory, Benjamin had assumed that solutions were global in time and smooth.
Bona proved global well-posedness in appropriate Sobolev spaces and rephrased 
Benjamin's arguments in this natural framework. This 
was an important step for subsequent work on stability for nonlinear dispersive equations. 

Two remarkable contributions to the stability theory of KdV-like equations were given about a decade later 
by Weinstein \cite{weinstein86} and by Bona, Souganidis and Strauss \cite{bss}, who applied the energy-momentum method
to generalized versions of the KdV equation. Weinstein \cite{weinstein86} also proves the orbital stability of standing
wave solutions to a general class of nonlinear Schr\"odinger equation. His proof, based on the energy-momentum method,
provides the first alternative, in the NLS context, to the proof of orbital stability given a few years earlier
by Cazenave and Lions \cite{cazlions} for the NLS with a power-law nonlinearity (see also \cite{caz}), 
which is purely variational, based on Lions' concentration-compactness principle \cite{lions}. 

In the same spirit, taking advantage of general existence results for nonlinear waves that were obtained in the early
1980's (see {\em e.g.} \cite{strauss,berlions}), an important body of work including
\cite{shatah,shrauss,jones,joloney,grill88,grill90} made use of linear stability analysis 
and the energy-momentum method to study stability properties of standing/solitary waves for
Hamiltonian systems including the NLS and nonlinear Klein-Gordon equations. 
This line of research culminated in the general theory of orbital stability of
Grillakis, Shatah and Strauss \cite{gssI,gssII}, who derived sufficient and necessary conditions for the stability of
standing/solitary waves of infinite-dimensional Hamiltonian systems with symmetry, 
{\em via} a combination of spectral properties and a general convexity condition. In the NLS
context, this convexity condition takes the form of the condition \eqref{slope} of Section~\ref{curves.sec}.
This stability condition seems to have first appeared in 1968 in a paper of Vakhitov and 
Kolokolov \cite{vk}, where stability of trapped modes in a cylindrical nonlinear optical waveguide is discussed
by formal arguments. In fact, the NLS equation is a standard model for slowly modulated waves in nonlinear media, 
for instance in nonlinear optics, see \cite{sulsul,maimistov}.

Following the seminal contributions of the 1980's, the amount of work on stability for the NLS and other nonlinear
dispersive equations has increased tremendously. Important results have been obtained for instance in 
\cite{ohta,cp,fukohta,fibichwang,hajstu,debfuk,fuk,jjlecoz,galhar07a,galhar07b,dcds,maeda08,lecozfuk,lecoz,jjcolin,lmr,maeda12,eect,an13,an14}, and many other references can be found in these papers.

In addition to orbital stability, the stronger property of asymptotic (orbital) stability\footnote{This notion is well known
in the finite dimension context, see {\em e.g.} \cite{coddlev}.} has also been investigated, see {\em e.g.}
\cite{pw,mm01,mmt02,mm05,mm08} for KdV and \cite{swI,swII,cucc_survey,mmt06,kz,cucc,cuccpel} for NLS.
Roughly speaking, a relative equilibria $U$ is (orbitally) {\em asymptotically stable} if 
it is orbitally stable and any solution starting close to its orbit eventually resolves into a ``modulation'' of the
original wave $U$ and a purely dispersive part, solution of the linear version of the governing equation. An
important related conjecture, known as the {\em soliton resolution conjecture} stipulates that, generically,
any reasonable initial data should give rise to a solution which eventually resolves into a sum of solitary waves (solitons)
and a purely dispersive part (radiation). More details and references on these topics can be found in \cite{soffer,tao}.
Let us just conclude by remarking that the term ``soliton'' (which was coined in \cite{zk}) comes 
from the literature on integrable systems, originating in \cite{fpu,zk,ggkm,lax,zs,manakov74}. 
Loosely speaking, solitons\index{solitons} are (stable) solitary waves of integrable systems, 
that can be obtained by exact solution methods,\footnote{These methods are somewhat reminiscent of the Fourier
transform approach to solve linear PDE's, though the formulas are much more involved for nonlinear waves.}
such as the {\em inverse scattering transform} \cite{lax}. 
However, the term soliton is now used in a more flexible manner
throughout the nonlinear dispersive PDE's community, whenever referring to a persistent localized wave resulting
from a balance of dispersion and nonlinear effects. 
The inverse scattering transform provides detailed information about the asymptotic behaviour
({\em e.g.} soliton resolution) of general solutions {\em in the integrable cases} 
-- see \cite{tao,klein_saut} and references therein for recent accounts 
comparing the inverse scattering to other PDE methods.

Further discussion and more references about nonlinear dispersive PDE's 
can be found in the monographs \cite{ac,cazenave2003,tao_book,pava}.


\newpage

\vglue 0.5cm
\appendix

\setcounter{section}{0}
\renewcommand{\thesection}{A.\arabic{section}}
 
\centerline{\bf\Large Appendix}
\medskip
The goal of this appendix is to present those very  basic notions from differential geometry, Lie group theory and Hamiltonian mechanics that are indispensable to follow the treatment of the main text and that are not necessarily familiar to all. The only prerequisites for this part are a good grasp of differential calculus on finite dimensional normed vector spaces not going much beyond  a fluent mastery of the chain rule for differentiation and an intuitive grasp of what  a submanifold of such spaces is. 
\section{Differential geometry: the basics}\label{s:diffgeom}
We first recall some elementary notions of differential geometry and dynamical systems on a  normed vector space $E$. For the general theory on differentiable manifolds, one may for example consult~\cite{am, ma, spi}.

By a vector field on $\Ban$ we will mean a smooth map $X:\Ban\to \Ban$. Given $u\in \Ban$, one should think of $X(u)$ as a ``tangent vector to $\Ban$ at $u$''.  With this idea in mind, a vector field naturally determines a differential equation
$$
\dot u(t)=X(u(t)), \quad u_0=u,
$$
the solutions of which induce a flow on $\Ban$ defined as $\Phi^X_t(u)=u(t)$. For ease of discussion, we will suppose throughout the appendix that all solutions are global and hence all flows complete.  Most results carry over even if the flow exists only locally in time. 

The diffeomorphisms\footnote{We mean $\Phi\in C^1(E,E)$ with a $C^1(E,E)$ inverse.}  $\Phi$ of $\Ban$ act naturally on vector fields as follows. First note that, when $\Phi$ is a diffeomorphism, and $\gamma:t\in (a,b)\to E$ a curve with $\gamma(0)=u, \dot\gamma(0)=v$, then we can consider the curve $\tilde\gamma:t\in (a, b) \to E$ defined by $\tilde \gamma(t)=\Phi(\gamma(t))$. This is the curve $\gamma$, ``pushed forward'' by $\Phi$: we invite the reader to draw a picture. This new curve satisfies $\tilde\gamma(0)=\Phi(u)$, so it passes through $\Phi(u)$. What is its tangent vector at that point? The chain rule yields immediately
$$
\dot{\tilde\gamma}(0)=D_u\Phi(v),
$$
where $D_y\Phi$ is our notation for the Fr\'echet derivative of $\Phi$ at $y\in \Ban$, which is a continuous linear map from $\Ban$ to $\Ban$. This equality gives a geometric interpretation to the purely analytical object $D_u\Phi(v)$: it is the tangent vector at $\Phi(u)$ to the curve $\tilde\gamma$ at $t=0$. With this in mind, given a vector field $X$, we can now define a new vector field $\Phi_*X$, the \emph{push forward} of the vector field $X$ by the diffeomorphism $\Phi$, as follows:
$$
\Phi_*X(\Phi(u)):=D_u\Phi(X(u)).
$$
Note that, with the above interpretation of the ``push forward'' of a vector at $u$, $D_u\Phi(X(u))$ is a vector ``at $\Phi(u)$'', which explains why $\Phi(u)$ appears in the argument in the left hand side. Of course, we can write
\begin{equation}\label{eq:pushforward}
\Phi_*X(u)=D_{\Phi^{-1}(u)}\Phi(X(\Phi^{-1}(u))).
\end{equation}
We will make little use of this notation from differential geometry, preferring to write out the explicit expression $D_u\Phi(X(u))$ whenever needed. 

Diffeomorphisms also act naturally on flows, as follows. Given a diffeomorphism $\Phi:\Ban\to \Ban$, one has, for all $u\in \Ban$,
$$
\frac{\rd}{\rd t} (\Phi\circ\Phi_t^X)(u)= D_{\Phi_t(u)}\Phi(X(\Phi_t(u))).
$$
From this and~\eqref{eq:pushforward}, one concludes
\begin{eqnarray*}
\frac{\rd}{\rd t} (\Phi\circ\Phi_t^X\circ \Phi^{-1})(u)&=& D_{\Phi_t^X(\Phi^{-1}(u))}\Phi(X(\Phi_t^X(\Phi^{-1}(u))))\\
&=&\Phi_*X(\Phi\circ\Phi_t^X\circ\Phi^{-1}(u)).
\end{eqnarray*}
In other words, the flow $\Phi\circ\Phi_t^X\circ\Phi^{-1}$ is generated by the pushed forward vector field $\Phi_*X$. 

It follows from the above and an application of the chain rule that, if $X,Y$ are two vector fields on $\Ban$, then, for all $u\in \Ban$,
\begin{eqnarray}\label{eq:commutator}
\frac{\partial^2}{\partial s\partial t} \Phi_s^Y\circ\Phi_t^X\circ \Phi_{-s}^Y(u)_{\mid s=0=t}&=&\frac{\rd}{\rd s}D_{\Phi_{-s}^Y(u)}\Phi_s^Y(X(\Phi_{-s}^Y(u)))_{s=0}\nonumber\\
&=&\frac{\rd}{\rd s}X(\Phi_{-s}^Y(u))_{s=0} + \frac{\rd}{\rd s} D_x\Phi_s^Y(X(u))_{s=0}\nonumber\\
&=&[X,Y](u),
\end{eqnarray}
where the commutator $[X,Y]$ of two vector fields is defined as follows:
$$
[X,Y](u)=D_u Y(X(u))-D_u X(Y(u)).
$$
This definition is justified by the following observation. Given a vector field $X$ and a $C^1$ function $F:\Ban\to \R$, one can define a differential operator
\begin{equation}\label{eq:derivationX}
\widehat X(F)(u)=D_uF(X(u)),
\end{equation}
which is -- geometrically -- nothing but the directional derivative of $F$ at $u$ in the direction $X(u)$. A simple computation shows readily that
\begin{equation}\label{eq:commutator3}
[\widehat X, \widehat Y]=\widehat{[X,Y]}.
\end{equation}
The following is then  well known:
\begin{lemma}\label{lem:commutator} The following are equivalent:
\begin{enumerate}[label=({\roman*})]
\item For all $s,t\in\R$, $\Phi_t^X\circ\Phi_s^Y=\Phi_s^Y\circ\Phi_t^X$;
\item $ [X,Y]=0$.
\end{enumerate}
\end{lemma}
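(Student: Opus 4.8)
\textbf{Proof plan for Lemma~\ref{lem:commutator}.}

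The plan is to prove the two implications separately, using the infinitesimal formulas recalled just above the statement. For the direction $(i)\Rightarrow(ii)$, suppose the flows commute for all $s,t\in\R$. Then the map $(s,t)\mapsto \Phi_s^Y\circ\Phi_t^X\circ\Phi_{-s}^Y(u)$ coincides with $(s,t)\mapsto \Phi_t^X(u)$, which is independent of $s$. Differentiating with respect to $s$ and then $t$ at $s=t=0$ therefore gives $0$ on the left-hand side; by~\eqref{eq:commutator} the same mixed partial derivative equals $[X,Y](u)$. Since $u\in\Ban$ is arbitrary, we conclude $[X,Y]=0$.

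For the converse $(ii)\Rightarrow(i)$, the cleanest route is to fix $u\in\Ban$ and study the curve $c(s):=\Phi_s^Y\circ\Phi_t^X\circ\Phi_{-s}^Y(u)$ for fixed $t$, showing it is constant in $s$, which immediately yields $\Phi_s^Y\circ\Phi_t^X\circ\Phi_{-s}^Y=\Phi_t^X$, i.e. $\Phi_s^Y\circ\Phi_t^X=\Phi_t^X\circ\Phi_s^Y$. To compute $\frac{\rd}{\rd s}c(s)$ at a general value $s=s_0$, I would use the group property of the flow to reduce to $s_0=0$: writing $\Phi_s^Y=\Phi_{s-s_0}^Y\circ\Phi_{s_0}^Y$, one finds $\frac{\rd}{\rd s}c(s)\big|_{s=s_0}= (\Phi_{s_0}^Y)_*$ applied to $\frac{\rd}{\rd\sigma}\big(\Phi_\sigma^Y\circ\Phi_{t}^X\circ\Phi_{-\sigma}^Y\big)\big|_{\sigma=0}$ evaluated at the appropriate base point. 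The inner derivative is, by the first line of the computation in~\eqref{eq:commutator} (before specializing $t$ to $0$), the vector field $s\mapsto$ (something expressible through $[X,Y]$ composed with the flow of $X$); more precisely, differentiating $\Phi_\sigma^Y\circ\Phi_t^X\circ\Phi_{-\sigma}^Y$ in $\sigma$ at $\sigma=0$ gives a vector field built from $D Y(X)-D X(Y)$ transported along $\Phi_t^X$, which vanishes identically once $[X,Y]=0$. Hence $c$ has zero derivative everywhere and is constant.

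The main obstacle — and the step that requires the most care rather than cleverness — is making the differentiation under the flow in the previous paragraph rigorous: one must justify interchanging the $s$- and $t$-derivatives (or equivalently, that the relevant maps are jointly $C^1$ so that the computation in~\eqref{eq:commutator} extends from the origin to arbitrary $(s_0,t)$), and one must keep careful track of which point each vector field is evaluated at, since push-forwards by $\Phi_{s_0}^Y$ and $\Phi_t^X$ shift base points. Concretely, I would set $g(s):=\frac{\rd}{\rd t}\big[\Phi_{-s}^Y\circ\Phi_s^X\circ\Phi_s^Y\big]$ — no; cleaner: define $F(s,t)=\Phi_{-s}^Y\circ\Phi_t^X\circ\Phi_s^Y(u)$, show $\partial_s F$ satisfies a linear ODE in $t$ driven by $\widehat{[X,Y]}=0$ with zero initial data at $t=0$, hence $\partial_s F\equiv 0$. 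Once the smoothness is granted (which holds here since $X,Y$ are smooth vector fields with global flows, by standard ODE dependence-on-parameters theory), the algebra is exactly~\eqref{eq:commutator}–\eqref{eq:commutator3} and the conclusion follows. This is the classical argument that the Lie bracket is the obstruction to commutativity of flows; I would present it in this streamlined ODE form rather than via the Lie-derivative/pullback formalism, to stay within the elementary framework of the appendix.
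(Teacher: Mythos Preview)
Your argument for $(i)\Rightarrow(ii)$ is exactly what the paper does: it says the implication ``follows immediately from the preceding computation,'' meaning~\eqref{eq:commutator}. For the converse $(ii)\Rightarrow(i)$, the paper does \emph{not} give a proof at all --- it simply writes ``for a simple argument we refer to~[Spi],'' citing Spivak. So there is nothing to compare against; you have supplied more than the paper does.

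Your sketch for $(ii)\Rightarrow(i)$ is the standard route (and essentially what one finds in Spivak): fix $u$, set $F(s,t)=\Phi_{-s}^Y\circ\Phi_t^X\circ\Phi_s^Y(u)$, and show $\partial_s F\equiv 0$ by observing that $\partial_s F(s,t)$, viewed as a function of $t$, solves a linear ODE whose inhomogeneity is built from $[X,Y]$ and whose initial value at $t=0$ vanishes. The idea is correct, though your write-up meanders (the ``--- no; cleaner:'' detour, and the sign flip between $\Phi_s^Y\circ\Phi_t^X\circ\Phi_{-s}^Y$ and $\Phi_{-s}^Y\circ\Phi_t^X\circ\Phi_s^Y$). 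If you clean it up, the sharpest formulation is: show that the curve of vector fields $t\mapsto(\Phi_{-t}^X)_*Y$ has derivative $(\Phi_{-t}^X)_*[X,Y]=0$, hence $(\Phi_t^X)_*Y=Y$ for all $t$; since $Y$ is then $\Phi_t^X$-related to itself, its flow is intertwined by $\Phi_t^X$, giving the commutation. The regularity you flag (joint smoothness of the flow in time and initial condition, to justify swapping derivatives) is indeed the only technical point, and is guaranteed here by the standing smoothness assumptions on $X,Y$ and standard ODE theory.
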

\begin{proof} That (i) implies (ii) follows immediately from the preceding computation. The proof of the converse is slightly more involved, for a simple argument we refer to~\cite{spi}. 
\end{proof}
 
\begin{remark}
Note that, if $X(u)=Au, Y(u)=Bu$, where $A,B:\Ban\to \Ban$ are linear, then, with our convention, 
$
[X,Y](u)=-[A,B]u.
$
Here $[A,B]=AB-BA$ is the standard commutator of linear maps.
\end{remark}

\begin{definition}\label{def:levelsurfregular}
Let $F\in C^k(\Ban,\R^m)$ for some $k\ge 1$. For each $\mu\in\R^m$ we define a level set\index{level set} of $F$ by
\begin{equation}\label{eq:sigmamu}
\Sigma_\mu=\{u\in \Ban\mid F(u)=\mu\}.
\end{equation}
We will say $u\in \Ban$ is a regular point\index{regular point} of $F$ if $D_uF:\Ban\to\R^m$ is surjective. We will say $\mu$ is a regular value\index{regular value} of $F$, if $\Sigma_\mu\not=\emptyset$ and all $u\in\Sigma_\mu$ are regular points of $F$. 
\end{definition}
If $\mu$ is a regular value of $F$, then $\Sigma_\mu$ is a co-dimension $m$ submanifold of $E$ \cite[Theorem 6.3.34]{baoebepre99}. In that case, the tangent space to $\Sigma_\mu$ at $u$ is defined as follows:
\begin{equation}
	\label{eq:deftangentspace}
	T_u\Sigma_\mu=\left\{w\in E \mid D_u F(w)=0\right\}=\mathrm{Ker}(D_uF).
\end{equation}
We point out that if $r=\mathrm{Rank}(D_uF)$ is constant on $\Sigma_\mu$, then $\Sigma_\mu$ is a co-dimension $r$ submanifold. We will need the following simple result in Section~\ref{ss:coercivityhessian}.

\begin{lemma}\label{eq:basicestimate}
Let $F\in C^k(\Ban,\R^m)$ for some $k\geq 2$. Let $\mu\in\R^m$ be a regular value of $F$. Let $u\in\Sigma_\mu$ and let $W_u$ be a subspace of $\Ban$ so that $\Ban=T_u\Sigma_\mu\oplus W_u$. Then, for all $v\in\Sigma_\mu$,
$$
\|(v-u)_2\|\le \mathrm{O}(\|v-u\|^2),
$$
and there exist $\delta, C>0$ such that
$$
\|v-u\|\le \delta \Rightarrow \|(v-u)_1\|\geq C\|v-u\|,
$$
where $(v-u)=(v-u)_1+(v-u)_2\in T_u\Sigma_\mu\oplus W_u$. 
\end{lemma}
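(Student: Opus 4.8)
\textbf{Proof proposal for Lemma~\ref{eq:basicestimate}.}

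The plan is to use a Taylor expansion of $F$ at $u$ together with the transversality of $W_u$ to $T_u\Sigma_\mu = \ker D_uF$. First I would set up the decomposition notation: write $v-u = (v-u)_1 + (v-u)_2$ with $(v-u)_1 \in T_u\Sigma_\mu$ and $(v-u)_2 \in W_u$. Since $\mu$ is a regular value, $D_uF:\Ban\to\R^m$ is onto with kernel $T_u\Sigma_\mu$, so its restriction $D_uF|_{W_u}: W_u \to \R^m$ is a linear isomorphism (as $\dim W_u = m$); denote its inverse by $L$, which is a bounded linear map with $\|L\| =: \kappa < +\infty$. This is the key structural fact I would isolate at the outset.

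Next, the main computation. Because $v, u \in \Sigma_\mu$, we have $F(v) = F(u) = \mu$, hence $F(v) - F(u) = 0$. Taylor's theorem for $F \in C^k$, $k\geq 2$, about $u$ gives
\[
0 = F(v) - F(u) = D_uF(v-u) + \mathrm{O}(\|v-u\|^2).
\]
Since $D_uF$ annihilates $(v-u)_1$, this reads $D_uF((v-u)_2) = -\mathrm{O}(\|v-u\|^2)$. Applying $L$ and using its boundedness yields $\|(v-u)_2\| \leq \kappa\,\mathrm{O}(\|v-u\|^2) = \mathrm{O}(\|v-u\|^2)$, which is the first claimed estimate. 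For the second, write $\|v-u\| \leq \|(v-u)_1\| + \|(v-u)_2\| \leq \|(v-u)_1\| + \mathrm{O}(\|v-u\|^2)$; choosing $\delta>0$ small enough that the $\mathrm{O}(\|v-u\|^2)$ term is bounded by $\tfrac12\|v-u\|$ whenever $\|v-u\|\leq\delta$, we get $\|v-u\| \leq \|(v-u)_1\| + \tfrac12\|v-u\|$, hence $\|(v-u)_1\| \geq \tfrac12\|v-u\|$, so $C = \tfrac12$ works.

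I do not expect any serious obstacle here; the only mild subtlety is to make sure the $\mathrm{O}(\|v-u\|^2)$ constant in the Taylor remainder is controlled uniformly for $v$ in a neighbourhood of $u$, which is immediate from $F \in C^2$ and the fact that we only claim the estimates for a fixed $u$ with $\delta$ allowed to depend on $u$. One should also note that the decomposition $v-u = (v-u)_1 + (v-u)_2$ is bounded, i.e.\ the projections onto $T_u\Sigma_\mu$ and $W_u$ along the complementary subspace are continuous (automatic since $W_u$ is finite-dimensional), so that $\|(v-u)_1\|, \|(v-u)_2\| \leq C'\|v-u\|$ for some $C'$; this is used implicitly but is standard. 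The whole argument is a few lines once the isomorphism $D_uF|_{W_u}$ and the Taylor expansion are in place.
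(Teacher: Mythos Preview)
Your proposal is correct and follows essentially the same approach as the paper: Taylor expand $F$ at $u$, use that $D_uF$ vanishes on $T_u\Sigma_\mu$ and is a linear isomorphism on $W_u$ to bound $\|(v-u)_2\|$ by $\mathrm{O}(\|v-u\|^2)$, then use the triangle inequality to get the lower bound on $\|(v-u)_1\|$. The paper's proof is nearly identical, only phrasing the isomorphism property as a lower bound $\|D_uF(w_2)\|\geq c\|w_2\|$ rather than applying the inverse.
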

Note that both $\delta$ and $C$ depend on $u$ and on the decomposition of $\Ban$ chosen. 

\begin{proof}
Write $u-v=w_1+w_2$, with $w_1\in T_u\Sigma_\mu$ and $w_2\in W_u$. Then, using that $D_uF(w_1)=0$,
we have 
$$
0=F(v)-F(u)=D_uF(w_2)+\mathrm{O}(\|v-u\|^2).
$$
Now, since $D_uF$ is a diffeomorphism from $W_u$ to $\R^m$, there exists $c>0$ so that
$$
\|D_uF(w_2)\|\geq c \|w_2||, \quad\mathrm{hence}\quad \mathrm{O}(\|v-u\|^2)\geq c \|w_2\|.
$$
Finally
$$
\|w_1\|=\|u-v-w_2\|\geq \|u-v\|-\|w_2\|\geq \|u-v\|-\mathrm{O}(\|v-u\|^2),
$$
from which the result follows.
\end{proof}

\section{Lie algebras, Lie groups and their actions}\label{s:liegroups}
In general, a Lie algebra\index{Lie!algebra} is a vector space $V$ equipped with a bilinear composition law $(u,v)\in V\times V\to [u,v]\in V$, called a Lie bracket\index{Lie!bracket},  which is anti-symmetric and satisfies the Jacobi identity, meaning that for all $u,v,w\in V$:
\begin{equation}\label{eq:jacobi}
[[u,v], w]+[[v,w], u]+[[ w, u],v]=0.
\end{equation}
The basic example of this structure is given by spaces of matrices or, more generally, of linear operators on vector spaces, where the Lie bracket is given by the usual commutator. Two other examples play an important role in these notes, namely the space of vector fields on a normed vector space with the commutator defined in~\eqref{eq:commutator} and the space of all smooth functions on a symplectic vector space, where the Lie bracket is given by the Poisson bracket, as explained in Section~\ref{s:hammech} below. The validity of the Jacobi identity follows in all these examples from a direct computation, whereas the bilinearity and the anti-symmetry are obvious. Lie algebras are intimately linked to Lie groups, as the terminology strongly suggests, and as we now further explain.

In general, a Lie group\index{Lie!group} is a group equipped with a compatible manifold structure. For our purposes, it is however enough to define a Lie group $G$ to be a subgroup of GL$(\R^N)$, such that $G$ is also a submanifold of $\R^{N^2}$ (\emph{i.e.} for our purposes, typically the level surface of a vector-valued function). As such, GL$(\R^N)$ itself, which is an open subset of $\R^{N^2}$, is a Lie group. So are the rotation group
$$
\mathrm{SO}(N)=\{R\in \mathrm{GL}(N,\R)\mid R^TR=\mathrm{I}_N\}
$$
and the symplectic group
\begin{equation}\label{eq:spgroup}
\mathrm{Sp}(2N)=\{S\in \mathrm{GL}({2N},\R)\mid S^TJS=J\},\quad\mathrm{with} \quad J=
\begin{pmatrix}
0&\mathrm{I}_N\\-\mathrm{I}_N&0
\end{pmatrix}.
\end{equation}
A simple verification shows that Sp$(2)=\mathrm{SL}(2,\R)$, the space of two by two matrices of determinant one. The dimension of a Lie group is by definition its dimension as a manifold. For SO$(N)$, it is $N(N-1)/2$, and for Sp$(2N)$, it is $N(2N+1)$, as is readily checked. The group $\R^n$ is also a Lie group in this sense. Indeed, putting $N=n+1$, and defining, for each $a\in\R^n$,
$$
A(a)=
\begin{pmatrix}
\mathrm{I}_n &a\\
0&1
\end{pmatrix}
$$
one readily sees that $A(a)A(b)=A(a+b)$, so that one can view $\R^n$ as a subgroup of GL$(n+1,\R)$. 

We recall that, in general, an action of a group $G$ on a set $\Sigma$ is a map $\Phi:(g,x)\in G\times \Sigma\to \Phi_g(x)\in \Sigma$ which satisfies $\Phi_\e(x)=x$, for all $x\in \Sigma$, and $\Phi_{g_1}\circ\Phi_{g_2}=\Phi_{g_1g_2}$.  In these notes, we consider actions that are defined on a normed vector space $\Ban$. If the $\Phi_g$  are linear,  one says $\Phi$ is a representation of the group. This will \emph{not} always be the case in these notes: actions may be  nonlinear. Furthermore, all actions considered will be at least continuous, and very often they will have additional smoothness properties.  In this appendix, where we deal with finite dimensional systems only, the actions are supposed to be separately $C^1$ in each of their two variables $g\in G$ and $u\in \Ban$. Appropriate technical conditions to deal with infinite dimensional spaces $\Ban$ are given in the main part of the text as needed. 

By definition, \emph{the} Lie algebra $\frak g$ of a Lie group G is the tangent space to the manifold $G$ at the unit element $\e\in G$:
$$
\frak g=T_eG.
$$
In other words, for each $\xi \in \frak g$, there exists $\gamma:t\in \R\to G$, a smooth curve with $\gamma(0)=\e=\mathrm{I}_N$,  and $\dot\gamma(0)=\xi$. Note that one should think of $\xi$ as a matrix, since for each $t$, $\gamma(t)$ is one. In addition, it turns out that, given $\xi\in \frak g$, 
$$
\exp(t\xi)\in G,
$$
for all $t\in\R$ where $\exp(t\xi)$ is to be understood as the exponential of the matrix $t\xi$. Indeed, given $\xi$ and $\gamma$ as above, for all $n\in\N$, $\gamma(\frac{t}n)\in G$ and so $\gamma(\frac{t}n)^n\in G$. Taking $n\to+\infty$, the result follows. A one-parameter subgroup of $G$ is, by definition, a smooth curve $\gamma : t\in \R\to \gamma(t)\in G$, which is also a group diffeomorphism: $\gamma(t+s)=\gamma(t)\gamma(s)$. What precedes shows that any such one-parameter group is of the form $t\to \exp(t\xi)$. So there is a one-to-one correspondence between the one-parameter subgroups of $G$ and its Lie-algebra, which starts to explain the importance of this latter notion. In addition, it turns out that, if $\xi, \eta\in T_\e G$, then so is their commutator (seen as matrices)
$$
[\xi, \eta]=\xi\eta-\eta\xi,
$$
which justifies calling $T_\e G$ a Lie algebra.
Indeed, consider, for each $s\in\R$,  the curve
$$
\gamma: t\in\R\to \exp(s\eta)\exp(t\xi)\exp(-s\eta)\in G.
$$
Clearly $\gamma(0)=\mathrm{I}_N$ and $\dot\gamma(0)=\exp(s\eta)\xi\exp(-s\eta)\in T_\e G$. So we have a curve
$$
s\in\R \to \exp(s\eta)\xi\exp(-s\eta)\in T_\e G.
$$
Taking the derivative with respect to $s$ yields $[\eta, \xi]\in T_\e G$:
\begin{equation}\label{eq:adjointaction}
\frac{\rd}{\rd s} \exp(s\eta)\xi\exp(-s\eta)_{\mid s=0} = [\eta, \xi].
\end{equation} 
As an example, the Lie algebra of SO$(N)$, denoted by so$(N)$, is given by
$$
\mathrm{so}(N)=\{A\in \mathcal{M}(N,\R) \mid A^T+A=0\},
$$
which is the space of all anti-symmetric $N\times N$ matrices. This is easily established by writing $\exp(tA^T)\exp(tA)=\mathrm{I}_N$ and taking a $t$-derivative at $t=0$. And it is obvious that the commutator of two anti-symmetric matrices is anti-symmetric. A basis for so$(3)$ is 
\begin{equation}\label{eq:so3basis}
e_1=
\begin{pmatrix}
0&0&0\\
0&0&-1\\
0&1&0
\end{pmatrix},
\ e_2=
\begin{pmatrix}
0&0&1\\
0&0&0\\
-1&0&0
\end{pmatrix},
\ e_3=
\begin{pmatrix}
0&-1&0\\
1&0&0\\
0&0&0
\end{pmatrix},
\end{equation}
and one readily checks that
\begin{equation}\label{eq:so3comrel}
[e_1, e_2]=e_3,\quad [e_2, e_3]= e_1, \quad [e_3, e_1]=e_2.
\end{equation}
One then identifies $\xi\in \ $so$(3)$ with $\xi\in\R^3$ via 
\begin{equation}\label{xirepr}
\xi=\sum_{i=1}^3 \xi_i e_i 
=
\begin{pmatrix}
0&-\xi_3&\xi_2\\
\xi_3&0&-\xi_1\\
-\xi_2&\xi_1&0
\end{pmatrix}.
\end{equation} 
Similarly, a basis for sl$(2,\R)$, the Lie algebra of SL$(2,\R)$, is 
\begin{equation}
e_0=
\begin{pmatrix}
1&0\\
0&-1
\end{pmatrix},
\ e_+=
\begin{pmatrix}
0&1\\
0&0
\end{pmatrix},
\ e_-=
\begin{pmatrix}
0&0\\
1&0
\end{pmatrix},
\end{equation}
and one has
\begin{equation}
[e_0, e_+]=2e_+,\quad [e_0, e_-]=-2e_-,\quad [e_-, e_+]=-e_0.
\end{equation}
In general, if $e_i$, $i=1,\dots, m$ is a basis of $\frak g$, there exists constants $c_{ij}^k$ so that
\begin{equation}\label{eq:structureconstants}
[e_i, e_j]=c_{ij}^ke_k,
\end{equation}
where the summation over $k$ is understood; the $c_{ij}^k$ are called the structure constants of $\frak g$. 

There exists a natural \emph{linear} action of $G$ on its Lie algebra, called the \emph{adjoint action}\index{adjoint action} or \emph{adjoint representation}\index{adjoint representation|seeonly{adjoint action}}, defined as follows, for all $g\in G, \xi\in T_\e G$:
$$
\mathrm{Ad}_g\xi=g\xi g^{-1}.
$$
Clearly $\mathrm{Ad}_{g_1g_2}=\mathrm{Ad}_{g_1}\mathrm{Ad}_{g_2}$. Note that for a commutative Lie group $G$, such as $\R^n$, it is trivial: $\mathrm{Ad}_g\xi=\xi$. It is instructive to compute some non-trivial adjoint actions explicitly.  For SO$(3)$, one finds, with the above (somewhat abusive) notation
\begin{equation}\label{eq:so3adjointaction}
\mathrm{Ad}_R\xi=
\begin{pmatrix}
0&-(R\xi)_3&(R\xi)_2\\
(R\xi)_3&0&-(R\xi)_1\\
-(R\xi)_2&(R\xi)_1&0
\end{pmatrix}=R\xi.
\end{equation}
We invite the reader to do the analogous computation for sl$(2,\R)$, determining the matrix of $\mathrm{Ad}_g$ in the basis given above. 

The dual of the Lie algebra $\frak g$ (as a vector space) is denoted by $\frak g^*$. It appears very naturally in the study of symplectic group actions arising in the study of Hamiltonian systems with symmetry, as we will see in Section~\ref{s:nother}. 
Given a basis $e_i$ of $\frak{g}$, we denote by $e_i^*$ the dual basis defined by $e_i^*(e_j)=\delta_{ij}$. 

Moreover, there is a natural action of $G$ on $\frak g^*$, obtained by dualization as follows. For all $\mu\in \frak g^*$, for all $\xi\in\frak g$, we define
\begin{equation}\label{eq:coadjoint}
\mathrm{Ad}^*_g\mu(\xi)=\mu(\mathrm{Ad}_{g^{-1}}\xi).
\end{equation}
This is called the co-adjoint action\index{co-adjoint action} of $G$. For later purposes, we define, for all $\mu\in\frak g^*$,
\begin{equation}\label{eq:stabilizer}
G_\mu=\{g\in G\mid \mathrm{Ad}^*_g\mu=\mu\},
\end{equation}
the so-called \emph{stabilizer}\index{stabilizer|seeonly{isotropy group}} or \emph{isotropy group}\index{isotropy group} of $\mu\in\frak g^*$.

As above, given a basis $e_i$ of $\frak g$, one identifies $\mu\in\frak g^*$ with $\mu=(\mu_1,\dots, \mu_m)\in\R^m$ by writing 
\begin{equation}\label{eq:gstaridentif}
\mu=\sum_{i=1}^m \mu_i e^*_i \mbox{ so that }  \mu(\xi)=\sum_{i=1}^m \mu_i\xi_i.
\end{equation}

Let $\mu\in\mathrm{so}(3)^*$; we write $\mu(\xi)=\sum_{i=1}^3\mu_i \xi_i$ and identify $\mu\in \mathrm{so}(3)^*$ with $\mu=(\mu_1,\mu_2,\mu_3)\in\R^3$. Again, one readily checks that
\begin{equation}\label{eq:so3coadjointaction}
{\mathrm{Ad_R}^*\mu}=R \mu.
\end{equation}

\begin{remark}\label{rem:euclidianstructure}
It is often useful to suppose there exists an Euclidian structure on $\frak{g}$ that is preserved by $\mathrm{Ad}_g$ for all $g\in G$. This is equivalent to supposing that there exists a basis $e_i$ of $\frak{g}$ so that the matrix of $\mathrm{Ad}_g$ in $e_i$ belongs to $\mathrm{O}(m)$. We will simply write $\mathrm{Ad}_g\in \mathrm{O}(m)$ in this case. It follows that the matrix of $\mathrm{Ad}^*_g$ in the dual basis $e_i^*$ belongs to $\mathrm{O}(m)$ as well. This implies that the natural Euclidian structure induced on $\frak{g}^*$ by the one on $\frak{g}$ is preserved by $\mathrm{Ad}^*_g$ for all $g\in G$. Such a structure always exists if the group $G$ is compact.
\end{remark}

Suppose now we have a $C^1$-action $\Phi:(g,u)\in G\times \Ban\to \Phi_g(u)\in \Ban$ of a Lie group $G$ on a normed vector  space $\Ban$.  Then, for all $\xi\in T_\e G$, one can define the vector field $X_\xi$ on $\Ban$, called \textit{generator}, via
\begin{equation}\label{eq:generator}\index{generator}
X_\xi(u)=\frac{\rd}{\rd t} \Phi_{\exp(t\xi)}(u)_{\mid t=0}.
\end{equation}

\begin{lemma} \label{lem:liealgantihom}
If $\Phi$ is a $C^2$-action, then for all $g\in G$, $\xi, \eta\in \frak g$, for all $u\in \Ban$, one has
\begin{eqnarray}
[X_\xi, X_\eta]&=&-X_{[\xi,\eta]},\label{eq:liealgrep}\\
X_{\mathrm{Ad}_g\xi}(\Phi_g(u))&=&D_u\Phi_g(X_\xi(u)).\label{eq:Adintertwine}
\end{eqnarray}
\end{lemma}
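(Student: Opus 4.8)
\textbf{Proof plan for Lemma~\ref{lem:liealgantihom}.} The plan is to establish both identities directly from the definitions, using the flow-commutator formula~\eqref{eq:commutator} and the definition of the generator~\eqref{eq:generator}, together with standard properties of one-parameter subgroups. The key observation, already recorded in the excerpt, is that for $\xi\in\frak g$ the generator $X_\xi$ is the vector field whose flow is $\Phi_{\exp(t\xi)}$; that is, $\Phi_t^{X_\xi}=\Phi_{\exp(t\xi)}$. This follows since $t\mapsto\Phi_{\exp(t\xi)}(u)$ is a flow (it satisfies the group property because $\exp((t+s)\xi)=\exp(t\xi)\exp(s\xi)$ and the composition law of the action), and its generator at $t=0$ is precisely $X_\xi(u)$ by~\eqref{eq:generator}.

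For~\eqref{eq:Adintertwine}, first I would differentiate the relation $\Phi_g\circ\Phi_{\exp(t\xi)}=\Phi_{g\exp(t\xi)}=\Phi_{g\exp(t\xi)g^{-1}}\circ\Phi_g=\Phi_{\exp(t\,\mathrm{Ad}_g\xi)}\circ\Phi_g$, which holds by the composition law of the action and the matrix identity $g\exp(t\xi)g^{-1}=\exp(t\,\mathrm{Ad}_g\xi)$. Applying $\frac{\rd}{\rd t}\big|_{t=0}$ to both sides evaluated at $u$, the left side gives $D_{\Phi_g(u)}\Phi_{\mathrm{Id}}$ acting on... more carefully: the left side is $\Phi_g(\Phi_{\exp(t\xi)}(u))$, whose $t$-derivative at $0$ is $D_u\Phi_g(X_\xi(u))$ by the chain rule; the right side is $\Phi_{\exp(t\,\mathrm{Ad}_g\xi)}(\Phi_g(u))$, whose $t$-derivative at $0$ is $X_{\mathrm{Ad}_g\xi}(\Phi_g(u))$ by~\eqref{eq:generator}. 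Equating the two yields~\eqref{eq:Adintertwine}. This only requires the action to be $C^1$.

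For~\eqref{eq:liealgrep}, the idea is to combine~\eqref{eq:commutator} with the adjoint-action formula~\eqref{eq:adjointaction}. Specifically, I would start from~\eqref{eq:Adintertwine} with $g=\exp(s\eta)$:
\begin{equation*}
X_{\mathrm{Ad}_{\exp(s\eta)}\xi}(\Phi_{\exp(s\eta)}(u))=D_u\Phi_{\exp(s\eta)}(X_\xi(u)),
\end{equation*}
and differentiate at $s=0$. On the right-hand side, recalling $\Phi_{\exp(s\eta)}=\Phi_s^{X_\eta}$, the $s$-derivative at $0$ of $D_u\Phi_s^{X_\eta}(X_\xi(u))$ together with the $s$-derivative of $X_\xi(\Phi_{-s}^{X_\eta}(\cdot))$-type terms is exactly the computation carried out in~\eqref{eq:commutator}, identifying the result as $[X_\eta,X_\xi](u)$ (with the sign convention of the excerpt, $[X_\eta,X_\xi](u)=D_uX_\xi(X_\eta(u))-D_uX_\eta(X_\xi(u))$). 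On the left-hand side, by~\eqref{eq:adjointaction} one has $\frac{\rd}{\rd s}\mathrm{Ad}_{\exp(s\eta)}\xi\big|_{s=0}=[\eta,\xi]$, so the $s$-derivative of the left side at $0$ contributes the term $X_{[\eta,\xi]}(u)$ (from the dependence on the subscript) plus $D_{u}X_{\xi}(X_\eta(u))$ (from the dependence of $\Phi_{\exp(s\eta)}(u)$ on $s$), using that $\xi\mapsto X_\xi(u)$ is linear. Matching the two sides and cancelling the common term $D_uX_\xi(X_\eta(u))$ gives $X_{[\eta,\xi]}(u)-D_uX_\eta(X_\xi(u))=-D_uX_\eta(X_\xi(u))+\cdots$; after the dust settles one obtains $X_{[\eta,\xi]}=-[X_\eta,X_\xi]$, equivalently $[X_\xi,X_\eta]=-X_{[\xi,\eta]}$.

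The main obstacle is bookkeeping the $C^2$ hypothesis carefully: the differentiation-under-differentiation step needed to invoke~\eqref{eq:commutator} requires the mixed partials $\frac{\partial^2}{\partial s\partial t}\Phi_{\exp(s\eta)\exp(t\xi)\exp(-s\eta)}(u)$ to exist and be computed by the chain rule, and one must justify interchanging the order of the two derivatives — this is where $C^2$-regularity of the action (as assumed in the appendix) is used. A cleaner route to~\eqref{eq:liealgrep}, which I would actually prefer to write up, is: note $\exp(s\eta)\exp(t\xi)\exp(-s\eta)=\exp(t\,\mathrm{Ad}_{\exp(s\eta)}\xi)$, hence $\Phi_s^{X_\eta}\circ\Phi_t^{X_\xi}\circ\Phi_{-s}^{X_\eta}=\Phi_{\exp(t\,\mathrm{Ad}_{\exp(s\eta)}\xi)}=\Phi_t^{X_{\mathrm{Ad}_{\exp(s\eta)}\xi}}$; then apply $\frac{\partial^2}{\partial s\partial t}\big|_{s=t=0}$, using the left-hand side to get $[X_\xi,X_\eta](u)$ via~\eqref{eq:commutator} and the right-hand side to get $-X_{[\xi,\eta]}(u)$ via~\eqref{eq:adjointaction} and linearity of $\xi\mapsto X_\xi$. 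This isolates the single regularity issue into the one application of~\eqref{eq:commutator} and keeps the sign tracking transparent.
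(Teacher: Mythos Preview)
Your proposal is correct and takes essentially the same approach as the paper: for~\eqref{eq:Adintertwine} you differentiate $\Phi_g\circ\Phi_{\exp(t\xi)}=\Phi_{\exp(t\,\mathrm{Ad}_g\xi)}\circ\Phi_g$ at $t=0$, exactly as in the paper; and your ``cleaner route'' for~\eqref{eq:liealgrep}---applying $\partial^2/\partial s\partial t$ to $\Phi_{\exp(s\eta)\exp(t\xi)\exp(-s\eta)}$, reading off $[X_\xi,X_\eta]$ from~\eqref{eq:commutator} on one side and $X_{[\eta,\xi]}$ from~\eqref{eq:adjointaction} plus linearity of $\xi\mapsto X_\xi$ on the other---is precisely the paper's argument.
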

\begin{proof}
It follows from~\eqref{eq:commutator} that
$$
\frac{\partial^2}{\partial s\partial t}{\Phi_{\exp (s\eta)\,\exp (t\xi)\exp (-s\eta)}}_{\mid s=0=t}=\left[X_\xi, X_\eta\right].
$$
Now, by definition, 
$$
X_{\exp (s\eta)\xi\exp(-s\eta)}=\frac{\rd}{\rd t}{\Phi_{\exp (s\eta)\exp (t\xi)\exp(-s\eta)}}_{\mid t=0}
$$
and furthermore
$$
\frac{\rd}{\rd s}{X_{\exp (s\eta)\xi\exp(-s\eta)}}_{\mid s=0}=X_{[\eta, \xi]}.
$$
This proves~\eqref{eq:liealgrep}. For~\eqref{eq:Adintertwine}, note that the chain rule implies
$$
\frac{\rd}{\rd t}\Phi_g(\Phi_{\exp(t\xi)}(u))_{\mid t=0}=D_u\Phi_g(X_\xi(u)).
$$
On the other hand, $\Phi_{g\exp(t\xi)}(u)=\Phi_{g\exp(t\xi)\ g^{-1}}(\Phi_g(u))$. Hence
$$
\frac{\rd }{\rd t}\Phi_g(\Phi_{\exp(t\xi)}(u))_{\mid t=0}=X_{\mathrm{Ad}_g\xi}.
$$
\end{proof}
Lemma~\ref{lem:liealgantihom} shows that the map $\xi\in \frak g\to X_\xi$ is a Lie algebra anti-homomorphism.

\section{Hamiltonian dynamical system with symmetry in  finite dimension}\label{s:hammech}
We now turn to a very short description of Hamiltonian dynamical systems and their symmetries on a finite dimensional normed vector space $\Ban$. We present the theory in a simple but slightly abstract formalism that is well-suited for the generalization to the infinite dimensional situation needed for the main body of the text and presented in Section~\ref{s:hamdyninfinite}. The modern theory of finite dimensional Hamiltonian dynamical systems finds its natural setting in the theory of (finite dimensional) symplectic geometry~\cite{am, ar, ma, souriau1997}. We shall however have no need for this more general formulation in these notes. 
\subsection{Hamiltonian dynamical systems}\index{Hamiltonian flow}
The central object of the theory in its usual formulation is a symplectic form, that we now define. Let $\omega:\Ban\times \Ban\to\R$ be a bilinear form which is anti-symmetric, meaning 
\begin{equation*}
\forall u, u'\in \Ban, \ \omega(u,u')=-\omega(u', u),
\end{equation*}
and non-degenerate, meaning that, for all $u\in \Ban$, 
\begin{equation*}
\left(\forall u'\in \Ban, \ \omega(u,u')=0\right)\Rightarrow u=0.
\end{equation*}
Such a form is called a symplectic form. The standard example is $\Ban=\R^{n}\times\R^n$ with $u=(q,p)$ and 
\begin{equation}\label{eq:basicsymplectic}
\omega(u, u')=q\cdot p'- q'\cdot p,
\end{equation}
where $\cdot$ indicates the standard inner product on $\R^n$. 
Given a $C^1$-function $F:\Ban\to\R$, one defines the \emph{Hamiltonian vector field\index{Hamiltonian vector field} $X_F$ associated to $F$} as follows: for all $u\in \Ban$,
\begin{equation}\label{eq:hamfielddef}
\omega(X_F(u), u')=D_uF(u'),\quad \forall u'\in \Ban.
\end{equation}
We recall that $D_uF\in E^*$ is our notation for the Frechet derivative of $F$ at $u$. Observe that one can think of the map $u\in \Ban\to D_uF\in \Ban^*$ as a differential one-form on $\Ban$. The vector field $X_F$ is well-defined and unique, thanks to the non-degeneracy of the symplectic form. If $\omega$ were symmetric, rather than anti-symmetric, it would define an inner product on $E$, rather than a symplectic form, and~\eqref{eq:hamfielddef} would actually define the gradient of $F$; in analogy, one sometimes refers to $X_F$ as the symplectic gradient of $F$. We will see it has radically different features from the gradient. 

For later reference, we point out that
\begin{equation}\label{eq:XFzero}
X_F=0\Rightarrow \exists c\in\R,\ \forall u\in\Ban,\ F(u)=c.
\end{equation}

The flow of the Hamiltonian vector field $X_F$, for which we shall write $\Phi_t^F$, is obtained by integrating the differential equation
\begin{equation}\label{eq:hameqmotion}
\dot u(t)=X_F(u(t)), \quad u_0=u,
\end{equation}
referred to as the Hamiltonian equation of motion. One writes $\Phi_t^F(u)=u(t)$. In this section we suppose that~\eqref{eq:hameqmotion} admits a unique and global solution and that, for all $t\in\R$, $\Phi_t\in C(E,E)$.

As a typical example from elementary mechanics, let $V\in C^1(\R^3;\R)$  and define the function 
\begin{equation}\label{eq:potential}
H(q,p)=\tfrac12p^2+V(q)
\end{equation}
on $\Ban=\R^6$, with the symplectic form as above. The equations of motion corresponding to $H$ are then 
\begin{equation}\label{eq:hampotential}
\dot q(t)=p(t),\quad \dot p(t)=-\nabla V(q(t)).
\end{equation}
Note that they lead to Newton's force law in the form $\ddot q(t)=-\nabla V(q(t))$. 
More generally, in the example above, with $\Ban=\R^{2n}$, one finds
\begin{equation*}
X_F(q,p)=
\begin{pmatrix}
\partial_p F(q,p)\\ -\partial_q F(q,p)
\end{pmatrix},
\end{equation*}
which leads to the familiar Hamiltonian equations of motion:
\begin{align*}
\dot q(t)&=\partial_pF(q(t), p(t)),\quad&
\dot p(t)&=-\partial_qF(q(t), p(t)).
\end{align*}
We give several other explicit examples of such flows in the main part of these notes.

Let us return to the general situation. Given two functions $F_1, F_2:\Ban\to\R$, one defines their Poisson bracket\index{Poisson bracket} $\{F_1, F_2\}$ via
\begin{equation}\label{eq:poissonbracket1}
\{F_1, F_2\} =\omega(X_{F_1}, X_{F_2})=-\{F_2, F_1\} .
\end{equation}
Observe that, with the notation from~\eqref{eq:derivationX}, we have
\begin{equation}\label{eq:poissonbracketbis}
\widehat X_{F_1}(F_2)=DF_2(X_{F_1})=\omega(X_{F_2}, X_{F_1})=\{F_2, F_1\},
\end{equation}
\emph{i.e.} for all $u\in\Ban$,
\begin{equation*}
\widehat X_{F_1}(F_2)(u)=D_uF_2(X_{F_1}(u))=\omega(X_{F_2}(u), X_{F_1}(u))=\{F_2, F_1\}(u).
\end{equation*}
It is then immediate from what precedes that, for all $u\in\Ban$,
\begin{eqnarray*}
\frac{\rd}{\rd t} (F_2\circ \Phi_t^{F_1})(u)&=&D_{\Phi_t^{F_1}(u)}F_2(X_{F_1}(\Phi_t^{F_1}(u)))\nonumber\\
&=&\{F_2, F_1\}(\Phi_t^{F_1}(u))\nonumber
\end{eqnarray*}
which in turn yields:
\begin{theorem} \label{thm:nother1} Let $F_1, F_2\in C^1(\Ban, \R)$. Then $F_1\circ \Phi_t^{F_2}=F_1$ for all $t$ iff $F_2\circ \Phi_t^{F_1}=F_2$ for all $t$, iff $\{F_1, F_2\}=0$. 
\end{theorem}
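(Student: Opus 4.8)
The plan is to prove Theorem~\ref{thm:nother1} directly from the computation that immediately precedes it, which already does essentially all the work. The only thing to check carefully is that one can pass between the ``instantaneous'' statement about Poisson brackets and the ``integrated'' statement about invariance under the flow.

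First I would establish the key identity: for all $u\in\Ban$ and all $t\in\R$,
\begin{equation*}
\frac{\rd}{\rd t}(F_1\circ\Phi_t^{F_2})(u)=\{F_1,F_2\}(\Phi_t^{F_2}(u)).
\end{equation*}
This is exactly the displayed computation above the theorem (with the roles of $F_1,F_2$ swapped), obtained by the chain rule together with the equation of motion $\dot u(t)=X_{F_2}(u(t))$ and the definitions~\eqref{eq:derivationX}, \eqref{eq:poissonbracketbis}. I would spell out the two lines: $\frac{\rd}{\rd t}(F_1\circ\Phi_t^{F_2})(u)=D_{\Phi_t^{F_2}(u)}F_1\big(X_{F_2}(\Phi_t^{F_2}(u))\big)=\{F_1,F_2\}(\Phi_t^{F_2}(u))$, using~\eqref{eq:poissonbracketbis} in the form $D_uF_1(X_{F_2}(u))=\{F_1,F_2\}(u)$.

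Next I would run the three implications around the cycle. Suppose $\{F_1,F_2\}\equiv 0$. Then the identity above says $t\mapsto (F_1\circ\Phi_t^{F_2})(u)$ has vanishing derivative for every $u$, hence is constant, and evaluating at $t=0$ (where $\Phi_0^{F_2}=\mathrm{Id}$) gives $F_1\circ\Phi_t^{F_2}=F_1$ for all $t$. By the antisymmetry of the Poisson bracket, $\{F_2,F_1\}=-\{F_1,F_2\}\equiv 0$ as well, so the same argument with the indices interchanged gives $F_2\circ\Phi_t^{F_1}=F_2$ for all $t$. Conversely, if $F_1\circ\Phi_t^{F_2}=F_1$ for all $t$, then differentiating at $t=0$ and using the identity gives $0=\{F_1,F_2\}(u)$ for every $u$, i.e. $\{F_1,F_2\}\equiv 0$; similarly $F_2\circ\Phi_t^{F_1}=F_2$ for all $t$ forces $\{F_2,F_1\}\equiv 0$, hence $\{F_1,F_2\}\equiv 0$. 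Thus all three conditions are equivalent.

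There is essentially no hard part here; the proof is a two-line differentiation plus bookkeeping. The only point requiring a word of care is the regularity needed to differentiate $t\mapsto \Phi_t^{F}(u)$ and apply the chain rule: this is legitimate because $F_1,F_2\in C^1(\Ban,\R)$ and, by hypothesis in this section, the flows $\Phi_t^F$ exist globally and solve~\eqref{eq:hameqmotion}, so $t\mapsto\Phi_t^{F}(u)$ is a differentiable curve with derivative $X_F(\Phi_t^F(u))$. I would simply note this once at the start of the argument and then proceed with the cycle of implications.
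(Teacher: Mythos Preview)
Your proof is correct and is essentially the paper's own argument: the paper presents the key identity $\frac{\rd}{\rd t}(F_2\circ\Phi_t^{F_1})(u)=\{F_2,F_1\}(\Phi_t^{F_1}(u))$ immediately before the theorem and states that the result follows from it, which is exactly the computation-plus-bookkeeping you spell out. Your version is simply more explicit about running the cycle of implications and noting the regularity needed for the chain rule.
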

When $F_1\circ \Phi_t^{F_2}=F_1$ for all $t$, one says either that the $\Phi_t^{F_2}$ form a symmetry group\footnote{See Definition~\ref{def:symgroup}.} for $F_1$ or that $F_1$ is a constant of the motion\footnote{Defined in~\eqref{eq:constantofmotion}.} for the flow $\Phi_t^{F_2}$.  The theorem, which is a Hamiltonian version of Noether's theorem\index{Noether's Theorem} (See~\cite{am, ar, ma, souriau1997} for a general treatment), can therefore be paraphrased by saying that $F_2$ is a constant of the motion for the flow $\Phi_t^{F_1}$ iff the flow $\Phi_t^{F_2}$ of $F_2$ forms a group of symmetries for $F_1$. Several instances and applications of this result appear in the main body of the text. It is typically used in the following manner. One wishes to study the dynamical flow $\Phi_t^{F_1}$. One has a simple and well-known one parameter group $\Phi_t^{F_2}$ for which one readily establishes with an explicit computation that $F_1\circ \Phi_t^{F_2}=F_1$. From this, one can then conclude that $F_2$ is a constant of the motion for the dynamical group $\Phi_t^{F_1}$. We will elaborate on this point in Section~\ref{s:nother}.

The radical difference between the properties of the symplectic gradient and the ``usual'' gradient is now apparent. The anti-symmetry of the Poisson bracket implies $\widehat X_F(F)=0$, that is, the symplectic gradient is~\emph{tangent} to the level surfaces of $F$ (See \eqref{eq:deftangentspace}), rather than orthogonal. Hence its flow $\Phi_t^F$ preserves these surfaces rather than moving points to increasing values of $F$ as does the usual gradient. These features, together with the Jacobi identity, are at the origin of all special properties of Hamiltonian systems. 

To prepare for the treatment of Hamiltonian dynamical systems in infinite dimension (see Section~\ref{s:hamdyninfinite}), we reformulate the above as follows. Given a symplectic form $\omega$ on a finite dimensional normed vector space $\Ban$, one can define a bijective linear map
$$
\Jcal: u\in \Ban\to \Jcal u\in \Ban^*
$$ 
by 
$
\Jcal u(v)=\omega(u, v).
$
It is clear that 
\begin{equation}\label{eq:Jantisym}
\Jcal u(v)=-\Jcal v(u).
\end{equation} 
With this notation, we find that
\begin{equation}\label{eq:HamfieldJ}
X_F=\Jcal^{-1}DF,\quad\mathrm{or}\quad \Jcal X_F=DF
\end{equation}
so that the Hamiltonian equations of motion~\eqref{eq:hameqmotion} can be equivalently rewritten as
\begin{equation}\label{eq:hameqmotionJ}
\Jcal \dot u(t)=D_{u(t)}F.
\end{equation}
This formulation is the one that we carry over to the infinite dimensional setting in the main body of these notes. Note that the Poisson bracket\index{Poisson bracket} of two functions can now be written as
\begin{equation}\label{eq:poissonbrackJ}
\{F,G\}=DF(\Jcal^{-1}DG).
\end{equation}
The point to make is that all objects of the theory can be expressed in terms of $\Jcal$. This is illustrated in the proof of the following result. 
\begin{lemma}\label{lem:poissonproperties} If $F_1, F_2, F_3\in C^2(\Ban,\R)$, then the Jacobi identity holds:
\begin{equation}\label{eq:jacobipoisson}
\{\{F_1, F_2\}, F_3\} + \{\{F_2, F_3\}, F_1\}+\{\{F_3, F_1\}, F_2\}=0
\end{equation}
If $F_1, F_2\in C^2(\Ban,\R)$, then
\begin{equation}\label{eq:compoisson}
X_{\{F_1, F_2\}}=-[X_{F_1}, X_{F_2}].
\end{equation}

\end{lemma}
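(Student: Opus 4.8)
The plan is to prove Lemma~\ref{lem:poissonproperties} in two parts, establishing~\eqref{eq:jacobipoisson} and then~\eqref{eq:compoisson}, both by reducing everything to statements about vector fields, their commutators, and the derivation~$\widehat X$ introduced in~\eqref{eq:derivationX}.

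For the Jacobi identity~\eqref{eq:jacobipoisson}, the key observation is the identity~\eqref{eq:poissonbracketbis}, which lets me rewrite the Poisson bracket as $\{F_2,F_1\}=\widehat X_{F_1}(F_2)$. First I would express each doubly-nested bracket in the cyclic sum in terms of the operators $\widehat X_{F_i}$: for instance $\{\{F_1,F_2\},F_3\}=-\widehat X_{F_3}(\{F_1,F_2\})$, and then I need a way to handle $\{F_1,F_2\}=\widehat X_{F_2}(F_1)$ being differentiated again. The cleanest route is to show that $\widehat X_{\{F_1,F_2\}}=[\widehat X_{F_2},\widehat X_{F_1}]$ as differential operators — equivalently, combining with~\eqref{eq:commutator3}, that $X_{\{F_1,F_2\}}=-[X_{F_1},X_{F_2}]$, which is exactly~\eqref{eq:compoisson}. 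So I would actually prove~\eqref{eq:compoisson} first and then deduce~\eqref{eq:jacobipoisson} from it: applying both sides of $\widehat X_{\{F_1,F_2\}}=[\widehat X_{F_2},\widehat X_{F_1}]$ to the function $F_3$ and unwinding via~\eqref{eq:poissonbracketbis} turns the cyclic sum into the commutator identity $[\widehat X_{F_1},[\widehat X_{F_2},\widehat X_{F_3}]]+\text{(cyclic)}=0$, which is the ordinary Jacobi identity for commutators of operators and holds automatically.

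For~\eqref{eq:compoisson} itself, I would work with the symplector $\Jcal$ and the formulation $X_F=\Jcal^{-1}D F$ from~\eqref{eq:HamfieldJ}, together with the Poisson bracket written as $\{F_1,F_2\}=D F_1(\Jcal^{-1}D F_2)$ as in~\eqref{eq:poissonbrackJ}. The strategy is to compute $D_u\{F_1,F_2\}$ directly by the product/chain rule, evaluate it on an arbitrary vector $w$, and compare with $\Jcal$ applied to $-[X_{F_1},X_{F_2}](u)=D_u X_{F_1}(X_{F_2}(u))-D_u X_{F_2}(X_{F_1}(u))$. Expanding $D_u\{F_1,F_2\}(w)$ produces four terms involving the second derivatives $D^2_u F_1$ and $D^2_u F_2$ (symmetric bilinear forms, by $C^2$) and the derivative of $\Jcal^{-1}$ — but $\Jcal$ is constant (linear), so $D(\Jcal^{-1}D F_i)=\Jcal^{-1}D^2 F_i$. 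Using the antisymmetry~\eqref{eq:Jantisym} of $\Jcal$ to move things across the pairing, and the symmetry of the Hessians, the terms reorganize exactly into the two required terms; this is the bookkeeping that makes the proof work.

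The main obstacle is purely organizational rather than conceptual: keeping careful track of which argument slot of each Hessian carries which vector field, and applying the antisymmetry of $\Jcal$ in the right places so that the four second-derivative terms collapse to two. One has to be attentive that $D^2_u F_i$ is symmetric (so $D^2_u F_i(v,w)=D^2_u F_i(w,v)$), that $\Jcal v(w)=-\Jcal w(v)$, and that $X_F(u)=\Jcal^{-1}D_u F$ means $\Jcal X_F(u)=D_u F$ as elements of $\Ban^*$ — so evaluating $(\Jcal X_{F_1})(X_{F_2})=D F_1(X_{F_2})=\{F_1,F_2\}$ with the sign conventions already fixed in~\eqref{eq:poissonbrackJ}. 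Once these sign conventions are pinned down, the computation is a few lines. I would present it by fixing $u\in\Ban$ and $w\in\Ban$, writing out $D_u\{F_1,F_2\}(w)$ in full, and then matching term by term; alternatively, the slicker operator-theoretic argument via $[\widehat X,\widehat Y]=\widehat{[X,Y]}$ from~\eqref{eq:commutator3} applied to a test function can be invoked to shorten the verification, noting $\widehat X_{F_1}(F_2)=\{F_2,F_1\}$ and hence $[\widehat X_{F_1},\widehat X_{F_2}](G)=\widehat X_{F_1}\{G,F_2\}-\widehat X_{F_2}\{G,F_1\}=\{\{G,F_2\},F_1\}-\{\{G,F_1\},F_2\}$, which by the already-known Jacobi identity equals $\{G,\{F_2,F_1\}\}=\widehat X_{\{F_2,F_1\}}(G)=-\widehat X_{\{F_1,F_2\}}(G)$ — but since this route uses~\eqref{eq:jacobipoisson}, I must make sure to break the circularity by proving one of the two statements by direct computation with $\Jcal$, and I would choose~\eqref{eq:compoisson} for that.
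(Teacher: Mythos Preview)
Your plan is correct and the underlying computation is the same as the paper's, but you run it in the \emph{opposite order}: the paper first proves the Jacobi identity~\eqref{eq:jacobipoisson} by a direct product-rule computation of $D_u\{F_1,F_2\}$ in terms of $\Jcal$ and the symmetric Hessians, obtaining
\[
\{\{F_1,F_2\},F_3\}(u)=D^2_uF_1(X_{F_2},X_{F_3})-D^2_uF_2(X_{F_1},X_{F_3}),
\]
after which the cyclic sum vanishes by symmetry of the Hessians. It then deduces~\eqref{eq:compoisson} from Jacobi, via exactly the operator manipulation you sketch in your final paragraph: $\widehat{[X_{F_1},X_{F_2}]}(F_3)=\{\{F_3,F_2\},F_1\}-\{\{F_3,F_1\},F_2\}=\{\{F_1,F_2\},F_3\}=-\widehat X_{\{F_1,F_2\}}(F_3)$. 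You instead do the direct $\Jcal$-computation for~\eqref{eq:compoisson} and derive Jacobi afterwards. Either order works, and the core calculation (differentiating $D_uF_1(\Jcal^{-1}D_uF_2)$ and using the antisymmetry~\eqref{eq:Jantisym}) is identical.

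One small correction: your stated mechanism for deducing Jacobi from~\eqref{eq:compoisson}, namely reducing to the triple-commutator identity $[\widehat X_{F_1},[\widehat X_{F_2},\widehat X_{F_3}]]+\text{(cyclic)}=0$, does not work as written. The cyclic Poisson sum translates to $[\widehat X_{F_1},\widehat X_{F_2}](F_3)+\text{(cyclic)}$, which is a sum of double commutators evaluated at \emph{different} functions, not the operator Jacobi identity applied to a single test function. The clean route is simply to apply $\widehat X_{\{F_1,F_2\}}=[\widehat X_{F_2},\widehat X_{F_1}]$ to $F_3$ and expand both sides using~\eqref{eq:poissonbracketbis}: the left side gives $\{F_3,\{F_1,F_2\}\}$, the right side gives $\{\{F_3,F_1\},F_2\}-\{\{F_3,F_2\},F_1\}$, and rearranging with antisymmetry is exactly~\eqref{eq:jacobipoisson}. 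This is a one-line fix.
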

\begin{proof} 
To prove~\eqref{eq:jacobipoisson}, one first easily checks that
\begin{eqnarray*}
&\ &\{\{F_1, F_2\}, F_3\}(u)= \\
&=&D^2_uF_1(\Jcal^{-1}D_uF_2, \Jcal^{-1}D_uF_3)+D_uF_1(\Jcal^{-1}D^2_uF_2(\cdot, \Jcal^{-1}D_uF_3))\\
&=&D^2_uF_1(\Jcal^{-1}D_uF_2, \Jcal^{-1}D_uF_3)-D^2_uF_2(\Jcal^{-1}D_uF_1, \Jcal^{-1}D_uF_3),
\end{eqnarray*}
where we used~\eqref{eq:Jantisym}. The result is then immediate. To prove~\eqref{eq:compoisson} we use~\eqref{eq:derivationX}--\eqref{eq:commutator3} to write
\begin{eqnarray*}
\widehat{[X_{F_1}, X_{F_2}]}(F_3)&=&\widehat X_{F_1}(\widehat X_{F_2}(F_3))-\widehat X_{F_2}(\widehat X_{F_1}(F_3))\\
&=&\widehat X_{F_1}(\{F_3, F_2\})-\widehat X_{F_2}(\{F_3, F_1\}))\\
&=&\{\{F_3, F_2\}, F_1\} -\{\{F_3, F_1\}, F_2\}\\
&=&\{\{F_1, F_2\}, F_3\}=-\widehat X_{\{F_1, F_2\}}(F_3),
\end{eqnarray*}
where we used the Jacobi identity in the last line. 
\end{proof}
For the case where $\Ban=\R^{2n}$ with the standard symplectic structure, one readily finds
\begin{equation}\label{eq:poisbrackstandard}
\{F_1, F_2\}=\partial_q F_1\cdot \partial_pF_2-\partial_p F_1\cdot \partial_qF_2.
\end{equation}
The above lemma then follows from a direct computation. 

The lemma implies that the vector space $C^\infty(\Ban,\R)$, equipped with the Poisson bracket, is a Lie algebra. In addition, it follows that the constants of the motion of a given function $F\in C^\infty(\Ban,\R)$ form a Lie subalgebra. Indeed, introducing the space of constants of the motion of $F$,
\begin{equation}
\mathcal{C}_F=\{G\in C^\infty(E,\R) \mid G\circ\Phi_t^F=G, \forall t\in\R\},
\end{equation}
which is clearly a vector space, it follows immediately from~\eqref{eq:jacobipoisson} that
$$
G_1,G_2\in \mathcal{C}_F\Rightarrow \{G_1, G_2\}\in \mathcal C_F,
$$
so that $\mathcal{C}_F$ is a Lie subalgebra of $C^\infty(\Ban,\Ban)$. 

We finally need to introduce symplectic transformations.
\begin{definition}\label{def:sympltransf} A symplectic transformation\index{symplectic transformation} on a symplectic space $(\Ban,\omega)$ is a $C^1$ diffeomorphism $\Phi:\Ban\to\Ban$ so that, for all $u,v,w\in \Ban$
\begin{equation}\label{eq:sympltransf}
\omega(D_u\Phi(v), D_u\Phi(w))=\omega(v,w).
\end{equation}
\end{definition}
This is often paraphrased by the statement  that ``$\Phi$ preserves the symplectic structure.''  To understand what this means, one should recall the interpretation of $D_u\Phi(v)$ as the ``push forward'' of $v$ by $\Phi$, explained in Section~\ref{s:diffgeom}.  Equation~\eqref{eq:sympltransf}  states that a diffeomorphism is symplectic if the symplectic form is left invariant by the ``push forward'' operation of its arguments. Note that, if $\Phi$ is linear,~\eqref{eq:sympltransf} reduces to $\omega(\Phi(v), \Phi(w))=\omega(v,w)$.  And if $\Ban=\R^{2n}$ with its standard symplectic structure, this then means that $\Phi\in\mathrm{Sp}(2n)$, defined in~\eqref{eq:spgroup}.

\begin{lemma}\label{lem:composeF} Let $F\in C^1(\Ban, \R)$ and let $\Phi\in C^1(\Ban, \Ban)$  be a symplectic transformation. Then, for all $u\in\Ban$,
\begin{equation}\label{eq:hampush}
D_u\Phi(X_{F\circ\Phi}(u))=X_F(\Phi(u)).
\end{equation}
Moreover, for all $t\in\R$,
\begin{equation}\label{eq:hamintertwine}
\Phi\circ\Phi_t^{F\circ\Phi}\circ\Phi^{-1}=\Phi_t^{F}.
\end{equation}
In particular, if $F\circ\Phi=F$, then $\Phi$ commutes with $\Phi_t^F$, for all $t\in\R$. And if $\Phi$ commutes with $\Phi_t^F$, for all $t\in\R$, then there exists $c\in\R$ so that $F\circ\Phi=F+c$. 
\end{lemma}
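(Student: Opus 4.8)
\textbf{Proof plan for Lemma~\ref{lem:composeF}.}

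The plan is to verify the three claims in turn, using only the chain rule and the defining property \eqref{eq:sympltransf} of a symplectic transformation. First I would establish \eqref{eq:hampush}. Fix $u\in\Ban$ and $v\in\Ban$. On the one hand, by the chain rule, $D_u(F\circ\Phi)(v)=D_{\Phi(u)}F(D_u\Phi(v))$. On the other hand, by the definition \eqref{eq:hamfielddef} of the Hamiltonian vector field applied to $F\circ\Phi$ at $u$, we have $D_u(F\circ\Phi)(v)=\omega(X_{F\circ\Phi}(u),v)$. Now write the right-hand side $D_{\Phi(u)}F(D_u\Phi(v))=\omega(X_F(\Phi(u)),D_u\Phi(v))$, again by \eqref{eq:hamfielddef}, this time for $F$ at $\Phi(u)$. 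Since $\Phi$ is a $C^1$ diffeomorphism, $D_u\Phi$ is invertible, so there is a unique $w\in\Ban$ with $D_u\Phi(w)=X_F(\Phi(u))$; substituting $v=D_u\Phi(w')$ (equivalently, replacing $v$ by $D_u\Phi(v')$ for arbitrary $v'\in\Ban$) and invoking \eqref{eq:sympltransf} gives
$$
\omega(X_{F\circ\Phi}(u),D_u\Phi(v'))=\omega(X_F(\Phi(u)),D_u\Phi(v'))=\omega(w,D_u\Phi(v')).
$$
Combining the last two displays, $\omega(D_u\Phi(X_{F\circ\Phi}(u)),D_u\Phi(v'))=\omega(D_u\Phi(w),D_u\Phi(v'))$ for all $v'$; the non-degeneracy of $\omega$ (pushed forward, hence still non-degenerate since $D_u\Phi$ is bijective) forces $D_u\Phi(X_{F\circ\Phi}(u))=D_u\Phi(w)=X_F(\Phi(u))$, which is \eqref{eq:hampush}.

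Next I would prove the intertwining relation \eqref{eq:hamintertwine}. Consider the curve $t\mapsto v(t):=\Phi\big(\Phi_t^{F\circ\Phi}(\Phi^{-1}(u))\big)$, which satisfies $v(0)=u$. Differentiating and using the chain rule together with the fact that $\Phi_t^{F\circ\Phi}$ is the flow of $X_{F\circ\Phi}$,
$$
\dot v(t)=D_{\Phi_t^{F\circ\Phi}(\Phi^{-1}(u))}\Phi\big(X_{F\circ\Phi}(\Phi_t^{F\circ\Phi}(\Phi^{-1}(u)))\big)=X_F\big(\Phi(\Phi_t^{F\circ\Phi}(\Phi^{-1}(u)))\big)=X_F(v(t)),
$$
where the middle equality is \eqref{eq:hampush} applied at the point $\Phi_t^{F\circ\Phi}(\Phi^{-1}(u))$. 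Thus $t\mapsto v(t)$ solves the Hamiltonian equation of motion \eqref{eq:hameqmotion} for $F$ with initial condition $u$; by the assumed uniqueness of solutions, $v(t)=\Phi_t^F(u)$, which is exactly \eqref{eq:hamintertwine}. The first ``in particular'' is then immediate: if $F\circ\Phi=F$ then $\Phi_t^{F\circ\Phi}=\Phi_t^F$, so \eqref{eq:hamintertwine} reads $\Phi\circ\Phi_t^F\circ\Phi^{-1}=\Phi_t^F$, i.e. $\Phi$ commutes with $\Phi_t^F$.

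Finally, for the converse statement I would argue as follows. Suppose $\Phi$ commutes with $\Phi_t^F$ for all $t$. Then $\Phi\circ\Phi_t^F\circ\Phi^{-1}=\Phi_t^F=\Phi_t^{F\circ\Phi}$ by \eqref{eq:hamintertwine}; hence $F$ and $F\circ\Phi$ generate the same flow, and therefore the same Hamiltonian vector field, $X_{F\circ\Phi}=X_F$, by differentiating at $t=0$. Consequently $X_{F-F\circ\Phi}=X_F-X_{F\circ\Phi}=0$ by linearity of $G\mapsto X_G$, and then \eqref{eq:XFzero} (applied to the $C^1$ function $F-F\circ\Phi$) yields a constant $c\in\R$ with $F-F\circ\Phi\equiv -c$, i.e. $F\circ\Phi=F+c$. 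I do not expect any serious obstacle here: the only mild subtleties are keeping track of the base points at which each derivative is evaluated when applying the chain rule, and noting that the push-forward $D_u\Phi$ transports the non-degenerate form $\omega$ to a non-degenerate form (so that cancellation is legitimate) — both are routine given that $\Phi$ is a diffeomorphism.
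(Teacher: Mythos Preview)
Your approach is correct and essentially the same as the paper's: chain rule plus \eqref{eq:sympltransf} for \eqref{eq:hampush}, uniqueness of flow lines for \eqref{eq:hamintertwine}, and \eqref{eq:XFzero} for the converse. One small slip: your first display does not follow from substituting $v=D_u\Phi(v')$ into the identity $\omega(X_{F\circ\Phi}(u),v)=\omega(X_F(\Phi(u)),D_u\Phi(v))$ (the right-hand side would acquire a second factor of $D_u\Phi$, and the last term should read $\omega(D_u\Phi(w),D_u\Phi(v'))$); the clean fix---which the paper uses and your next line effectively recovers---is to leave $v$ alone and apply \eqref{eq:sympltransf} to the left side, $\omega(X_{F\circ\Phi}(u),v)=\omega(D_u\Phi(X_{F\circ\Phi}(u)),D_u\Phi(v))$, so both sides carry $D_u\Phi(v)$ in the second slot and non-degeneracy plus surjectivity of $D_u\Phi$ conclude.
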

Equation~\eqref{eq:hampush} asserts that the push forward of the vector field $X_{F\circ \Phi}$ by $\Phi$ is $X_F$. 
\begin{proof}
For all $u,v\in\Ban$, one has
\begin{eqnarray*}
\omega(X_{F\circ\Phi}(u), v)&=&D_u(F\circ \Phi)(v)=D_{\Phi(u)}F(D_u\Phi(v))\\
&=&\omega(X_F(\Phi(u)), D_u\Phi(v)).
\end{eqnarray*}
Hence, since $\Phi$ is symplectic and since $D_{\Phi(u)}\Phi^{-1}D_u\Phi=\mathrm{Id}_\Ban=D_u\Phi D_{\Phi(u)}\Phi^{-1}$,
$$
\omega(D_{u}\Phi(X_{F\circ\Phi}(u)), v)=\omega(X_{F\circ\Phi}(u), D_{\Phi(u)}\Phi^{-1}(v))=
\omega(X_F(\Phi(u)), v)$$
which yields~\eqref{eq:hampush}. Next, for all $u\in\Ban$, one finds from the chain rule and~\eqref{eq:hampush}
\begin{eqnarray*}
\frac{\rd}{\rd t} \Phi(\Phi_t^{F\circ\Phi}(\Phi^{-1}(u)))&=&D_{\Phi_t^{F\circ\Phi}(\Phi^{-1}(u))}\Phi \left(X_{F\circ\Phi}(\Phi_t^{F\circ \Phi}(\Phi^{-1}(u))\right)\\
&=&X_F((\Phi\circ\Phi_t^{F\circ\Phi}\circ\Phi^{-1})(u)).
\end{eqnarray*}
This shows $t\in\R\to (\Phi\circ\Phi_t^{F\circ\Phi}\circ\Phi^{-1})(u)\in\Ban$ is a flow line of $X_F$. Since the latter are unique,~\eqref{eq:hamintertwine} follows.
\end{proof}
We end with a proof of a basic fact about Hamiltonian flows: if they are smooth, they are symplectic.
\begin{theorem}\label{thm:symplecticflow}
Let $F\in C^2(\Ban, \R)$. Suppose that the corresponding Hamiltonian flow $\Phi^F:\R\times \Ban\to \Ban$ is of class $C^2$. Then, for all $t\in\R$, $\Phi_t^F$ is a symplectic transformation.
\end{theorem}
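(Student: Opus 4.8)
\textbf{Proof plan for Theorem~\ref{thm:symplecticflow}.}
The plan is to show that for each $v,w\in\Ban$ the function $t\mapsto \omega(D_u\Phi_t^F(v), D_u\Phi_t^F(w))$ is constant, by computing its time derivative and showing it vanishes identically. Since $\Phi_0^F=\mathrm{Id}_\Ban$, we have $D_u\Phi_0^F=\mathrm{Id}_\Ban$, so the value at $t=0$ is $\omega(v,w)$; constancy then gives exactly \eqref{eq:sympltransf}. The $C^2$ hypothesis on $\Phi^F$ is what licenses differentiating under the derivative $D_u$ and interchanging $\partial_t$ with $D_u$: writing $u(t)=\Phi_t^F(u)$, one has $\partial_t(D_u\Phi_t^F(v))=D_u\big(\tfrac{\rd}{\rd t}\Phi_t^F\big)(v)=D_u(X_F\circ\Phi_t^F)(v)=D_{u(t)}X_F(D_u\Phi_t^F(v))$, using the chain rule and the fact that $\tfrac{\rd}{\rd t}\Phi_t^F=X_F\circ\Phi_t^F$ (which follows from the flow property \eqref{flow2} applied to \eqref{eq:hameqmotion}). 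Here $X_F\in C^1$ because $F\in C^2$, so $D X_F$ makes sense.

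The key algebraic step is then the following: abbreviating $a(t)=D_u\Phi_t^F(v)$, $b(t)=D_u\Phi_t^F(w)$, and dropping the $t$-argument on $u(t)$, I would compute
\begin{align*}
\frac{\rd}{\rd t}\,\omega(a,b)
&=\omega\big(D_{u}X_F(a),\,b\big)+\omega\big(a,\,D_{u}X_F(b)\big).
\end{align*}
The claim is that the right-hand side vanishes for every $a,b\in\Ban$, i.e. that $D_uX_F$ is "$\omega$-skew" at every point $u$. To see this, recall from \eqref{eq:hamfielddef} that $\omega(X_F(u),\cdot)=D_uF$, so differentiating the identity $\omega(X_F(y),z)=D_yF(z)$ with respect to $y$ in the direction $a$ gives $\omega(D_uX_F(a),z)=D^2_uF(a,z)$ for all $z$. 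Hence
\begin{align*}
\omega\big(D_{u}X_F(a),b\big)+\omega\big(a,D_{u}X_F(b)\big)
&=D^2_uF(a,b)-\omega\big(D_uX_F(b),a\big)+\omega\big(a,D_uX_F(b)\big)\\
&=D^2_uF(a,b)-D^2_uF(b,a)=0,
\end{align*}
where the first equality uses antisymmetry of $\omega$ to rewrite the second term and the last equality uses the symmetry of the second Fréchet derivative $D^2_uF$ (valid since $F\in C^2$). This is the heart of the argument, and it is exactly the point where both hypotheses ($F\in C^2$ for symmetry of $D^2F$, $\Phi^F\in C^2$ for the differentiation step) are needed.

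I would then finish by noting that $t\mapsto\omega(a(t),b(t))$ is a $C^1$ function of $t$ with identically zero derivative, hence constant, and equals its value $\omega(v,w)$ at $t=0$; since $v,w,u$ and $t$ were arbitrary, \eqref{eq:sympltransf} holds for all $u$ and all $t$, so each $\Phi_t^F$ is a symplectic transformation. The main obstacle — really the only delicate point — is justifying the interchange of $\partial_t$ and $D_u$ and the chain-rule computation of $\partial_t D_u\Phi_t^F(v)$; this is precisely what the $C^2$-regularity of the map $(t,u)\mapsto\Phi^F(t,u)$ buys us, allowing the mixed partials to be taken in either order. Everything else is the short bilinear-algebra identity above. (An alternative, essentially equivalent, route would be to invoke \eqref{eq:hamintertwine}/Lemma~\ref{lem:composeF}-type reasoning, but the direct differentiation is cleaner and self-contained here.)
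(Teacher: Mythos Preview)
Your proposal is correct and follows essentially the same approach as the paper: differentiate $t\mapsto\omega(D_u\Phi_t^F v, D_u\Phi_t^F w)$, use the $C^2$ hypothesis to interchange $\partial_t$ and $D_u$, and conclude by the symmetry of $D^2_uF$. The only cosmetic difference is that the paper first invokes the group property to reduce the computation to $t=0$, whereas you carry it out at general $t$ (equivalently, at the point $u(t)$); your intermediate identification of the ``$\omega$-skewness'' of $D_uX_F$ is exactly the identity $\omega(D_uX_F(a),b)=D_u^2F(a,b)$ that the paper uses in its final line.
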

\begin{proof} It will be sufficient to show that, for all $u,v,w\in \Ban$, and for all $t\in\R$,
$$
\frac{\rd}{\rd t} (\Jcal D_u\Phi_t^F v)(D_u\Phi_t^F w)=0.
$$
Using the group property of the flow, one sees it is enough to show this at $t=0$. Then
\begin{align*}
\frac{\rd}{\rd t}(\Jcal D_u\Phi_t^F v)(D_u\Phi_t^F w)_{\mid {t=0}}&=\Jcal (\Jcal^{-1}D_u^2F (v, \cdot))( w)+ (\Jcal v)(\Jcal^{-1}D_u^2 F (w,\cdot)).
\end{align*}
where we used the continuity of $\Jcal$, the Schwarz Lemma (exchange of partial derivatives) and the observation that
$$
\Jcal\frac{\partial \Phi^F}{\partial t}(u)=D_{u(t)}F\in E^*,
$$
and hence, at $t=0$, 
$$
\Jcal D_u\left(\frac{\partial \Phi^F}{\partial t}\right)(u)=\left(D^2_u F\right),
$$
which means that, for all $v\in \Ban$, 
$$
\Jcal D_u\left(\frac{\partial \Phi^F}{\partial t}\right)(u)v =\left(D^2_u F\right)(v, \cdot).
$$
Note that both sides are elements of $\Ban^*$ since $u\in\Ban\to \frac{\partial \Phi^F}{\partial t}\in E$ so that $D_u\left(\frac{\partial \Phi^F}{\partial t}\right)(u)\in\mathcal{B}(\Ban, \Ban)$. 
Using the anti-symmetry of $\Jcal$, one then finds
\begin{align*}
\frac{\rd}{\rd t} (\Jcal D_u\Phi_t^F v)(D_u\Phi_t^F w)_{\mid t=0}&=D_u^2F (v, w) -D_u^2 F (w,v)=0.
\end{align*}
\end{proof} 

\begin{remark}\label{rem:smoothness}
We point out that the proof, as it stands, is valid in infinite dimensional systems. Remark however that the conditions imposed on the flow $\Phi_t^F$ are very strong for systems in infinite dimension. Too strong actually to be of much use in that context. We use/need those conditions to apply the Schwarz Lemma at several points in the proof.  
Also, it is known that Hamiltonian flows in infinite dimension need not always be symplectic. In the framework of Section~\ref{s:hamdyninfinite} it is possible to give sufficient smoothness conditions on the restriction of the flow to $\Dcal$ that will guarantee the result, but we shall not need this. For  a different set of technical conditions guaranteeing the symplecticity of the flow, we refer to~\cite{chm}.
\end{remark}
\subsection{Symmetries and constants of the motion}\label{s:nother}
Hamiltonian dynamical systems have many special features, but the one important to us here is that there exists for them a special link between the symmetries of the dynamics and the constants of the motion. This link takes the form of a Hamiltonian version of Noether's Theorem, of which we already gave a simple version in Theorem~\ref{thm:nother1}, and has far-reaching consequences, some of which we further explore in this section. Again, a general treatment can for example be found in~\cite{am, ma}; we give just those few elements needed in these notes.  

We start with some notions on Hamiltonian Lie group actions on a symplectic vector space. 
\begin{definition}\label{def:globhamaction} Let $G$ be a Lie group and $\Phi: (g,x)\in G\times \Ban\to \Phi_g(x)\in \Ban,$ an action of $G$ on $\Ban$ with $\Phi_g\in C^1(E,E)$.  We will say $\Phi$ is globally Hamiltonian\index{globally Hamiltonian action} if $\Phi_g$ is symplectic for all $g\in G$ and if, for all $\xi\in\frak{g}$, there exists $F_\xi\in C^2(\Ban, \R)$ so that $\Phi_{\exp(t\xi)}=\Phi_t^{F_\xi}$. 
\end{definition}
In other words, an action is globally Hamiltonian if all $\Phi_g$ are symplectic and if all one parameter groups  are realized by Hamiltonian flows. In the notation of the previous sections this means that
$$
X_\xi=X_{F_\xi}.
$$
Here, the left hand side is the generator of the action, defined in~\eqref{eq:generator} and the right hand side is the Hamiltonian vector field associated to $F_\xi$.

\begin{remark}\label{rem:expo}
In view of Theorem~\ref{thm:symplecticflow}, if $g=\exp(\xi)$ for some $\xi\in\frak g$ and $\Phi_g$ can be written as $\Phi_{\exp(\xi)}=\Phi_1^{F_\xi}$ for some $F_\xi\in C^2(E,\real)$ such that $\Phi_1^{F_\xi}$ is $C^2$, then $\Phi_g$ is symplectic. This will obviously hold as well 
for all $g$ that can be written as a finite product of elements of the form $\exp(\xi)$, which is the case for all $g$ in the connected component of $G$ containing $e\in G$ (See~\cite{ma}, page 145, Proposition 2.10). 
So the assumption that $\Phi_g$ is symplectic is only needed for elements $g$ that are not
connected to $e\in G$. 
In infinite dimensional systems, as indicated in Remark~\ref{rem:smoothness} at the end of the previous section, the condition that all 
$\Phi_g$ must be symplectic is more restrictive. In practice, one often works with linear actions of the symmetry group, for which the symplectic property can be checked directly. 
\end{remark}

The above definition is a special case of the more general definition of \emph{globally Hamiltonian action} for 
infinite dimensional systems that we introduced in Definition~\ref{def:globhamactioninf}. 
It suffices to take $\Dcal=E$ in the latter to obtain the definition here. 

We shall now continue with the abstract theory where, in particular, we will see through a version
of Noether's Theorem\index{Noether's Theorem} that, if the Hamiltonian is invariant under a globally Hamiltonian action 
$\Phi$ as above, then the functions $F_\xi\in C^2(E,\real)$ are constants of the motion. The theory will be illustrated in Example~\ref{ex:SO(3)} at the end of the section, in the simple case where $\Ban=\R^6$ and $G=\mathrm{SO}(3)$.

\begin{theorem} \label{thm:nother2} Let $G$ be a Lie group and $\Phi$ a globally Hamiltonian action of $G$ on a symplectic vector space $\Ban$. Let $H\in C^1(\Ban,\R)$ and let $\Phi_t^H$ be the corresponding Hamiltonian flow. Suppose that
\begin{equation}\label{eq:haminvariant}
\forall g\in G, \quad H\circ \Phi_g=H.
\end{equation}
Then the following statements hold.
\begin{enumerate}[label=({\roman*})]
\item For all $\xi\in\frak g$, $\{H, F_\xi\}=0.$
\item For all $t\in\R$, $F_\xi\circ \Phi_t^H=F_\xi$. 
\item $G$ is an invariance group\footnote{See Definition~\ref{def:symgroup}} for $\Phi_t^H$. 
\end{enumerate}
\end{theorem}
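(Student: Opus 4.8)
\textbf{Plan of proof for Theorem~\ref{thm:nother2}.}

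The plan is to derive the three statements in sequence, reducing each to results already established, in particular Theorem~\ref{thm:nother1} and Lemma~\ref{lem:composeF}. First I would prove (i). Fix $\xi\in\frak g$. By hypothesis~\eqref{eq:haminvariant}, $H\circ\Phi_g=H$ for all $g\in G$, and in particular $H\circ\Phi_{\exp(t\xi)}=H$ for all $t\in\R$. Since the action is globally Hamiltonian (Definition~\ref{def:globhamaction}), $\Phi_{\exp(t\xi)}=\Phi_t^{F_\xi}$, so the identity reads $H\circ\Phi_t^{F_\xi}=H$ for all $t$; that is, $H$ is a constant of the motion for the flow of $F_\xi$. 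Theorem~\ref{thm:nother1} (applied with $F_1=H$, $F_2=F_\xi$) then immediately yields $\{H,F_\xi\}=0$.

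Next I would obtain (ii) from (i) by the same Noether duality. Having $\{H,F_\xi\}=0$, Theorem~\ref{thm:nother1} gives the reverse implication: $F_\xi\circ\Phi_t^H=F_\xi$ for all $t\in\R$. (One could also invoke Lemma~\ref{lem:composeF}: since $H\circ\Phi_{\exp(t\xi)}=H$ and $\Phi_{\exp(t\xi)}$ is symplectic, $\Phi_{\exp(t\xi)}$ commutes with $\Phi_s^H$ for all $s$, and differentiating this commutation relation in $t$ at $t=0$ recovers $X_H(F_\xi$-invariance$)$; but the route through Theorem~\ref{thm:nother1} is cleaner.) This is the statement that each $F_\xi$ is a constant of the motion for the Hamiltonian dynamics.

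Finally, for (iii) I would show that $\Phi_g$ commutes with $\Phi_t^H$ for every $g\in G$ and every $t\in\R$. For $g$ of the form $g=\exp(\xi)$, $\xi\in\frak g$, we have $\Phi_g=\Phi_1^{F_\xi}$, and since $H\circ\Phi_s^{F_\xi}=H$ for all $s$ (by (i) together with Theorem~\ref{thm:nother1}), Lemma~\ref{lem:composeF} gives that $\Phi_1^{F_\xi}$ commutes with $\Phi_t^H$ for all $t$. Since every element $g$ in the connected component of $e$ is a finite product of such exponentials (see Remark~\ref{rem:expo}), and each factor commutes with $\Phi_t^H$, so does $g$. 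For elements $g$ not connected to $e$, one uses directly that $\Phi_g$ is symplectic and $H\circ\Phi_g=H$, so Lemma~\ref{lem:composeF} applies verbatim: $\Phi_g$ commutes with $\Phi_t^H$ for all $t$. Hence $\Phi_g\circ\Phi_t^H=\Phi_t^H\circ\Phi_g$ for all $g\in G$, $t\in\R$, which is precisely the statement that $G$ is an invariance group for $\Phi_t^H$ in the sense of Definition~\ref{def:symgroup}.

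The proof is entirely a matter of assembling earlier results, so there is no substantial obstacle; the only point requiring a little care is the passage from one-parameter subgroups to arbitrary group elements in part (iii), which is handled by the connectedness argument of Remark~\ref{rem:expo} for $g$ near $e$ and by the assumed symplecticity of $\Phi_g$ (built into the definition of globally Hamiltonian action) for the remaining components of $G$.
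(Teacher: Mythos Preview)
Your proposal is correct and follows exactly the paper's approach: the paper's proof is the single sentence ``This is an immediate consequence of Theorem~\ref{thm:nother1} and of Lemma~\ref{lem:composeF},'' and you have simply unpacked that consequence. One small redundancy: in part~(iii) your detour through exponentials and connected components is unnecessary, since by Definition~\ref{def:globhamaction} every $\Phi_g$ is symplectic, so Lemma~\ref{lem:composeF} applies directly to \emph{all} $g\in G$ at once (exactly as you do for the ``not connected to $e$'' case).
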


\begin{proof}
This is an immediate consequence of Theorem~\ref{thm:nother1} and of Lemma~\ref{lem:composeF}.
\end{proof}

This result is useful because it is often easy to check~\eqref{eq:haminvariant}, whereas the conclusions (ii) and (iii) are statements about the flow $\Phi_t^H$, which is usually not explicitly known, and are therefore hard to check directly. In particular, (iii) says that if the Hamiltonian $H$ is $G$-invariant as a function, then $G$ is an invariance group of the dynamics\footnote{This is the point in the proof where the symplectic nature of the $\Phi_g$ is used, via Lemma~\ref{lem:composeF}.}. And (ii) ascertains that the group generators $F_\xi$ are then constants of the motion for $\Phi_t^H$. 

Let us point out that (iii) implies neither (i), (ii) or~\eqref{eq:haminvariant} (See Lemma~\ref{lem:composeF}.)

So the hypothesis that the Hamiltonian is invariant under the group action is strictly stronger than the statement that the Hamiltonian flow is invariant under $G$.  
The map
\begin{equation}\label{eq:momentmap0}
\xi\in\frak g\to F_\xi\in C^2(\Ban,\R)
\end{equation}
can be chosen to be linear. Indeed, if $e_i$, $i=1,\dots, d$ is a basis of $\frak g$, if we choose $F_i=F_{e_i}$, and if we write $\xi=\sum_i\xi_ie_i$, we can define
\begin{equation}\label{eq:basisdecompmoment}
F_\xi=\sum_i \xi_iF_i,
\end{equation}
by linearity. This allows one to define the \emph{momentum map} for the action $\Phi$, as follows:
\begin{equation}\label{eq:momentmap}
\mathcal F: u\in \Ban\to \mathcal F(u)\in \frak g^*,\quad \mathcal F(u)(\xi)=F_\xi(u).
\end{equation}
This, of course, is just a rewriting of~\eqref{eq:momentmap0}. 
In the main body of the text we shall always assume a basis has been chosen for $\frak g$, as above, so that we can identify $\frak g\simeq\R^m$. And we shall simply write 
\begin{equation}\label{eq:momentmap1}
F:u\in E\to (F_1(u), \cdots, F_m(u))\in\R^m\simeq\frak g^*.
\end{equation}
We shall refer to $\mathcal F$ or to $F$ as a momentum map\index{momentum map} for the action, indifferently. 

\begin{definition}\label{def:equivariant} Let $\Phi$ be a globally Hamiltonian action of $G$ on $\Ban$, with momentum map $F$. One says the momentum map is $\mathrm{Ad}^*$-equivariant\index{momentum map!equivariant} if, for all $g\in G$, for all $\xi\in\frak g$,
\begin{equation}\label{eq:adstarequiv}
F_\xi\circ\Phi_g=F_{\mathrm{Ad}_{g^{-1}}\xi}.
\end{equation}
\end{definition}
The terminology comes from the following observation. If~\eqref{eq:adstarequiv} holds, then
it follows from~\eqref{eq:momentmap} and~\eqref{eq:coadjoint} that
\begin{equation}
\mathcal F\circ \Phi_g=\mathrm{Ad}^*_g\circ\mathcal F.
\end{equation}
Since we identify $\frak g^*\simeq\R^m$, this can be written 
\begin{equation}\label{eq:adstarequivbis}
F\circ \Phi_g=\mathrm{Ad}^*_gF.
\end{equation}

We can now formulate the final result from the theory of invariant Hamiltonian systems that we need. It is an immediate consequence of~\eqref{eq:adstarequivbis} or, for the reader weary of duals, of~\eqref{eq:adstarequiv}.
\begin{proposition}\label{eq:reduction} Let $\Phi$ be a globally Hamiltonian, $\mathrm{Ad}^*$-equivariant action of a Lie group $G$ on a symplectic vector space $\Ban$. Let $\mu\in\frak g^*\simeq \R^m$ and define
\begin{equation}\label{eq:sigmamuapp}
\Sigma_\mu=\{u\in\Ban\mid {F}(u)=\mu\}
\end{equation}
Then ${G_\mu}=G_{\Sigma_\mu}$, where $G_\mu$ is the stabilizer of $\mu$,
defined in~\eqref{eq:stabilizer} and $G_{\Sigma_\mu}$ is defined in~\eqref{eq:gsimgamu}.
\end{proposition}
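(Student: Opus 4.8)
The plan is to prove the set equality $G_\mu = G_{\Sigma_\mu}$ by showing both inclusions directly, using the $\mathrm{Ad}^*$-equivariance relation \eqref{eq:adstarequivbis}, namely $F\circ\Phi_g = \mathrm{Ad}^*_g F$. First I would recall the definitions: $G_\mu = \{g\in G \mid \mathrm{Ad}^*_g\mu = \mu\}$ from \eqref{eq:stabilizer}, and $G_{\Sigma_\mu} = \{g\in G \mid \forall u\in\Sigma_\mu,\ \Phi_g(u)\in\Sigma_\mu\}$ from \eqref{eq:gsimgamu}, where $\Sigma_\mu = \{u\in\Ban\mid F(u)=\mu\}$.

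For the inclusion $G_\mu \subseteq G_{\Sigma_\mu}$: let $g\in G_\mu$ and $u\in\Sigma_\mu$, so $F(u)=\mu$. Then by \eqref{eq:adstarequivbis}, $F(\Phi_g(u)) = \mathrm{Ad}^*_g F(u) = \mathrm{Ad}^*_g\mu = \mu$, since $g\in G_\mu$. Hence $\Phi_g(u)\in\Sigma_\mu$, and since $u$ was arbitrary, $g\in G_{\Sigma_\mu}$.

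For the reverse inclusion $G_{\Sigma_\mu}\subseteq G_\mu$: let $g\in G_{\Sigma_\mu}$. The key point is that $\Sigma_\mu$ is nonempty — this is precisely where I would invoke the standing assumption that $\mu$ is in the image of $F$ (indeed $\mu\in\frak g^*$ is, by hypothesis of the proposition, a value taken by the momentum map, as $u_{\mu_*}\in\Sigma_{\mu_*}$ is exactly the situation in which the proposition is applied; more to the point, the statement is vacuous unless $\Sigma_\mu\neq\emptyset$). Pick any $u\in\Sigma_\mu$. Then $\Phi_g(u)\in\Sigma_\mu$, so $F(\Phi_g(u))=\mu$. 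On the other hand, \eqref{eq:adstarequivbis} gives $F(\Phi_g(u)) = \mathrm{Ad}^*_g F(u) = \mathrm{Ad}^*_g\mu$. Comparing, $\mathrm{Ad}^*_g\mu = \mu$, i.e. $g\in G_\mu$. This completes both inclusions.

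The argument is essentially a two-line computation once \eqref{eq:adstarequivbis} is available, so the only genuine subtlety — the "main obstacle", such as it is — is the nonemptiness of $\Sigma_\mu$ needed in the second inclusion: if $\Sigma_\mu=\emptyset$ then trivially $G_{\Sigma_\mu}=G$, while $G_\mu$ could be a proper subgroup, so the equality would fail. I would therefore make explicit at the outset that we only consider $\mu$ lying in the range of $F$ (which is the only case of interest, and is the case in every application in the text). Apart from that, the proof is immediate from $\mathrm{Ad}^*$-equivariance, and I would present it as a short self-contained paragraph after reminding the reader of the relevant definitions.
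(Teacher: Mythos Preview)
Your proof is correct and matches the paper's approach exactly: the paper simply states that the proposition ``is an immediate consequence of~\eqref{eq:adstarequivbis} or, for the reader weary of duals, of~\eqref{eq:adstarequiv}'' and omits the details you have spelled out. Your remark on the nonemptiness of $\Sigma_\mu$ is a useful caveat that the paper leaves implicit.
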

The situation we have in mind is the one where $G$ is such that $H\circ \Phi_g=H$, for all $g\in G$. 
By Theorem~\ref{thm:nother2},
the functions $F_i$ are then constants of the motion for the flow $\Phi_t^H$ and hence the surfaces $\Sigma_\mu$ are $\Phi_t^H$ invariant. We can therefore consider the dynamical system $(\Sigma_\mu, \Phi_t^H)$, which has $G_\mu$ as an invariance group ($G_\mu$ leaves invariant both $\Sigma_\mu$
and the flow $\Phi_t^H$). 
This viewpoint will prove useful in the study of orbital stability in several situations. 
\begin{definition}\label{def:regularpoint} Let $\Phi$ be a globally Hamiltonian action of a Lie group $G$ on a symplectic vector space $\Ban$.
Let $\mu\in\frak g^*$. We say $\mu$ is a regular point\index{regular point} of the momentum map $F$ if, for all $u\in \Sigma_\mu$, $D_u F$ is surjective. 
\end{definition}
This definition simply guarantees that $\Sigma_\mu$ is a co-dimension $m$ submanifold of $\Ban$, where $m$ is the dimension of $\frak g$.

\begin{example}\label{ex:SO(3)}
For the simple Hamiltonian system with spherical potentials considered in Section~\ref{s:spherpot} and Section~\ref{s:spherpotstab}, one has $E=\R^6$, 
$G=\mathrm{SO}(3)$, and it is not difficult to check that, for all
$u(q,p)\in \R^6$, $F(u)=L(q,p)\in\R^3\simeq\mathrm{so}(3)^*$ and 
$F_\xi(q,p)=\xi\cdot L(q,p)$, where we use the identifications \eqref{xirepr} and \eqref{eq:gstaridentif}.
Furthermore, for all $R\in \mathrm{SO}(3)$,
$$
L(Rq, Rp)=RL(q,p),
$$
which shows the action is Ad$^*$-invariant, in view of~\eqref{eq:so3coadjointaction}.
\end{example}

We end this section with some comments on the Poisson brackets of the components of the momentum map. Remark first that the momentum map of a globally Hamiltonian action is not unique since, for any choice of $\lambda\in \frak g^*$, 
$\tilde F_\xi=F_\xi+\lambda(\xi)$ also satisfies $X_\xi=X_{\tilde F_\xi}$. Note furthermore that, in view of~\eqref{eq:liealgrep} and~\eqref{eq:compoisson}, the momentum map satisfies, for all $\xi,\eta\in\frak g$,
$$
X_{F_{[\xi, \eta]}}=X_{[\xi,\eta]}=X_{\{F_\xi, F_\eta\}}.
$$ 
It then follows from~\eqref{eq:XFzero} that, for all $\xi, \eta\in\frak g$, there exists a constant $c(\xi,\eta)$ so that
$$
F_{[\xi, \eta]}=\{F_\xi, F_\eta\}+c(\xi,\eta).
$$
The following lemma is useful and an easy consequence of~\eqref{eq:adstarequivbis}:
\begin{lemma}\label{lem:adstarequiv}
Let $\Phi$ be a globally Hamiltonian action of $G$ on $\Ban$, with momentum map $F$. If $ F$ is $\mathrm{Ad}^*$-equivariant, then, 
for all $\xi, \eta\in\frak g$, 
\begin{equation}\label{eq:lieisom}
F_{[\xi, \eta]}=\{F_\xi, F_\eta\}.
\end{equation}
Conversely, if~\eqref{eq:lieisom} holds, then~\eqref{eq:adstarequiv} holds for all $g\in G$ of the form $g=\exp(\eta)$, for some $\eta\in \mathfrak{g}$ and then for all $g$ in the connected component of $e$.
\end{lemma}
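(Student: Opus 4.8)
<br>

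The statement to prove is Lemma~\ref{lem:adstarequiv}: that $\mathrm{Ad}^*$-equivariance of the momentum map $F$ is equivalent to the Lie-algebra homomorphism property $F_{[\xi,\eta]} = \{F_\xi, F_\eta\}$ (where the converse direction yields $\mathrm{Ad}^*$-equivariance only for $g$ in the connected component of $e$).

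\textbf{Plan of proof.} The forward direction is the easier one. The plan is to start from the $\mathrm{Ad}^*$-equivariance relation $F_\xi\circ\Phi_g = F_{\mathrm{Ad}_{g^{-1}}\xi}$ in Definition~\ref{def:equivariant} and differentiate it. First I would specialize to $g = \exp(s\eta)$ for a fixed $\eta\in\mathfrak g$ and a real parameter $s$, so that the relation reads $F_\xi\circ\Phi_{\exp(s\eta)} = F_{\mathrm{Ad}_{\exp(-s\eta)}\xi}$ as an identity of functions on $\Ban$. Evaluating at a point $u\in\Ban$ and taking $\frac{\rd}{\rd s}$ at $s = 0$: on the left, since $\Phi_{\exp(s\eta)} = \Phi_s^{F_\eta}$ (the action is globally Hamiltonian), one gets $\frac{\rd}{\rd s}F_\xi(\Phi_s^{F_\eta}(u))_{|s=0}$, which by the computation preceding Theorem~\ref{thm:nother1} equals $\{F_\xi, F_\eta\}(u)$. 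On the right, using \eqref{eq:adjointaction}, $\frac{\rd}{\rd s}\mathrm{Ad}_{\exp(-s\eta)}\xi_{|s=0} = [-\eta,\xi] = [\xi,\eta]$, and since $\xi\mapsto F_\xi$ is linear (see \eqref{eq:basisdecompmoment}), $\frac{\rd}{\rd s}F_{\mathrm{Ad}_{\exp(-s\eta)}\xi}(u)_{|s=0} = F_{[\xi,\eta]}(u)$. Equating gives $\{F_\xi, F_\eta\} = F_{[\xi,\eta]}$, which is \eqref{eq:lieisom}.

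\textbf{The converse.} Here one assumes \eqref{eq:lieisom} and wants to recover \eqref{eq:adstarequiv}, at least for $g = \exp(\eta)$. The idea is to fix $\xi,\eta\in\mathfrak g$ and consider the two curves $s\mapsto F_\xi\circ\Phi_{\exp(s\eta)}$ and $s\mapsto F_{\mathrm{Ad}_{\exp(-s\eta)}\xi}$ as functions $\R\to C^2(\Ban,\R)$ (or evaluate at a fixed $u$ to get curves in $\R$). Both agree at $s=0$, namely both equal $F_\xi$. I would then show both satisfy the same ODE in $s$. For the right-hand curve, set $\xi(s) = \mathrm{Ad}_{\exp(-s\eta)}\xi$; then $\frac{\rd}{\rd s}\xi(s) = [\xi(s),\eta]$ by the one-parameter-group property of $\mathrm{Ad}$ together with \eqref{eq:adjointaction}, so $\frac{\rd}{\rd s}F_{\xi(s)} = F_{[\xi(s),\eta]}$ by linearity. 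For the left-hand curve, $g(s) = F_\xi\circ\Phi_{\exp(s\eta)} = F_\xi\circ\Phi_s^{F_\eta}$, and differentiating via the chain rule plus the Hamiltonian equation of motion gives $\frac{\rd}{\rd s}g(s) = \{g(s), F_\eta\}$ (the Poisson bracket of $F_\xi\circ\Phi_s^{F_\eta}$ with $F_\eta$, using that $\Phi_s^{F_\eta}$ is symplectic so it commutes with taking Poisson brackets). Now invoking hypothesis \eqref{eq:lieisom} at the level of the evolving object — more precisely, showing $\{F_{\xi(s)}, F_\eta\} = F_{[\xi(s),\eta]}$ for every fixed $s$, which is just \eqref{eq:lieisom} applied with $\xi(s)$ in place of $\xi$ — one sees that both $s\mapsto g(s)$ and $s\mapsto F_{\xi(s)}$ solve the same linear ODE $\dot h(s) = \{h(s), F_\eta\}$ with the same initial condition $h(0) = F_\xi$. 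By uniqueness of solutions (this is where one needs sufficient regularity; in the finite-dimensional setting of this appendix it is immediate), $g(s) = F_{\xi(s)}$ for all $s$, and setting $s=1$ gives \eqref{eq:adstarequiv} for $g = \exp(\eta)$. Finally, $\mathrm{Ad}^*$-equivariance is multiplicative in $g$ — if it holds for $g_1$ and $g_2$ it holds for $g_1g_2$, by the cocycle identities $\Phi_{g_1g_2} = \Phi_{g_1}\circ\Phi_{g_2}$ and $\mathrm{Ad}_{(g_1g_2)^{-1}} = \mathrm{Ad}_{g_2^{-1}}\mathrm{Ad}_{g_1^{-1}}$ — so it extends to all finite products of exponentials, hence to the whole connected component of $e$ (as noted in Remark~\ref{rem:expo}).

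\textbf{Main obstacle.} The routine differentiations are harmless; the one genuinely delicate point is the uniqueness argument in the converse direction — justifying that the ODE $\dot h(s) = \{h(s), F_\eta\}$ has a unique solution through a given initial datum. In finite dimensions this is a linear ODE on the (finite-dimensional) span of the $F_i$'s, or can simply be read off pointwise, so there is no real difficulty; one should just be careful to phrase the argument so that the hypothesis \eqref{eq:lieisom} is being fed in at each $s$ rather than only at $s=0$. A secondary bookkeeping point is keeping the sign conventions straight: the relation $\frac{\rd}{\rd s}\mathrm{Ad}_{\exp(-s\eta)}\xi_{|s=0} = [\xi,\eta]$ (note the minus in the exponent flips $[\eta,\xi]$ to $[\xi,\eta]$), and the Hamiltonian version of Noether (Theorem~\ref{thm:nother1}) with its $\{F_\xi,F_\eta\}$ versus $\{F_\eta,F_\xi\}$ asymmetry, must be matched consistently so that both sides of \eqref{eq:lieisom} carry the same sign.
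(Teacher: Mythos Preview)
Your proof is correct. The paper does not actually prove this lemma; it only remarks that it is ``an easy consequence of~\eqref{eq:adstarequivbis}'' and moves on, so there is no detailed argument to compare against.

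One small comment on the converse: your ODE-and-uniqueness approach works, but you can sidestep the uniqueness discussion entirely by differentiating the single curve
\[
\phi(s) \;=\; F_{\mathrm{Ad}_{\exp(-s\eta)}\xi}\circ\Phi_{\exp(-s\eta)}.
\]
Writing $\xi(s)=\mathrm{Ad}_{\exp(-s\eta)}\xi$ and using $\dot\xi(s)=[\xi(s),\eta]$, the product rule gives
\[
\frac{\rd}{\rd s}\phi(s)(u)
= F_{[\xi(s),\eta]}\!\bigl(\Phi_{-s}^{F_\eta}(u)\bigr)
  - \{F_{\xi(s)},F_\eta\}\!\bigl(\Phi_{-s}^{F_\eta}(u)\bigr),
\]
which vanishes by hypothesis~\eqref{eq:lieisom} applied with $\xi(s)$ in place of $\xi$. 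Hence $\phi$ is constant, $\phi(s)=\phi(0)=F_\xi$, and composing on the right with $\Phi_{\exp(s\eta)}$ gives exactly~\eqref{eq:adstarequiv} for $g=\exp(s\eta)$. This is the same computation you are doing, just packaged so that the ``main obstacle'' disappears: you are checking that one curve is constant rather than that two curves coincide. Your multiplicativity argument for passing from exponentials to the full connected component of $e$ is fine.
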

What one has to remember here is this. In applications, we often wish to assure~\eqref{eq:adstarequiv} holds. The preceding lemma states this is essentially guaranteed by~\eqref{eq:lieisom}, at least for all $g=\exp\eta$, which, for many Lie groups, means all of $G$. Finally,~\eqref{eq:lieisom} is guaranteed by
\begin{equation}\label{eq:structuconstantspoisson}
\{F_i, F_j\}=c_{ij}^k F_k,
\end{equation}
where we used the notation introduced in~\eqref{eq:structureconstants} and~\eqref{eq:basisdecompmoment}. 
As an example, one may remark that the components of the angular momentum vector $L$ satisfy  the commutation relations of the Lie algebra of SO$(3)$, namely
$$
\{L_i, L_j\}=\epsilon_{ijk}L_k, \quad i,j,k=1,2,3.
$$
One may therefore show that an action is Ad$^*$-equivariant by showing~\eqref{eq:structuconstantspoisson} holds. However, in infinite dimension, this is not immediate since the necessary smoothness properties of the $F_i$'s and even of the corresponding Hamiltonian vector fields are not readily verified. 

Finally, an $Ad^*$-equivariant moment map may not exist. An easy example is $\Ban=\R^2$, $G=\R^2$ and $\Phi:\R^2\times \R^2\to \R^2$ given by $\Phi_{(a,b)}(q,p)=(q+a, p-b)$. Identifying $\frak g\simeq \R^2$ in the obvious way, this action has a moment map $F_1(q,p)=p, F_2(q,p)=q$ and $\{F_1, F_2\}=-1$. Since the group is commutative, it is clearly not $Ad^*$-equivariant. Ways to handle such situations exist, but we shall not deal with such complications in the main part of the text. We refer to~\cite{am, ma, souriau1997} for details. 




\newcommand{\etalchar}[1]{$^{#1}$}

\printindex

\end{document}